\setlist[itemize,3]{label = $\triangleright$}
\setlist[enumerate,1]{label = \textup{(\roman*)}, ref = \textup{(\roman*)}}
\setlist[enumerate,2]{label = \textup{(\alph*)}, ref = \textup{(\alph*)}}
\setlist[enumerate,3]{label = (\arabic*), ref = (\arabic*)}
\def\nobreakbefore{%
  \relax\ifvmode\else
    \ifhmode
      \ifdim\lastskip > 0pt\relax
        \unskip\nobreakspace
      \fi
    \fi
  \fi
}
\let\oldcite\cite
\renewcommand\cite{\nobreakbefore\oldcite}
\proof\endcsname{\@addpunct{.}}{\@addpunct{ :}}{}{}
\proof\endcsname{\@addpunct{.}}{\@addpunct{:}}{}{}
\def\blfootnote{\gdef\@thefnmark{}\@footnotetext}
\newcommand{\MSC}[1]{\blfootnote{\textup{2010} \textit{Mathematics Subject Classification}: \textup{#1}}}
  \renewcommand\TOCLineLeaderFill[1][]{\hfill}
\numberwithin{equation}{section}
\theoremstyle{plain}
\newtheorem{theorem}{Theorem}[section]
\newtheorem*{theorem*}{Theorem}
\newtheorem{lemma}[theorem]{Lemma}
\newtheorem*{lemma*}{Lemma}
\newtheorem*{corollary*}{Corollary}
\newtheorem{proposition}[theorem]{Proposition}
\newtheorem*{proposition*}{Proposition}
\theoremstyle{remark}
\theoremstyle{definition}
\newenvironment{centered}{%
  \begin{list}{}{%
      \topsep0pt
    }
    \centering
  \item[]
  }
  {\end{list}}
\numberwithin{equation}{section}
  \theoremstyle{plain}
  \newtheorem{prop}[theorem]{Proposition}
  \newtheorem*{prop*}{Proposition}
  \newtheorem{lem}[theorem]{Lemma}
  \newtheorem*{lem*}{Lemma}
  \newtheorem{cor}[theorem]{Corollary}
  \newtheorem*{cor*}{Corollary}
  \newtheorem*{fact*}{Fact}
  \newtheorem*{conj*}{Conjecture}
  \newtheorem*{qst*}{Question}
  \theoremstyle{definition}
  \newtheorem*{defn*}{Definition}
  \newtheorem*{ex*}{Example}
  \theoremstyle{remark}
  \newtheorem*{rem*}{Remark}
  \newtheorem*{obs*}{Observation}
  \newtheorem*{ass*}{Assumption}
  \newtheorem*{ntt*}{Notation}
  \newcommand*{\iso}{\xrightarrow{\,\smash{\raisebox{-0.65ex}{\ensuremath{\scriptstyle\sim}}}\,}}
  \newcommand*{\opiso}{\xleftarrow{\,\smash{\raisebox{-0.65ex}{\ensuremath{\scriptstyle\sim}}}\,}}
  \newcommand*{\from}{\colon} 
  \newcommand*{\rst}[1]{\ensuremath{{\mathbin\upharpoonright}\raise-.5ex\hbox{$#1$}}}
  \DeclarePairedDelimiter{\norm}{\lvert}{\rvert}
  \newcommand*{\id}{{\mathop{}\mathopen{}\mathrm{id}}}  
  \DeclarePairedDelimiter{\gauss}{\lfloor}{\rfloor}
  \providecommand*{\x}{\times}				  
  \providecommand*{\ox}{\otimes}
  \newcommand*{\op}{\oplus}
  \DeclareMathOperator{\Gal}{Gal}
  \DeclareMathOperator{\GL}{GL}
  \DeclareMathOperator{\Sym}{Sym}
  \DeclareMathOperator{\im}{im}
  \DeclareMathOperator{\ind}{ind}
  \newcommand*{\ISO}[1]{\ensuremath{{\mathbb{#1}}}}
  \newcommand*{\ProvideMathOperator}[2]{\providecommand*{#1}{\operatorname{#2}}}
  \ProvideMathOperator{\O}{{\mathcal{O}}}	
  \ProvideMathOperator{\P}{\ISO{P}}
  \newcommand*{\CreateBunchOfLetterCmds}[4]{%
    \renewcommand*{\do}[1]{
      \expandafter\providecommand\csname #2##1#3\endcsname{{#1{##1}}}%
    }%
    \docsvlist{#4}
  }
  \newcommand*{\FixedDomain}[1]{{\mathbf{#1}}}
  \newcommand*{\E}{\FixedDomain{E}}
  \newcommand*{\FixedDomainb}[1]{{\overline{\FixedDomain{#1}}}}
  \CreateBunchOfLetterCmds{\FixedDomainb}{}{b}{E,K,L,M,o,k}
  \newcommand*{\FixedDomainh}[1]{{\widehat{\FixedDomain{#1}}}}
  \CreateBunchOfLetterCmds{\FixedDomainh}{}{h}{E,F,K,L,M,o,k}
  \newcommand*{\FixedDomainbh}[1]{{\widehat{\overline{\FixedDomain{#1}}}}}
  \CreateBunchOfLetterCmds{\FixedDomainbh}{}{bh}{E,F,K,L,M,o,k}
  \newcommand*{\F}{\ISO{F}} 	  		              
  \newcommand*{\N}{\ISO{N}}                       
  \newcommand*{\Q}{\ISO{Q}}                       
  \newcommand*{\Z}{\ISO{Z}}                       
  \newcommand*{\ISOb}[1]{{\overline{\ISO{#1}}}}
  \CreateBunchOfLetterCmds{\ISOb}{}{b}{F,Z,Q}
  \newcommand*{\ISOh}[1]{{\widehat{\ISO{#1}}}}
  \CreateBunchOfLetterCmds{\ISOh}{}{h}{Z,Q}
  \newcommand*{\ISObh}[1]{{\widehat{\overline{\ISO{#1}}}}}
  \CreateBunchOfLetterCmds{\ISObh}{}{bh}{Z,Q}
  \CreateBunchOfLetterCmds{\widetilde}{}{t}{A,B,C,D,E,F,G,H,I,J,K,L,M,N,O,P,Q,R,S,T,U,V,W,X,Y,Z}
  \CreateBunchOfLetterCmds{\bar}{}{b}{A,B,C,D,E,F,G,H,I,J,K,L,M,N,O,P,Q,R,S,T,U,V,W,X,Y,Z}
  \CreateBunchOfLetterCmds{\widehat}{}{h}{A,B,C,D,E,F,G,H,I,J,K,L,M,N,O,P,Q,R,S,T,U,V,W,X,Y,Z}
  \CreateBunchOfLetterCmds{\mathbf}{b}{}{A,B,C,D,E,F,G,H,I,J,K,L,M,N,O,P,Q,R,S,T,U,V,W,X,Y,Z}
  \CreateBunchOfLetterCmds{\mathbb}{d}{}{A,B,C,D,E,F,G,H,I,J,K,L,M,N,O,P,Q,R,S,T,U,V,W,X,Y,Z}
  \CreateBunchOfLetterCmds{\mathrm}{r}{}{A,B,C,D,E,F,G,H,I,J,K,L,M,N,O,P,Q,R,S,T,U,V,W,X,Y,Z}
  \CreateBunchOfLetterCmds{\mathcal}{c}{}{A,B,C,D,E,F,G,H,I,J,K,L,M,N,O,P,Q,R,S,T,U,V,W,X,Y,Z}
  \CreateBunchOfLetterCmds{\mathfrak}{f}{}{A,B,C,D,E,F,G,H,I,J,K,L,M,N,O,P,Q,R,S,T,U,V,W,X,Y,Z}
  \newcommand*{\BfT}[1]{{\widetilde{\mathbf{#1}}}}
  \newcommand*{\BfB}[1]{{\bar{\mathbf{#1}}}}
  \newcommand*{\BfH}[1]{{\widehat{\mathbf{#1}}}}
  \newcommand*{\BfBH}[1]{{\widehat{\overline{\mathbf{#1}}}}}
  \newcommand*{\DfT}[1]{{\widetilde{\mathbf{#1}}}}
  \newcommand*{\DfB}[1]{{\bar{\mathbb{#1}}}}
  \newcommand*{\DfH}[1]{{\widehat{\mathbb{#1}}}}
  \newcommand*{\DfBH}[1]{{\widehat{\overline{\mathbb{#1}}}}}
  \newcommand*{\RfT}[1]{{\widetilde{\mathbf{#1}}}}
  \newcommand*{\RmB}[1]{{\bar{\mathrm{#1}}}}
  \newcommand*{\RmH}[1]{{\widehat{\mathrm{#1}}}}
  \newcommand*{\RfBH}[1]{{\widehat{\overline{\mathrm{#1}}}}}
  \newcommand*{\CfT}[1]{{\widetilde{\mathbf{#1}}}}
  \newcommand*{\ClB}[1]{{\bar{\mathcal{#1}}}}
  \newcommand*{\ClH}[1]{{\widehat{\mathcal{#1}}}}
  \newcommand*{\CfBH}[1]{{\widehat{\overline{\mathcal{#1}}}}}
  \newcommand*{\FfT}[1]{{\widetilde{\mathbf{#1}}}}
  \newcommand*{\FlB}[1]{{\bar{\mathfrak{#1}}}}
  \newcommand*{\FlH}[1]{{\widehat{\mathfrak{#1}}}}
  \newcommand*{\FfBH}[1]{{\widehat{\overline{\mathfrak{#1}}}}}
  \CreateBunchOfLetterCmds{\BfT}{b}{t}{A,B,C,D,E,F,G,H,I,J,K,L,M,N,O,P,Q,R,S,T,U,V,W,X,Y,Z}
  \CreateBunchOfLetterCmds{\BfB}{b}{b}{A,B,C,D,E,F,G,H,I,J,K,L,M,N,O,P,Q,R,S,T,U,V,W,X,Y,Z}
  \CreateBunchOfLetterCmds{\BfH}{b}{h}{A,B,C,D,E,F,G,H,I,J,K,L,M,N,O,P,Q,R,S,T,U,V,W,X,Y,Z}
  \CreateBunchOfLetterCmds{\BfBH}{b}{bh}{A,B,C,D,E,F,G,H,I,J,K,L,M,N,O,P,Q,R,S,T,U,V,W,X,Y,Z}
  \CreateBunchOfLetterCmds{\DfT}{b}{t}{A,B,C,D,E,F,G,H,I,J,K,L,M,N,O,P,Q,R,S,T,U,V,W,X,Y,Z}
  \CreateBunchOfLetterCmds{\DfB}{d}{b}{A,B,C,D,E,F,G,H,I,J,K,L,M,N,O,P,Q,R,S,T,U,V,W,X,Y,Z}
  \CreateBunchOfLetterCmds{\DfH}{d}{h}{A,B,C,D,E,F,G,H,I,J,K,L,M,N,O,P,Q,R,S,T,U,V,W,X,Y,Z}
  \CreateBunchOfLetterCmds{\DfBH}{d}{bh}{A,B,C,D,E,F,G,H,I,J,K,L,M,N,O,P,Q,R,S,T,U,V,W,X,Y,Z}
  \CreateBunchOfLetterCmds{\RfT}{b}{t}{A,B,C,D,E,F,G,H,I,J,K,L,M,N,O,P,Q,R,S,T,U,V,W,X,Y,Z}
  \CreateBunchOfLetterCmds{\RmB}{r}{b}{A,B,C,D,E,F,G,H,I,J,K,L,M,N,O,P,Q,R,S,T,U,V,W,X,Y,Z}
  \CreateBunchOfLetterCmds{\RmH}{r}{h}{A,B,C,D,E,F,G,H,I,J,K,L,M,N,O,P,Q,R,S,T,U,V,W,X,Y,Z}
  \CreateBunchOfLetterCmds{\RfBH}{r}{bh}{A,B,C,D,E,F,G,H,I,J,K,L,M,N,O,P,Q,R,S,T,U,V,W,X,Y,Z}
  \CreateBunchOfLetterCmds{\CfT}{b}{t}{A,B,C,D,E,F,G,H,I,J,K,L,M,N,O,P,Q,R,S,T,U,V,W,X,Y,Z}
  \CreateBunchOfLetterCmds{\ClB}{c}{b}{A,B,C,D,E,F,G,H,I,J,K,L,M,N,O,P,Q,R,S,T,U,V,W,X,Y,Z}
  \CreateBunchOfLetterCmds{\ClH}{c}{h}{A,B,C,D,E,F,G,H,I,J,K,L,M,N,O,P,Q,R,S,T,U,V,W,X,Y,Z}
  \CreateBunchOfLetterCmds{\CfBH}{c}{bh}{A,B,C,D,E,F,G,H,I,J,K,L,M,N,O,P,Q,R,S,T,U,V,W,X,Y,Z}
  \CreateBunchOfLetterCmds{\FfT}{b}{t}{A,B,C,D,E,F,G,H,I,J,K,L,M,N,O,P,Q,R,S,T,U,V,W,X,Y,Z}
  \CreateBunchOfLetterCmds{\FlB}{f}{b}{A,B,C,D,E,F,G,H,I,J,K,L,M,N,O,P,Q,R,S,T,U,V,W,X,Y,Z}
  \CreateBunchOfLetterCmds{\FlH}{f}{h}{A,B,C,D,E,F,G,H,I,J,K,L,M,N,O,P,Q,R,S,T,U,V,W,X,Y,Z}
  \CreateBunchOfLetterCmds{\FfBH}{f}{bh}{A,B,C,D,E,F,G,H,I,J,K,L,M,N,O,P,Q,R,S,T,U,V,W,X,Y,Z}
  \renewcommand*\mod[1]{\allowbreak\mkern5mu \mathrm{mod}\,\,#1}
\renewcommand{\ss}{\textnormal{ss}}
\newcommand{\unr}{\textnormal{u}}
\author[1]{\href{mailto:enno.nagel+math@gmail.com}{\texttt{Enno Nagel}}}
\author[2]{\href{mailto:aftab.pande@gmail.com}{\texttt{Aftab Pande}}}
\affil[1]{enno.nagel+math@gmail.com, Instituto de Matemática, UFAL, Maceió}
\affil[2]{aftab.pande@gmail.com, Instituto de Matemática, UFRJ, Rio de Janeiro}
\date{}
\begin{document}
  \title{Reductions of modular Galois representations of Slope (2,3)}
  \vspace{-2ex}
  \maketitle

  \begin{abstract}
    \noindent\textsc{Abstract}.\;
    We compute, via the $p$-adic Langlands correspondence, the semisimplifications of the mod-$p$ reductions of $2$-dimensional crystalline representations of $\Gal(\Qb_p/\Q_p)$ of slope $(2,3)$.
  \end{abstract}

  \MSC{11F85, 22E50, 11F80}

  \setcounter{secnumdepth}{5}
  \setcounter{tocdepth}{5}
  \setcounter{section}{-1}
  \tableofcontents


\section{Introduction}

Let $p$ be a prime number.
What is the (local two-dimensional crystalline) mod-$p$ Galois representation attached to a modular form of weight $k$, an integer $\geq 2$, and Hecke-operator eigenvalue $a_p$, a point in the $p$-adic open unit disc?
There is no general answer yet.
To conjure a conjecture, several authors computed the more accessible cases near the boundary of the disc, that is, the cases of lower \emph{slope}, $p$-adic valuation of $a_p$, (and generic weight $k \geq 2$) via the mod-$p$ local Langlands correspondence (as first conceived in \cite{B03}, proved in \cite{BLZ} for small weights with respect to the slope, that is, $v(a_p) > \lfloor\frac{k-1}{p}\rfloor$, and recently improved upon in \cite{BL}):
\begin{itemize}
  \item for slope $0 < v(a_p) < 1$ and weight $k > 2p + 2$ (with $k \nequiv 3 \mod (p-1)$ in \cite{BG09} and $k \equiv 3 \mod (p-1)$ and $p > 2$ in \cite{BG13},) and
  \item for $p > 3$, slope  $v(a_p) = 1$ (and weight $k \geq 2p+2$) in \cite{BGR}
  \item for $p \geq 3$, slope  $1 < v(a_p) < 2$ (with a condition on $a_p$ when $v(a_p) = 3/2$) and weight $2p + 2 \leq k \leq p^2 - p$ in \cite{GG};
    then for all weights in \cite{BG} and for $v(a_p) = 3/2$ (and $p > 3$) in \cite{GhateRai}.
\end{itemize}

In this article, we extend these results to slope $2 < v(a_p) < 3$ (with a condition on $a_p$ when $v(a_p) = 5/2$).

\subsection{Parametrizations of $p$-adic Galois representations mod $p$}

We will follow the notation of \cite{GG} and \cite{BG}.
Let $\E$ be a finite extension of $\Q_p$ and let $v$ be the additive valuation on $\E$ satisfying $v(p) = 1$.

Let $\cG_{\Q_p}$ be the absolute Galois group $\Gal(\Qb_p / \Q_p)$ of ${\Q_p}$.
A \emph{$p$-adic Galois representation} is a continuous action of $\cG_{\Q_p}$ on a finite-dimensional vector space defined over $\E$.

Among all $p$-adic Galois representations the \emph{crystalline} Galois representations admit an explicit parameterization:
Every crystalline representation $V$ of dimension $2$ is uniquely determined (up to twist by a crystalline character) by
\begin{itemize}
  \item
    a \emph{weight}, an integer $k \geq 2$, and
  \item
    an \emph{eigenvalue} $a_p$ in $\E$ with $v(a_p) > 0$.
\end{itemize}
The rational number $v(a_p)$ is called the \emph{slope} of $V$.

Inside $V$ the compact group $\cG_{\Q_p}$ stabilizes a lattice.
The (induced) representation of $\cG_{\Q_p}$ on the \emph{semisimplified} mod $p$ reduction $\Vb$ of $V$ over $\Fb_p$, by the Brauer-Nesbitt principle, is independent of the choice of this lattice.
Let $V_{k,a_p}$ be the crystalline representation of weight $k$ and eigenvalue $a_p$, that is, the crystalline representation attached to the (admissible) $\phi$-module of basis $\{e_1, e_2\}$ whose Frobenius $\phi$ and filtration $V_\bullet$ is given
by
\[
  \phi
  =
  \begin{pmatrix}
    0 & -1 \\
    p^{k-1} & a_p
  \end{pmatrix}
  \quad \text{ and } \,
  \ldots = V_0 = V \supset V_1 = \ldots = V_{k-1} = \E \cdot e_1 \supset 0 = V_k = \ldots
\]
We will denote by $\bar{V}_{k,a_p}$ the semisimplified mod $p$ reduction of $V_{k,a_p}$.

The finite-dimensional irreducible Galois representation over $\Fb_p$ are classified and, up to twists by unramified characters, parametrized by integers, as follows:
For $n$ in $\N$, let $\Q_{p^n}$ (respectively $\Q_{p^{-n}}$) be the smallest field extension of $\Q_p$ that contains a primitive $(p^n-1)$-th root $\zeta_n$ (respectively $p_n$) of $1$ (respectively of $-p$).
The \emph{fundamental character} $\omega_n \from \Gal(\Q_{p^{-n}} / \Q_{p^n}) \to \F_{p^n}^*$ is defined by
\[
  \sigma \mapsto \zeta_n
  \quad \text{ where $\zeta_n$ is determined by } \sigma(p_n) = \zeta_n \cdot p_n.
\]
Let $\omega := \omega_1$.
For $\lambda$ in $\Fb_p$, let $\unr(\lambda) \from \cG_{\Q_p} \to \Fb_p^*$ be the unramified character that sends the (arithmetic) Frobenius to $\lambda$.
For $a$ in $\Z$, let
\[
  \ind_{\cG_{\Q_{p^n}}}^{\cG_{\Q_p}} \omega_n^a
  :=
  \Fb_p[\cG_{\Q_p}] \otimes_{\Fb_p[\cG_{\Q_{p^n}}]} \omega_n^a
\]
be the induction of $\omega_n^a$ from $\cG_{\Q_{p^n}}$ to $\cG_{\Q_p}$.
The conjugated characters $\omega_n(g \cdot g^{-1})$ for $g$ in $\cG_{\Q_p}$ are $\omega_n, \omega_n^p, \ldots, \omega_n^{p^{n-1}}$ and all distinct;
therefore, by Mackey's criterion,    $\ind_{\cG_{\Q_{p^n}}}^{\cG_{\Q_p}} \omega_n^a$ is irreducible and its determinant is $\omega^a$ on $\cG_{\Q_{p^n}}$.
Let $\ind(\omega_n^a)$ denote the twist of $\ind_{\cG_{\Q_{p^n}}}^{\cG_{\Q_p}} \omega_n^a$ by the unramified character that turns its determinant into $\omega^a$ on all of $\cG_{\Q_p}$.

Every irreducible $n$-dimensional representation of $\cG_{\Q_p}$ over $\Fb_p$ is of the form $\ind(\omega_n^a) \otimes \unr (\lambda)$ for some $a$ in $\Z$ and $\lambda$ in $\Fb_p^*$ (cf. [op. cit., Paragraph 1.1]).
In particular, every mod $p$ reduction of dimension $2$ is either of the form
\[
  \ind(\omega_2^a) \otimes \unr (\lambda) \text{ \quad or \quad } (\omega_1^a \otimes \unr (\lambda)) \oplus (\omega_1^b \otimes \unr (\mu))
\]
for some $a,b$ in $\Z$ and $\lambda,\mu$ in $\Fb_p^*$.

The powers $a$ and $b$ of the fundamental character $\omega_2$ are not unique in $\Z$ but satisfy the following congruences:
$\omega_2$ has order $p^2-1$, so $\omega_2^{p^2} = \omega_2$, and $\omega_2^i$ and $\omega_2^{ip}$ are conjugate under $\cG_{\Q_p}$, thus have isomorphic inductions.

There are also restrictions on the exponents occurring in the mod $p$ reduction:
We recall that the Galois representation $V_{k, a_p}$ is obtained from a filtered $\phi$-module by a functor;
which is a tensor functor, in particular, it is compatible with taking the determinant.
This way, the determinant of the Galois representation $V_{k, a_p}$ is known and can be made explicit, and so its mod $p$ reduction.
It is $\omega^{k - 1}$.
At the same time, we recall that the determinant of $\ind(\omega_2^i)$ is (by definition) $\omega^i$.

\subsection{Main Theorem}

For a weight $k$ and an eigenvalue $a_p$ that parametrize a crystalline representation $V_{k,a_p}$, we compute $a$ in $\Z$ and $\lambda$ in $\Fb_p^*$ that parametrize the mod $p$ reduction $\Vb_{k,a_p}$ for
\begin{itemize}
  \item
    a weight $k$ in certain mod $(p-1)$ and mod $p$ congruence classes, and
  \item
    a slope $2 < v(a_p) < 3$.
\end{itemize}

Applying \cite[Lemma 3.3]{BG09} to the results of \cref{sec:ElimJH} and \cref{sec:sepRedIrred} yields:

\begin{theorem}
  Let $r := k-2$ and $a$ in $\{ 3, \ldots, p + 1 \}$ such that $r \equiv a \mod (p-1)$.
    If $p \geq 5$, $r \geq 3p + 2$ and $v(a_p)$ in $]2,3[$ (and, if for $a=5$ or $p=5$,  $v(a_p) = 5/2$, then $v(a_p^2 - p^5) = 5$), then
    \[
      \overline{V}_{k,a_p} \cong
      \begin{cases}
        \ind(\omega_2^{a + 1}),                               & \quad \text{ for } a = 3 \text{ and } r \not \equiv 0,1,2 \mod p\\
        \ind(\omega_2^{a+p}),                              & \quad \text{ for } a = 3 \text{ and } r \equiv 0 \mod p\\
        \ind(\omega_2^{a+p}),                               & \quad \text{ for } a = 4 \text{ and } r \not\equiv 1,2,3,4 \mod p\\
        \ind(\omega_2^{a+2p-1}),                               & \quad \text{ for } a = 4 \text{ and } r \equiv 1 \mod p\\
        \ind(\omega_2^{a + 1}),                               & \quad \text{ for } a = 4 \text{ and } r  \equiv 4 \mod p\\
        \ind(\omega_2^{a + 2p-1}),                            & \quad \text{ for } a = 5 \text{ and } r \equiv 2,3 \mod p\\
        \ind(\omega_2^{a + 2p-1}),                            & \quad \text{ for } a = 5 \text{ and } r \not\equiv 2,3,4,5 \mod p, v(a_p^2) \neq 5\\
        \ind(\omega_2^{a + p}),                               & \quad \text{ for } a = 5, \ldots, p-1 \text{ and } p \parallel r - a  \\
        \ind(\omega_2^{a + 1}),                               & \quad \text{ for } a = 5, \ldots, p-1 \text{ and } r \equiv a \mod p^2\\
        \ind(\omega_2^{a + 2p-1}),                            & \quad \text{ for } a = 6, \ldots, p \text{ and } r \not\equiv a, a-1 \mod p\\
        \ind(\omega_2^{a + p}),                               & \quad \text{ for } a = 5, \ldots, p  \text{ and } p | r-a+1\\
        \ind(\omega_2^{a+p}),                                 & \quad \text{ for }  a = p \text{ and } p | r - p \text{ but not } p^3 | r - p   \\
        \unr(\sqrt{-1})\omega \oplus \unr(-\sqrt{-1})\omega , & \quad \text{ for } a = p \text{ and } r \equiv p \mod p^3 \\
        \ind(\omega_2^{a+2p-1}),                              & \quad \text{ for } a =p+1 \text{ and } r \not \equiv 0, 1 \mod p \\
      \end{cases}
    \]
    where $\parallel$ denotes exact divisibility.
\end{theorem}

This result is as predicted by the main theorem of \cite{BG}:
Since the slope increases by a unit, here the reducible case occurs when $p^3 \mid p-r$ (whereas, in \cite{BG}, when $p^2 \mid p-r$).
In \cite{A}, Arsovski examines whether the representation is irreducible or not, for a large class of slopes (integral and non-integral) and even weights, but does not specify it.
In the cases where he eliminates certain factors (as in our \cref{sec:ElimJH}) his results are compatible with ours.
Our results agree with the results of \cite[Section 4.2 Case (ii)]{GhateKumar} (where our $a = b+2$).
Our results in \cref{sec:Xr-2} and \ref{sec:singular-quotients-x-r-2} are also compatible with those of \cite{GhateVangala}.

Here we deal with all weights and compute the exact shape of the representation, but we could not address:
\begin{itemize}

  	\item the case $p \mid r -(a-1) \mod p$ for $a = p+1  \text{ or } 2$, $a= 3  \text{ or } p+2$ and $a= 4  \text{ or } p+3$.
        \item the case $a = 5$ and $p \mid r - 5, r-4$, (to determine reducibility in \cref{sec:sepRedIrred}), and
    \item the case $v(a_p) = 5/2$ when $v(a_p^2 - p^5) \ne 5$.
\end{itemize}

The latter two cases are part of Ghate's zig-zag conjecture (see \cite{GhateZigZag}), which has been addressed in recent work (see \cite{GhateRai}) for $a=3$ and $v(a_p) = 3/2$.
The cases addressed give further evidence for:

\begin{conj*}[{\cite[Conjecture 4.1.1]{BG14}}]
  Let $\bar{V}_{k,a_p}$ be the semisimplified mod $p$ reduction of \, $V_{k,a_p}$.
  If $p$ is odd, $k$ is even and $v(a_p) \not \in \mathbb{Z}$, then $\bar{V}_{ k,a_p}$ is irreducible.
\end{conj*}

\subsection{Outline}

We refer to \cite{BG09}, \cite{GG} and \cite{BG} for a more detailed exposition.
Let $\bL$ be the \emph{$2$-dimensional mod $p$ local Langlands correspondence}, an injection
\begin{center}
  \setlength{\tabcolsep}{2pt}
  \begin{tabular}{lcr}
    $\left\{
      \begin{tabular}{c}
        continuous actions of $\Gal(\Qb_p / \Q_p)$\\
        on $2$-dimensional $\Fb_p$-vector spaces
      \end{tabular}
    \right\}$
      & $\hookrightarrow$ &
      $\left\{
        \begin{tabular}{c}
          semisimple \emph{smooth}\\
          actions of $\GL_2(\F_p)$ on\\
          $\Fb_p$-vector spaces
        \end{tabular}
      \right\}$
  \end{tabular}
\end{center}
Since $\bL$ is injective, to determine $\bar{V}_{k,a_p}$, it suffices to determine $\bL(\bar{V}_{k,a_p})$.
As $\bL$ and the $p$-adic local Langlands correspondence (the analog of the mod $p$ local Langlands correspondence that attaches actions of $\Gal(\Qb_p / \Q_p)$ on $2$-dimensional $\Q_p$-vector spaces to actions of $\GL_2(\Q_p)$ on Banach spaces) are compatible with taking the mod $p$ reduction,
\[
  \bL(\bar{V}_{k,a_p}) = \bar{\Theta}_{k,a_p}^\ss
\]
where the right-hand side is the representation of $\GL_2(\Q_p)$ over the (infinite dimensional) $\Fb_p$-vector space given by
\begin{itemize}
  \item
    the semisimplification $\bar{\Theta}_{k,a_p}^\ss$ of
  \item
    the reduction modulo $p$ $\bar{\Theta}_{k,a_p}$ of the canonical lattice $\Zb_p$-lattice $\Theta_{k,a_p}$ of the base extension $\Pi_{k,a_p}$ from $\E$ to $\Qb_p$ of
  \item
    the representation $\Pi_{k,a_p}$ of $\GL_2(\Q_p)$ that corresponds to $V_{k,a_p}$ under the $p$-adic local Langlands correspondence;
    explicitly, with $r = k-2$,
    \[
      \Pi_{k,a_p} = \ind_{KZ}^G \Sym^r \Qb_p^2 / (T - a_p)
    \]
    where
    \begin{itemize}
      \item
        $G = \GL_2(\Q_p)$, $K = \GL_2(\Z_p)$ and $Z = \Q_p^*$ is the center of $G$,
      \item
        $\Sym^r \Qb_p^2$ is the representation of $\GL_2(\Q_p)$ given by all homogeneous polynomials of total degree $r$, and
      \item
        $T$ is the \emph{Hecke operator} that generates the endomorphism algebra of all $\Qb_p[G]$-linear maps on $\ind_{KZ}^G \Sym^r \Qb_p^2$.
    \end{itemize}
\end{itemize}

The canonical $\Zb_p$-lattice $\Theta_{k,a_p}$ of $\Pi_{k,a_p}$ is given by the image
\[
  \Theta_{k,a_p} := \im (\ind_{KZ}^G \Sym^r \Zb_p^2 \to \Pi_{k,a_p})
\]
and the mod $p$-reduction $\bar\Theta_{k,a_p}$ by $\Theta_{k,a_p} / p \Theta_{k,a_p}$.

Let $V_r := \Sym^r \Fb_p^2$.
It is a representation of $\GL_2(\F_p)$ that extends to one of $KZ$ by letting $p \in Z$ act trivially.
We note that there is a natural $\Fb_p[G]$-linear surjection
\[
  \ind_{KZ}^G V_r \twoheadrightarrow \bar{\Theta}_{k,a_p}. \tag{$*$}
\]
Our main result will be that, generally, there is a single Jordan-Hölder factor $J$ of $V_r$ whose induction surjects onto the right-hand side.
Then \cite[Proposition 3.3]{BG09} uniquely determines $\bar V_{k,a_p}$.

To find the Jordan-Hölder factor $J$ of $V_r$, we first define a quotient $Q$ of $V_r$ whose induction surjects onto the right-hand side.
For this, let $X(k,a_p)$ denote the kernel of the above epimorphism.
Put $\Gamma := \GL_2(\F_p)$.

Let $\theta := X^p Y - X Y^p \in V_{p + 1}$ and let $V_r^{***}$ be the image of the map from $V_{r-3p-3}$ to $V_r$ given by multiplication with $\theta^3$.
For $i = 0, \ldots, r$, let
\[
  X_{r-i} := \text{ the $\Fb_p[\Gamma]$-submodule of $V_r$ generated by $X^i Y^{r-i}$ }.
\]
\small
\begin{obs*}
  Mistakably, the notation $X_{r-i}$ involves \emph{two} parameters, $r$ in $\N$ for the surrounding, and $i$ in $\{ 0, \ldots, r \}$ for the inner submodule:
  For example, put $r' = r-1$.
  Then $X_{r'}$ is the submodule of $V_{r-1}$, homogeneous polynomials of two variables of total degree $r-1$, generated by $Y^{r-1}$;
  whereas $X_{r-1}$ is the submodule of $V_r$, homogeneous polynomials of two variables of total degree $r$, generated by $X Y^{r-1}$.
\end{obs*}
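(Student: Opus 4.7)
The plan is to verify both the general assertion and the supporting example directly by unpacking the defining convention $X_{n-j} := \Fb_p[\Gamma] \cdot X^j Y^{n-j} \subseteq V_n$; no nontrivial module-theoretic input is required. The observation has no hidden content beyond the fact that the subscript $n-j$ on the left records only one integer, while the right-hand side depends on both the ambient degree $n$ (which fixes $V_n$) and the exponent $j$ (which fixes the generator). Accordingly, two pairs $(n_1, j_1) \neq (n_2, j_2)$ can produce the same formal subscript $n_1 - j_1 = n_2 - j_2$ while naming genuinely different submodules of $V_{n_1}$ and $V_{n_2}$; this is the content of the first bullet, and it is proved by inspection of the definition.

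For the example, I would apply the definition twice. First, with ambient degree $n = r' = r-1$ and subscript $r'$, the equation $n - j = r' = r-1$ forces $j = 0$, so $X_{r'} \subseteq V_{r-1}$ is generated by $X^0 Y^{r-1} = Y^{r-1}$. Second, with ambient degree $n = r$ and subscript $r-1$, the same equation $n - j = r-1$ now forces $j = 1$, so $X_{r-1} \subseteq V_r$ is generated by $X^1 Y^{r-1} = X Y^{r-1}$. The two submodules live in different ambient representations ($V_{r-1}$ versus $V_r$) and are generated by different monomials ($Y^{r-1}$ versus $X Y^{r-1}$), exactly as the observation claims.

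The main obstacle is readerly rather than mathematical: one must not conflate the two meanings of the symbol when an auxiliary degree $r'$ is in play. I would therefore close by recording the working convention that, wherever ambiguity could arise, the ambient $V_n$ be displayed alongside $X_{n-j}$, or, equivalently, that one always recover $j$ explicitly from the equation $j = n - (n-j)$ before reading off the generator $X^j Y^{n-j}$; in particular, in the situation of the example no computation can migrate from $V_{r-1}$ to $V_r$ without re-solving this equation in the new ambient space.
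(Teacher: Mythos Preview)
Your proposal is correct and matches the paper's treatment: the observation is a notational remark stated without proof in the paper, and your unpacking of the definition $X_{n-j} := \Fb_p[\Gamma]\cdot X^j Y^{n-j} \subseteq V_n$ is exactly the verification one performs. There is nothing further to compare.
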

\normalsize

By \cite[Remark 4.4]{BG09},
\begin{itemize}
  \item
    if $2 < v(a_p)$, then $\ind_{KZ}^G X_{r-2} \subseteq X(k,a_p)$, and
  \item
    if $v(a_p) < 3$, then $\ind^G_{KZ} V_r^{***} \subseteq X(k,a_p)$.
\end{itemize}
Finally put
\[
  Q := V_r / (X_{r-2} + V_r^{***})
\]
Thence, if $2 < v(a_p) < 3$, then the epimorphism $(*)$ induces an epimorphism
\[
  \ind_{KZ}^G Q \twoheadrightarrow \bar\Theta_{k,a_p}.
\]
Thus we need to understand the modules $X_{r-2}$, $V_r^{***}$ and their intersection $X_{r-2}^{***} := X_{r-2} \cap V_r^{***}$:
In \cref{lem:VrAstQuotients}, the Jordan-Hölder series of $V_r / V_r^{***}$ is computed.
In \cref{sec:Xr-2}, the Jordan-Hölder series of $X_{r-2}$ and $X_{r-2} / X_{r-2}^*$ is computed (where $X_{r-2}^* := X_{r-2} \cap V_r^*$), and
in \cref{sec:singular-quotients-x-r-2} that of $X_{r-2}^* / X_{r-2}^{***}$.
The computed modules depend on the congruence classes of $r$ modulo $p-1 = \#\F_p^*$ respectively $p = \#\F_p$,
as well as (the sum of) the digits of the $p$-adic expansion of $r$.

We then compute in \cref{sec:Q} the Jordan-Hölder factors of $Q$:
A priori, $Q$ has at most $6$ Jordan-Hölder factors.
If $Q$ happens to have a \emph{single} Jordan-Hölder factor, that is, if there is a homomorphism of an irreducible module onto $\bar{\Theta}_{k,a_p}$, then \cite[Proposition 3.3]{BG09} describes $\bar{\Theta}_{k,a_p}$ completely.

Otherwise, that is, if $Q$ happens to have \emph{more than one} Jordan-Hölder factor $J$, then in \cref{sec:ElimJH} we show, for all but a single Jordan-Hölder factor $J_0$ of $Q$, there are functions $f_J$ in $\ind_{KZ}^G \Sym^r \Qb_p^2$ such that
\begin{itemize}
  \item its image $(T - a_p)(f_J)$ under the Hecke operator lies in $\ind_{KZ}^G \Sym^r \Zb_p^2$, and
  \item its mod $p$ reduction $\bar{f}_J$  lies in $\ind_{KZ}^G J$, and generates the entire $\Fb_p[G]$-module $\ind_{KZ}^G J$ (this holds, for example, when it is supported on a single coset of $G / KZ$).
\end{itemize}
Then \cite[Proposition 3.3]{BG09} applied to
$\ind_{KZ}^G J_0 \twoheadrightarrow \bar\Theta_{k,a_p}$ describes $\bar{\Theta}_{k,a_p}$ completely.

In \cref{sec:sepRedIrred}, if the only remaining Jordan-Hölder factor is $V_{p-2}\ox \rD^n$ for some $n$, we need to distinguish between the irreducible and reducible case:
To this end we construct additional functions and observe whether the map $\ind_{KZ}^G V_{p-2} \ox \rD^n \rightarrow \overline{\Theta}_{k,a_p}$ factors through the cokernel of either $T$ (in which case irreducibility holds) or of $T^2 - cT +1$ for some $c \in \overline{\mathbb{F}}_p$ (in which case reducibility holds).


\section{Groundwork}
\label{sec:lemmas}

We restate key results of \cite{G} in our notation (which follows that of \cite{GG}, \cite{BG} and \cite{BGR}).
Let $M$ be the multiplicative monoid of all $2 \x 2$-matrices with coefficients in $\F_p$.
Inside the $M$-representation of all homogeneous polynomials of two variables,
\begin{itemize}
  \item
    here, as in \textit{op. cit.}, $V_r$ denotes the subrepresentation given by all those of (total) degree $r$, a vector space of dimension $r + 1$,
  \item
     whereas in \cite{G}, it denotes the subrepresentation given by all those of (total) degree $r-1$, a vector space of dimension $r$.
\end{itemize}
That is, there is a one-dimensional offset.

  \subsection{The Jordan-Hölder series of $V_m \ox V_n$ for $m = 2,3$}
  \label{sec:the-jordan-hölder-series-of-v-m-ox-v-n-for-m-23}

  For an $M$-representation $U$, let $\sigma U$ and $\varphi U$ denote the socle and cosocle of $U$.

  \begin{lemma}[The Jordan-Hölder series of a Tensor product of two irreducible modules as given in {\cite[(5.5) and (5.9)]{G}}.]
  \label{lem:JHTensor}
    Let $0 \leq m \leq n \leq p-1$.
    \begin{enumerate}
      \item
        \label{en:JHTensor0p-1}
        If $0 \leq m + n \leq p-1$, then
        \[
          V_m \otimes V_n \cong \bigoplus_{i = 0, \ldots, m} V_{m + n -2i} \otimes \rD^i.
        \]
      \item
        \label{en:JHTensorp2p-1}
        If $p \leq m + n \leq 2p-2$, then
        \[
          V_m \otimes V_n \cong V_{p(m + n + 2 - p)-1} \oplus (V_{p-n - 2} \otimes V_{p - m - 2} \otimes \rD^{m + n + 2-p})
        \]
        where the second summand equals
        \[
          (V_{p - n - 2} \otimes V_{p - m - 2} \otimes \rD^{m + n + 2 - p})
          \cong
          \bigoplus_{i = 0, \ldots, p - n - 2} V_{(p - m - 2) + (p - n - 2) - 2i} \otimes \rD^{m + n + 2 - p + i}
        \]
        and the first summand $V = V_{(k + 1)p - 1}$ for $k$ in $\{ 1, \ldots, p - 1 \}$ is a direct sum
        \[
          V = \bigoplus_{m = 0, \ldots, \gauss{k/2}} U_{k - 2m} \ox \rD^m
        \]
        where $U_0 = V_{p-1}$ and $U = U_l$ for $l$ in $\{ 1, \ldots, p \}$ has Jordan-Hölder series
        \[
          0 \subset \sigma U \subset \varphi U \subset U
        \]
        whose successive semisimple Jordan-Hölder factors $\Ub = \sigma U$, $\Ub' = \varphi U / \sigma U$ and $\Ub^{\prime\prime} = U / \varphi U$ are
        \begin{itemize}
          \item
            $\Ub = \Ub^{\prime\prime} = V_{p - l - 1} \ox \rD^l$, and
          \item
            $\Ub' = (V_{l - 2} \otimes \rD) \op V_l$.
        \end{itemize}
        with the convention that $V_k = 0$ for $k < 0$.
    \end{enumerate}
  \end{lemma}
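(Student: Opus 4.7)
The plan is to derive both parts from the corresponding structural results of \cite{G}, being careful to match the one-dimensional index offset between \cite{G} (where $V_r$ has dimension $r$) and the present paper (where $V_r$ has dimension $r + 1$), as flagged at the start of \cref{sec:lemmas}.

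For part \cref{en:JHTensor0p-1}, the range $m + n \leq p - 1$ is the restricted region where the characteristic-zero Clebsch--Gordan decomposition survives. I would first construct the $M$-equivariant polynomial multiplication map $V_m \ox V_n \onto V_{m + n}$, whose target is irreducible because $m + n \leq p - 1$. The kernel is generated by the element $X \ox Y - Y \ox X \in V_1 \ox V_1$, on which $M$ acts by the determinant, yielding a short exact sequence
\[
  0 \to V_{m - 1} \ox V_{n - 1} \ox \rD \to V_m \ox V_n \to V_{m + n} \to 0.
\]
Since both ends are semisimple with pairwise non-isomorphic Jordan--Hölder constituents (differing either in $V_{\bullet}$-index or in determinant twist), the sequence splits; iterating $m + 1$ times and matching dimensions $(m + 1)(n + 1) = \sum_{i = 0}^m (m + n - 2i + 1)$ yields the claimed decomposition.

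For part \cref{en:JHTensorp2p-1}, this iteration breaks down once $m + n \geq p$, since $V_{m + n}$ then becomes reducible with a non-split Jordan--Hölder structure. Here I would appeal directly to \cite{G} for the splitting of $V_m \ox V_n$ into the "large" summand $V_{(k + 1)p - 1}$, with $k := m + n + 1 - p \in \{1, \ldots, p - 1\}$, and the "small" summand $V_{p - n - 2} \ox V_{p - m - 2} \ox \rD^{m + n + 2 - p}$, to which part \cref{en:JHTensor0p-1} applies since $(p - n - 2) + (p - m - 2) \leq p - 4$. The further decomposition of the large summand as $\bigoplus_{i = 0}^{\gauss{k / 2}} U_{k - 2i} \ox \rD^i$, together with the three-step, non-split Jordan--Hölder series of each $U_l$ --- socle and cosocle both $V_{p - l - 1} \ox \rD^l$ and middle layer $(V_l \ox \rD) \op V_{l + 2}$ --- is the main structural content of \cite{G}, which I would invoke after the index shift.

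The main obstacle is bookkeeping rather than conceptual depth: translating \cite{G}'s conventions, keeping the determinant twists consistent, and, most importantly for part \cref{en:JHTensorp2p-1}, verifying that the splitting into the two summands is honest rather than merely a short exact sequence. That last point reduces to checking that the socles of $V_{(k + 1)p - 1}$ and of $V_{p - n - 2} \ox V_{p - m - 2} \ox \rD^{m + n + 2 - p}$ share no common Jordan--Hölder factor, which can be read off from the action of the diagonal torus on highest-weight vectors modulo $p - 1$.
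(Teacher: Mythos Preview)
Your overall plan --- derive both parts from Glover \cite{G} after the index shift --- matches the paper exactly: the paper's proof of (i) is the single line ``By \cite[(5.5)]{G}'', and for (ii) it reduces the second summand to (i) and cites \cite[(5.9)]{G} for the Jordan--Hölder structure of $V_{(k+1)p-1}$.

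Where your write-up diverges is in the extra justification you supply for (i), and there the splitting step has a genuine gap. You argue that the short exact sequence
\[
  0 \to V_{m-1}\ox V_{n-1}\ox\rD \to V_m\ox V_n \to V_{m+n} \to 0
\]
splits because the two ends have pairwise non-isomorphic composition factors. That inference is false in general for $\Fb_p[\Gamma]$-modules: part (ii) of the very lemma you are proving furnishes counterexamples, since $U_l$ is a non-split extension whose socle $V_{p-l-1}\ox\rD^l$ and middle layer $(V_l\ox\rD)\op V_{l+2}$ share no constituent. Disjointness of composition factors does not force $\Ext^1$ to vanish. The same objection applies to your proposed check at the end of (ii), where you want disjoint socles to certify that the two summands really split off from one another.

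The fix is to produce an explicit $M$-equivariant section $V_{m+n}\into V_m\ox V_n$ rather than argue abstractly. This is exactly what the paper does later (proof of \cref{lem:Xr2Vpa3}), sending $f\in V_{n+2}$ to $f_{xx}\ox x^2 + 2f_{xy}\ox xy + f_{yy}\ox y^2$ via iterated partial derivatives; the relevant binomial coefficients are units precisely in the range $m+n\le p-1$, so the section exists there and the iteration goes through. With that repair your argument for (i) becomes a correct, more self-contained alternative to the bare citation, at the cost of reproving a piece of \cite{G}. For (ii) there is no comparable shortcut: the non-split structure of the $U_l$ is the substantive content, and both you and the paper ultimately defer to \cite[(5.9)]{G} for it.
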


  \begin{cor}[of {\cref{lem:JHTensor}}]
  \label{cor:JHVxV2}
    As $\F_p[M]$-modules
        we have $V_2 \ox V_{p-2} = V_{p-4} \ox \rD^2 \op V_{2p-1}$ where $V_{2p-1}$ has successive semisimple Jordan-Hölder factors $V_{p-2} \otimes \rD$, $V_1$ and $V_{p-2} \ox \rD$.
  \end{cor}

  \subsection{The singular submodules of $V_r$}
  \label{sec:singular-submodules-of-v-r}

  We recall that $\Gamma := \GL_2(\F_p)$.

  \begin{lemma}[Extension of {\cite[Propositions 2.1 and 2.2]{BG}}]
  \label{lem:VrAstQuotients}
    Let $p > 2$.
    The short exact sequence of $\F_p[\Gamma]$-modules
    \begin{enumerate}
      \item
        of $V_r/V_r^*$, for $r \geq p$, and $r \equiv a \mod (p-1)$ with $a \in \{1, \ldots, p-1\}$ is
        \[
          0 \to V_a \to V_r / V_r^* \to V_{p-a-1} \ox \rD^a \to 0,
        \]
        and this sequence splits if and only if $a = p-1$;
        \label{en:VrModVrAst}
      \item
        of $V_r^*/V_r^{**}$ for $r \geq 2p + 1$, and $r \equiv a \mod (p-1)$ with $a \in \{3, \ldots, p + 1\}$ is
        \[
          0 \to V_{a-2} \ox \rD \to V_r^* / V_r^{**} \to V_{p-a + 1} \ox \rD^{a-1} \to 0
        \]
        and this sequence splits if and only if $a = p + 1$;
        \label{en:VrAstModVrAstAst}
      \item
        of $V_r^{**}/V_r^{***}$, for $r \geq 3p + 2$, and $r \equiv a \mod (p-1)$ with $a \in \{5, \ldots, p + 3\}$ is
        \[
          0 \rightarrow V_{a-4} \otimes \rD^2 \rightarrow V_r^{**}/V_r^{***} \rightarrow V_{p-a + 3} \otimes \rD^{a-2} \rightarrow 0
        \]
        and this sequence splits if and only if $a = p + 3$.
        \label{en:VrAstAstModVrAstAstAst}
    \end{enumerate}
  \end{lemma}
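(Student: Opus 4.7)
The plan is to reduce parts (ii) and (iii) to part (i) by the standard device of dividing out powers of $\theta$. Recall that $\theta = X^pY - XY^p$ satisfies $g \cdot \theta = (\det g) \cdot \theta$ for every $g \in \Gamma$, so for each $k \geq 1$ multiplication by $\theta^k$ defines a $\Gamma$-equivariant map
\[
  \mu_k \from V_{r-k(p+1)} \otimes \rD^k \lgra V_r
\]
whose image is, by the definition given in the text, the submodule $V_r^{(k)}$ (where I write $V_r^{(1)} = V_r^*$, $V_r^{(2)} = V_r^{**}$, $V_r^{(3)} = V_r^{***}$). Injectivity of $\mu_k$ follows from the fact that $\Fb_p[X,Y]$ is a domain and $\theta \ne 0$, equivariance from the cited transformation law, and the identification of the image from the very definition of $V_r^{(k)}$.

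The key observation is then that $\mu_k$ identifies $V_r^{(k+1)} = \theta^{k+1} V_{r-(k+1)(p+1)}$ with the submodule $\theta \cdot V_{r-(k+1)(p+1)} \otimes \rD^k = V_{r-k(p+1)}^* \otimes \rD^k$ of the source. Consequently $\mu_k$ descends to a $\Gamma$-equivariant isomorphism
\[
  V_r^{(k)} / V_r^{(k+1)} \;\cong\; \bigl(V_{r-k(p+1)} / V_{r-k(p+1)}^*\bigr) \otimes \rD^k.
\]
Part (i) is the case $k = 0$, so parts (ii) and (iii) follow by applying (i) to the right-hand side with $k = 1$ and $k = 2$ respectively.

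Explicitly, for (iii) I would set $r' := r - 2p - 2$ and check that the hypotheses of (i) hold for $r'$: indeed $r' \geq p$ since $r \geq 3p + 2$, and $r' \equiv a - 4 \pmod{p-1}$ with $a-4 \in \{1, \ldots, p-1\}$ whenever $a \in \{5, \ldots, p+3\}$. Part (i) then yields a non-split-unless-$a-4=p-1$ sequence
\[
  0 \lgra V_{a-4} \lgra V_{r'}/V_{r'}^* \lgra V_{p-(a-4)-1} \otimes \rD^{a-4} \lgra 0,
\]
and tensoring by $\rD^2$ gives exactly the claimed sequence, splitting iff $a = p+3$. Part (ii) is handled identically by taking $k=1$, $r' := r - p - 1$, and noting $a - 2 \in \{1, \ldots, p-1\}$ for $a \in \{3, \ldots, p+1\}$.

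The substantive step — and hence the only real obstacle — is verifying the three properties of $\mu_k$: $\Gamma$-equivariance (with the twist by $\rD^k$), injectivity, and the identification $\mu_k^{-1}(V_r^{(k+1)}) = V_{r-k(p+1)}^*$. Each is immediate once the transformation law of $\theta$ is in hand, but it is what bootstraps (ii) and (iii) from (i). The splitting assertions require no additional work, since the forward direction is part of (i) and the failure to split for other values of $a$ is preserved by the exact tensor functor $- \otimes \rD^k$.
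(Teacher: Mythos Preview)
Your proposal is correct and takes essentially the same approach as the paper: both reduce the higher cases to the lower ones via the isomorphism induced by multiplication by powers of $\theta$. The paper cites \cite[Propositions 2.1, 2.2]{GG} for (i) and (ii) and then reduces (iii) to (ii) via $V_r^{**}/V_r^{***} \cong (V_{r-p-1}^{*}/V_{r-p-1}^{**})\otimes \rD$, whereas you reduce (ii) and (iii) directly to (i) via $\mu_k$; the content is identical.
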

  \begin{proof}
    See \cite[Proposition 2.1 and 2.2]{BG} for (i) and (ii) respectively.
    For (iii), follow the proof of Proposition 2.2 in loc.cit. and use $ V_r^{**}/V_r^{***} \cong  (V_{r-p-1}^{*}/V_{r-p-1}^{**})\otimes \rD$.

    The sequences in (i), (ii), (iii) split for $a = p-1, p + 1, p + 3$ respectively because $V_{p-1}$ is an injective module over $\F_p[\Gamma]$.
  \end{proof}

  \begin{lemma}[Extension of {\cite[Lemma 2.3]{BG}}]
    \label{lem:VrAstCriteria}
    Let $F(X,Y) = \sum_{0 \leq j \leq r} c_j X^{r-j}Y^j$ in $V_r$.
    If the indices of all nonzero coefficients are congruent mod $(p-1)$, that is, $c_j, c_k \not= 0$ implies $j \equiv k \mod (p-1)$, then
    \begin{enumerate}
      \item
        $F \in V_r^*$ if and only if $c_0 = 0 = c_r$ and $\sum c_j = 0$,
        \label{en:VrAstCriterion}
      \item
        \label{en:VrAstAstCriterion}
        $F \in V_r^{**}$ if and only if
        $
          c_0 = c_1 = 0 = c_{r-1} = c_r \text{ and } \sum c_j = \sum j c_j = 0,
        $
      \item
        \label{en:VrAstAstAstCriterion}
        For $p > 2$, $F \in V_r^{***}$ if and only if
        \[
          c_0 = c_1 = c_2 = 0 = c_{r-2} = c_{r-1} = c_r \text{ and } \sum c_j = \sum j c_j = \sum j(j-1) c_j =0.
        \]
    \end{enumerate}
  \end{lemma}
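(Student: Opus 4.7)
The plan is to identify $V_r^{*}$, $V_r^{**}$, and $V_r^{***}$ with the $F \in V_r$ divisible by $\theta$, $\theta^2$, and $\theta^3$, respectively (each being by definition the image of multiplication by the corresponding power of $\theta$). The uniform strategy is to rewrite divisibility by $\theta^k$ as order-$k$ vanishing on $\P^1(\F_p)$ and then translate each vanishing condition into a linear relation on the coefficients $c_j$.

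First, I would factor $\theta = XY \prod_{\zeta \in \F_p^*}(X - \zeta Y)$ as the product of the $p+1$ pairwise distinct linear forms indexed by the points of $\P^1(\F_p)$. Since these linear forms are coprime, $\theta^k \mid F$ in $\F_p[X,Y]$ if and only if $F$ vanishes to order $\geq k$ at each point of $\P^1(\F_p)$. The order-$k$ vanishing at $[1:0]$ and $[0:1]$ is immediate and reads $c_0 = \cdots = c_{k-1} = 0$ and $c_{r-k+1} = \cdots = c_r = 0$, giving exactly the edge-coefficient conditions prescribed in (i), (ii), (iii).

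For each remaining point $[\zeta:1]$ with $\zeta \in \F_p^*$, order-$k$ vanishing amounts to $\partial_X^m F(\zeta, 1) = 0$ for $m = 0, \ldots, k-1$, i.e.,
\[
  \sum_j (r-j)(r-j-1) \cdots (r-j-m+1)\, c_j\, \zeta^{r-j-m} = 0.
\]
Here the congruence hypothesis is essential: since all nonzero $c_j$ satisfy $j \equiv a \mod (p-1)$, the factor $\zeta^{r-j-m} = \zeta^{r-a-m}$ is independent of $j$, so it factors out, and the $p-1$ conditions (one per $\zeta \in \F_p^*$) collapse into a single identity $\sum_j (r-j)(r-j-1) \cdots (r-j-m+1)\, c_j = 0$ for each $m < k$.

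Finally, I would triangulate: the identities $\sum_j (r-j)(r-j-1) \cdots (r-j-m+1) c_j = 0$ for $m = 0, 1, 2$ are equivalent, by successive subtraction of multiples of the earlier ones (the transition matrix between $1, (r-j), (r-j)(r-j-1)$ and $1, j, j(j-1)$ is upper triangular with $\pm 1$ on the diagonal), to $\sum c_j = \sum j c_j = \sum j(j-1) c_j = 0$, matching the stated conditions. The argument is essentially mechanical; the one step requiring attention is this final change of basis, but it presents no genuine obstacle.
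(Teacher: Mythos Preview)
Your argument is correct. The factorization $\theta = XY\prod_{\zeta\in\F_p^*}(X-\zeta Y)$ into distinct linear forms, together with the UFD property, cleanly reduces $\theta^k\mid F$ to order-$k$ vanishing at each point of $\P^1(\F_p)$; since $k\le 3<p$, the Taylor/derivative criterion at the finite points is valid (the factorials $m!$ for $m<k$ are units), and the congruence hypothesis on the indices makes the power $\zeta^{r-j-m}$ constant across the surviving terms, collapsing the $p-1$ conditions to one. The final triangular change of basis between the falling factorials in $r-j$ and in $j$ is exactly as you say.

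The paper takes a different, more combinatorial route: it works directly with the congruence $X^pY\equiv XY^p\bmod\theta$, which under the hypothesis lets one slide any interior monomial $X^{r-j}Y^j$ to a fixed representative $X^{r-a}Y^a$, so that $F\equiv c_0X^r+(\sum_j c_j)X^{r-a}Y^a+c_rY^r\bmod\theta$; the higher cases are then handled ``similarly'' by iterating. Your geometric approach via $\P^1(\F_p)$ is more systematic and scales transparently to $\theta^2,\theta^3$ (and beyond, up to $\theta^{p}$), whereas the paper's reduction-by-relations argument is shorter to state for $k=1$ but leaves the inductive step for $k=2,3$ to the reader. Either way the content is the same linear-algebra identity on the coefficients; your version just makes the mechanism explicit.
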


  \subsection{Some combinatorial Lemmas}

  The following lemma, known as Lucas' Theorem, is a key combinatorial lemma used throughout the paper.

  \begin{lem}[Lucas' Theorem]
    Let $r$ and $n$ be natural numbers and $r = r_0 + r_1 p + r_2 p^2 + \dotsb$ and $n = n_0 + n_1 p + n_2 p^2 + \dotsb$ be their $p$-adic expansions.
    Then
    \[
      \binom{r}{n}
      \equiv
      \binom{r_0}{n_0} \binom{r_1}{n_1} \binom{r_2}{n_2} \dotsm
      \mod p.
    \]
  \end{lem}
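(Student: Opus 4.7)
The plan is to prove Lucas's Theorem via the standard generating function argument in $\F_p[x]$, exploiting the Frobenius-type identity $(1+x)^p \equiv 1 + x^p \bmod p$ that holds because all intermediate binomial coefficients $\binom{p}{k}$ with $0 < k < p$ are divisible by $p$.

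First I would write $r = \sum_{i \geq 0} r_i p^i$ with $0 \leq r_i < p$ and factor
\[
  (1+x)^r = \prod_{i \geq 0} (1+x)^{r_i p^i}.
\]
Iterating the Frobenius identity yields $(1+x)^{p^i} \equiv 1 + x^{p^i} \bmod p$, and raising both sides to the $r_i$-th power gives $(1+x)^{r_i p^i} \equiv (1 + x^{p^i})^{r_i} \bmod p$. Taking the product over $i$ produces the key identity
\[
  (1+x)^r \equiv \prod_{i \geq 0} (1 + x^{p^i})^{r_i} \pmod{p}.
\]

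The second step is to compare the coefficient of $x^n$ on both sides. On the left it is $\binom{r}{n}$. On the right, expanding each factor by the ordinary binomial theorem,
\[
  \prod_{i \geq 0} (1 + x^{p^i})^{r_i} = \prod_{i \geq 0} \sum_{k_i = 0}^{r_i} \binom{r_i}{k_i} x^{k_i p^i},
\]
a monomial $x^n$ arises precisely from tuples $(k_i)$ with $0 \leq k_i \leq r_i < p$ and $\sum_i k_i p^i = n$. By the uniqueness of the base-$p$ expansion of $n$, the only such tuple is $k_i = n_i$ for all $i$, so the coefficient of $x^n$ on the right equals $\prod_i \binom{r_i}{n_i}$, which gives the congruence.

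There is no real obstacle in this argument; the only subtle point is ensuring that $k_i < p$ so that uniqueness of the $p$-adic expansion applies, which is automatic from $k_i \leq r_i < p$. Implicitly one uses the convention $\binom{r_i}{n_i} = 0$ whenever $n_i > r_i$, which is consistent with the coefficient on the right vanishing in that case since no admissible tuple $(k_i)$ exists.
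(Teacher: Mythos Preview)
Your proof is correct and is the standard generating-function argument for Lucas's Theorem. The paper itself does not prove this lemma at all: it simply states it as a well-known classical result (``The following Lemma, known as Lucas's Theorem, is a key combinatorial lemma used throughout the paper'') and moves on. So there is nothing to compare against; your argument would serve perfectly well as a self-contained proof if one were wanted.
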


  \begin{lem}[Extension of {\cite[Lemmas 2.5 and 2.6]{BG}}]
    \label{lem:binomialsumaeq2}
    For $i = 0,1,2$, let $a$ in $\{ 1 + i, \ldots, p-1 + i \}$ be such that $r \equiv a \mod (p-1)$.
    Then
    \[
      \sum_{\substack{j \equiv a-i \mod (p-1)\\ 0 < j < r-i}} \binom{r}{j} \equiv
      \begin{cases}
        0 \mod p,                           & \text{\quad if $i = 0$ }\\
        a-r \mod p,                         & \text{\quad if $i = 1$ }\\
        \dfrac{(a-r)(a + r-1)}{2}   \mod p, & \text{\quad if $i = 2$.}
      \end{cases}
    \]
  \end{lem}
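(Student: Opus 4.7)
The plan is to first evaluate the \emph{unrestricted} sum
\[
  \sigma := \sum_{\substack{0 \le j \le r\\ j \equiv a-i \mod p-1}} \binom{r}{j}
\]
modulo $p$ via a roots-of-unity filter over $\F_p^*$, and then to subtract the contributions of the excluded boundary indices $j \in \{0\} \cup \{r-s : 0 \le s \le i\}$.

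For the filter, since $\F_p^*$ is the set of $(p-1)$-th roots of unity in $\Fb_p$ and $\sum_{\zeta \in \F_p^*} \zeta^m \equiv -[(p-1) \mid m] \mod p$ by character orthogonality, we get
\[
  \sigma \equiv -\sum_{\zeta \in \F_p^*} \zeta^{-(a-i)}(1+\zeta)^r \mod p.
\]
For $\zeta = -1$ both $(1+\zeta)^r$ and $(1+\zeta)^a$ vanish, and for $\zeta \ne -1$ Fermat's little theorem applied to $1 + \zeta \in \F_p^*$ gives $(1+\zeta)^r \equiv (1+\zeta)^a$. Expanding $(1+\zeta)^a$ binomially, swapping sums and applying orthogonality a second time then yields the clean reduction
\[
  \sigma \equiv \sum_{\substack{0 \le k \le a\\ k \equiv a-i \mod p-1}} \binom{a}{k} \mod p.
\]
For $a \in \{1+i, \ldots, p-1+i\}$ the only $k$ in range with $k \equiv a-i \mod p-1$ is $k = a-i$, with the single exception that $k = 0$ also contributes at the extremal value $a = p-1+i$; hence, using an Iverson bracket, $\sigma \equiv \binom{a}{i} + [a = p-1+i] \mod p$.

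For the boundary subtraction, using $i \le 2 < p-1$, among $j \in \{0\} \cup \{r-s : 0 \le s \le i\}$ the index $j = r-i$ always satisfies $j \equiv a - i \mod p-1$ (contributing $\binom{r}{i}$), while $j = 0$ does so precisely when $a = p-1+i$ (contributing $1$). The two $[a = p-1+i]$-terms therefore cancel, leaving
\[
  \sum_{\substack{0 < j < r-i\\ j \equiv a-i \mod p-1}} \binom{r}{j} \equiv \binom{a}{i} - \binom{r}{i} \mod p,
\]
which evaluates to $0$, $a-r$, and $(a-r)(a+r-1)/2$ for $i = 0, 1, 2$ respectively. The main technical point is verifying the clean cancellation at the boundary $a = p-1+i$; in this formulation it is automatic (using $\binom{p}{1} \equiv 0$ and $\binom{p+1}{2} \equiv 0 \mod p$), avoiding the case split used in \cite[Lemmas 2.5 and 2.6]{BG}.
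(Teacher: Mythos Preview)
Your proof is correct and takes a genuinely different route from the paper's. The paper cites \cite[Lemmas 2.5 and 2.6]{BG} for $i=0,1$ and then handles $i=2$ by induction on $r$ via the Pascal-type identity $\binom{x+2}{n}=\binom{x}{n-2}+2\binom{x}{n-1}+\binom{x}{n}$, reducing to the already-known cases. You instead evaluate the full sum directly through the character orthogonality relation $\sum_{\zeta\in\F_p^*}\zeta^m\equiv-[(p-1)\mid m]$, reduce $(1+\zeta)^r$ to $(1+\zeta)^a$ by Fermat, and then do a careful boundary subtraction. This yields the closed form $\binom{a}{i}-\binom{r}{i}$ uniformly in $i$, which the paper in fact states immediately after the lemma as an unproven remark (``Since we do not go beyond $i=2$, we will not prove the above identity''); your argument supplies that proof essentially for free. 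The trade-off is that your enumeration of the $k$ with $k\equiv a-i\bmod(p-1)$ in $\{0,\dots,a\}$, and of the boundary indices $r-s$, uses $i<p-1$; so as written your proof needs $p\ge 5$, whereas the paper's inductive argument does not explicitly invoke this. In the paper's context ($p\ge 5$ throughout the main results) this is harmless. One minor point: the parenthetical ``using $\binom{p}{1}\equiv 0$ and $\binom{p+1}{2}\equiv 0$'' is not actually needed for the cancellation you describe---the two $[a=p-1+i]$ terms cancel on the nose before any evaluation of $\binom{a}{i}$---so you could drop that remark.
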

  \begin{proof}
    For $i = 0,1$, see \cite[Lemmas 2.5 and 2.6]{BG}.
    For $i = 2$, we apply induction on $r$.
    We have
    \[
      \binom{x + 2}{n}
      =
      \binom{x}{n-2}+
      2 \binom{x}{n-1}+
      \binom{x}{n}.
    \]
    Applying this identity for $i = 2$, and using the known cases ($i = 0,1$) and the induction hypothesis,
    \begin{align}
      \sum_{\substack{j \equiv a-2 \mod (p-1)\\ 0 < j < r-2}} \binom{r}{j}
      = & \sum_{\substack{j \equiv a-2 \mod (p-1)\\ 0 < j < r-2}} \binom{r-2}{j-2} + 2 \sum_{\substack{j \equiv a-2 \mod (p-1)\\ 0 < j < r-2}} \binom{r-2}{j-1} + \sum_{\substack{j \equiv a-2 \mod (p-1)\\ 0 < j < r-2}} \binom{r-2}{j}\\
      \equiv &  \frac{(a-r)(a + r-5)}{2} + 2(a-r) + 0 \mod p\\
      \equiv & \frac{(a-r)(a+r-5 +4)}{2} =\frac{(a-r)(a + r-1)}{2} \mod p.
      \qedhere
    \end{align}
  \end{proof}

\begin{rem*}
  More generally
  \[
    \sum_{\substack{j \equiv a-i \mod (p-1)\\ 0 < j < r-i}} \binom{r}{j} \equiv \binom{a}{i} - \binom{r}{i} \mod p.
  \]
  Since we do not go beyond $i=2$, we will not prove the above identity.
\end{rem*}

  \begin{lem}
    [Analog of {\cite[Lemma 2.5]{BG}} and {\cite[Proposition 2.8]{BGR}}]
    \label{lem:analog25bg}
    Let $p > 2$.
    For $i = 0, 1, \ldots, p-1$, if $r \equiv a \mod (p-1)$ and $a$ in $\{ i+1, \ldots ,p-1+i \}$, then we have
    \[
      \sum_{\substack{j \equiv a \mod (p-1)\\ i < j < r}} \binom{j}{i} \binom{r}{j}
      \equiv
      p \binom{r}{i} \frac{a-r}{a-i} \mod p^2.
    \]
  \end{lem}
  \begin{proof}
    By \cite[The latter statement of Lemma 2.5]{BG}
    \[
      \frac{1}{p} \sum_{0 < j \equiv a < r} \binom{r}{j}
      \equiv
      \frac{a-r}{a} \mod p.
    \]
    First replacing $r$ with $r-i$ and $a$ with $a-i$ yields
    \[
      \frac{1}{p} \sum_{j-i \equiv a-i} \binom{r-i}{j-i}
      \equiv
      \frac{a-r}{a-i} \mod p.
    \]
    Thus
    \[
      \frac{1}{p} \sum_{j \equiv a} \binom{j}{i} \binom{r}{j}
      =
      \frac{1}{p} \binom{r}{i} \sum_{j-i \equiv a-i} \binom{r-i}{j-i}
      \equiv
      \binom{r}{i} \frac{a-r}{a-i}  \mod p.
    \]

  \end{proof}

  \begin{cor}
    \label{cor:analog25bg}
    If $r \equiv p \mod (p-1)$ and $p^3 \mid p-r$, then for $i = 1, \ldots, p-1$, we have
    \[
      \sum_{\substack{j \equiv p \mod (p-1)\\ 1 < j < r}} \binom{j}{i} \binom{r}{j}
      \equiv
      0
      \mod p^3.
    \]
  \end{cor}
  \begin{proof}
    We first prove it for $i=1$ by observing
    \[
      \sum_{\substack{j \equiv p \mod (p-1)\\ 1 < j < r}} j \binom{r}{j} =     r  \sum_{\substack{j \equiv p \mod (p-1)\\ 1 < j < r}} \binom{r-1}{j-1}
    \]
    As $r \equiv p \mod p^3$,
    \[
      \sum_{\substack{j \equiv p \mod (p-1)\\ 1 < j < r}} j \binom{r}{j} \equiv     p  \sum_{\substack{j \equiv p \mod (p-1)\\ 1 < j < r}} \binom{r-1}{j-1} \mod p^3
    \]

    By \cite[Lemma 2.5]{BG}, with $r$ replaced by $r-1$,
    \[
      \sum_{\substack{j \equiv p \mod (p-1)\\ 1 < j < r}} \binom{r-1}{j-1}
    \equiv
    p  \frac{p-r}{p-1} \mod p^2.
    \]
    Multiplying by $p$ gives
    \[
      p \sum_{\substack{j \equiv p \mod (p-1)\\ 1 < j < r}} \binom{r-1}{j-1}
    \equiv
    p^2  \frac{p-r}{p-1} \mod p^3.
    \]

    As $r \equiv p \mod p^3$, the last expression above is zero $\mod p^3$.
    Hence
    \[
      \sum_{\substack{j \equiv p \mod (p-1)\\ 1 < j < r}} j \binom{r}{j} \equiv 0 \mod p^3.
    \]

    The proof for the general expression is similar as we use the condition $r \equiv p \mod p^3$ to show $\sum \binom{j}{i} \binom{r}{j} \equiv  \binom{p}{i} \sum \binom{r-i}{p-i} \mod p^3$ and then apply  \cref{lem:analog25bg}.
  \end{proof}

  \begin{lemma}
    \label{lem:analog7ag}
    Let $p > 2$.
    Let $r \equiv a \mod (p-1)$ with $a$ in $\{3, \ldots, p + 1 \}$.
    There are integers $\{\alpha_j : a \leq j < r \text{ and } j \equiv a \mod (p-1)\}$ such that
    \begin{enumerate}
       \item we have $\alpha_j \equiv \binom{r}{j} \mod p$, and
       \item for $n = 0, 1, 2$, we have $\sum_{j \geq n} \binom{j}{n} \alpha_j \equiv 0 \mod p^{4-n}$ and, for $n = 3$, we have
         \begin{itemize}
           \item
             if $a = 4, \ldots, p + 1$, then
             $\sum_{j \geq 3} \binom{j}{3} \alpha_j \equiv 0 \mod p$, and
           \item
             if $a = 3$, then
             $\sum_{j \geq 3} \binom{j}{3} \alpha_j \equiv \binom{r}{3} \mod p$.
         \end{itemize}
    \end{enumerate}
  \end{lemma}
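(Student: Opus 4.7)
Following the approach of \cite[Lemma 2.7]{BG}, the plan is to take $\alpha_j := \binom{r}{j} + \delta_j$, where the correction $\delta_j \in p\Z$ is supported on a fixed, small number of indices and is chosen so that condition (1) is automatic and condition (2) becomes a solvable linear system in the $\delta_j$'s.

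Write $r = a + m(p-1)$; by the hypothesis $r \geq 3p + 2$ we have $m \geq 3$, and the index set is $J = \{a, a + (p-1), \ldots, a + (m-1)(p-1)\}$, of size $m \geq 3$. Define the \emph{unperturbed moments}
\[
  M_n := \sum_{j \in J} \binom{j}{n}\binom{r}{j}, \qquad n = 0, 1, 2, 3.
\]
The absorption identity $\binom{j}{n}\binom{r}{j} = \binom{r}{n}\binom{r-n}{j-n}$ converts $M_n$ (up to an explicit, bounded number of boundary terms) into $\binom{r}{n}$ times a sum of the form $\sum_{k \equiv a - n \mod{p-1}} \binom{r-n}{k}$, which is controlled by \cref{lem:binomialsumaeq2} and \cref{lem:analog27bg}. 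This step produces $M_n$ modulo $p^{4-n}$; in particular, for $n = 3$, \cref{lem:binomialsumaeq2} with $i = 0$ applied to $(r - n, a - n)$ gives $M_3 \equiv 0 \mod p$ when $a \geq 4$, whereas when $a = 3$ the inner sum $\sum_{k \equiv 0 \mod{p-1}} \binom{r-3}{k}$ equals $1 \mod p$ by a direct roots-of-unity computation (using $(1+\zeta)^{r-3} \equiv 1 \mod p$ for $(p-1)$-th roots of unity $\zeta \neq -1$, since $(p-1) \mid r - 3$), giving $M_3 \equiv \binom{r}{3} \mod p$.

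Next set $\alpha_j := \binom{r}{j} + \delta_j$ with $\delta_j \in p\Z$ supported on $j_\ell := a + (\ell - 1)(p - 1)$ for $\ell = 1, 2, 3$. Condition (1) is preserved. The moment conditions in (2) for $n = 0, 1, 2$ become
\[
  \sum_{\ell = 1}^{3} \binom{j_\ell}{n}\,\delta_{j_\ell} \equiv -M_n \mod{p^{4-n}}, \qquad n = 0, 1, 2.
\]
Since $j_\ell \equiv a - (\ell - 1) \mod p$ are three distinct residues mod $p$ (using $p \geq 3$), the coefficient matrix $(\binom{j_\ell}{n})_{n, \ell}$ has nonzero determinant mod $p$, and the system is solvable with $\delta_{j_\ell} \in p\Z$ inductively on $n$. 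The $n = 3$ condition mod $p$ is unaffected by the corrections (each contributes $p \cdot \binom{j_\ell}{3}\epsilon_{j_\ell} \equiv 0 \mod p$) and therefore reduces to the mod-$p$ value of $M_3$ computed above, giving $0$ for $a \geq 4$ and $\binom{r}{3}$ for $a = 3$, as required.

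The main obstacle is sharpening the binomial sum estimates: specifically, extending \cref{lem:analog27bg} from $a \equiv 1$ to all $a \in \{3, \ldots, p+1\}$ and gaining one additional power of $p$, so as to pin down $M_0$ mod $p^4$ and $M_1$ mod $p^3$. We expect this to follow by one further inductive step as in the proof of \cref{lem:analog27bg}, with the roots-of-unity expansion of $(1 + X)^{r-n}$ carried to one more order.
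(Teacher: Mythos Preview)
Your approach is the same as the paper's: perturb $\binom{r}{j}$ by multiples of $p$ at three indices and solve the resulting $3\times 3$ linear system. But your final paragraph misidentifies the difficulty. You do \emph{not} need to pin down $M_0 \bmod p^4$ or $M_1 \bmod p^3$: the only input required is $M_n \equiv 0 \pmod p$ for $n = 0,1,2$, which already follows from the absorption identity together with \cref{lem:binomialsumaeq2} (case $i=0$). Once $p \mid M_n$, the quantity $-M_n/p$ is some integer, and since your coefficient matrix $\bigl(\binom{j_\ell}{n}\bigr)_{n,\ell}$ is invertible mod $p$ (your Vandermonde argument), it is invertible mod every power of $p$, so the system for the $\epsilon_{j_\ell}$ has an integer solution \emph{regardless of what $M_n/p$ actually is}. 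The perturbations absorb the unknown higher-order part of $M_n$; no sharpening of the binomial-sum lemmas is needed.

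The paper executes exactly this: it sets $s_n = -p^{-1}M_n \in \Z$ and solves for $\delta_k,\delta_l,\delta_m$. Its index choice differs from yours---it takes $k = ap \equiv 0 \pmod p$ to make the matrix block upper-triangular mod $p$, then chooses $l,m$ with $l,m,l-m \not\equiv 0$---but your consecutive choice $j_\ell \equiv a,\,a-1,\,a-2 \pmod p$ works equally well. For $n=3$ the paper avoids your roots-of-unity computation: absorption plus \cref{lem:binomialsumaeq2} gives $\sum_j\binom{j}{3}\binom{r}{j}=\binom{r}{3}\sum_{j'\equiv a-3}\binom{r-3}{j'}$, and for $a=3$ the extra boundary term $j'=0$ supplies exactly the $\binom{r}{3}$. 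Finally, the paper disposes of small $r$ (where $ap \notin J$) separately by setting all $\alpha_j=0$; you instead invoke $r \geq 3p+2$, which is not among the lemma's hypotheses.
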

  \begin{proof}
    If $r \leq ap$, then $\binom{r}{j} \equiv 0 \mod p$ for all $0 < j <r$ such that $j \equiv a \mod (p-1)$.
    Therefore, we can put $\alpha_j = 0$, and the proposition trivially holds true.

    Let $r > ap$.
    By \cref{lem:binomialsumaeq2} and noting that $j(j-1)(j-2)\binom{r}{j} = r(r-1)(r-2) \binom{r-3}{j-3}$ we see that
    \[
      \sum_{\substack{j \equiv a \mod (p-1) \\ 3 \leq j < r}} \binom{j}{3} \binom{r}{j}
      \equiv
      \begin{cases}
        \binom{r}{3} \mod p,	& \text{ \quad for } a = 3 \\
        0 \mod p,	& \text{ \quad otherwise.}
      \end{cases}
    \]
    This solves the case $n=3$.

    By \cref{lem:binomialsumaeq2} again, $\sum_{j \geq 2} \binom{j}{2} \binom{r}{j}$, $\sum_{j \geq 1} j \binom{r}{j}$ and $\sum_{j} \binom{r}{j} \equiv 0 \mod p$ for $j \equiv a \mod (p-1)$.
    Put
    \[
      s_0 = - p^{-1} \sum_{j } \binom{r}{j},
      \quad
      s_1 = - p^{-1} \sum_{j \geq 1} j \binom{r}{j}
      \quad \text{ and } \quad
      s_2 = - p^{-1} \sum_{j \geq 2} \binom{j}{2} \binom{r}{j}.
    \]
    and $\alpha_j = \binom{r}{j} +  p \delta_j$.

    Thus we have to solve for $3$ equations $(n=0,1,2)$ in $\delta_j's$. So we can take all but three $\delta_j$'s to be 0. Thus we need to choose $3$ $ j$'s wisely so that such a solution exists.

    There are $\delta_j$ such that
    \[
      \sum \alpha_j \equiv 0 \mod p^4, \quad
      \sum j \alpha_j \equiv 0 \mod p^3,
      \quad \text{ and } \quad
      \sum \binom{j}{2} \alpha_j \equiv 0 \mod p^2
    \]
    if and only if the following system of linear equations $(*)$ in the three unknowns $\delta_k$, $\delta_l$ and $\delta_m$ is solvable:
    \begin{align}
      1 & & 1 & & 1 & \equiv  s_0 \mod p^3,\\
      k & & l & & m & \equiv s_1 \mod p^2, \tag{$*$}\\
      \binom{k}{2} & & \binom{l}{2} & & \binom{m}{2} & \equiv s_2 \mod p.
    \end{align}
    It suffices to solve all equations modulo $p^3$.
    For this, we show that there are $k,l$ and $m$ in $\{ a, a + (p-1), \ldots, r - (p-1)\}$ such that the determinant of $(*)$ is invertible in $\Z / p^3 \Z$, or equivalently, that it is nonzero $\mod p$.

    Since $r > ap$, we can put $k = ap$.
    Then $(*)$ is modulo $p$ given by an upper triangular matrix whose upper left coefficient is $1$, and therefore its determinant equals that of its lower right $2 \x 2$-matrix
    \[
      \begin{pmatrix}
        l & m\\
        \binom{l}{2} & \binom{m}{2}
      \end{pmatrix}.
    \]
    Since this determinant is $(k - l)(l - m)(m - k)/2$, the system of linear equations $(*)$ can be made solvable by choosing $k$, $l$ and $m$ in different congruence classes.
  \end{proof}

  For \cref{prop:elimrEqAp}, we need a strengthened version of \cref{lem:analog7ag} for $a \geq 5$ when $r \equiv a \mod p$:

  \begin{lemma*}[\ref{lem:analog7ag}']
    Let $p > 2$.
    Let $r \equiv a \mod p(p-1)$ with $a$ in $\{5, \ldots, p + 1 \}$.
    There are integers $\{\alpha_j : a \leq j < r \text{ and } j \equiv a \mod (p-1)\}$ such that
    \begin{enumerate}
       \item $\alpha_j \equiv \binom{r}{j} \mod p^2$, and
       \item $\sum_{j \geq n} \binom{j}{n} \alpha_j \equiv 0 \mod p^{5-n}$ for $n = 0, 1, 2, 3$.
    \end{enumerate}
  \end{lemma*}
  \begin{proof}
    If $r \leq ap$, then necessarily $r = a$ and hence $\{j : a \leq j < r \text{ and } j \equiv a \mod (p-1)\} = \emptyset$ and the proposition trivially holds true.

    Let $r > ap$.
    By \cref{lem:analog25bg} for $i = 0, 1, 2, 3$ and noting that $r - a \equiv 0 \mod p$, we have $\sum_{j \geq 3} \binom{j}{3} \binom{r}{j}$, $\sum_{j \geq 2} \binom{j}{2} \binom{r}{j}$, $\sum_{j \geq 1} j \binom{r}{j}$ and $\sum_{j} \binom{r}{j} \equiv 0 \mod p^2$ (where the sums run over all $0 < j < r$ with $j \equiv a \mod (p-1)$).

    Therefore, we are in a situation analogous to that of the proof of \cref{lem:analog7ag}, and we can proceed analogously,
    putting
    \[
      s_0 = - p^{-2} \sum_{j \geq 0} \binom{r}{j},
      \quad
      s_1 = - p^{-2} \sum_{j \geq 1} j \binom{r}{j},
      \quad \text{ and } \quad
      s_2 = - p^{-2} \sum_{j \geq 2} \binom{j}{2} \binom{r}{j}
    \]
    and $\alpha_j = \binom{r}{j} +  p^2 \delta_j$
  \end{proof}

  \begin{lemma}
    \label{lem:analog7bg}
    Let $p \geq 5$.
    Let $a$ in $ \{ 4, \ldots, p + 1 \}$ such that $r \equiv a \mod (p-1)$.
    If $r \equiv a \mod p$, then there are integers $\{\beta_j : a-1 \leq j < r-1 \text{ and } j \equiv a-1 \mod (p-1)\}$ such that
      \begin{enumerate}
        \item we have $\beta_j \equiv \binom{r}{j} \mod p$, and
        \item for $n = 0, 1, 2, 3$, we have $\sum_{j \geq n} \binom{j}{n} \beta_j \equiv 0 \mod p^{4-n}$.
      \end{enumerate}
  \end{lemma}
  \begin{proof}
    If $r \leq (a-1)p$ and $r \equiv a \mod (p-1)$, then $\Sigma(r) = a$.
    Therefore, because $r \equiv a \mod p$, we have $r = a$.
    Hence, $\{j : a-1 \leq j < r-1 \text{ and } j \equiv a-1 \mod (p-1)\} = \emptyset$ and the proposition trivially holds true.

    Let $r > (a-1)p$.
    By \cref{lem:binomialsumaeq2} for $i = 1$ and noting that $r - a \equiv 0 \mod p$, we have $\sum_{j \geq 3} \binom{j}{3} \binom{r}{j}$, $\sum_{j \geq 2} \binom{j}{2} \binom{r}{j}$, $\sum_{j \geq 1} j \binom{r}{j}$ and $\sum_{j} \binom{r}{j} \equiv 0 \mod p$ (where the sums run over all $j < r-2$ with $j \equiv a-1 \mod (p-1)$) for $a \geq 5$.

    We now show the argument for $a=4$ and $j \equiv a - 1 = 3 \mod (p-1)$ and $n=3$.

    As $\beta_j \equiv \binom{r}{j} \mod p$ we see that:

    $\sum_{j \geq 3} \binom{j}{3} \beta_j \equiv \sum_{j \geq 3} \binom{j}{3} \binom{r}{j} \equiv \binom{r}{3} \sum_{j \geq 3} \binom{r-3}{j-3} \equiv \binom{r}{3} (1 + \sum_{j > 3} \binom{r-3}{j-3})\mod p$.

    If we let $r' = r-3 \equiv p = a' \mod (p-1)$ and $j' = j-3 \equiv p \mod (p-1)$ then by Lemma 1.6 and noting that $r \equiv 4 \mod p$, the sum $ \sum_{j > 3} \binom{r-3}{j-3} = \sum_{j' > 0} \binom{r'}{j'}\equiv  a' - r' = p - (r-3) \equiv (p-1) \equiv -1 \mod p$. Hence $\sum_{j \geq 3} \binom{j}{3} \binom{r}{j}$ vanishes.

    Therefore, we are in a situation analogous to that of the proof of \cref{lem:analog7ag}, and we can proceed analogously (where we put $k = (a-1)p$ instead of $k = ap$):
    Put
    \[
      s_0 = - p^{-1} \sum_{j \geq 0} \binom{r}{j},
      \quad
      s_1 = - p^{-1} \sum_{j \geq 1} j \binom{r}{j},
      \quad \text{ and } \quad
      s_2 = - p^{-1} \sum_{j \geq 2} \binom{j}{2} \binom{r}{j}
    \]
    and $\beta_j = \binom{r}{j} +  p \delta_j$.
    There are $\delta_j$ such that
    \[
      \sum \beta_j \equiv 0 \mod p^4, \quad
      \sum j \beta_j \equiv 0 \mod p^3,
      \quad \text{ and } \quad
      \sum \binom{j}{2} \beta_j \equiv 0 \mod p^2
    \]
    if the following system of linear equations $(*)$ in the three unknowns $\delta_k$, $\delta_l$ and $\delta_m$ is solvable:
    \begin{align}
      1 & & 1 & & 1 & \equiv  s_0 \mod p^3,\\
      k & & l & & m & \equiv s_1 \mod p^2, \tag{$*$}\\
      \binom{k}{2} & & \binom{l}{2} & & \binom{m}{2} & \equiv s_2 \mod p^1.
    \end{align}
    It suffices to solve all equations modulo $p^3$.
    For this, we show that there are $k,l$ and $m$ in $\{ a-1, a + (p-2), \ldots, r-p \}$ such that the determinant of $(*)$ is invertible in $\Z / p^4 \Z$, or equivalently, that it is nonzero $\mod p$.

    Because $r > (a-1)p$, we may put $k = (a-1)p$.
    Then $(*)$ is modulo $p$ given by an upper triangular matrix whose upper left coefficient is $1$, and therefore its determinant equals that of its lower right $2 \x 2$-matrix
    \[
      \begin{pmatrix}
        l & m\\
        \binom{l}{2} & \binom{m}{2}
      \end{pmatrix}.
    \]
    Since this determinant is $(k - l)(l - m)(m - k)/2$, the system of linear equations $(*)$ can be made solvable by choosing $k$, $l$ and $m$ in different congruence classes.
  \end{proof}

  For \cref{prop:elimrEqA}, we need a strengthened version of \cref{lem:analog7bg} when $r \equiv a \mod p^2$:

  \begin{lemma*}[\ref{lem:analog7bg}']
    Let $p \geq 5$.
    Let $a$ in $ \{ 5, \ldots, p + 1 \}$ such that $r \equiv a \mod (p-1)$.
    If $r \equiv a \mod p^2$, then there are integers $\{\beta_j : a-1 \leq j < r-1 \text{ and } j \equiv a-1 \mod (p-1)\}$ such that
    \begin{enumerate}
      \item we have $\beta_j \equiv \binom{r}{j} \mod p^2$, and
      \item for $n = 0, 1, 2, 3, 4$, we have $\sum_{j \geq n} \binom{j}{n} \beta_j \equiv 0 \mod p^{5-n}$.
    \end{enumerate}
  \end{lemma*}
  \begin{proof}
    If $r \leq (a-1)p$ and $r \equiv a \mod (p-1)$, then $\Sigma(r) = a$.
    Therefore, because $r \equiv a \mod p$, we have $r = a$.
    Hence, $\{j : a-1 \leq j < r-1 \text{ and } j \equiv a-1 \mod (p-1)\} = \emptyset$ and the proposition trivially holds true.

    Let $r > (a-1)p$.
    By \cite[last line of Lemma 3.3]{BhaLocConst} for $c = 0$ and $t = 2$, we have $\sum_{j \geq 4} \binom{j}{4} \binom{r}{j}$, $\sum_{j \geq 3} \binom{j}{3} \binom{r}{j}$, $\sum_{j \geq 2} \binom{j}{2} \binom{r}{j}$, $\sum_{j \geq 1} j \binom{r}{j}$ and $\sum_{j} \binom{r}{j} \equiv 0 \mod p^2$ (where the sums run over all $j < r-2$ with $j \equiv a-1 \mod (p-1)$).

    We now show the argument for $a=5$ and $j \equiv a - 1 = 4 \mod (p-1)$ and $n=4$.

    As $\beta_j \equiv \binom{r}{j} \mod p$ we see that:

    $\sum_{j \geq 4} \binom{j}{4} \beta_j \equiv \sum_{j \geq 4} \binom{j}{4} \binom{r}{j} \equiv \binom{r}{4} \sum_{j \geq 4} \binom{r-4}{j-4} \equiv \binom{r}{4} (1 + \sum_{j > 4} \binom{r-4}{j-4})\mod p$.

    If we let $r' = r-4 \equiv p = a' \mod (p-1)$ and $j' = j-4 \equiv p \mod (p-1)$ then by Lemma 1.6 and noting that $r \equiv 5 \mod p$, the sum $ \sum_{j > 4} \binom{r-4}{j-4} = \sum_{j' > 0} \binom{r'}{j'}\equiv  a' - r' = p - (r-4) \equiv (p-1) \equiv -1 \mod p$. Hence $\sum_{j \geq 4} \binom{j}{4} \binom{r}{j}$ vanishes.

    Therefore, we are in a situation similar to that of the proof of \cref{lem:analog7bg}, and we can proceed similarly,
    putting
    \[
      s_0 = - p^{-2} \sum_{j \geq 0} \binom{r}{j},
      \quad
      s_1 = - p^{-2} \sum_{j \geq 1} j \binom{r}{j},
      \quad \text{ and } \quad
      s_2 = - p^{-2} \sum_{j \geq 2} \binom{j}{2} \binom{r}{j}.
    \]
    and $\beta_j = \binom{r}{j} +  p^2 \delta_j$.
  \end{proof}

 \begin{lemma}
    \label{lem:analog7eg}
    Let $p \geq 5$.
    Let $a$ in $ \{ 6, \ldots, p \}$ such that $r \equiv a \mod (p-1)$.
    If $r \equiv a-1 \mod p$, then there are integers $\{\alpha'_j : a-2 \leq j < r-2 \text{ and } j \equiv a-2 \mod (p-1)\}$ such that
      \begin{enumerate}
        \item we have $\alpha'_j \equiv \binom{r}{j} - r \binom{r-1}{j} \mod p$, and
        \item for $n = 0, 1, 2, 3$, we have $\sum_{j \geq n} \binom{j}{n} \alpha'_j \equiv 0 \mod p^{4-n}$.
      \end{enumerate}
  \end{lemma}
  \begin{proof}
    We first use \cref{lem:binomialsumaeq2} for $a \geq 6$, yielding
    \begin{align}
        & \sum_{\substack{j \equiv a-2 \mod (p-1)\\ 0 < j < r-2}} \binom{j}{n} \binom{r}{j}\\
      = & \binom{r}{n}\sum_{\substack{j \equiv a-2 \mod (p-1)\\ 0 < j < r-2}} \binom{r-n}{j-n} \equiv \dfrac{(a-r)(a+r-1-2n)}{2}\binom{r}{n} \mod p.
    \end{align}
    As $r \equiv a-1 \mod p$, we have
    \[
      \sum_{\substack{j \equiv a-2 \mod (p-1)\\ 0 < j < r-2}} \binom{j}{n} \binom{r}{j}\equiv (a-1-n)\binom{a-1}{n} \equiv (a-1) \binom{a-2}{n} \mod p
    \]
    We also have
    \begin{align}
      & \sum_{\substack{j \equiv a-2 \mod (p-1)\\ 0 < j < r-2}} \binom{j}{n} r\binom{r-1}{j}\\
      = & r \binom{r-1}{n}\sum_{\substack{j \equiv a-2 \mod (p-1)\\ 0 < j < r-2}} \binom{r-1-n}{j-n} \equiv (a-r)r\binom{r-1}{n} \mod p
    \end{align}
    As $r \equiv a-1 \mod p$ we obtain
    \[
      \sum_{\substack{j \equiv a-2 \mod (p-1)\\ 0 < j < r-2}} \binom{j}{n} r\binom{r-1}{j} \equiv (a-1)\binom{a-2}{n}.
    \]
    For $n = 0, 1, 2, 3$, since $n < p$, we have
    \[
      \sum_{\substack{j \equiv a-2 \mod (p-1)\\ 0 < j < r-2}}  \bigg( \binom{r}{j} - r \binom{r-1}{j} \bigg) \equiv 0 \mod p.
    \]
    If $r \leq (a-2)p$ and $r \equiv a-1 \mod p$ and $r \equiv a \mod (p-1)$, then $r = p + a-1$ and $\{j : a-2 \leq j < r-2 \text{ and } j \equiv a-2 \mod (p-1)\} = \{ a-2 \}$, and, for $j = a-2$, we have
    \[
      \binom{r}{j} - r \binom{r-1}{j} = (a-1) - (p+a-1) \equiv 0 \mod p;
    \]
    therefore we may put $\alpha'_j = 0$.

    For the boundary case of $a=5$ and $n=3$, one can mimic the boundary cases from the previous lemmas. Using the fact that $\alpha'_j \equiv \binom{r}{j} - r \binom{r-1}{j} \mod p$, $r \equiv 4 \mod p$ and by \cref{lem:binomialsumaeq2} we see that:

    $ \sum_{j \geq 3} \binom{j}{3} \binom{r}{j} \equiv \binom{r}{3} \sum_{j \geq 3} \binom{r-3}{j-3} \equiv \binom{r}{3} (1 + \sum_{j > 3} \binom{r-3}{j-3}) \equiv 4 \mod p$.

    and

    $\sum_{j \geq 3} \binom{j}{3}r \binom{r-1}{j} \equiv r \binom{r-1}{3} \sum_{j \geq 3} \binom{r-4}{j-3} \equiv r \binom{r-1}{3}(1 + \sum_{j > 3} \binom{r-4}{j-3}) \equiv 4 \mod p$.

    Thus, we see that:

    $$\sum_{\substack{j \equiv 3 \mod (p-1)\\ 0 < j < r-2}} \binom{j}{3} \alpha'_j \equiv 0 \mod p$$.

    Let $r > (a-2)p$.
    Therefore, we are in a situation analogous to that of the proof of \cref{lem:analog7ag}, and we can proceed analogously:
    Put
    \begin{align}
      s_0 = - p^{-1} \sum_{j \geq 0} \binom{r}{j} - r \binom{r-1}{j},\\
      s_1 = - p^{-1} \sum_{j \geq 1} j (\binom{r}{j} - r \binom{r-1}{j}), \quad \text{ and }\\
      s_2 = - p^{-1} \sum_{j \geq 2} \binom{j}{2} (\binom{r}{j} - r \binom{r-1}{j}).
    \end{align}
    and $\alpha'_j = \binom{r}{j} - r \binom{r-1}{j} +  p \delta_j$.

    There are $\delta_j$ such that
    \[
      \sum \alpha'_j \equiv 0 \mod p^4, \quad
      \sum j \alpha'_j \equiv 0 \mod p^3
      \quad \text{ and } \quad
      \sum \binom{j}{2} \alpha'_j \equiv 0 \mod p^2
    \]
    if the following system of linear equations $(*)$ in the three unknowns $\delta_k$, $\delta_l$ and $\delta_m$ is solvable:
    \begin{align}
      1 & & 1 & & 1 & \equiv  s_0 \mod p^3,\\
      k & & l & & m & \equiv s_1 \mod p^2, \tag{$*$}\\
      \binom{k}{2} & & \binom{l}{2} & & \binom{m}{2} & \equiv s_2 \mod p.
    \end{align}
    It suffices to solve all equations modulo $p^3$.
    For this, we show that there are $k,l$ and $m$ in $\{ a-1, a + (p-2), \ldots, r-p \}$ such that the determinant of $(*)$ is invertible in $\Z / p^4 \Z$, or equivalently, that it is nonzero $\mod p$.

    Because $r > (a-2)p$, we may put $k = (a-2)p$.
    Then $(*)$ is modulo $p$ given by an upper triangular matrix whose upper left coefficient is $1$, and therefore its determinant equals that of its lower right $2 \x 2$-matrix
    \[
      \begin{pmatrix}
        l & m\\
        \binom{l}{2} & \binom{m}{2}
      \end{pmatrix}.
    \]
    Since this determinant is $(k - l)(l - m)(m - k)/2$, the system of linear equations $(*)$ can be made solvable by choosing $k$, $l$ and $m$ in different congruence classes.
  \end{proof}

  \begin{lemma}
    \label{lem:analog7cg}
    Let $p \geq 5$.
    Let $a$ in $ \{ 5, \ldots, p+1 \}$ such that $r \equiv a \mod (p-1)$.
    If $r \equiv a \mod p$, then there are integers $\{\gamma_j : a-2 \leq j < r-2 \text{ and } j \equiv a-2 \mod (p-1)\}$ such that
    \begin{enumerate}
      \item we have $ \gamma_j \equiv \binom{r}{j} \mod p$, and
      \item for $n = 0, 1, 2, 3$, we have $\sum_{j \geq n} \binom{j}{n} \gamma_j \equiv 0 \mod p^{4-n}$.
    \end{enumerate}
  \end{lemma}
  \begin{proof}
    If $r \leq (a-2)p$ and $r \equiv a \mod (p-1)$, then $\Sigma(r) = a$.
    Therefore, because $r \equiv a \mod p$, we have $r = a$.
    Therefore, $\{j : a-2 \leq j < r-2 \text{ and } j \equiv a-2 \mod (p-1)\} = \emptyset$ and the proposition trivially holds true.

    Let $r > (a-2)p$.
    Let us first show $\sum_{j \geq i} \binom{j}{i} \binom{r}{j} \equiv 0 \mod p$ in the edge case $i = 3$ and $a=5$.
    We have
    \[
      \sum_{j \geq 3} \binom{j}{3} \binom{r}{j} \equiv \binom{r}{3} \sum_{j \geq 3} \binom{r-3}{j-3}\mod p.
    \]
    We split up the latter sum as
    \[
      \sum_{j \geq 3} \binom{r-3}{j-3} = \sum_{j =3} \binom{r-3}{j-3}+ \sum_{j > 3} \binom{r-3}{j-3}= 1 + \sum_{j > 3} \binom{r-3}{j-3}.
    \]
    Letting $r' = r-3 \equiv p+1 = a' \mod (p-1)$ and $j' \equiv a'-2 \equiv (p-1) \mod (p-1)$, by \cref{lem:binomialsumaeq2},
    \begin{align}
      \sum_{j > 3} \binom{r-3}{j-3} & = \sum_{j' > 0, j' \equiv p-1} \binom{r'}{j'}\\
                                    & = \frac{(a' - r')(a' + r' -1 )}{2}\\
                                    & = \frac{(p+1 - (r-3))(p+1 + r-3 -1)}{2}=\frac{(p-1)(p+2)}{2} \equiv - 1 \mod p.
    \end{align}
    We conclude $\sum_{j \geq 3} \binom{j}{3} \binom{r}{j} \equiv 0 \mod p$.
    Therefore for $a \geq 5$, by \cref{lem:binomialsumaeq2} for $i = 2$ and as $a - r \equiv 0 \mod p$, we find all sums (running over all $j < r-2$ with $j \equiv a-2 \mod (p-1)$) given by $\sum_{j \geq 3} \binom{j}{3} \binom{r}{j}$, $\sum_{j \geq 2} \binom{j}{2} \binom{r}{j}$, $\sum_{j \geq 1} j \binom{r}{j}$ and $\sum_{j \geq 0} \binom{r}{j}$ to vanish $\mod p$.

    Therefore, we are in a situation analogous to that of the proof of \cref{lem:analog7ag}, and we can proceed analogously (where we put $k = (a-2)p$ instead of $k = ap$):
    Put
    \[
      s_0 = - p^{-1} \sum_{j \geq 0} \binom{r}{j},
      \quad
      s_1 = - p^{-1} \sum_{j \geq 1} j \binom{r}{j}
      \quad \text{ and } \quad
      s_2 = - p^{-1} \sum_{j \geq 2} \binom{j}{2} \binom{r}{j}.
    \]
    and $\gamma_j = \binom{r}{j} +  p \delta_j$.
    There are $\delta_j$ such that
    \[
      \sum \gamma_j \equiv 0 \mod p^4, \sum j \gamma_j \equiv 0 \mod p^3, \text{ and } \sum \binom{j}{2} \gamma_j \equiv 0 \mod p^2
    \]
    if the following system of linear equations $(*)$ in the three unknowns $\delta_k$, $\delta_l$ and $\delta_m$ is solvable:
    \begin{align}
      1 & & 1 & & 1 & \equiv  s_0 \mod p^3,\\
      k & & l & & m & \equiv s_1 \mod p^2, \tag{$*$}\\
      \binom{k}{2} & & \binom{l}{2} & & \binom{m}{2} & \equiv s_2 \mod p.
    \end{align}
    It suffices to solve all equations modulo $p^3$.
    For this, we show that there are $k,l$ and $m$ in $\{ a-2, a-2 + (p-1), \ldots, r-p-1 \}$ such that the determinant of $(*)$ is invertible in $\Z / p^4 \Z$, or equivalently, that it is nonzero $\mod p$.

    Because $r > (a-2)p$, we have $0 < (a-2)p < r$;
    we may, and will, therefore put $k = (a-2)p$.
    Then $(*)$ is modulo $p$ given by an upper triangular matrix whose upper left coefficient is $1$, and therefore its determinant equals that of its lower right $2 \x 2$-matrix
    \[
      \begin{pmatrix}
        l & m\\
        \binom{l}{2} & \binom{m}{2}
      \end{pmatrix}.
    \]
    Since this determinant is $(k - l)(l - m)(m - k)/2$, the system of linear equations $(*)$ can be made solvable by choosing $k$, $l$ and $m$ in different congruence classes.
  \end{proof}

  \begin{lemma}
    \label{lem:analog7dg}
    Let $p \geq 5$.
    Let $a = p$ and $r \equiv a \mod (p-1)$.
    \begin{enumerate}
      \item
        If $p^2 \mid p-r$, then there are integers $\{\gamma_j : p-1 \leq j < r-1 \text{ and } j \equiv 0  \mod (p-1)\}$ such that
        \begin{itemize}
          \item we have $ \gamma_j \equiv \binom{r}{j} \mod p^2$, and
          \item for $ 0 \leq n \leq 4$, we have  $\sum_{j \geq n} \binom{j}{n} \gamma_j \equiv 0 \mod p^{5-n}$.
        \end{itemize}
      \item
        If $p^2 \mid p-r$, then there are integers $\{\gamma_j : p \leq j < r \text{ and } j \equiv 1  \mod (p-1)\}$ such that
        \begin{itemize}
          \item
            we have $ \gamma_j \equiv \binom{r}{j} \mod p^2$, and
          \item
            for $0 \leq n \leq 4$, we have $\sum_{j \geq n} \binom{j}{n} \gamma_j \equiv 0 \mod p^{5-n}$.
        \end{itemize}
      \item
        If $p^3 \mid p-r$, then there are integers $\{\gamma_j : p \leq j < r \text{ and } j \equiv 1  \mod (p-1)\}$ such that
        \begin{itemize}
          \item
            we have $\gamma_j \equiv \binom{r}{j} \mod p^3$, and
          \item
            for $ 0 \leq n \leq 4$, we have $\sum_{j \geq n} \binom{j}{n} \gamma_j \equiv 0 \mod p^{6-n}$ , and
          \item  $\sum_{j \geq 5} \binom{j}{5} \gamma_j  \equiv
            \begin{cases}
              0 \mod p,                           & \text{\quad if $p \geq 7$ }\\
              1  \mod p, & \text{\quad if $p = 5$.}
            \end{cases}$
        \end{itemize}
    \end{enumerate}
  \end{lemma}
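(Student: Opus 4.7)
The plan is to extend the Vandermonde-style argument of \cref{lem:analog7ag}, \cref{lem:analog7bg} and \cref{lem:analog7cg} to the higher moduli required here. All three parts follow the same template, so I will outline part~(1) in detail and indicate the modifications for parts~(2) and~(3).

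First, I handle the trivial case in which the prescribed index set is empty (small $r$), where there is nothing to prove. Henceforth assume $r$ is large enough to furnish five (respectively six, in part~(3)) admissible pivots. Second, I compute the base sums: using the identity $\binom{j}{n}\binom{r}{j} = \binom{r}{n}\binom{r-n}{j-n}$, each sum $\sum_j \binom{j}{n}\binom{r}{j}$, over the prescribed residue class modulo $p-1$, factors as $\binom{r}{n}$ times a sum of the kind handled by \cref{lem:analog27bg}. The hypothesis $p^2 \mid p-r$ (respectively $p^3 \mid p-r$) forces $p \mid r$, so $v_p\bigl(\binom{r}{n}\bigr) \geq 1$ for $1 \leq n \leq p-1$; combined with a refinement of \cref{lem:analog27bg} that writes $r = p + np^t(p-1)$ with $t \geq 2$ (respectively $t \geq 3$) and tracks the expansion of $r$ one or two digits further, this yields
\[
  \sum_j \binom{j}{n} \binom{r}{j} \equiv 0 \mod p^{5-n}
\]
for all relevant $n$ in parts~(1) and~(2), and $\sum_j \binom{j}{n} \binom{r}{j} \equiv 0 \mod p^{6-n}$ in part~(3).

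Third, I perturb by $\gamma_j = \binom{r}{j} + p^2 \delta_j$ in parts~(1) and~(2) (respectively $\gamma_j = \binom{r}{j} + p^3 \delta_j$ in part~(3)), which preserves $\gamma_j \equiv \binom{r}{j}$ modulo the required power. Substituting into the target congruences and subtracting the contribution from Step~2 leaves, after rescaling by $p^2$ (respectively $p^3$), a system of five (respectively six) linear congruences in the $\delta_j$. I retain pivots $k_0, k_1, \ldots$ and set all other $\delta_j = 0$. Choosing $k_0 = (p-1)p$ (respectively $k_0 = ap$), Lucas's theorem forces $\binom{k_0}{n} \equiv 0 \mod p$ for $1 \leq n \leq p-1$, so the first column of the coefficient matrix is the unit vector modulo~$p$. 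The problem reduces to the lower-right minor, a generalized Vandermonde in the remaining pivots; a triangular change of basis identifies its determinant, up to a unit, with the ordinary Vandermonde $\prod_{i<j}(k_j-k_i)$. Picking the remaining pivots in the admissible range with pairwise distinct, nonzero residues modulo~$p$ makes this determinant a unit of $\Z/p^N\Z$, and the system is solvable.

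The main obstacle is Step~2: the refined version of \cref{lem:analog27bg}, extending the computation of the binomial sums modulo $p^5$ (respectively $p^6$) under the strengthened hypothesis on $p-r$. This requires carrying the $p$-adic expansion of $r$ two (respectively three) digits beyond what was needed in \cref{lem:analog27bg}. Steps~3 and~4 are then essentially formal extensions of the $3 \times 3$ Vandermonde arguments already established in \cref{lem:analog7ag}--\cref{lem:analog7cg}.
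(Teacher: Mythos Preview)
Your Step~2 overstates what is true and what is needed. The claimed congruence $\sum_j \binom{j}{n}\binom{r}{j} \equiv 0 \bmod p^{5-n}$ already fails for $n=0$ in part~(ii): by \cref{lem:analog27bg} that sum is $\equiv p-r \bmod p^3$, hence nonzero $\bmod p^3$ whenever $p^2 \parallel p-r$. So the ``refinement of \cref{lem:analog27bg}'' you flag as the main obstacle is not merely hard --- it is false. No amount of tracking further $p$-adic digits of $r$ will force these base sums to vanish to the full target modulus.

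What the paper actually uses is far weaker: the base sums for $n=0,1,2,3$ vanish $\bmod p^2$ (this is \cite[Lemma~2.7]{BG}; in part~(iii) the input is vanishing $\bmod p^3$, supplied by \cref{lem:analog27bg} itself). With $\gamma_j = \binom{r}{j} + p^2\delta_j$ (respectively $+\,p^3\delta_j$) as you propose, the target congruence for each $n \geq 3$ is then \emph{automatic}: both the base sum and the $p^2\delta$-perturbation are already $\equiv 0 \bmod p^{5-n}$ (respectively $p^{6-n}$). Only the three constraints $n=0,1,2$ survive, yielding exactly the $3\times 3$ system you know from \cref{lem:analog7ag}--\cref{lem:analog7cg} --- one pivot $k$ with $k \equiv 0 \bmod p$ (the paper takes $k=(a-1)p$ or $k=ap$) and two further pivots with distinct nonzero residues --- not the $5\times 5$ or $6\times 6$ systems you envisage.
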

  \begin{proof}
    Ad (i): This is a special case of \cref{lem:analog7bg}'
    \bigskip

    Ad (ii):
    Similarly this follows from \cref{lem:analog7bg}', as follows:

    Let the integers $\{\beta_j : a-1 \leq j < r-1 \text{ and } j \equiv a-1 \mod (p-1)\}$ be as in \cref{lem:analog7bg}', that is
    \begin{enumerate}
      \item we have $\beta_j \equiv \binom{r}{j} \mod p^2$, and
      \item for $0 \leq n \leq 4$, we have $\sum_{j \geq n} \binom{j}{n} \beta_j \equiv 0 \mod p^{5-n}$.
    \end{enumerate}
    Since \( j \mapsto r - j \) for \( 0 \leq j \leq r \) is a bijection between
    \[
      \{ j : p \leq j < r \text{ and } j \equiv 1 \pmod{p - 1} \}
    \]
    and
    \[
      \{ j' : p - 1 \leq j' < r - 1 \text{ and } j' \equiv 0 \pmod{p - 1} \},
    \]
    the integers \( \gamma_j = \beta_{r-j} \) satisfy the conditions.
    \bigskip

    Ad (iii):
    We adapt \cref{lem:analog7ag} by referring to \cref{lem:analog25bg}:
    Let $a = p$ and $r \equiv a \mod (p-1)$.
    Because $p^3 \mid p-r$, we have $r > ap$.
    By \cref{cor:analog25bg}, we have $\sum_{j \geq 3} \binom{j}{3} \binom{r}{j}$, $\sum_{j \geq 2} \binom{j}{2} \binom{r}{j}$, $\sum_{j \geq 1} j \binom{r}{j}$ and $\sum_{j \geq 0} \binom{r}{j} \equiv 0 \mod p^3$. We note however that for $p=5$ one gets that  $\sum_{j \geq 5} \binom{j}{5} \binom{r}{j} \equiv 1 \mod p$ while for $p \geq 7$ we have that  $\sum_{j \geq 5} \binom{j}{3} \binom{r}{j} \equiv 0 \mod p$ as in \cite[Lemma 7.3]{BG}.

    Therefore, we are in a situation analogous to that of the proof of \cref{lem:analog7ag}, and we can proceed analogously:
    Put
    \[
      s_0 = - p^{-3} \sum_{j \geq 0} \binom{r}{j},
      \quad
      s_1 = - p^{-3} \sum_{j \geq 1} j \binom{r}{j}
      \quad \text{ and } \quad
      s_2 = - p^{-3} \sum_{j \geq 2} \binom{j}{2} \binom{r}{j}.
    \]
    and $\gamma_j = \binom{r}{j} +  p^3 \delta_j$.
    There are $\delta_j$ such that
    \[
      \sum \gamma_j \equiv 0 \mod p^6, \sum j \gamma_j \equiv 0 \mod p^5, \text{ and } \sum \binom{j}{2} \gamma_j \equiv 0 \mod p^4
    \]
    if the following system of linear equations $(*)$ in the three unknowns $\delta_k$, $\delta_l$ and $\delta_m$ is solvable:
    \begin{align}
      1 & & 1 & & 1 & \equiv  s_0 \mod p^3,\\
      k & & l & & m & \equiv s_1 \mod p^2, \tag{$*$}\\
      \binom{k}{2} & & \binom{l}{2} & & \binom{m}{2} & \equiv s_2 \mod p^1.
    \end{align}
    It suffices to solve all equations modulo $p^3$.
    For this, we show that there are $k,l$ and $m$ in $\{ a, a + (p-1), \ldots, r - (p-1) \}$ such that the determinant of $(*)$ is invertible in $\Z / p^6 \Z$, or equivalently, that it is nonzero $\mod p$.

    Because $r > ap$, we have $a \leq ap < r$;
    we may, and will, therefore put $k = ap$.
    Then $(*)$ is modulo $p$ given by an upper triangular matrix whose upper left coefficient is $1$, and therefore its determinant equals that of its lower right $2 \x 2$-matrix
    \[
      \begin{pmatrix}
        l & m\\
        \binom{l}{2} & \binom{m}{2}
      \end{pmatrix}.
    \]
    Since this determinant is $(k - l)(l - m)(m - k)/2$, the system of linear equations $(*)$ can be made solvable by choosing $k$, $l$ and $m$ in different congruence classes.
  \end{proof}


\section{The Jordan-Hölder series of $X_{r-2}$}
\label{sec:Xr-2}

Knowing under which conditions $X_{r-2} \supset X_{r-1}$ is a proper inclusion helps finding an additional Jordan-Hölder factor in $X_{r-2}$.
In contrast to the inclusion $X_{r-1} \supseteq X_r$, however, not always $X_{r-2} \neq X_{r-1}$ for $r$ sufficiently big.
To give an example, there is by \cref{lem:Xr2V2xXrpp} the natural epimorphism
\[
  X_{r''} \otimes V_2 \twoheadrightarrow X_{r-2}
\]
given by multiplication.
Let $r'' = r-2$.
For a natural number $r$, let
\[
  \Sigma(r)
  :=
  \text{ the sum of the digits of the $p$-adic expansion of $r$ }.
\]
Let $a$ in $ \{ 3, \ldots, p+1 \} $ such that $r \equiv a \mod (p-1)$.
If the sum of the digits of the $p$-adic expansion of $r - 2$ is equal to $a - 2$, then by \cref{prop:XrAst0IffSrMin} the left-hand side of
\[
  0 \to X_{r''}^* \to X_{r''} \to X_{r''} /X_{r''}^* \to 0
\]
vanishes.
In particular, if $a=3$, then the right-hand side is $X_{r''}/X_{r''}^* = V_1$.
Therefore,
\[
  X_{r''} \otimes V_2
  =
  V_1 \otimes V_2
  =
  V_1 \otimes \rD \oplus V_3
  \twoheadrightarrow
  X_{r-2}.
\]
That is, there is an epimorphism with only two Jordan-Hölder factors onto $X_{r-2}$.
Therefore, necessarily $X_{r-2} = X_{r-1}$.

This equality happens in other cases as well:
For $r = p+2, \ldots, p+(p-2)$, that is, $r = (p-1) + a$ for $a = 3, \ldots, p-1$, by \cref{prop:Xrm012ModAst}.(iii),
\[
  X_{r-2}/X_{r-2}^* = V_a/V_a^* = V_a
\]
where the equality on the right-hand side holds because $V_a$ is irreducible when $a = 3, ..., p-1$;
thus, $X_{r-2}^*$ has dimension $(p-1)+a - (a+1) = p-2 \leq p$;
in particular, it is irreducible.
We conclude $X_{r-2} = X_{r-1}$, because both have two Jordan-Hölder factors.

For $r = 2p-1$, by \cite[Proposition 3.3.(i)]{BG} already $X_{r-1} = V_{2p-1}$.
Therefore $V_{2p-1} = X_{r-1} \subseteq X_{r-2} \subseteq V_{2p-1}$.

By the next statement, $X_{r-2} = X_{r-1}$ if and only if $r = p^n + r_0$ where $r_0 = 2, ..., p-1$ and $n$ in $\N$.
(The preceding discussion showed this only for $r_0 = 2$ or $n=1$.)

\begin{lemma}
  \label{lem:XrInXr1InXr2}
  Let $p > 2$.
  Let $r$ in $\N$.
  We have $0 \subset X_r \subseteq X_{r-1} \subseteq X_{r-2}$ and
  \begin{itemize}
    \item the inclusion $X_r \subseteq X_{r-1}$ is an equality if and only if $r < p$, and
    \item for $p > 2$, the inclusion $X_{r-1} \subseteq X_{r-2}$ is an equality if and only if $r \leq p$ or $r = p^n + r_0$ where $r_0$ in $ \{2, \ldots, p-1 \}$ and $n > 0$.
  \end{itemize}
\end{lemma}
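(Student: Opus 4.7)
My plan is to establish the chain of inclusions by explicit actions of unipotent matrices, then characterize the equalities by separating an ``easy'' direction (using irreducibility of $V_r$ for small $r$ and direct Lucas-type constructions for specific shapes of $r$) from an ``only if'' direction that forms the main obstacle.

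For the inclusions, acting by $\begin{pmatrix}1 & 1\\0 & 1\end{pmatrix}$ on the generator $XY^{r-1}$ of $X_{r-1}$ gives $(X+Y)Y^{r-1}$; subtracting $XY^{r-1}$ yields $Y^r \in X_{r-1}$, so $X_r \subseteq X_{r-1}$. Assuming $p>2$, acting by $\begin{pmatrix}1 & \lambda\\0 & 1\end{pmatrix}$ on $X^2Y^{r-2}$ produces $X^2Y^{r-2} + 2\lambda XY^{r-1} + \lambda^2 Y^r$; antisymmetrizing $\lambda \leftrightarrow -\lambda$ isolates $4\lambda\, XY^{r-1}$, so $XY^{r-1}\in X_{r-2}$ since $4$ is invertible.

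For the easy direction of the equality claims, irreducibility of $V_r$ as an $\F_p[\Gamma]$-module for $r\leq p-1$ forces $X_r = X_{r-1} = X_{r-2} = V_r$ (both being nonzero). For $r = p^n + r_0$ with $r_0\in\{2,\ldots,p-1\}$, I would prove $X^2Y^{r-2}\in X_{r-1}$ by acting on $XY^{r-1}$ by $\begin{pmatrix}1 & 0\\ \lambda & 1\end{pmatrix}$ and applying Lucas to split $(\lambda X+Y)^{r-1} = (\lambda X^{p^n}+Y^{p^n})(\lambda X+Y)^{r_0-1}$; extracting the $\lambda^1$-coefficient (permitted because $\deg_\lambda = r_0 \leq p-1$) delivers $X^{p^n+1}Y^{r_0-1} + (r_0-1)\,X^2Y^{r-2}$. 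The coefficient $(r_0-1)$ is nonzero modulo $p$, and the companion monomial $X^{p^n+1}Y^{r_0-1}$ is separated using coefficients of other powers of $\lambda$ (whose monomial supports pair up disjointly), isolating $X^2Y^{r-2}\in X_{r-1}$, hence $X_{r-2}=X_{r-1}$.

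The main obstacle is the ``only if'' directions. To show $X_r\subsetneq X_{r-1}$ for $r\geq p$, I plan to use the Lucas-type description $X_r = \F_p$-span of $(cX+dY)^r = \prod_i (cX^{p^i}+dY^{p^i})^{r_i}$. Because the functions $(c,d)\mapsto c^a d^b$ on $\F_p^2$ collapse modulo $c^p=c$ and $d^p=d$, distinct monomials $X^{r-j}Y^j$ in the expansion are forced to share coefficients whenever their coordinate monomials in $(c,d)$ coincide in $\F_p[c,d]/(c^p-c,d^p-d)$; this proves $\dim X_r < \dim V_r$ for $r\geq p$ and exhibits explicit monomials (e.g.\ $X^{p-1}Y$ for $r=p$) lying in $X_{r-1}$ but not in $X_r$. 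The ``only if'' for $X_{r-1}\subsetneq X_{r-2}$ when $r$ is not of the listed form is more delicate; here I would combine the natural multiplication epimorphism $X_{r''}\otimes V_2 \twoheadrightarrow X_{r-2}$ (recalled just before the lemma, with $r''=r-2$) with the Jordan-Hölder analysis of $X_{r''}/X_{r''}^*$ arising from \cref{lem:VrAstQuotients}, to show that $X_{r-2}$ acquires a Jordan-Hölder factor not realised in $X_{r-1}$ precisely when $r$ fails the dichotomy in the lemma. The combinatorial bookkeeping on the base-$p$ digits of $r$ and its interplay with $r\bmod(p-1)$ and with the digit sum $\Sigma(r)$ constitutes the technical heart of this final step.
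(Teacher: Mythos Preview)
Your treatment of the inclusions and of the first bullet is fine (modulo the harmless slip that it is the upper unipotent $\begin{pmatrix}1&\lambda\\0&1\end{pmatrix}$, not the lower one, that sends $XY^{r-1}$ to $X(\lambda X+Y)^{r-1}$). For the ``if'' direction of the second bullet your Lucas idea also works, though not quite as stated: the companion monomial $X^{p^n+1}Y^{r_0-1}$ does \emph{not} appear in any other $\lambda$-coefficient of $X(\lambda X+Y)^{r-1}$, so you cannot separate it from that family alone. You must also use the Weyl-conjugate family $Y(X+\mu Y)^{r-1}$; a suitable $\mu$-coefficient then gives a second relation between $X^2Y^{r-2}$ and $X^{p^n+1}Y^{r_0-1}$, and the resulting $2\times 2$ system has determinant $\tfrac{r_0(r_0-1)}{2}\not\equiv 0$. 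You also skip the case $r=p$, where $V_p$ is not irreducible; the paper handles it separately via \cref{prop:Xrm012ModAst}.

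The real problem is your plan for the ``only if'' direction of the second bullet. The epimorphism $X_{r''}\otimes V_2\twoheadrightarrow X_{r-2}$ only bounds the Jordan--H\"older factors of $X_{r-2}$ from \emph{above}; it cannot by itself show that $X_{r-2}$ \emph{acquires} a factor not present in $X_{r-1}$. In fact the paper runs the logic in the opposite direction: later results such as \cref{prop:dimXr-2min} and \cref{prop:V3p-1IsoXr-2} use the present lemma as the \emph{lower} bound (``at least three factors'') and the epimorphism as the upper bound. So your proposed route is circular. The paper's actual argument is purely combinatorial: one writes
\[
X^2Y^{r-2}=CX^r+\sum_k c_k(kX+Y)^{r-1}X+\sum_l d_l(X+lY)^{r-1}Y+DY^r,
\]
compares coefficients to obtain the system $\binom{r-1}{T}C_T+\binom{r-1}{T+1}D_T=\delta_{T,1}$ (for $T\equiv 1\bmod p-1$, with $C_T=\sum_k c_kk^T$, $D_T=\sum_l d_ll^{r-2-T}$), and then does a case analysis on the $p$-adic digits of $r-1$ via Lucas to show this system is solvable precisely when $r-1=r_0+p^{n}$ with $1\le r_0\le p-2$. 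There is no Jordan--H\"older input; it is a hands-on determinant computation in each case.
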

\begin{proof}
  For $X_r \subseteq X_{r-1}$ and when this inclusion is strict, see \cite[Lemma 4.1]{BG}.
  Note that $X_r = X_{r-1} = V_r$ for $r < p$.

  We have $X_{r-1} \subseteq X_{r-2}$, because
  $4X^{r-1}Y =
  \begin{pmatrix}
    1 &1\\
    0&1
  \end{pmatrix}
  X^{r-2}Y^2  -
  \begin{pmatrix}
    1 & -1\\
    0 & 1
  \end{pmatrix}
  X^{r-2}Y^2$.

  If $r < p$, then $V_r$ is irreducible.
  In particular, $X_{r-2} = X_{r-1}$.

  If $r = p$, then $X_{r-2} = X_{r-1}$ as
  \[
    (p - 1)X^{r-2} Y^2 = \sum_{\lambda \in \mathbb{F}_p^*} k^{p-2} (X + kY)^{r-1} Y.
  \]
  We may hence assume $r > p$.
  We have $X_{r-1} = X_{r-2}$ if and only if there are coefficients $C$, $c_0, \ldots, c_{p-1}$, $d_0, \ldots, d_{p-1}$ and $D$ in $\F_p$ such that
  \[
    X^2Y^{r-2}
    =
    C X^r + \sum c_k(kX + Y)^{r-1} X + \sum d_l (X + lY)^{r-1} Y + D Y^r.
    \tag{$*$}
  \]
  For $T \in \left\{ 0, \ldots, r-1 \right\}$, put
  \[
    C_T = \sum_{k = 1, \ldots, p-1} c_k k^T
    \text{ \quad and \quad }
    D_T = \sum_{l = 1, \ldots, p-1} d_l l^{r-1-T}.
  \]
  Comparing the coefficients on both sides of $(*)$, the above equation is satisfied if and only if
  \begin{itemize}
    \item $c_0 + C_0 + r D_1 = 0$ (by the coefficient of $X Y^{r-1}$),
    \item $d_0 + D_{r-1} + (r-1) C_{r-2} = 0$ (by the coefficient of $X^{r-1} Y$),
    \item $C + C_{r-1} = 0$ (by the coefficient of $X^r$),
    \item $D_0 + D = 0$ (by the coefficient of $Y^r$), and,
    \item by the coefficients of $X^{T + 1} Y^{r-(T + 1)}$ for $T = 1, \ldots, r-3$,
      \begin{equation}
        \binom{r-1}{T} C_T + \binom{r-1}{T + 1} D_{T+1} =
        \begin{cases}
          1, & \text{\quad for } T = 1,\\
          0, & \text{\quad for } T = 2, \ldots, r-3.
        \end{cases}
        \tag{$+$}
      \end{equation}
  \end{itemize}
  Because $\# \F_p^* = p-1$, for $0 < T', T'' < r-2$, if $T' \equiv T'' \mod (p-1)$, then $C_{T'} = C_{T''}$ and $D_{T'} = D_{T''}$.
  In particular, for every $T \equiv 1 \mod (p-1)$,
  \[
    \binom{r-1}{T} C_T + \binom{r-1}{T + 1} D_{T+1}
    =
    \binom{r-1}{T} C_1 + \binom{r-1}{T + 1} D_2.
  \]

  In the following, we will show that there are coefficients $c_1, \ldots, c_{p-1}$, and $d_1, \ldots, d_{p-1}$ in $\F_p$ such that $(+)$ is satisfied if and only if the stated conditions on $r$ are satisfied.
  That is, we show that if the stated conditions on $r$ are not satisfied, then $( + )$ cannot be satisfied, but if they are satisfied, then there are such coefficients.

  Because both matrices $(k^T)_{k, T = 1, \ldots, p-1}$ and $(l^{r-2-T})_{l, T = 1, \ldots, p-1}$ of the systems of $\F_p$-linear equations
  \[
    C_T = \sum_{k = 1, \ldots, p-1} c_k k^T
    \text{ and }
    D_{T+1} = \sum_{l = 1, \ldots, p-1} d_l l^{r-2-T}
    \quad \text{ for } T = 1, 2, \ldots, p-1
  \]
  are, up to permutations of columns, given by
  \[
    (k^T)_{k, T = 1, \ldots, p-1},
  \]
  and thus, up to a sign change, have Vandermonde determinant
  \[
    \prod_{k' < k'' \in {1, \ldots, p-1}} (k'' - k') \neq 0.
  \]
  we can freely choose $c_1$, \ldots, $c_{p-1}$ respectively $d_1$, \ldots, $d_{p-1}$ in $\mathbb{F}_p$ such that $C_1, C_2, \ldots, C_{p-1}$ respectively $D_1, D_2, \ldots, D_{p-1}$ satisfy Equations $(+)$ if and only if we can freely choose $C_1, C_2, \ldots, C_{p-1}$ and $D_1, D_2, \ldots, D_{p-1}$ in $\mathbb{F}_p$ that satisfy Equations $(+)$.

  Expand $r-1 = r_0 + r_1 p + r_2 p^2 + \dotsb$ with $r_0, r_1, \ldots \in \left\{ 0, \ldots, p-1 \right\}$.
  \begin{enumerate}[{Case} 1.]
    \item
      $r_0 = 0$.

      Then by Lucas' Theorem modulo $p$,
      \[
        \binom{r-1}{1} = r_0 = 0 \text{ \quad and \quad } \binom{r-1}{2} = \binom{r_0}{2} = 0.
      \]
      This equation contradicts that of $( + )$ for $T = 1$!
      Therefore $X_{r-2} \supset X_{r-1}$.
    \item
      $r_0 > 0$.

      \begin{enumerate}[{label*=\arabic*.}]
        \item
          There is a digit $r_j > 1$.
          Let $j$ be the minimal index of all digits with that property.

          For $T = p^j, p^j + p^j-1$ with $j \geq 1$, by Lucas' Theorem modulo $p$,
          \begin{alignat}{3}
            \binom{r-1}{p^j} = &  \binom{r_j}{1} & \text{\quad and \quad } & \binom{r-1}{p^j + 1} & = & \binom{r_j}{1} \binom{r_0}{1}\\
            \binom{r-1}{p^j + p^j-1} = &  \binom{r_j}{1} \binom{r_{j-1}}{p-1} \dotsm \binom{r_0}{p-1} & \text{\quad and \quad } & \binom{r-1}{2p^j} & = & \binom{r_j}{2}.
          \end{alignat}
          Because $p^j, p^j + p^j-1 \equiv 1 \mod (p-1)$,
          \begin{alignat}{4}
            r_j & C_1 +{} & r_j r_0 & D_1 & ={} & 0\\
            r_j \binom{r_{j-1}}{p-1} \dotsm \binom{r_0}{p-1} & C_1 +{} & \binom{r_j}{2} & D_1 & ={} & 0.
          \end{alignat}
          The determinant of the matrix $M$ of this system of equations is
          \[
            \norm{M}
            =
            r_j \cdot
            \begin{vmatrix}
              1 & r_0\\
              r_j \binom{r_{j-1}}{p-1} \dotsm \binom{r_0}{p-1} & \binom{r_j}{2}
            \end{vmatrix}
            =
            r_j \cdot
            \left[ \binom{r_j}{2} - r_j r_0 \binom{r_{j-1}}{p-1} \dotsm \binom{r_0}{p-1} \right].
          \]
          \begin{enumerate}[{label*=\arabic*.}]
            \item
              $j > 1$.

              By minimality of $j$, we have $\binom{r_{j-1}}{p-1} = 0$.
              Thence $\norm{M} = r_j \binom{r_j}{2} \not= 0$, that is,
              $C_1 = D_1 = 0$.
              This equation contradicts that of $( + )$ for $T = 1$!
              Therefore $X_{r-2} \supset X_{r-1}$.
            \item
              $j = 1$.

          \begin{enumerate}[{label*=\arabic*.}]
            \item
              $r_0 < p-1$.

              We have
              \[
                \norm{M} = r_1 \left[\binom{r_1}{2} - r_1 r_0 \binom{r_0}{p-1}\right].
              \]
              We obtain $\norm{M} = r_1 \binom{r_1}{2} \neq 0$ because $r_0 < p-1$.
              That is, $C_1 = D_1 = 0$.
              This equation contradicts that of $( + )$ for $T = 1$!
              Therefore $X_{r-2} \supset X_{r-1}$.
            \item
              $r_0 = p-1, r_1 < p-1$.

              We have
              \[
                \norm{M}
                = r_1 \left[\binom{r_1}{2} - r_1 (p-1)\right].
              \]
              We obtain $\norm{M} \equiv r_1^2 \frac{r_1 + 1}{2} \not\equiv 0$ because $r_1 < p-1$.
              That is, $C_1 = D_1 = 0$.
              This equation contradicts that of $( + )$ for $T = 1$!
              Therefore $X_{r-2} \supset X_{r-1}$.
              \item
                $r_0 = p-1, r_1 = p-1$.

                Let $T = p$.
                Then
                \[
                  \binom{r-1}{T} \equiv p-1 \equiv \binom{r-1}{1}
                  \quad \text{ and }
                  \binom{r-1}{T+1} \equiv 1 \equiv \binom{r-1}{2}.
                \]
                Because $T \equiv 1 \mod (p-1)$, we have $C_T = C_1$ and $D_{T+1} = D_2$;
                thus the equation $\binom{r-1}{T} C_T + \binom{r-1}{T + 1} D_{T+1} = 0$ in $( + )$ for $T = p$ contradicts $\binom{r-1}{1} C_1 + \binom{r-1}{2} D_2 = 1$ in $( + )$ for $T = 1$!
                Therefore $X_{r-2} \supset X_{r-1}$.
            \end{enumerate}
          \end{enumerate}
        \item
          All $r_1, r_2, \ldots \leq 1$.
          That is, $r-1$ is of the form $r-1 = r_0 + p^{n_1} + \dotsb + p^{n_m}$ for $0 < n_1 < \cdots < n_m$ in $\N$.

          For $T = 1$, we have
          \[
            \binom{r-1}{T} C_1 + \binom{r-1}{T + 1} D_2
            =
            r_0 C_1 + \binom{r_0}{2} D_2
            =
            1.
          \]
          \begin{enumerate}[{label*=\arabic*.}]
            \item
              We have $r_0 = p-1$.
              By Lucas' Theorem,
              \begin{itemize}
                \item
                  for $T = p^{n_1}$, we have, because $T \equiv 1 \mod (p-1)$,
                  \[
                    \binom{r-1}{T} C_1 + \binom{r-1}{T + 1} D_2
                    =
                    r_{n_1} C_1 + r_{n_1} r_0 D_2
                    =
                    C_1 + r_0 D_2 = 0;
                  \]
                \item
                  for $T = p^{n_1} + r_0$, then $T + 1 = 2p$ if $n_1 = 1$, and $T+1 = p^{n_1} + p$ if $n_1 > 1$.
                  Thus, if $n_1 = 1$ we have $\binom{r_1}{2} = 0$ because $r_1 \leq 1$, and if $n_1 > 1$, we have $\binom{r_1}{1} = 0$ because $r_1 = 0$.
                  Therefore, because $T \equiv 1 \mod (p-1)$,
                  \[
                    \binom{r-1}{T} C_1 + \binom{r-1}{T + 1} D_2
                    =
                    r_{n_1} \binom{r_0}{r_0} C_1
                    =
                    C_1
                    =
                    0,
                  \]
              \end{itemize}
              Therefore $C_1 = 0$, thus $D_2 = 0$.
              Thus
              \[
                r_0 C_1 + \binom{r_0}{2} D_2
                =
                1
              \]
              is impossible to satisfy.
            \item
              We have $r_0 < p-1$.
              \begin{enumerate}[{label*=\arabic*.}]
                \item
                  We have $m > 1$.
                  By Lucas' Theorem,
                  \begin{itemize}
                    \item
                      for $T = p^{n_1}$, we have, because $T \equiv 1 \mod (p-1)$,
                      \[
                        \binom{r-1}{T} C_1 + \binom{r-1}{T + 1} D_2
                        =
                        r_{n_1} C_1 + r_{n_1} r_0 D_2
                        =
                        C_1 + r_0 D_2 = 0;
                      \]
                    \item
                      for $T = p^{n_2} + p^{n_1} - 1$, we have $\binom{r - 1}{T} = 0$ because $\binom{r_0}{p-1} = 0$.
                      Therefore, because $T \equiv 1 \mod (p-1)$,
                      \[
                        \binom{r-1}{T} C_1 + \binom{r-1}{T + 1} D_2
                        =
                        r_{n_2} r_{n_1} D_2
                        =
                        D_2
                        =
                        0.
                      \]
                  \end{itemize}
                  Therefore $D_2 = 0$, thus $C_1 = 0$.
                  Thus
                  \[
                    r_0 C_1 + \binom{r_0}{2} D_2
                    =
                    1
                  \]
                  is impossible to satisfy.
                \item
                  We have $m = 1$.
                  In this case, $r$ satisfies the stated conditions for $X_{r-1} = X_{r-2}$, and we show, equivalently, that $(+)$ can be solved.
                  We have:
                  \begin{itemize}
                    \item the only $T$ in $ \left\{ 0, \ldots, r-2 \right\} $ such that $T \equiv 1 \mod (p-1)$ and $\binom{r-1}{T} \nequiv  0 \mod p$ are $T = p^0, p^{n_1}$,
                    \item the only $T$ in $ \left\{ 0, \ldots, r-2 \right\} $ such that $T \equiv 1 \mod (p-1)$ and $\binom{r-1}{T + 1} \nequiv  0 \mod p$ are $T = p^0, p^{n_1}$ for $r_0 > 1$, and, $T = p^{n_1}$ for $r_0 = 1$.
                  \end{itemize}
                  Therefore, to solve $( + )$, it suffices to choose $C_1, \ldots, C_{p-1}$ and $D_1, \ldots, D_{p-1}$ such that they resolve $(+)$ for $T = p^0$ and $p^{n_1}$;
                  that is, by Lucas' Theorem, such that for $T=1$,
                  \[
                    r_0 C_1 + \binom{r_0}{2} D_2 = 1
                  \]
                  and
                  \[
                    \binom{r-1}{p^{n_1}} C_{p^{n_1}} + \binom{r-1}{p^{n_1}+1} D_{p^{n_1}+1}
                    =
                    C_1 + r_0 D_2
                    =
                    0.
                  \]
                  That is, such that
                  \[
                    C_1 = - r_0 D_2
                    \quad \text{ and } \quad
                    D_2 = \frac{1}{\binom{r_0}{2} - r_0^2}
                    \tag{*}
                  \]
                  (where the denominator is nonzero because $r_0 \neq p-1$) and where
\[
                    C_2, \ldots, C_{p-1}
                    \quad \text{ and } \quad
                    D_2, \ldots, D_{p-1}
                  \]
                  are unrestricted.
                  We can therefore choose $c_1$, \ldots, $c_{p-1}$ respectively $d_1$, \ldots, $d_{p-1}$ such that $C_1$ respectively $D_1$ satisfy Equation $(*)$.

                  \qedhere
              \end{enumerate}
          \end{enumerate}
      \end{enumerate}
  \end{enumerate}
\end{proof}

  \subsection{Tensor Product Epimorphism}

  \begin{lemma}[Extension of {\cite[Lemma 3.6]{BG}}]
  \label{lem:Xr2V2xXrpp}
    Let $r \geq 2$.
    Put $r'' = r-2$.
    The map
    \begin{align}
      \phi \from X_{r''} \otimes V_2 & \to X_{r-2}\\
      f \otimes g & \mapsto f \cdot g
    \end{align}
    is an epimorphism of $\F_p[M]$-modules.
  \end{lemma}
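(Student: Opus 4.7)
The plan is to view $\phi$ as the restriction to $X_{r''} \otimes V_2$ of the natural $\F_p[M]$-linear multiplication map $V_{r-2} \otimes V_2 \to V_r$, $f \otimes g \mapsto fg$, which is $M$-equivariant for the diagonal action on the source. It then suffices to prove the two inclusions $\im \phi \supseteq X_{r-2}$ and $\im \phi \subseteq X_{r-2}$.

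The first inclusion will be immediate: $\phi(Y^{r-2} \otimes X^2) = X^2 Y^{r-2}$ lies in $\im \phi$, and $\im \phi$ is $\F_p[\Gamma]$-stable by equivariance of $\phi$; hence $\im \phi$ contains the $\F_p[\Gamma]$-submodule of $V_r$ generated by $X^2 Y^{r-2}$, which by definition is $X_{r-2}$.

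The substantive part is the reverse inclusion. Since $X_{r''}$ is by definition $\F_p$-spanned by $\Gamma$-translates of $Y^{r-2}$, and (for $p \geq 3$) $V_2$ is irreducible and hence $\F_p$-spanned by $\Gamma$-translates of $X^2$, the image $\im \phi$ is $\F_p$-spanned by products of the form $(b_1 X + d_1 Y)^{r-2}(a_2 X + c_2 Y)^2$, where the pairs $(b_1, d_1)$ and $(a_2, c_2)$ range over $\F_p^2 \setminus \{0\}$ (as second and first columns of arbitrary matrices in $\Gamma$). I would then split into two cases according to the determinant $a_2 d_1 - b_1 c_2$: when it is nonzero, the matrix $m := \begin{pmatrix} a_2 & b_1 \\ c_2 & d_1 \end{pmatrix}$ lies in $\Gamma$ and the product equals $m \cdot (X^2 Y^{r-2})$, which lies in $X_{r-2}$.

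The only obstacle is the degenerate case $(a_2, c_2) = \lambda (b_1, d_1)$, in which the product collapses to $\lambda^2 \cdot L^r$ for the single linear form $L := b_1 X + d_1 Y$. Here I would invoke $\Gamma$-transitivity on nonzero linear forms (so that $L^r = \gamma \cdot Y^r$ for some $\gamma \in \Gamma$) to reduce to proving $Y^r \in X_{r-2}$; the latter I would establish by the explicit unipotent computation that for $\gamma_t := \begin{pmatrix} 1 & 0 \\ t & 1 \end{pmatrix}$ one has $\gamma_t \cdot (X^2 Y^{r-2}) = X^2 Y^{r-2} + 2t \cdot X Y^{r-1} + t^2 \cdot Y^r$, whence for $p > 2$ the linear combination $\tfrac{1}{2}(\gamma_1 + \gamma_{-1} - 2 \gamma_0) \cdot (X^2 Y^{r-2}) = Y^r$ witnesses $Y^r \in X_{r-2}$.
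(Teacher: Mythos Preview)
Your proof is correct (for $p \geq 3$, which is implicit throughout the paper). Both you and the paper set up $\phi$ as the restriction of the $M$-linear multiplication map $V_{r''} \otimes V_2 \to V_r$, and handle surjectivity the same way: the generator $X^2 Y^{r-2}$ of $X_{r-2}$ visibly lies in the image.

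For the containment $\im \phi \subseteq X_{r-2}$ the paper takes a shorter route. Rather than passing to an $\F_p$-spanning set and case-splitting on linear (in)dependence of the two column vectors, the paper observes that $X_{r''} \otimes V_2$ is generated \emph{as an $\F_p[\Gamma]$-module} by just three elements $X^{r''} \otimes X^2$, $X^{r''} \otimes XY$, $X^{r''} \otimes Y^2$ (since $X_{r''}$ is $\Gamma$-cyclic and $V_2$ has a three-term basis); under $\phi$ these land on $X^r$, $X^{r-1}Y$, $X^{r-2}Y^2$, all of which lie in $X_{r-2}$ by the chain $X_r \subseteq X_{r-1} \subseteq X_{r-2}$ already established in \cref{lem:XrInXr1InXr2}. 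Your degenerate-case computation (the unipotent trick producing $Y^r \in X_{r-2}$) is effectively a second proof of the inclusion $X_r \subseteq X_{r-2}$, which the paper already has available. So your argument is more self-contained but longer, and it needs $p$ odd twice (cyclicity of $V_2$ and the factor $\tfrac12$); the paper's argument exploits $\Gamma$-equivariance at the level of module generators rather than vector-space generators, avoiding both the case-split and the restriction on $p$.
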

  \begin{proof}
    By \cite[(5.1)]{G}, the map $\phi_{r'',2} \from V_{r''} \ox V_2 \twoheadrightarrow V_r$ defined by $u \ox v \mapsto uv$ is $M$-linear.
    Let $\phi$ be its restriction to the $M$-submodule $X_{r''} \ox V_2$.
    The $M$-submodule $X_{r''} \ox V_2$ is generated by $X^{r''} \ox X^2$, $X^{r''} \ox Y^2$ and $X^{r''} \ox XY$, which map to $X^r$, $X^{r-2}Y^2$ and $X^{r-1}Y$.
    Therefore the image of $\phi$ is included in $X_{r-2} \subseteq V_r$.
    Because $X^{r-2}Y^2$ generates $X_{r-2}$, surjectivity follows.
  \end{proof}

  \begin{cor}
    \label{cor:Xr2Iso}
    We have $\dim X_{r-2} \leq 3p + 3$.
    If $\dim X_{r-2} = 3p + 3$, then the epimorphism $\phi \from X_{r''} \otimes V_2 \twoheadrightarrow X_{r-2}$ is an isomorphism.
  \end{cor}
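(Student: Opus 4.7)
The plan is to bound $\dim X_{r-2}$ by exploiting the epimorphism $\phi \from X_{r''} \otimes V_2 \twoheadrightarrow X_{r-2}$ established in \cref{lem:Xr2V2xXrpp}. Since $\dim V_2 = 3$, the first assertion reduces to showing $\dim X_{r''} \leq p+1$.

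To prove this bound, I would observe that the generator $Y^{r''}$ of $X_{r''}$ is a $B$-eigenvector, where $B \subseteq \Gamma$ denotes the upper-triangular Borel. Indeed, the upper unipotent matrices $\begin{pmatrix} 1 & b \\ 0 & 1 \end{pmatrix}$ act by $Y \mapsto Y$, fixing $Y^{r''}$, while the diagonal torus acts through the character $\begin{pmatrix} a & 0 \\ 0 & d \end{pmatrix} \mapsto d^{r''}$. Hence the stabilizer of the line $\F_p \cdot Y^{r''}$ contains $B$, so its $\Gamma$-orbit consists of at most $[\Gamma : B] = p+1$ lines. Since $X_{r''}$ is spanned by $\Gamma$-translates of $Y^{r''}$, we conclude $\dim X_{r''} \leq p+1$, and therefore $\dim X_{r-2} \leq \dim X_{r''} \cdot \dim V_2 \leq 3(p+1) = 3p+3$.

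For the second assertion, if equality $\dim X_{r-2} = 3p+3$ holds, then all inequalities in the above chain must be equalities; in particular $\dim(X_{r''} \otimes V_2) = 3p+3 = \dim X_{r-2}$. Any surjection of finite-dimensional vector spaces of equal dimension is automatically an isomorphism, so $\phi$ is an isomorphism.

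The only substantive step is the dimension bound $\dim X_{r''} \leq p+1$, which is an instance of the general principle that a cyclic $\F_p[\Gamma]$-module generated by a $B$-eigenvector has dimension at most $[\Gamma : B]$; once this is in hand, the conclusion is a routine dimension count combined with the tensor-product epimorphism from the preceding lemma.
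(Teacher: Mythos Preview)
Your approach matches the paper's: both use the epimorphism of \cref{lem:Xr2V2xXrpp} together with the bound $\dim X_{r''}\le p+1$, and then observe that a surjection between spaces of equal finite dimension is an isomorphism. The paper simply invokes the bound $\dim X_{r''}\le p+1$ as known (it follows, for instance, from the surjection $U_{r''}\twoheadrightarrow X_{r''}$ with $\dim U_{r''}=p+1$ constructed in the proof of \cref{prop:Xrm012ModAst}, going back to \cite[(4.5)]{G}); you instead supply a direct argument via Borel eigenvectors, which is a pleasant alternative.

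There is, however, a slip in your eigenvector computation. With the action convention used in this paper (see the proof of \cref{lem:Xr2Generators}), the matrix $\begin{pmatrix}a&b\\c&d\end{pmatrix}$ sends $X\mapsto aX+cY$ and $Y\mapsto bX+dY$, so the upper unipotent $\begin{pmatrix}1&b\\0&1\end{pmatrix}$ sends $Y^{r''}\mapsto(bX+Y)^{r''}$, which is \emph{not} $Y^{r''}$. The vector that is genuinely fixed by the upper unipotent, and hence is a $B$-eigenvector, is $X^{r''}$; since the Weyl element carries $Y^{r''}$ to $X^{r''}$, the latter equally generates $X_{r''}$, and your orbit argument then goes through verbatim. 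Alternatively, keep $Y^{r''}$ but replace $B$ by the lower-triangular Borel. Either way the conclusion $\dim X_{r''}\le[\Gamma:B]=p+1$ stands, and the rest of your proof is correct.
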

  \begin{proof}
    Because $\dim X_{r''} \leq p+1$ and $\dim V_2 = 3$, the left-hand side of the epimorphism $\phi \from X_{r''} \otimes V_2 \twoheadrightarrow X_{r-2}$ in \cref{lem:Xr2V2xXrpp} has dimension $\leq 3(p+1) = 3p+3$.
    Therefore its kernel is $0$.
  \end{proof}

  \begin{lemma}[Extension of {\cite[Lemma 3]{GG}}]
    \label{lem:Xr2Generators}
    Let $p > 2$ and $r \geq 2$.
    The $\F_p[M]$-module $X_{r-2}$ is generated by
    \[
      \{ X^r, Y^r, X^{r-1}Y,  X^2(j X + Y)^{r-2}, Y^2(X + kY)^{r-2}, XY(lX + Y )^{r-2} : j,k,l \in \mathbb{F}_p \}.
    \]
  \end{lemma}
  \begin{proof}
    We have $X_{r-2} = \langle X^{r-2}Y^2 \rangle$.
    We compute
    \begin{align}
      \begin{pmatrix}
        a & b\\
        c & d
      \end{pmatrix}
      X^{r-2}Y^2 & = (aX + cY)^{r-2}(bX + dY)^2\\
                  & = b^2 X^2(aX + cY)^{r-2} + d^2 Y^2(aX + cY)^{r-2} +2 bd XY(aX + cY)^{r-2}.
    \end{align}
    If $a =0$, then the right-hand side is in the span of $X^2Y^{r-2}, Y^r, XY^{r-1}$.
    If $c = 0$, then the right-hand side is in the span of $X^r, X^{r-2}Y^2, X^{r-1}Y$.
    If $ac \ne 0$, then the right-hand side is in the span of
    \[
      \{ X^r, Y^r, X^{r-1}Y,  X^2(j X + Y)^{r-2}, Y^2(X + kY)^{r-2}, XY(lX + Y )^{r-1}\}
    \]
    where $j,k,l \in \mathbb{F}_p$.
    We conclude as in \cite[Lemma 3]{GG}.
  \end{proof}

  \begin{cor}[Extension of {\cite[Lemma 3.5]{BG}}]
    \label{cor:dimXr2dimXr1}
    If $\dim X_{r-2} = 3p + 3$, then $\dim X_{r-1} = 2p + 2$ is maximal and $\dim X_r = \dim X_{r'} = \dim X_{r''} = p+1$ are maximal.
  \end{cor}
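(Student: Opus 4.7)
The plan is to combine \cref{cor:Xr2Iso} with the analogous tensor product epimorphism one step lower, then propagate the maximal dimensions down the filtration $X_r \subseteq X_{r-1} \subseteq X_{r-2}$ of \cref{lem:XrInXr1InXr2}.

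First, \cref{cor:Xr2Iso} immediately gives $\dim X_{r''}=p+1$ and upgrades $\phi \from X_{r''}\otimes V_2 \twoheadrightarrow X_{r-2}$ to an isomorphism. The same construction one degree lower, namely \cite[Lemma~3.5]{BG}, supplies an $\F_p[M]$-linear epimorphism $X_{r'}\otimes V_1 \twoheadrightarrow X_{r-1}$, so $\dim X_{r-1}\le 2(p+1)=2p+2$. Combined with the standard bound $\dim X_r\le p+1$ (the $\Gamma$-orbit of $Y^r$ consisting of the at most $p+1$ lines $(\alpha X + \beta Y)^r$ indexed by $\P^1(\F_p)$), this yields the uniform upper bound $\dim X_{r-i}\le (i+1)(p+1)$ for $i=0,1,2$.

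To establish the reverse inequalities I would bound the successive quotients $\dim(X_{r-i+1}/X_{r-i})\le p+1$ for $i=1,2$. For $i=1$, transport $X_{r-1}$ through the isomorphism $X_{r-2}\cong X_{r''}\otimes V_2$ to the $\F_p[\Gamma]$-submodule generated by $Y^{r-2}\otimes XY$; the quotient is then cyclic, generated by the image of $Y^{r-2}\otimes X^2$, and since $V_2$ is irreducible for $p>2$, the second-factor projection of its $\Gamma$-orbit forces this cokernel to embed into a single copy of a module of dimension at most $p+1$. An analogous argument inside $X_{r'}\otimes V_1$, using the irreducibility of $V_1$, bounds $\dim(X_{r-1}/X_r)\le p+1$.

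Adding telescopically, $3p+3=\dim X_{r-2}\le \dim X_r + (p+1)+(p+1)\le 3(p+1)=3p+3$, which forces equality at each step: $\dim X_r=p+1$ and $\dim X_{r-1}=2p+2$. The epimorphism $X_{r'}\otimes V_1\twoheadrightarrow X_{r-1}$ must then be an isomorphism (both sides of dimension $2p+2$), yielding the remaining claim $\dim X_{r'}=p+1$. The main obstacle lies in the quotient bound $\dim(X_{r-2}/X_{r-1})\le p+1$, which requires a careful analysis of the $\Gamma$-orbit of $Y^{r-2}\otimes X^2$ modulo the submodule generated by $Y^{r-2}\otimes XY$ and leverages the irreducibility of $V_2$; once this bound is in hand, the rest of the argument is dimensional bookkeeping.
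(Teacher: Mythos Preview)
Your overall strategy---upper bounds from the tensor epimorphisms, then quotient bounds $\dim(X_{r-i+1}/X_{r-i})\le p+1$ to force equality by telescoping---is the right shape and matches what the paper does (the paper just writes ``is seen as in the proof of \cite[Lemma 3.5]{BG}'' for the step $\dim X_{r-1}=2p+2$, then invokes \cite[Lemma 3.5]{BG} directly for $\dim X_r=p+1$, and finishes with the surjection $X_{r'}\otimes V_1\twoheadrightarrow X_{r-1}$ as you do).

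However, your justification of the key bound $\dim(X_{r-2}/X_{r-1})\le p+1$ does not work as written. There is no $\Gamma$-equivariant ``second-factor projection'' $X_{r''}\otimes V_2\to V_2$ once $\dim X_{r''}=p+1>1$, and irreducibility of $V_2$ by itself says nothing about the size of a cyclic quotient of $X_{r''}\otimes V_2$. You correctly flag this as the main obstacle, but the method you sketch to overcome it is not a method.

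The actual argument (which is what the proof of \cite[Lemma 3.5]{BG} does one level down and what the paper is implicitly invoking) goes through the explicit spanning set of \cref{lem:Xr2Generators}. Modulo $X_{r-1}$ the generators $X^r$, $Y^r$, $X^{r-1}Y$, $XY(lX+Y)^{r-2}$ all vanish, leaving the $2p$ elements $X^2(jX+Y)^{r-2}$ and $Y^2(X+kY)^{r-2}$. Expanding
\[
(X+kY)^r = X^2(X+kY)^{r-2} + 2k\,XY(X+kY)^{r-2} + k^2\,Y^2(X+kY)^{r-2}
\]
and noting that the left side and the middle term lie in $X_{r-1}$ gives $X^2(X+kY)^{r-2}\equiv -k^2\,Y^2(X+kY)^{r-2}\bmod X_{r-1}$ for $k\ne 0$; after rescaling this identifies each $X^2(jX+Y)^{r-2}$, $j\ne 0$, with a $Y^2$-type generator. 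The surviving spanning set has $p+1$ elements, giving the bound. The bound $\dim(X_{r-1}/X_r)\le p+1$ is the analogous (and easier) computation using $(jX+Y)^r=jX(jX+Y)^{r-1}+Y(jX+Y)^{r-1}$. With these two bounds in hand, your telescoping argument goes through exactly as you wrote.
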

  \begin{proof}
    If $\dim X_{r-2} = 3p + 3$, then
    the left-hand side of the epimorphism $\phi \from X_{r''} \otimes V_2 \twoheadrightarrow X_{r-2}$ from \cref{lem:Xr2V2xXrpp} has dimension at least $3(p+1)$.
    Therefore, as $\dim V_2 = 3$, we have $\dim X_{r''} \geq p+1$ and thus $\dim X_{r''} = p+1$.

    That $\dim X_{r-1} = 2p + 2$ (that is, is maximal) is seen as in the proof of \cite[Lemma 3.5]{BG}.
    Therefore $\dim X_r = p + 1$ (that is, is maximal) by \cite[Lemma 3.5]{BG}.

    If $\dim X_{r-1} = 2p + 2$, then by the epimorphism $X_{r'} \ox V_2 \twoheadrightarrow X_{r-1}$, given by $f \otimes g \mapsto f \cdot g$, also $\dim X_{r'} = p + 1$ is maximal.
  \end{proof}

  \subsection{Singular Quotient of $X_r$, $X_{r-1}$ and $X_{r-2}$}
  \label{sec:singular-quotient-of-x-r-x-r-1-and-x-r-2}

  We generalize \cite[(4.5)]{G} by computing the quotients of $X_r$, $X_{r-1}$ and $X_{r-2}$ by its largest singular module:
  We denote by
  \[
    N
    =
    \{ \text{ all } m \text{ in } M \text{ such that } \det m = 0 \},
  \]
  all singular matrices and, for every module $V$ with an action of $M$, its largest singular submodule by
  \[
    V^*
    =
    \{ \text{ all } v \text{ in } V \text{ such that } n \cdot v = 0 \text{ for all } n \text{ in } N \}.
  \]

  \begin{prop}[Extension of {\cite[(4.5)]{G}}]
    \label{prop:Xrm012ModAst}
    Let $r > 0$.
    \begin{enumerate}
      \item
        For the unique $a$ in $\left\{ 1, \dots, p-1 \right\}$ such that $r \equiv a \mod (p-1)$,
        \[
          X_r / X_r^*
          =
          X_a/X_a^*
          =
          V_a.
        \]
      \item
        For the unique $a$ in $\left\{ 2, \dots, p \right\}$ such that $r \equiv a \mod (p-1)$,
        \[
          X_{r-1} / X_{r-1}^*
          =
          X_{a-1}/X_{a-1}^*
          =
          V_a/V_a^*
          =
          \begin{cases}
            V_a ,      & \text{\quad for $a = 2, \ldots,p-1$ }\\
            V_a / V_a^*, & \text{\quad for $a = p$ and $r \geq p$ }.
          \end{cases}
        \]
      \item
        For the unique $a$ in $\left\{ 3, \dots, p + 1 \right\}$ such that $r \equiv a \mod (p-1)$,
        \[
          X_{r-2} / X_{r-2}^*
          =
          X_{a-2}/X_{a-2}^*
          =
          V_a/V_a^*
          =
          \begin{cases}
            V_a ,        & \text{\quad for $a = 3, \ldots,p-1$ }\\
            V_a / V_a^*,   & \text{\quad for $a = p,p + 1$ and $r \geq p$ }.
          \end{cases}
        \]
    \end{enumerate}
  \end{prop}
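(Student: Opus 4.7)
The plan is to extend \cite[(4.5)]{G}, which gives part (i), to parts (ii) and (iii), handled uniformly with $i = 1$ respectively $i = 2$.

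The starting observation is that $X_{r-i}^* = X_{r-i} \cap V_r^*$: both sides equal the elements of $X_{r-i}$ annihilated by every singular matrix. Hence the inclusion $X_{r-i} \hookrightarrow V_r$ descends to an injection $X_{r-i}/X_{r-i}^* \hookrightarrow V_r/V_r^*$, whose target is, by \cref{lem:VrAstQuotients}(i), a (generically non-split) extension
\[
  0 \to V_b \to V_r/V_r^* \to V_{p-b-1} \ox \rD^b \to 0
\]
with $b \equiv r \mod p-1$, $b \in \{1, \ldots, p-1\}$ (split only when $b = p-1$). The image of the generator $X^iY^{r-i}$ is non-zero in $V_r/V_r^*$ by \cref{lem:VrAstCriteria}\ref{en:VrAstCriterion}, since its only non-zero coefficient is $c_{r-i} = 1$ and $\sum c_j = 1 \not\equiv 0 \mod p$.

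When $a \leq p-1$, one has $a = b$: then $V_a$ is irreducible and is the unique irreducible submodule of $V_r/V_r^*$, so to conclude $X_{r-i}/X_{r-i}^* = V_a = V_a/V_a^*$ it suffices to show that the image of $X^iY^{r-i}$ maps to zero in the cosocle $V_{p-b-1} \ox \rD^b$, a finite linear check via the action of matrices that witness the cosocle projection. When $a \in \{p, p+1\}$, one has $b = a - (p-1)$; by \cref{lem:VrAstQuotients}(i) applied at $r = a$, the module $V_a/V_a^*$ has the same Jordan--Hölder factors and extension type as $V_r/V_r^*$, hence is isomorphic to it. The claim then reduces to showing that the image of $X^iY^{r-i}$ projects non-trivially onto the cosocle, in which case, by non-splitness, the $\Gamma$-submodule it generates must be all of $V_r/V_r^* \cong V_a/V_a^*$. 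This non-vanishing of the cosocle projection, after expansion of $g \cdot X^iY^{r-i}$ for suitable $g \in \Gamma$, reduces to the binomial sums of \cref{lem:binomialsumaeq2}: namely $\equiv a - r \mod p$ for $i = 1$ and $\equiv (a-r)(a+r-1)/2 \mod p$ for $i = 2$, both non-zero mod $p$ in the generic regime $r > ap$. Finally, the intermediate equality $X_{r-i}/X_{r-i}^* = X_{a-i}/X_{a-i}^*$ follows by applying the same argument in the base case $r = a$, where one also checks directly that $X_{a-i} = V_a$ (immediate for $a \leq p-1$ since $V_a$ is then irreducible, and by an explicit generator computation for $a = p, p+1$).

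The main obstacle will be the cases $a = p, p+1$: isolating the exact binomial sum that realizes the cosocle projection of $X^iY^{r-i}$ modulo $V_r^*$ requires careful matrix-by-matrix bookkeeping, analogous to the arguments in \cite[Propositions 2.1 and 2.2]{GG} for the successive quotients $V_r^*/V_r^{**}$.
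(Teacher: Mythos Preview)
Your approach via the embedding $X_{r-i}/X_{r-i}^* \hookrightarrow V_r/V_r^*$ is genuinely different from the paper's, but it has a real gap in the cases $a = p, p+1$. You claim that the cosocle projection of $X^iY^{r-i}$ reduces to the binomial sums of \cref{lem:binomialsumaeq2}, yielding $a-r$ for $i=1$ and $(a-r)(a+r-1)/2$ for $i=2$, and that these are ``both non-zero mod $p$ in the generic regime $r > ap$''. This is false: these quantities vanish mod $p$ precisely when $r \equiv a \mod p$ (for $i=1$) or when $r \equiv a$ or $r \equiv 1-a \mod p$ (for $i=2$), independently of the size of $r$. The proposition, however, holds for \emph{all} $r$ in the given congruence class mod $p-1$, so an argument that fails on an arithmetic progression of $r$'s is not a proof. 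Moreover, you have not actually derived that these particular sums compute the cosocle projection of the single monomial $X^iY^{r-i}$; the sums in \cref{lem:binomialsumaeq2} arise in the paper from expanding $\sum_k k^c(kX+Y)^r$, which is a different element of $X_r$.

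The paper's argument avoids this entirely. It never touches the cosocle map or any binomial identity. Instead it builds, for each $i \in \{0,1,2\}$, an abstract $M$-module $U_{r-i}$ (namely $U_{r-i} = U_{r-i} \otimes V_0$, $U_{r'} \otimes V_1$, $U_{r''} \otimes V_2$ respectively, where $U_s$ is Glover's $(p{+}1)$-dimensional model) together with a surjection $\rho_{r-i} \colon U_{r-i} \twoheadrightarrow X_{r-i}$, and observes that the $M$-module $U_{r-i}$ depends only on $r \bmod p-1$. The key computation is then that $\rho_{r-i}^{-1}(X_{r-i}^*) = \rho_{a-i}^{-1}(X_{a-i}^*)$: for any singular $n \in M$ one writes $n \cdot \rho_{r-i}(x)$ as a scalar multiple of $v_n^r$ using only that $n$ has rank $\leq 1$ on $V_1$, and the scalar is a sum $\sum_i c_i \gamma_i^{r-i}$ which, by $\#\F_p^* = p-1$, equals the corresponding sum with $r$ replaced by $a$. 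This gives $X_{r-i}/X_{r-i}^* \cong X_{a-i}/X_{a-i}^*$ directly and uniformly, with no dependence on $r \bmod p$ and no case split between $a \leq p-1$ and $a \in \{p, p+1\}$.
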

  \begin{proof}
    \hfill
    \begin{enumerate}
      \item
        To prove $X_r / X_r^* = X_a / X_a^*$, we adapt the proof of \cite[(4.5)]{G} so that it readily generalizes to $X_{r-1}$:
        Let $U_r$ (denoted $X$ in \textit{op. cit.}) be the vector space of dimension $p + 1$ with basis vectors $x_0$, $x_1$, \ldots, $x_p$.
        Let $\rho_r \colon U_r \to X_r$ be given by
        \[
          x_0 \mapsto x^r
          \quad \text{ and } \quad
          x_i \mapsto (ix + y)^r.
        \]
        In particular,
        \[
          \rho_r x_i
          =
          (\rho_1 x_i)^r.
        \]
        For every nonzero $v$ in $X_1 = V_1$, there is a unique $\gamma$ in $\F_p$ and a unique $i$ in $ \{ 0, 1, \ldots, p \} $ such that $v = \gamma \rho_1(x_i)$.
        In particular, for every $v = m \cdot \rho_1(x_i)$ for $i = 0$, $1$, \ldots, $p$.
        Let $M$ act on $U_r$ by
        \[
          m \cdot x_i
          =
          \begin{cases}
            0,	& \text{ \quad if } m \cdot \rho_1(x_i) = 0 \\
            \gamma^r x_j,	& \text{ \quad if } m \cdot \rho_1(x_i) = \gamma \rho_1( x_j).
          \end{cases}
        \]
        With this action of $M$, the proof of \cite[(4.5)]{G} shows $\rho_r$ is $M$-linear.
        Also, $\# \F_p^* = p-1$, the $\F_p[M]$-modules $U_r$ and $U_a$ are isomorphic.
        We claim
        \[
          \rho_a^{-1}(X_a^*)
          \cong
          \rho_r^{-1}(X_r^*),
        \]
        that is:
        For every $n$ in $N$ and $x$ in $U_a = U_r$, we have $n \cdot \rho_a (x) = 0$ if and only if $n \cdot \rho_r (x) = 0$.

        To see this, note that the image of $n$ on $V_1$ is at most one-dimensional, $\dim (n V_1) \leq 1$, that is, there is $v_n$ in $V_1$ such that for every $v$ in $V_1$ there is $\gamma_v$ in $\F_p$ such that $n \cdot v = \gamma_v v_n$.
        Therefore, by definition of the $M$-linear homomorphism $\rho_r$, for every $i = 0,1, \ldots, p$ there is $\gamma_i$ in $\F_p$ such that
        \[
          n \cdot \rho_r(x_i)
          =
          \gamma_i^r v_n^r.
        \]
        Writing $x = \sum_{i} b_i x_i$, therefore
        \[
          n \cdot \rho_r(x)
          =
          \left[ \sum b_i \gamma_i^r \right] v_n^r.
        \]
        Similarly,
        \begin{align}
          n \cdot \rho_a(x)
        & =
        \left[ \sum b_i \gamma_i^a \right] v_n^a
        \end{align}
        Because $r \equiv a \mod (p-1)$ and $\# \F_p^* = p-1$,
        \[
          \sum b_i \gamma_i^a
          =
          \sum b_i \gamma_i^r.
        \]
        Therefore,
        \[
          n \cdot \rho_r(x) = 0
          \quad \text{ if and only if } \quad
          n \cdot \rho_a(x) = 0,
        \]
        that is,
        \[
          \rho_r^{-1}(X_r^*)
          \cong
          \rho_a^{-1}(X_a^*).
        \]
        Therefore
        \[
          X_r / X_r^*
          \opiso
          U_r / \rho_r^{-1}(X_r^*)
          \cong
          U_a / \rho_a^{-1}(X_a^*)
          \iso
          X_a / X_a^*.
        \]
        (As observed in the proof of \cite[(4.5)]{G}, indeed $X_a^* = 0$ because $a < p$ and $V_a$ is irreducible.)
        \label{en:Glov45r}
      \item
        To prove $X_{r-1} / X_{r-1}^* = X_{a-1} / X_{a-1}^*$, we adapt the above proof:
        Put $r' = r-1$.
        \begin{itemize}
          \item
            Let $U_{r-1} = U_{r'} \otimes V_1$ be the $\F_p[M]$-module given by the tensor product of the $\F_p[M]$-modules $U_{r'}$ and $V_1$:
            If $x_0$, $x_1$, \ldots, $x_p$ is a basis of $U_{r'}$ and $v'$ and $v''$ one of $V_1$, then the basis vectors of $U_{r-1}$ are $x_0 \otimes v'$, \ldots, $x_p \otimes v'$ and $x_0 \otimes v''$, \ldots, $x_p \otimes v''$.
            (NB: We follow the abuse of notation that distinguishes $X_{r-1}$ from $X_{r'}$ for $r' = r - 1$.)
          \item
            let $\rho_{r-1} \from U_{r-1} \to X_{r-1}$ be the composition
            \[
              U_{r-1}
              =
              U_{r'} \otimes V_1
              \overset{\rho_{r'} \otimes \id}{\longrightarrow}
              X_{r'} \otimes V_1
              \to
              X_{r-1}
            \]
            where the right-hand side homomorphism sends $f \otimes g$ to $f \cdot g$.
        \end{itemize}
        Because the $\F_p[M]$-modules $U_{r'}$ and $U_{a'}$ are isomorphic, so are $U_{r-1}$ and $U_{a-1}$.
        We claim
        \[
          \rho_{a-1}^{-1}(X_{a-1}^*)
          \cong
          \rho_{r-1}^{-1}(X_{r-1}^*),
        \]
        that is:
        For every $n$ in $N$ and $x$ in $U_{a-1} = U_{r-1}$, we have $n \cdot \rho_{a-1} (x) = 0$ if and only if $n \cdot \rho_{r-1} (x) = 0$.
        Because the image of $n$ on $V_1$ is at most one-dimensional, $\dim (n V_1) \leq 1$, there is $v_n$ in $V_1$ such that
        \begin{itemize}
          \item for every $i = 0,1, \ldots, p$ there is $\gamma_i$ in $\F_p$ such that
            \[
              n \cdot x_i
              =
              \gamma_i^{r'} v_n^{r'}, \quad \text{ and }
            \]
          \item there are $\gamma'$ and $\gamma''$ in $\F_p$ such that $n \cdot v' = \gamma' v_n$ and $n \cdot v'' = \gamma'' v_n$.
        \end{itemize}
        Writing $x = \sum_{i} b'_i x_i \otimes v' + \sum_{i} b''_i x_i \otimes v''$, therefore
        \begin{align}
          n \cdot \rho_{r-1}(x)
        & =
        \left[ \gamma' \sum b'_i \gamma_i^{r'} \right] v_n^{r'} \cdot v_n
        +
        \left[ \gamma'' \sum b''_i \gamma_i^{r'} \right] v_n^{r'} \cdot v_n\\
        & =
        \left[ \sum_i (\gamma' b'_i  + \gamma'' b''_i) \gamma_i^{r'} \right] v_n^{r}
        \end{align}
        Similarly,
        \begin{align}
          n \cdot \rho_{a-1}(x)
        & =
        \left[ \sum_i (\gamma' b'_i  + \gamma'' b''_i) \gamma_i^{a'} \right] v_n^a.
        \end{align}
        Because $r' \equiv a' \mod (p-1)$ and $\# \F_p^* = p-1$,
        \[
          \sum_i (\gamma' b'_i  + \gamma'' b''_i) \gamma_i^{r'}
          =
          \sum_i (\gamma' b'_i  + \gamma'' b''_i) \gamma_i^{a'}
        \]
        Therefore,
        \[
          n \cdot \rho_{r-1}(x) = 0
          \quad \text{ if and only if } \quad
          n \cdot \rho_{a-1}(x) = 0,
        \]
        that is,
        \[
          \rho_{r-1}^{-1}(X_{r-1}^*)
          \cong
          \rho_{a-1}^{-1}(X_{a-1}^*).
        \]
        Therefore
        \[
          X_{r-1} / X_{r-1}^*
          \opiso
          U_{r-1} / \rho_{r-1}^{-1}(X_{r-1}^*)
          \cong
          U_{a-1} / \rho_{a-1}^{-1}(X_{a-1}^*)
          \iso
          X_{a-1} / X_{a-1}^*.
        \]
        \label{en:Glov45r-1}
      \item
        To prove $X_{r-2} / X_{r-2}^* = X_{a-2} / X_{a-2}^*$, we adapt the above proof:
        Put $r'' = r-2$.
        \begin{itemize}
          \item
            Let $U_{r-2} = U_{r''} \otimes V_2$ be the $\F_p[M]$-module given by the tensor product of the $\F_p[M]$-modules $U_{r''}$ and $V_2$:
            If $x_0$, $x_1$, \ldots, $x_p$ is a basis of $U_{r'}$ and $v_0$, $v_1$ and $v_2$ one of $V_2$, then the basis vectors of $U_{r-2}$ are $x_0 \otimes v_1$, \ldots, $x_p \otimes v_1$, $x_0 \otimes v_1$, \ldots, $x_p \otimes v_1$ and $x_0 \otimes v_2$, \ldots, $x_p \otimes v_2$.
            (NB: We follow the abuse of notation that distinguishes $X_{r-2}$ from $X_{r''}$ for $r' = r - 2$.)
          \item
            let $\rho_{r-2} \from U_{r-2} \to X_{r-2}$ be the composition
            \[
              U_{r-2}
              =
              U_{r''} \otimes V_2
              \overset{\rho_{r''} \otimes \id}{\longrightarrow}
              X_{r''} \otimes V_2
              \to
              X_{r-2}
            \]
            where the right-hand side homohomomorphism sends $f \otimes g$ to $f \cdot g$.
        \end{itemize}
        Because the $\F_p[M]$-modules $U_{r''}$ and $U_{a''}$ are isomorphic, so are $U_{r-2}$ and $U_{a-2}$.

        Let $n$ in $N$ and $x$ in $U_{a-2} = U_{r-2}$.
        It suffices to prove that $n \cdot \rho_{a-2} (x) = 0$ if and only if $n \cdot \rho_{r-2} (x) = 0$, and we will prove this as above:
        Because the image of $n$ on $V_1$ is at most one-dimensional, $\dim (n V_1) \leq 1$, there is $v_n$ in $V_1$ such that
        \begin{itemize}
          \item  by definition of the $M$-action and $\rho_r$ on $U_r$, for every $i = 0,1, \ldots, p$ there is $\gamma_i$ in $\F_p$ such that
            \[
              n \cdot \rho_{r''} (x_i)
              =
              \gamma_i^{r''} v_n^{r''}, \quad \text{ and }
            \]
          \item by definition of the $M$-action on $V_2$ with basis $v_0 = x^2$, $v_1 = xy$ and $v_2 = y^2$, there are $\Gamma_0$, $\Gamma_2$ and $\Gamma_1'$, $\Gamma_1''$ in $\F_p$ such that
            \[
              n \cdot v_0
              =
              \Gamma_0^2 v_n^2,
              \quad
              n \cdot v_1
              =
              \Gamma_1' \Gamma_1'' v_n^2,
              \quad \text{ and } \quad
              n \cdot v_2
              =
              \Gamma_2^2 v_n^2.
            \]
        \end{itemize}
        Writing $x = \sum_{i = 0, 1, \ldots, p, j = 0,1,2} b_{i,j} x_i \otimes v_j$, therefore
        \begin{align}
          & n \cdot \rho_{r-2}(x)
          \\
          = &
          \quad \left[ \sum b_{i,0} \gamma_i^{r''} \Gamma_0^2 \right] v_n^{r''} \cdot v_n^2
          +
          \left[ \sum b_{i,1} \gamma_i^{r''} \Gamma_1' \Gamma_1'' \right] v_n^{r''} \cdot v_n^2\\
            &
          +
          \left[ \sum b_{i,2} \gamma_i^{r''} \Gamma_2^2 \right]
          v_n^{r''} \cdot v_n^2\\
          = &
          \left[ \sum_i \gamma_i^{r''}
          ( \Gamma_0^2 b_{i,0}
          +
          \Gamma_1'\Gamma_1'' b_{i,1}
          +
          \Gamma_2^2 b_{i,2}) \right]
          v_n^r
        \end{align}
        Similarly,
        \[
          n \cdot \rho_{a-2}(x)
          =
          \left[ \sum_i \gamma_i^{a''}
          (\Gamma_0^2 b_{i,0}
          +
          \Gamma_1'\Gamma_1'' b_{i,1}
          +
          \Gamma_2^2 b_{i,2}) \right]
          v_n^a.
        \]
        Because $r'' \equiv a'' \mod (p-1)$ and $\# \F_p^* = p-1$, the result follows as above.
        \qedhere
    \end{enumerate}
  \end{proof}

  \begin{lem}[Jordan-Hölder series of $X_r$]
    \label{lem:JHXr}
    There is a short exact sequence
    \[
      0 \to X_r^* \to X_r \to X_r/X_r^* \to 0.
    \]
    Let $r \geq p$.
    For $a$ in $\left\{ 1, \ldots, p-1 \right\}$ such that $r \equiv a \mod (p-1)$,
    \begin{itemize}
      \item we have $X_r/X_r^* = V_a$, and
      \item $\dim X_r = p + 1$ if and only if $X_r^* \neq 0$;
        if so, then $X_r^* = V_{p-a-1} \otimes \rD^a$.
    \end{itemize}
  \end{lem}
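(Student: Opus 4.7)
The first assertion (the short exact sequence) is tautological from the definition of $X_r^*$. For $X_r/X_r^* \cong V_a$, the plan is to apply \cref{prop:Xrm012ModAst}(i), reducing the computation to $X_a/X_a^*$. Since $V_a$ is irreducible for $a \in \{1, \ldots, p-1\}$ and $X_a$ is a nonzero submodule (containing $Y^a$), we obtain $X_a = V_a$. The element $X^a \in V_a$ is not annihilated by the singular matrix $\begin{pmatrix}1 & 0 \\ 0 & 0\end{pmatrix}$, so $V_a^*$ is a proper submodule of the irreducible $V_a$, hence $V_a^* = 0$ and $X_a^* = 0$, so $X_r/X_r^* \cong V_a$.

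For the dimension equivalence, I would first establish the universal upper bound $\dim X_r \leq p+1$: the $\F_p[\Gamma]$-module $X_r$ is the $\Gamma$-span of $Y^r$, and is therefore generated by the $p+1$ vectors $(cX + dY)^r$ indexed by $[c:d] \in \P^1(\F_p)$. Combined with $\dim (X_r/X_r^*) = a+1$, this yields $\dim X_r^* \leq p - a$, so that $\dim X_r = p+1$ iff $\dim X_r^* = p - a$ iff $X_r^* \neq 0$ (using $p - a \geq 1$).

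The remaining identification $X_r^* \cong V_{p-a-1}\otimes\rD^a$ whenever nonzero is the main technical point. Since $V_{p-a-1}\otimes\rD^a$ has dimension exactly $p - a$, matching the upper bound, it suffices to realize it as a nonzero submodule of $X_r^*$. Central character constraints force any irreducible submodule $V_m\otimes\rD^i$ of $V_r$ to satisfy $m + 2i \equiv a \mod p-1$, leaving the two dimension-$(p-a)$ candidates $V_{p-a-1}\otimes\rD^a$ and $V_{p-a-1}\otimes\rD^{a+(p-1)/2}$; the correct twist is singled out by Glover's socle description of $V_r$ in \cite{G}. The main obstacle is then to verify that this socle copy actually lies inside $X_r$, not merely inside $V_r$; I would do this by exhibiting an explicit generator in the $\Gamma$-span of $Y^r$ (for instance, as the image of a suitable multiple of $\theta = X^pY - XY^p$), invoking the divisibility criteria of \cref{lem:VrAstCriteria} to confirm membership in $V_r^*$.
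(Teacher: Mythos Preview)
Your treatment of the tautological short exact sequence, of $X_r/X_r^* \cong V_a$ via \cref{prop:Xrm012ModAst}(i), and of the bound $\dim X_r \le p+1$ from the spanning set $\{(cX+dY)^r : [c:d]\in\P^1(\F_p)\}$ is correct and matches the paper.  The paper then simply invokes \cite[Lemma~4.6]{BG} for the dichotomy $X_r^* \in \{0,\; V_{p-a-1}\otimes\rD^a\}$, whereas you attempt a direct argument.

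Your direct argument has a genuine gap.  The chain ``$\dim X_r^* = p-a$ iff $X_r^* \neq 0$'' is asserted before it is proven: the implication $X_r^* \neq 0 \Rightarrow \dim X_r^* = p-a$ only follows \emph{after} you have embedded $V_{p-a-1}\otimes\rD^a$ into $X_r^*$, so the logic of your second paragraph is premature.  More seriously, your plan for that embedding --- locate the socle copy of $V_{p-a-1}\otimes\rD^a$ inside $V_r$ via Glover, then ``exhibit an explicit generator in the $\Gamma$-span of $Y^r$'' --- is an unconditional construction that nowhere uses the hypothesis $X_r^*\neq 0$.  If such a generator could always be produced, then $X_r^*$ would never vanish, contradicting the statement you are proving (and \cref{prop:XrAst0IffSrMin}).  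What you actually need is the implication ``$X_r^* \neq 0 \Rightarrow V_{p-a-1}\otimes\rD^a \subseteq X_r^*$'', and an explicit-element search does not deliver this.  The clean route (essentially what underlies \cite[Lemma~4.6]{BG}) is to work with the $(p{+}1)$-dimensional model $U_r$ from the proof of \cref{prop:Xrm012ModAst}: one has $U_r \cong U_a$, the kernel $\ker\rho_a \subset U_a$ is an \emph{irreducible} $M$-module isomorphic to $V_{p-a-1}\otimes\rD^a$, and $\ker\rho_r \subseteq \ker\rho_a$ by that same proof; hence $\ker\rho_r$ is either $0$ or all of $\ker\rho_a$, and $X_r^* = \rho_r(\ker\rho_a) \cong \ker\rho_a/\ker\rho_r$ is correspondingly either $V_{p-a-1}\otimes\rD^a$ or $0$.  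Irreducibility of $\ker\rho_a$ is the missing ingredient in your outline.
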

  \begin{proof}
    We have $\dim X_r \leq p + 1$ and $X_r / X_r^* = X_a/X_a^* = V_a$ by \cref{prop:Xrm012ModAst}.(i).
    By \cite[Lemma 4.6]{BG}, either $X_r^* = V_{p-a-1} \otimes \rD^a$ (if and only if $\dim X_r = p+1$) or $X_r^* = 0$ (if and only if $\dim X_r < p+1$).
  \end{proof}

  \begin{lemma}[Extension of {\cite[Lemma 4.7]{BG}}]
    \label{lem:analog47}
    Let $p \geq 3$ and $r \geq p$.
    Let $a$ in $\{ 1, \ldots, p-1 \}$ such that $r \equiv a \mod (p-1)$.
    \begin{enumerate}
      \item If $a = 1$, then $X_r^{*} = X_r^{**}$ if and only if $p \mid  r$, and $X_r^{**} = X_r^{***}$.
      \item If $a = 2$, then $X_r^{*} = X_r^{**}$, and $X_r^{**} = X_r^{***}$ if and only if $r \equiv 0,1 \mod p$.
      \item If $a \geq  3$, then $X_r^* = X_r^{**} = X_r^{***}$.
    \end{enumerate}
  \end{lemma}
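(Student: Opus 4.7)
The plan is to use the irreducibility of $X_r^*$ to reduce the problem to testing whether a single explicit element lies in $V_r^{**}$ and $V_r^{***}$. By \cref{lem:JHXr}, when nonzero, $X_r^*$ is the irreducible $\F_p[\Gamma]$-module $V_{p-1-a}\otimes\rD^a$, so each submodule in the chain $X_r^{***}\subseteq X_r^{**}\subseteq X_r^*$ equals either $0$ or $X_r^*$, and the three equalities of the lemma boil down to deciding the two inclusions $X_r^* \subseteq V_r^{**}$ and $X_r^* \subseteq V_r^{***}$; the case $X_r^* = 0$ is trivial, so I assume $X_r^* \neq 0$.

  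As test element I take
  \[
    F_a := \sum_{\lambda \in \F_p^*} \lambda^{-a}(X + \lambda Y)^r + Y^r = -\sum_{\substack{j \equiv a \mod p-1 \\ 0 < j < r}} \binom{r}{j} X^{r-j} Y^j,
  \]
  with the extra term $X^r$ added on the left when $(p-1)\mid a$ so that $c_0 = 0$. Either way $F_a \in X_r$, and $F_a \in V_r^*$ by \cref{lem:VrAstCriteria}(i) together with \cref{lem:binomialsumaeq2}(i); hence $F_a \in X_r^*$. By irreducibility of $X_r^*$, whenever $F_a \neq 0$ the $\F_p[\Gamma]$-module it generates is all of $X_r^*$, so each inclusion $X_r^* \subseteq V_r^{**}$ and $X_r^* \subseteq V_r^{***}$ becomes equivalent to $F_a$ itself satisfying the corresponding coefficient criteria of \cref{lem:VrAstCriteria}(ii)--(iii).

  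I then evaluate those criteria. After rewriting $\sum j c_j$ and $\sum j(j-1) c_j$ via the identities $j \binom{r}{j} = r \binom{r-1}{j-1}$ and $j(j-1) \binom{r}{j} = r(r-1) \binom{r-2}{j-2}$ and reindexing, \cref{lem:binomialsumaeq2}(i) applied to $r-1$ and $r-2$ with the appropriate representatives of $a-1$ and $a-2$ reduces everything to small boundary contributions. The outcome recovers \cite[Lemma 4.7]{BG}, namely $X_r^* \subseteq V_r^{**}$ iff $a \geq 2$ or $p \mid r$; and it yields the new conditions for $X_r^* \subseteq V_r^{***}$: automatic for $a \geq 3$ (proving (iii)); identical to $p \mid r$ (the $V_r^{**}$ condition) when $a = 1$, so $X_r^{**} = X_r^{***}$ unconditionally, proving (i); and requiring additionally $\binom{r}{2} \equiv 0 \mod p$, i.e.\ $r \equiv 0, 1 \mod p$, when $a = 2$, proving (ii).

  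The principal obstacle is the possible degeneracy $F_a = 0$ with $X_r^* \neq 0$. A Lucas-theorem inspection of $\binom{r}{j} \mod p$ on the progression $j \equiv a \mod p-1$, $0 < j < r$, shows that $F_a = 0$ if and only if the base-$p$ digit sum $\Sigma(r)$ equals $a$; but then $r$ is so sparse that one verifies directly $\dim X_r \leq \Sigma(r) + 1 = a + 1 \leq p$, whence $\dim X_r < p+1$ and hence $X_r^* = 0$ by \cref{lem:JHXr}---contradicting the standing assumption and ruling out this pathology.
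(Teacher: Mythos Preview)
Your proof is correct and takes a genuinely different route from the paper's.

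The paper argues case by case with distinct techniques: it cites \cite[Lemmas~3.1 and~4.7]{BG} for the $X_r^*=X_r^{**}$ statements, then for $X_r^{**}=X_r^{***}$ it first observes (via \cref{lem:VrAstQuotients}\ref{en:VrAstAstModVrAstAstAst}) that the only possible Jordan--H\"older factor $V_{p-a-1}\otimes\rD^a$ cannot occur inside $V_r^{**}/V_r^{***}$ unless $a=2$, settling $a\neq 2$ structurally. For $a=2$ and $r\equiv 0,1\bmod p$ the paper then invokes Frobenius-twist and $\theta$-multiplication isomorphisms to transport the question to smaller~$r$. Your approach is uniform: you exploit the irreducibility of $X_r^*$ to reduce both inclusions $X_r^*\subseteq V_r^{**}$ and $X_r^*\subseteq V_r^{***}$ to membership tests on the single explicit element~$F_a$, and then read off all cases from the coefficient criteria of \cref{lem:VrAstCriteria}. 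This is cleaner and more elementary; the paper's structural isomorphisms, on the other hand, are tools reused elsewhere in the article.

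One small point: your treatment of the degenerate case $F_a=0$ asserts $\dim X_r\le\Sigma(r)+1$ as something one ``verifies directly.'' This is true but not entirely immediate; it is the content of \cite[Lemma~4.5]{BG} (and is reproved later in the paper as one direction of \cref{prop:XrAst0IffSrMin}). Citing that result, or simply noting that $F_a=0\iff\Sigma(r)=a\iff X_r^*=0$ by \cref{prop:XrAst0IffSrMin}, would make the argument airtight.
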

  \begin{proof}
    Regarding the equality between $X_r^*$ and $X_r^{**}$:
    If $a = 1$, then by \cite[Lemma 3.1]{BG}, we have $X_r^{*} = X_r^{**}$ if and only if $p | r$.
    If $a \geq 2$, then $X_r^* = X_r^{**}$ by \cite[Lemma 4.7]{BG}.

    Regarding the equality between $X_r^{**}$ and $X_r^{***}$:
    If $X_r^{**} / X_r^{***} \ne 0$, then $X_r^{**} /X_r^{***} = V_{p-a-1} \ox \rD^a$ by \cref{lem:JHXr}.
    By \cref{lem:VrAstQuotients}.(iii), we find that $V_{p-a-1} \ox \rD^a$ is a $\Gamma$-submodule of $V_r^{**}/V_r^{***}$ if and only if $a = 2$.
    (Beware of the shift from $a$ to $a+p-1$ for $a = 1, \ldots ,4$!)
    Therefore, if $a \neq 2$, then $X_r^{**} / X_r^{***} = 0$.

    For $a = 2$, recall the polynomial in the proof of \cite[Lemma 3.1.(i)]{BG}:
    \[
      F(X,Y)
      =
      \sum_{j = 1, \ldots, r-1} \binom{r}{j} \sum_{k \in \F_p} k^{r-j} X^{r-j} Y^j
      \equiv
      \sum_{\substack{j = 1, \ldots, r-1\\ j \equiv 2 \mod (p-1)}} - \binom{r}{j} X^{r-j} Y^j
      \mod p.
    \]
    It is in $X_r^{**}$ by \cref{lem:VrAstCriteria} and \cref{lem:binomialsumaeq2}.
    If $r \not\equiv 0,1 \mod p$, then $\binom{r}{2} = r(r-1)/2 \not\equiv 0$;
    therefore, by the same token, $F(X,Y)$ is not in $X_r^{***}$.
    Thus $X_r^{**}/X_r^{***} \neq 0$.

    If $r \equiv 0 \mod p$, then we follow the proof of \cite[Lemma 3.1.(ii)]{BG}:
    Write $r = p^n u$ for $n \geq 1$ and $p \nmid u$.
    Let $\iota \colon X_u \to X_r$ be the isomorphism
    \[
      f(X,Y) \mapsto f(X^{p^n},Y^{p^n}) = f(X,Y)^{p^n}
    \]
    that restricts to
    \[
      X_u^* \iso X_r^*.
    \]
    Therefore $X_r^* = \iota(X_u^*) = X_r^{* \ldots *}$ with $p^n$-times $*$, that is, in $\theta | f$ in $X_r$ if and only if $\theta^{p^n} | f$.
    In particular, $X_r^* = X_r^{**} = X_r^{***}$.

    If $r \equiv 1 \mod p$, then $X_{r-1}^{**} = 0$ by \cite[Proof of Proposition 5.4]{BG}.
    In particular $X_r^{**} = X_r^{***}$.
  \end{proof}

  \subsection{Jordan-Hölder series of $X_{r-2}$}
  \label{sec:jordan-holder-series-of-x-r-and-x-r-2}

  To compute the Jordan-Hölder series of $Q := V_r / (V_r^{***} + X_{r-2})$, it would help to know that of $X_{r-2}$.
  However, to this end, the exact Jordan-Hölder series of $X_{r-2}$ will turn out dispensable, but that of $X_{r''} \otimes V_2 \twoheadrightarrow X_{r-2}$ sufficient.
  Therefore, the following \cref{prop:JHXrppxV2} will serve as fulcrum of all subsequent computations of the Jordan-Hölder factors of $Q$:

  \begin{prop}
  \label{prop:JHXrppxV2}
    Let $r \geq p+1$.
    Let $r \equiv a \mod (p-1)$ for $a$ in $\left\{ 3, \ldots, p + 1 \right\}$.
    Put $r'' = r-2$.
    We have the following short exact sequences (where, by convention, $V_i = 0$ for $i < 0$):
    \begin{enumerate}
      \item
        If $X_{r''}^* \neq 0$,
        \begin{itemize}
          \item
            For $a = 3$,
            \begin{align}
              0 & \to (V_{2p-1} \ox \rD) \op (V_{p-4} \ox \rD^3)\\
                & \to X_{r''} \ox V_2\\
                & \to (V_1 \ox \rD) \op V_3 \to 0
            \end{align}
            where $V_{2p-1}$ has Jordan-Hölder series $V_{p-2} \otimes \rD$, $V_1$ and $V_{p-2} \otimes \rD$.
          \item
            For $a$ in $\{ 4, \ldots, p-1 \}$,
            \begin{align}
              0 & \to (V_{p-a + 3} \ox \rD^{a-2}) \op (V_{p-a + 1} \ox \rD^{a-1}) \op (V_{p-a-1} \ox \rD^a)\\
                & \to X_{r''} \ox V_2\\
                & \to (V_{a-4} \ox \rD^2) \op (V_{a-2} \otimes \rD) \op V_a \to 0.
            \end{align}
          \item
            For $a = p$,
            \begin{align}
              0 & \to (V_3 \otimes \rD^{p-2}) \op (V_1 \otimes \rD^{p-1})\\
                & \to X_{r''} \ox V_2\\
                & \to (V_{p-4} \otimes \rD^2) \op V_{2p-1} \to 0
            \end{align}
            where $V_{2p-1}$ has Jordan-Hölder series $V_{p-2} \otimes \rD$, $V_1$ and $V_{p-2} \otimes \rD$.
          \item
            For $a = p + 1$,
            \begin{equation}
              0 \to V_2 \otimes \rD^{p-1} \to X_{r''} \ox V_2 \to V_{3p-1} \to 0
            \end{equation}
            where $V_{3p-1} = (V_{p-1} \ox \rD) \op U$ and $U$ has successive semisimple Jordan-Hölder factors $V_{p-3} \ox \rD^2$, $(V_0 \otimes \rD) \op V_2$ and $V_{p-3} \ox \rD^2$.
        \end{itemize}
      \item
        If $X_{r''}^* = 0$, then all summands on the left-hand sides vanish.
    \end{enumerate}
  \end{prop}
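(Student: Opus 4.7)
The plan is to start from the short exact sequence
\[
0 \to X_{r''}^* \to X_{r''} \to X_{r''}/X_{r''}^* \to 0
\]
supplied by \cref{lem:JHXr} applied to the index $r''$. Since $r \equiv a \mod p-1$ with $a \in \{3, \ldots, p+1\}$, we have $r'' \equiv a-2 \mod p-1$ with $a-2 \in \{1, \ldots, p-1\}$, so \cref{prop:Xrm012ModAst}.(i) and \cref{lem:JHXr} identify
\[
X_{r''}/X_{r''}^* = V_{a-2}, \qquad X_{r''}^* = V_{p-a+1} \otimes \rD^{a-2} \text{ (when nonzero)}.
\]
Because $V_2$ is a free $\F_p$-module, tensoring with $V_2$ preserves exactness and yields
\[
0 \to (V_{p-a+1} \otimes V_2) \otimes \rD^{a-2} \to X_{r''} \otimes V_2 \to V_{a-2} \otimes V_2 \to 0,
\]
which is the skeleton of the claimed sequence. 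The proof then reduces to computing the Jordan--Hölder structures of $V_{a-2} \otimes V_2$ and $V_{p-a+1} \otimes V_2$ for each admissible value of $a$, using \cref{lem:JHTensor} and \cref{cor:JHVxV2}, and matching the result to the claimed decomposition.

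For the generic range $a \in \{4, \ldots, p-1\}$, both tensor products fall in the "small" regime $m+n \leq p-1$ of \cref{lem:JHTensor}.\ref{en:JHTensor0p-1}: the quotient $V_{a-2} \otimes V_2$ splits as $V_a \oplus (V_{a-2} \otimes \rD) \oplus (V_{a-4} \otimes \rD^2)$, while the kernel $V_{p-a+1} \otimes V_2$ splits as $V_{p-a+3} \oplus (V_{p-a+1} \otimes \rD) \oplus (V_{p-a-1} \otimes \rD^2)$; after the $\rD^{a-2}$ twist on the kernel side, these match the stated direct sums exactly. The two boundary cases $a = 3$ and $a = p$ are symmetric: in one the small factor $V_1 \otimes V_2 = V_3 \oplus (V_1 \otimes \rD)$ appears, and in the other the big factor $V_{p-2} \otimes V_2$ must be analyzed via \cref{cor:JHVxV2}.\ref{en:JHVpm2V2}, which produces the non-semisimple piece $V_{2p-1}$ (with the stated three-step filtration $V_{p-2} \otimes \rD$, $V_1$, $V_{p-2} \otimes \rD$) together with an extra summand $V_{p-4} \otimes \rD^2$; applying the $\rD^{a-2}$ twist where needed then reproduces both sequences.

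The case $a = p+1$ is the extremal one: the kernel side reduces to $V_0 \otimes V_2 \otimes \rD^{p-1} = V_2 \otimes \rD^{p-1}$, a single irreducible, while the quotient side is $V_{p-1} \otimes V_2$. For this I apply \cref{lem:JHTensor}.\ref{en:JHTensorp2p-1} with $m=2$, $n=p-1$: the second summand $V_{p-n-2} \otimes V_{p-m-2} \otimes \rD^{m+n+2-p}$ vanishes by the convention $V_{-1}=0$, leaving $V_{p-1} \otimes V_2 = V_{3p-1}$ (consistent with $\dim = 3p$); the internal structure is then read off from the second half of \cref{lem:JHTensor}.\ref{en:JHTensorp2p-1} (or, equivalently, from \cref{cor:JHVxV2}.\ref{en:JHVpm1V2}), which produces the irreducible summand together with the four-factor module $U$ with the claimed successive semisimple factors.

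The main obstacle will be the bookkeeping of the $\rD$-twists when transporting the tensor-product decompositions across the $\rD^{a-2}$ on the kernel side, and carefully unpacking the non-semisimple blocks $V_{2p-1}$ (for $a=3$ and $a=p$) and $V_{3p-1}$ (for $a=p+1$) so that their stated filtrations coincide with those given by \cref{lem:JHTensor}.\ref{en:JHTensorp2p-1}; once this matching is verified in each of the four cases, the exact sequences in the proposition follow at once from the snake-lemma-free construction above.
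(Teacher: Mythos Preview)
Your proposal is correct and follows essentially the same approach as the paper: start from the short exact sequence $0 \to X_{r''}^* \to X_{r''} \to V_{a-2} \to 0$ of \cref{lem:JHXr}, tensor with $V_2$ (using flatness), and then compute $V_{a-2} \otimes V_2$ and $V_{p-a+1} \otimes V_2$ case by case via \cref{lem:JHTensor} and \cref{cor:JHVxV2}. The paper organizes the case split by $a'' = a-2$ rather than by $a$, but the content is identical.
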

  \begin{proof}
    If $X_{r''}^* \neq 0$, then by \cref{lem:JHXr}, for the unique $a'' \in \left\{ 1, \ldots, p-1 \right\}$ such that $r'' = r-2 \equiv a'' \mod (p-1)$, (that is, $a'' = a-2$ for the unique $a \in \left\{ 3, \ldots, p + 1 \right\}$ such that $r \equiv a \mod (p-1)$),
    \[
      0 \to V_{p-a''-1} \ox \rD^{a''} \to X_{r''} \to V_{a''} \to 0. \tag{$*$}
    \]
    By flatness of the $\F_p[M]$-module $V_2$,
    \[
      0 \to (V_{p-a''-1} \ox \rD^{a''}) \ox V_2 \to X_{r''} \ox V_2 \to V_{a''} \ox V_2 \to 0
    \]
    We regard the left-hand side of the short exact sequence, that is, $(V_{p-a''-1} \ox \rD^{a''}) \ox V_2$:
    \begin{itemize}
      \item
        if $a'' = 1$, then by \cref{cor:JHVxV2},
        \[
          V_2 \ox V_{p-a''-1} = V_{2p-1} \op V_{p-4} \ox \rD^2;
        \]
      \item
        if $a'' = 2, \ldots, p-3$, then by \cref{lem:JHTensor}.\ref{en:JHTensor0p-1},
        \begin{align}
          V_2 \ox V_{p-a''-1} & = [V_1 \ox V_{p-a''}] \op V_{p-a''-3} \ox \rD^2\\
                              & = [(V_{p-a''-1} \otimes \rD) \op V_{p-a'' + 1}] \op V_{p-a''-3} \ox \rD^2;
        \end{align}
      \item
        if $a'' = p-2$, that is, $p-a''-1 = 1$, then $V_2 \ox V_1 = (V_1 \otimes \rD) \op V_3$ by \cref{lem:JHTensor}.\ref{en:JHTensor0p-1};
      \item
        if $a'' = p-1$, that is, $p-a''-1 = 0$, then $V_2 \ox V_0 = V_2$.
    \end{itemize}

    We regard the right-hand side of the short exact sequence, that is, $ V_{a''} \ox V_2$:
    \begin{itemize}

      \item
        if $a'' = 1$, then $V_1 \ox V_2 = (V_1 \otimes \rD) \op V_3$ by \cref{lem:JHTensor}.\ref{en:JHTensor0p-1}.
      \item
        if $a'' = 2, \ldots,p-3$, then by \cref{lem:JHTensor}.\ref{en:JHTensor0p-1} (where we recall $V_{-1} = 0$),
        \begin{align}
          V_2 \ox V_{a''} & = [V_1 \ox V_{a'' + 1}] \op V_{a''-2} \ox \rD^2\\
                          & = [(V_{a''} \otimes \rD) \op V_{a'' + 2}] \op V_{a''-2} \ox \rD^2.
        \end{align}

      \item
        if $a'' = p-2$, then, like for $a'' = 1$ on the left-hand side of the short exact sequence,
        \[
          V_2 \ox V_{p-2}
          = (V_1 \ox V_{p-1}) \op V_{p-4} \ox \rD^2
          = (V_{2p-1}) \op V_{p-4} \ox \rD^2,
        \]
        where $V_{2p-1}$ has by \cref{lem:JHTensor}.\ref{en:JHTensorp2p-1} (for $k = 1$) Jordan-Hölder series $V_{p-2} \otimes \rD$, $V_1$ and $V_{p-2} \otimes \rD$;
      \item
        if $a'' = p-1$, then by \cref{lem:JHTensor}.\ref{en:JHTensorp2p-1},
        \[
          V_2 \ox V_{a''} = V_{3p-1} = (V_{p-1} \ox \rD) \op U,
        \]
        where $U$ has successive semisimple Jordan-Hölder factors $V_{p-3} \ox \rD^2$, $(V_0 \otimes \rD) \op V_2$ and $V_{p-3} \ox \rD^2$.
        \qedhere
    \end{itemize}

    If instead $X_{r''}^* = 0$, then the left-hand side of $(*)$ vanishes, and accordingly that of the Jordan-Hölder series of $X_{r''} \ox V_2$.
  \end{proof}

  Let us collect what we can infer about the Jordan-Hölder factors of $X_{r-2}$ by \cref{lem:JHTensor} from looking at the short exact sequence
  \[
    0 \to
    X_{r''}^* \otimes V_2 \to
    X_{r''} \otimes V_2 \to
    X_{r''} / X_{r''}^* \otimes V_2 \to
    0.
  \]
  \begin{itemize}
    \item
      The left-hand side has minimal dimension $3$ for $a'' = p-1$, the right-hand side has minimal dimension $2 \cdot 3 = 6$ for $a'' = 1$.
    \item  Regarding the number of Jordan-Hölder factors,
      \begin{itemize}
        \item
          the left-hand side has $2$ Jordan-Hölder factors for $a'' = p-2$,
        \item
          the left-hand side has the minimal number of Jordan-Hölder factors $1$ for $a'' = p-1$,
        \item
          whereas the right-hand side has minimal number of Jordan-Hölder factors $2$ for $a'' = 1$, and
        \item
          in the generic case $a'' \in \{ 2, \ldots, p-3 \}$, both sides have $3$ Jordan-Hölder factors.
      \end{itemize}
    \item
      Under the conditions of \cref{lem:XrInXr1InXr2}, there are at least $3$ Jordan-Hölder factors in $X_{r-2}$.
      Because $X_{r''} \otimes V_2$ has by \cref{prop:JHXrppxV2} only $6$ Jordan-Hölder factors, $X_{r-2}$ has by the epimorphism $X_{r''} \otimes V_2 \twoheadrightarrow X_{r-2}$ between $3$ and $6$ Jordan-Hölder factors.
  \end{itemize}

  \subsection{Sum of the Digits}

  For a natural number $r$, let
  \[
    \Sigma(r)
    :=
    \text{ the sum of the digits in the $p$-adic expansion of $r$ }.
  \]
  Since $p \equiv 1 \mod (p-1)$, we have $\Sigma(r) \equiv r \mod (p-1)$.
  Thus, if $a$ in $ \{ 1, \ldots, p-1 \}$ such that $r \equiv a \mod (p-1)$, then $\Sigma(r) = a$ is smallest possible.
  In other words, $\Sigma(r) = a$ holds if and only if $\Sigma(r) < p$.
  If $\Sigma(r) < p$, we say $\Sigma(r)$ is \emph{minimal}, otherwise $\Sigma(r)$ is \emph{non-minimal}.

  In the forthcoming subsections we will compute the Jordan-Hölder series of $X_{r-2}$ depending on minimality of $\Sigma(r)$, $\Sigma(r')$ and $\Sigma(r'')$:
  Assuming $X_{r-2} \neq X_{r-1}$, this \cref{sec:Xr-2} will show that, for given $r$, the Jordan-Hölder factors of the kernel of the multiplication map $\phi \colon X_{r''} \otimes V_2 \to X_{r-2}$ of \cref{lem:Xr2V2xXrpp} are to be found among those of $X_{r''}^* \otimes V_1$, $X_{r'}^* \otimes V_1$ and $X_r^*$ with minimal $\Sigma(r'')$, $\Sigma(r')$ or $\Sigma(r)$ (with no contribution to this kernel by the tensor products with non-minimal $\Sigma(r'')$, $\Sigma(r')$ or $\Sigma(r)$).

  The following \cref{lem:digits-minimality} shows that, with few exceptions for $a = 1, 2$, the minimality of $\Sigma(r'')$ implies that of $\Sigma(r')$;
  likewise, the minimality of $\Sigma(r')$ implies that of $\Sigma(r)$.

  \begin{lem}
    \label{lem:digits-minimality}
    Let $a$ in $ \{ 1, \ldots, p-1 \} $ such that $r \equiv a \mod (p-1)$.
    Put $r' = r-1$ and $r'' = r-2$.
    \begin{itemize}
      \item
        For $a$ in $ \{ 3, \ldots, p-1 \} $,
        \begin{itemize}
          \item if $\Sigma(r'')$ is minimal, then $\Sigma(r')$ and $\Sigma(r)$ are minimal;
          \item if $\Sigma(r')$ is minimal, then $\Sigma(r)$ is minimal.
        \end{itemize}
      \item
        For $a = 2$, we have $\Sigma(r')$ is minimal if and only if $r' = p^n$;
        moreover
        \begin{itemize}
          \item If $\Sigma(r'')$ is minimal, then $\Sigma(r')$ is minimal only if $r' = p$ and $\Sigma(r)$ is minimal only if $r = 2$ or $r = p^n + p$ for some $n \geq 0$;
          \item If $\Sigma(r')$ is minimal, then $\Sigma(r)$ is minimal.
        \end{itemize}
      \item
        for $a = 1$, we have $\Sigma(r)$ is minimal if and only if $r = p^n$;
        moreover
        \begin{itemize}
          \item If $\Sigma(r'')$ is minimal (and $r > p$), then $\Sigma(r')$ is minimal but $\Sigma(r)$ is only minimal if $n = 1$;
          \item If $\Sigma(r')$ is minimal, then $\Sigma(r)$ is not minimal.
        \end{itemize}
        For every $a$, if $\Sigma(r'')$ and $\Sigma(r')$ are non-minimal, then $\Sigma(r)$ can be either minimal or non-minimal.
    \end{itemize}
  \end{lem}
  \begin{proof}
    We use the definition of minimality of $\Sigma(r'')$ and that $\Sigma(r') = \Sigma(r'') + 1$ (respectively $\Sigma(r) = \Sigma(r'') + 2$) if $p \nmid r'$ (respectively $p \nmid r$):
    \begin{enumerate}
      \item For $a$ in $ \{ 3, \ldots, p-1 \} $:
        \begin{enumerate}
          \item
            Because $r'' \equiv a - 2$  and $a - 2 \leq p-3$, we have $\Sigma(r'') < p$ if and only if $\Sigma(r'') \leq p-3$.
            Therefore, if $\Sigma(r'') < p$, then both $\Sigma(r') = \Sigma(r'') + 1$ and $\Sigma(r) = \Sigma(r'') + 2 < p$.
          \item Because $r' \equiv a - 1$ and $a - 1 \leq p-2$, if $\Sigma(r') \leq a-1 \leq p-2 < p$, then $\Sigma(r) \leq p-1 < p$.
        \end{enumerate}
      \item For $a = 2$:
        \begin{enumerate}
          \item
            We have $\Sigma(r'')$ is minimal if and only if $\Sigma(r'') = 0$, $p-1$ or $p + p-2$.
            Therefore, if $r' \neq p$, then $\Sigma(r') = \Sigma(r'') + 1 = p$ and if $r \neq p^n + p$ for some $n \geq 1$, then $\Sigma(r) = p+1$.
          \item
            We have $\Sigma(r')$ is minimal if and only if $\Sigma(r') = 1$.
            Therefore $r = p^n + 1$ for some $n \ge  0$ and $\Sigma(r)$ is minimal.
        \end{enumerate}
      \item For $a = 1$:
        \begin{enumerate}
          \item
            We have $\Sigma(r'')$ is minimal if and only if $\Sigma(r'') = p-2$.
            If $r = p$, then $\Sigma(r'') = p-2$.
            Otherwise, because $r > p$, in particular $r'' = r_0 + p R$ with $r_0 < p-2$ and some $R$ in $\mathbb{N}$.
            Therefore $\Sigma(r') = \Sigma(r'') + 1 < p$ is minimal but $\Sigma(r) = \Sigma(r'') + 2 = p$ is non-minimal.
          \item
            If $\Sigma(r') = p-1$ is minimal, then $\Sigma(r) = p$ is not-minimal.
            \qedhere
        \end{enumerate}
    \end{enumerate}
  \end{proof}

  As we will repeatedly cite \cite{BG}, here's how our minimality conditions on $\Sigma(r)$ and $\Sigma(r')$ relate to theirs in Section 4 (for $a = 2, \ldots, p-1$) on $u' = u-1$ where $r = u p^n$ such that $p \nmid u$.
  Then $\Sigma(r)$ is minimal, if and only if $\Sigma(u)$ is minimal, if and only if $\Sigma(u')$ is minimal because $p \nmid u$.
  Putting $r' = r-1$,
  \begin{itemize}
    \item If $\Sigma(u')$ is minimal, then $\Sigma(r')$ is minimal if and only $n = 0$, because $\Sigma(r') = \Sigma(u') - 1 + d$ where $d = 1$ if $n = 0$, that is, $p \nmid r$, and $d > p-1$ if $n > 0$, that is, $p \mid r$.
    \item If $\Sigma(u')$ is not minimal, then $\Sigma(r')$ is not minimal, because $\Sigma(r') = \Sigma(u') - 1 + d$ where $d = 1$ if and only if $p \nmid r$, that is, $n = 0$, and $d > p-1$ if and only if $p \mid r$, that is, $n > 0$.
  \end{itemize}

  The following \cref{prop:XrAst0IffSrMin} states (and proves more directly) results contained in \cite[Sections 3 and 4]{BG}, in particular \cite[Lemma 3.10, Proposition 3.11, Lemma 4.5 and Lemma 4.6]{BG}.

  \begin{prop}
    \label{prop:XrAst0IffSrMin}
    Let $p \geq 3$ and $r \geq p$.
    We have $X_r^{*} = 0$ if and only if $\Sigma(r)$ is minimal.
  \end{prop}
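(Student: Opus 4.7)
The plan is to invoke \cref{lem:JHXr}, which tells us that $X_r^* = 0$ if and only if $\dim X_r < p+1$. So the proposition reduces to the dimension count: $\dim X_r = p+1$ precisely when $\Sigma(r) \geq p$. First I would parametrize $X_r$ concretely. Since $\Gamma$ acts on the generator $Y^r$ by $\bigl(\begin{smallmatrix} a & b \\ c & d \end{smallmatrix}\bigr) \cdot Y^r = (bX+dY)^r$, and since scaling $(b,d)$ by $\lambda \in \F_p^*$ scales $(bX+dY)^r$ by $\lambda^r$, the module $X_r$ is already spanned by the $p+1$ vectors
\[
  X^r, \quad Y^r, \quad (X + kY)^r \text{ for } k = 1, \ldots, p-1,
\]
indexed by $\mathbb{P}^1(\F_p)$. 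So $\dim X_r \leq p+1$, with equality exactly when these generators are linearly independent.

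Second, I would write out the linear dependence condition. For $F = c_0 X^r + c_\infty Y^r + \sum_{k=1}^{p-1} c_k (X+kY)^r$, expanding via the binomial theorem and introducing $S_t := \sum_{k=1}^{p-1} c_k k^t$ (which depends only on $t \bmod p-1$ since $k \in \F_p^*$), the equation $F = 0$ becomes:
\[
  c_0 + S_0 = 0, \qquad c_\infty + S_{r \bmod (p-1)} = 0, \qquad \binom{r}{j} \, S_{j \bmod (p-1)} = 0 \text{ for } 0 < j < r.
\]
Lucas's Theorem then pins down that $\binom{r}{j} \not\equiv 0 \bmod p$ if and only if the $p$-adic digits of $j$ satisfy $j_i \leq r_i$. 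Using $j \equiv \Sigma(j) \bmod (p-1)$, the middle equations translate to $S_t = 0$ for every $t \equiv \Sigma(j) \bmod (p-1)$ achieved by some admissible $j$ with $0 < j < r$. An easy digit-by-digit construction shows that $\Sigma(j)$ achieves every value in $\{1, \ldots, \Sigma(r)-1\}$ as $j$ ranges over $p$-adic subindices of $r$ with $0 < j < r$ (the endpoints $0$ and $\Sigma(r)$ being excluded because they force $j = 0$ or $j = r$).

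Third, I would conclude by cases, using Vandermonde to control the $S_t$. If $\Sigma(r) = a$ is minimal ($a \leq p-1$), the constrained residues are exactly $\{1, \ldots, a-1\}$, that is, $a-1$ independent linear conditions on $(c_1, \ldots, c_{p-1}) \in \F_p^{p-1}$; the solution space has dimension $p-a \geq 1$, with $c_0$ and $c_\infty$ then uniquely determined, so the $p+1$ generators are dependent, $\dim X_r \leq p$, and $X_r^* = 0$. If $\Sigma(r) \geq p$, then $\{1, \ldots, \Sigma(r)-1\}$ contains $p-1$ consecutive integers and therefore reduces onto every residue in $\Z/(p-1)\Z$; the full Vandermonde system forces $c_1 = \cdots = c_{p-1} = 0$, then $c_0 = c_\infty = 0$, so the generators are independent, $\dim X_r = p+1$, and $X_r^* \neq 0$. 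The principal technical point is the residue bookkeeping linking $\Sigma(r)$ to the cardinality of the residue set of $\Sigma(j) \bmod (p-1)$; once that is in hand, invertibility of the relevant Vandermonde block does the rest.
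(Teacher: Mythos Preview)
Your proof is correct. For the non-minimal direction ($\Sigma(r) \geq p$) your argument is essentially identical to the paper's: parametrize $X_r$ by the $p+1$ vectors $(bX+dY)^r$ indexed by $\mathbb{P}^1(\F_p)$, expand via the binomial theorem, introduce the power sums $S_t = \sum_k c_k k^t$, use Lucas's theorem to see that every residue class mod $p-1$ contains some $j$ with $\binom{r}{j}\not\equiv 0$, and conclude $c_k=0$ by Vandermonde.

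The genuine difference is in the minimal direction. The paper handles $\Sigma(r)=a$ by citing external results: for $a=1$ it invokes the Frobenius isomorphism $X_1 \iso X_{p^n}$, and for $a\in\{2,\ldots,p-1\}$ it cites \cite[Lemma~4.5]{BG} for $\dim X_r < p+1$. You instead run the same linear-algebra machinery in reverse: since the admissible $j$ (those with $0<j<r$ and $\binom{r}{j}\not\equiv 0$) have $\Sigma(j)\in\{1,\ldots,a-1\}$, only $a-1<p-1$ of the $S_t$ are forced to vanish, leaving a $(p-a)$-dimensional space of nontrivial dependencies. This gives a uniform, self-contained treatment of both directions within a single framework, at the cost of not making the Frobenius structure at $a=1$ explicit. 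Either route is short; yours avoids the external reference.
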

  \begin{proof}
    If $\Sigma(r)$ is minimal, that is, $\Sigma(r) = a$, and
    \begin{itemize}
      \item
        if $a = 1$, that is, $r = p^n$, then $X_1 \iso X_r$ by the $\F[M]$-homomorphism $X \mapsto X^{p^n}$, in particular $X_r^* = 0$ (\cite[Proposition 3.12]{BG});
      \item
        if $a$ in $ \{ 2, \ldots, p-1 \} $, then $\dim X_r < p+1$ by the proof of \cite[Lemma 4.5]{BG} (for $r' = r-1$);
        thus $X_r^* = 0$ by \cref{prop:Xrm012ModAst}.
    \end{itemize}

    Let $\Sigma(r)$ be non-minimal, that is, $\Sigma(r) \geq p$.
    We have $X_r^* = 0$ if and only if $\dim X_r < p + 1$ if and only if the standard generating set of $X_r$ is linearly dependent:
    That is, there is $b_0$, \ldots, $b_{p-1}$ and $b_p$ in $\F_p$, not all zero, such that
    \[
      b_0 Y^r + \sum_{k = 1, \ldots, p-1} b_k (kX + Y)^r + b_p X^r
      =
      0.
      \tag{$*$}
    \]
    We show that if $\Sigma(r) \geq p$, then $(*)$ implies $b_0, \ldots, b_{p-1}, b_p$ to vanish.
    It suffices to show that $b_1$, \ldots, $b_{p-1}$ vanish.
    Because $\# \F_p^* = p-1$,
    \[
      \sum_{k = 1, \ldots, p-1} b_k (kX + Y)^r
      =
      \sum_{k = 1, \ldots, p-1} b_k \sum_{i = 1, \ldots, p-1} k^i \sum_{j \equiv i \mod (p-1)} \binom{r}{j} X^j Y^{r-j}.
      \tag{$**$}
    \]
    For $i = 1, \ldots, p-1$, let
    \[
      B_i = \sum_{k = 1, \ldots, p-1} b_k k^i.
    \]
    By the nonzero Vandermode determinant of $(k^i)_{i,j = 1, \ldots, p-1}$, if $B_1 = \dotsb = B_{p-1} = 0$, then $b_1 = \dotsb = b_{p-1} = 0$.
    Thus, it suffices to show $B_1 = \dotsb = B_{p-1} = 0$.
    Comparing the coefficients of $X^{t} Y^{r-t}$, by $(*)$ and $(**)$, for every $t$ such that $t \equiv i$,
    \[
      B_i \binom{r}{t} = 0.
      \tag{$***$}.
    \]
    Let $t$ in $ \{ 1, \ldots, p-1 \} $.
    Write $r = r_0 + r_1 p + \dotsb$.
    Since $\Sigma(r) = r_0 + r_1 + \dotsb \geq p$, we can write $t = t_0 + t_1 + \dotsb$ with $0 \leq t_j \leq r_j$ for $j = 0, 1, \ldots$.
    Put $t' = t_0 + t_1 p + \dotsb$.
    Then $t' \equiv t \mod (p-1)$, and, by Lucas' Theorem, $\binom{r}{t'} \neq 0$.
    By $(***)$
    \[
      0
      =
      \binom{r}{t'} B_{t'}
      =
      \binom{r}{t'} B_t;
    \]
    that is, $B_t = 0$.
    We conclude that $B_1, \ldots, B_{p-1}$, (and therefore $b_1$, \ldots, $b_{p-1}$) vanish.
  \end{proof}

  \subsection{Sum of the Digits of $r-2$ is \emph{non}-minimal}
  \label{sec:sigma-non-minimal}

  Let $a$ in $\{ 3, \ldots, p + 1 \}$ such that $r \equiv a \mod (p-1)$.
  Let $r'' = r - 2$.
  We assume in this \cref{sec:sigma-non-minimal} that $\Sigma(r'')$ is non-minimal, that is, $\Sigma(r'') \geq p$ and will show that $X_{r-2} / X_{r-1}$ has two Jordan-Hölder factors.

  By \cref{lem:XrInXr1InXr2}, we have $X_{r-2} = X_{r-1}$ if and only if $r = p^n + r_0$ with $r_0$ in $ \{ 2, \ldots, p-1 \} $.
  That is, $r'' = p^n + r_0''$ with $0 \leq r_0 \leq p-3$;
  in particular, $\Sigma(r'')$ is minimal.
  By the same token, $X_{r-1} = X_r$ if and only if $r < p$.

  We conclude that if $r \geq p$ and $\Sigma(r'')$ non-minimal, then
  \[
    0 \subseteq X_r^* \subset X_r \subset X_{r-1} \subset X_{r-2}
  \]
  where
  \begin{itemize}
    \item the two inclusions to the right of $X_r$ are proper by \cref{lem:XrInXr1InXr2},
    \item we have $X_r/X_r^* = V_a$, in particular a proper inclusion $X_r^* \subset X_r$ by \cref{prop:Xrm012ModAst} (which in this case is \cite[(4.5)]{G}), and
    \item we have $X_r^* = 0$ if and only if $\Sigma(r)$ is minimal by \cref{prop:XrAst0IffSrMin}.
  \end{itemize}

  By \cref{lem:JHXr} and \cref{prop:XrAst0IffSrMin} the Jordan-Hölder series of $X_r$ is known.
  Therefore, by \cite[Proposition 3.13 and 4.9]{BG}:
  \begin{itemize}
    \item Let $r \equiv a \mod (p-1)$ for $1 \leq a \leq p-1$.
      \begin{itemize}
        \item Either $\Sigma(r)$ is non-minimal, then the Jordan-Hölder series
          \[
            0 \to
            V_{p-a-1} \otimes \rD^a \to
            X_r \to
            V_a \to
            0,
            \tag{$*$}
          \]
          (which is dual to that of $V_r / V_r^*$, that is, inverts the directions of the arrows of
          \[
            0 \to
            V_a \to
            V_r / V_r^{*} \to
            V_{p-a-1} \otimes \rD^a \to
            0,)
          \]
        \item or it is minimal, in which case the right-hand side of the short exact sequence $(*)$ around $X_r \cong V_a$ vanishes.
      \end{itemize}
    \item Let $r \equiv a \mod (p-1)$ for $2 \leq a \leq p$.
      \begin{itemize}
        \item Either $\Sigma(r')$ is non-minimal, then the Jordan-Hölder series is
          \[
            0 \to
            V_{p-a+1} \otimes \rD^{a-1} \to
            X_{r-1} / X_r \to
            V_{a-2} \otimes \rD \to
            0,
            \tag{$**$}
          \]
          (which is dual to that of $V_r^* / V_r^{**}$ for $a = 2, 3, \ldots, p$)
        \item or it is minimal, in which case
          \begin{itemize}
            \item either $r < p$ and $X_{r-1} / X_r = 0$,
            \item or, otherwise, the right-hand side of the short exact sequence $(**)$ around $X_{r-1} / X_r$ vanishes.
          \end{itemize}
      \end{itemize}
  \end{itemize}

  Regarding $\Sigma(r'')$, let $r \equiv a \mod (p-1)$.
  \begin{itemize}
    \item Either $\Sigma(r'') > p$, then
      \begin{itemize}
        \item We show in \cref{sec:SrSrpSrpp-non-minimal}, \cref{sec:Srp-min-Srpp-non-min} and \cref{sec:Sr-min-Srp-non-min-Srpp-non-min} that the Jordan-Hölder series for $a = 4, \ldots, p-1$ is
      \[
        0 \to
        V_{p-a+3} \otimes \rD^{a-2} \to
        X_{r-2} / X_{r-1} \to
        V_{a-4} \otimes \rD^2 \to
        0,
        \tag{$***$}
      \]
        which is dual to that of $V_r^* / V_r^{**}$ for $a = 4, 5, \ldots, p+1, p+2$ and $r \geq p$.
        If $\Sigma(r'), \Sigma(r) > p$, then this also holds for $a = p, p+1$.
        If $\Sigma(r') < p$, then this also holds for $a = p$
      \end{itemize}
    \item or it is minimal, and for $a = 3, \ldots, p+1$
      \begin{itemize}
        \item either $r = p^n + r_0$ with $r_0 \in \{ 2, \ldots, p-1 \}$, then we proved in \cref{lem:XrInXr1InXr2} that $X_{r-2} / X_{r-1} = 0$,
        \item or, otherwise, we will prove in \cref{sec:sigma-minimal} that the right-hand side of the short exact sequence $(***)$ around $X_{r-2} / X_{r-1}$ vanishes.
      \end{itemize}
  \end{itemize}

  Independently of whether one of $\Sigma(r')$ or $\Sigma(r)$ is minimal or not, if $\Sigma(r'')$ is non-minimal, then, except when $r \equiv 3 \mod (p-1)$, a specific fourth Jordan-Hölder factor appears in $X_{r-2}$:

  \begin{lem}
    \label{lem:Xr2Vpa3}
    Let $a$ in $ \{ 4, \ldots, p+1 \} $ such that $r \equiv a \mod (p-1)$.
    If \, $\Sigma(r'') \geq p$ and $r \geq 3p+2$, then $V_{p-a+3} \otimes \rD^{a-2}$ is a Jordan-Hölder factor of $X_{r-2}$.
    
  \end{lem}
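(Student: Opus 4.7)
The plan is to use the surjection $\phi\colon X_{r''} \otimes V_2 \twoheadrightarrow X_{r-2}$ of \cref{lem:Xr2V2xXrpp} to transport a copy of $V_{p-a+3} \otimes \rD^{a-2}$ into $X_{r-2}$. The hypothesis $\Sigma(r'') \geq p$ combined with \cref{prop:XrAst0IffSrMin} yields $X_{r''}^* \neq 0$, and \cref{lem:JHXr} then identifies $X_{r''}^* \cong V_{p-a+1} \otimes \rD^{a-2}$. Combining the decomposition of \cref{lem:JHTensor} with the flatness of $V_2$ (as in the proof of \cref{prop:JHXrppxV2}) exhibits $V_{p-a+3} \otimes \rD^{a-2}$ as a Jordan-Hölder factor of $X_{r''}^* \otimes V_2 \subseteq X_{r''} \otimes V_2$ -- as a direct summand for $a \in \{4, \ldots, p-1\}$, and sitting inside a non-split piece for $a \in \{p, p+1\}$.

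Next I would observe that $\phi(X_{r''}^* \otimes V_2) \subseteq X_{r-2} \cap V_r^*$, because $n \cdot (fg) = (nf)(ng) = 0$ whenever $f \in X_{r''}^*$ and $n$ is a singular matrix. A direct inspection of the list in \cref{lem:VrAstQuotients}.\ref{en:VrAstModVrAstAst} shows that $V_{p-a+3} \otimes \rD^{a-2}$ does \emph{not} appear as a Jordan-Hölder factor of $V_r^*/V_r^{**}$ for any $a \in \{4, \ldots, p+1\}$. Consequently, if the composite $V_{p-a+3} \otimes \rD^{a-2} \hookrightarrow X_{r''}^* \otimes V_2 \xrightarrow{\phi} V_r^*$ is nonzero, its image must in fact land inside $V_r^{**}$. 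The hypothesis $r \geq 3p+2$ then makes \cref{lem:VrAstQuotients}.\ref{en:VrAstAstModVrAstAstAst} applicable, and a case-by-case check confirms that $V_{p-a+3} \otimes \rD^{a-2}$ indeed occurs as a Jordan-Hölder factor of $V_r^{**}/V_r^{***}$. Since this factor is irreducible, any nonzero image under $\phi$ realises it as a Jordan-Hölder factor of $X_{r-2}$.

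The core task is therefore to show that $\phi$ does not annihilate the summand $V_{p-a+3} \otimes \rD^{a-2}$ of $X_{r''}^* \otimes V_2$. I plan to do this by exhibiting an explicit generator: write the highest-weight polynomial of $X_{r''}^*$ as $\theta \cdot h$ for a suitable $h \in V_{r''-p-1}$ (possible because $X_{r''}^* \subseteq V_{r''}^* = \theta \cdot V_{r''-p-1}$), multiply it by one of the generators $X^2$, $XY$, or $Y^2$ of $V_2$ according to the case, and verify via the coefficient criterion of \cref{lem:VrAstCriteria}.\ref{en:VrAstAstAstCriterion} that the resulting polynomial of $X_{r-2}$ is not in $V_r^{***}$, i.e.\ that at least one of $\sum c_j$, $\sum j c_j$, or $\sum j(j-1) c_j$ fails to vanish modulo $p$. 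The main obstacle will be this case analysis, running parallel to the four cases of \cref{prop:JHXrppxV2}; the cases $a = p$ and $a = p+1$ are the most delicate, since the summand there sits inside the non-split extensions coming from the Jordan-Hölder series of $V_{2p-1}$ and $V_{3p-1}$ in \cref{lem:JHTensor}.\ref{en:JHTensorp2p-1}, so extracting a genuine lift to $X_{r''}^* \otimes V_2$ whose image under $\phi$ projects nonzero onto the desired factor will require careful bookkeeping.
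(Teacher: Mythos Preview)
Your overall strategy matches the paper's: embed $V_{p-a+3}\otimes\rD^{a-2}$ into $X_{r''}^*\otimes V_2$ and show its image under the multiplication map $\phi$ is nonzero. However, the paper's execution is considerably simpler, and you have overcomplicated the final step and misread the structure for $a=p,p+1$.

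The paper constructs the embedding $V_{p-a+3}\hookrightarrow V_{p-a+1}\otimes V_2$ \emph{uniformly} for all $a\in\{4,\ldots,p+1\}$ via second partial derivatives: the $M$-linear map $f\mapsto f_{xx}\otimes X^2+2f_{xy}\otimes XY+f_{yy}\otimes Y^2$ sends $X^{p-a+3}$ to a nonzero scalar multiple of $X^{p-a+1}\otimes X^2$ (the scalar is $\binom{p-a+3}{2}$, nonzero mod $p$ precisely because $a>3$). Composing with $\psi\otimes\id$ and then $\phi$, the highest-weight vector lands on $\psi(X^{p-a+1})\cdot X^2$, which is visibly nonzero because multiplication by $X^2$ is injective on polynomials. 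That is the entire argument: no coefficient computation, no reference to $V_r^{***}$, no case split.

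Your plan to verify non-vanishing by checking the image is not in $V_r^{***}$ via \cref{lem:VrAstCriteria} would work but is a detour---you only need the image to be nonzero in $V_r$, not nonzero modulo $V_r^{***}$. Also, your worry that $a=p,p+1$ are delicate because the factor ``sits inside the non-split extensions coming from $V_{2p-1}$ and $V_{3p-1}$'' is a confusion: those non-split pieces appear on the \emph{quotient} side $X_{r''}/X_{r''}^*\otimes V_2$ in \cref{prop:JHXrppxV2}, whereas $V_{p-a+3}\otimes\rD^{a-2}$ lives in the \emph{sub} side $X_{r''}^*\otimes V_2$, which for $a=p$ is $(V_3\otimes\rD^{p-2})\oplus(V_1\otimes\rD^{p-1})$ and for $a=p+1$ is just $V_2\otimes\rD^{p-1}$---both semisimple, so no delicacy at all.
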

  \begin{proof}
    Let $a$ in $ \{ 1, \ldots, p-1 \} $ such that $r \equiv a \mod (p-1)$.
    Because $\Sigma(r'')$ is non-minimal, by \cref{lem:JHXr} and \cref{prop:XrAst0IffSrMin},
    \[
      (V_{p-a+1} \otimes \rD^{a-2}) \otimes V_2 \iso
      X_{r''}^* \otimes V_2 \twoheadrightarrow
      X_{r-2}.
      \tag{$*$}
    \]
    
    For $n = 0, \ldots, p-3$ in $\N$, let us construct an $\F_p[M]$-linear map
    $
      V_{n+2}
      \to
      V_n \otimes V_2.
    $
    Given $f$ in $V_{n+2}$, let $f_{xx}$, $f_{xy}$ and $f_{yy}$ in $V_n$ denote its partial derivatives of second order.
    By the proof of \cite[(5.2)]{G}, the $\F_p$-linear map
    \begin{align}
      \phi_n \colon V_{n+1} & \to V_n \otimes V_1\\
      f & \mapsto f_x \otimes x + f_y \otimes y
    \end{align}
    is $M$-linear, and so is its iteration $(\phi_n \otimes \id) \circ \phi_{n+1}$, given by
    \begin{align}
      V_{n+2} & \to V_n \otimes (V_1 \otimes V_1)\\
      f & \mapsto
      f_{xx} \otimes x \otimes x +
      f_{xy} \otimes x \otimes y +
      f_{yx} \otimes y \otimes x +
      f_{yy} \otimes y \otimes y.
    \end{align}
    By composing with $\id \otimes \pi$ where $\pi$ is the $\F_p[M]$-linear homomorphism
    $
      V_1 \otimes V_1 \to V_2
    $
    given by $f \otimes g \mapsto f \cdot g$, we obtain that the $\F_p$-linear map
    \begin{align}
      V_{n+2}
      & \to
      V_n \otimes V_2\\
      f
      & \mapsto
      f_{xx} \otimes x^2 +
      f_{xy} \otimes 2 xy +
      f_{yy} \otimes y^2,
    \end{align}
    is $M$-linear.
    In particular, we obtain for $a > 3$ an $\F_p[M]$-linear map
    \[
      V_{p-a+3} \otimes \rD^{a-2}
      \to
      (V_{p-a+1} \otimes \rD^{a-2}) \otimes V_2
      \iso
      X_{r''}^* \otimes V_2
    \]
    whose left arrow sends
    \[
      X^{p-a+3}
      \mapsto
      (p-a+3)(p-a+2) X^{p-a+1} \otimes X^2.
    \]
    If $a > 3$, then $\binom{p-a+3}{2} \nequiv 0 \mod p$, that is, the right-hand side is nonzero.
    Thus, under the right arrow, the map $(*)$,
    \[
      X^{p-a+1} \otimes X^2
      \mapsto
      \psi(X^{p-a+1}) \cdot X^2
      \neq
      0.
    \]
    Therefore, $V_{p-a+3} \otimes \rD^{a-2}$ is a nonzero Jordan-Hölder factor of $X_{r-2}$.
  \end{proof}

    \subsubsection{Sum of the Digits of $r-1$ is minimal}
    \label{sec:Srp-min-Srpp-non-min}

    Because $\Sigma(r')$ is minimal, by \cite[Proposition 3.13 and 4.9]{BG} we have $\dim X_{r-1} < 2p+2$, therefore, by \cref{cor:dimXr2dimXr1}, we have $\dim X_{r-2} < 3p+3$;
    that is, $X_{r-2}$ has at most five Jordan-Hölder factors.

    Let $r \geq 2p+1$ and $\Sigma(r') < p$, that is, the sum of the digits of $r-1$ is minimal.
    Let $a$ in $ \{ 3, \ldots, p+1 \} $ such that $r \equiv a \mod (p-1)$.
    Recall the Jordan-Hölder series of $X_{r-1}$:
    \begin{itemize}
      \item
        If $a = 2, \ldots, p-1$, then by \cite[Proposition 4.9.(i)]{BG},
        $$
          \label{eq:Xr1aSrpMin}
          X_{r-1} = V_{a-2} \otimes \rD \oplus V_a.
        $$
      \item
        Otherwise, if $a = p$, then by \cite[Proposition 3.13.(i)]{BG},
        $
          \label{eq:Xr1aEpp1SrpMin}
          X_{r-1} = V_{2p-1}
        $
        where we recall that $V_{2p-1}$ has successive semisimple Jordan-Hölder factors $V_{p-2} \otimes \rD$, $V_1$ and $V_{p-2} \otimes \rD$ as stated in \cref{cor:JHVxV2}.
    \end{itemize}

    \begin{prop}
      \label{prop:Xr2SrpMin}
      Let $a$ in $ \{ 4, \ldots, p \} $ such that $r \equiv a \mod (p-1)$.
      Let $\Sigma(r'') \geq p$ and $\Sigma(r') < p$.
      If $r \geq 3p+2$, then
      \[
        0 \to
        V_{p-a+3} \otimes \rD^{a-2} \to
        X_{r-2} / X_{r-1} \to
        V_{a-4} \otimes \rD^2 \to
        0.
      \]
    \end{prop}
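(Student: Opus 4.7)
\emph{Proof plan.} The plan is to exploit the epimorphism $\phi \colon X_{r''} \otimes V_2 \twoheadrightarrow X_{r-2}$ of \cref{lem:Xr2V2xXrpp} together with the explicit form of $X_{r-1}$ under the minimality hypothesis on $\Sigma(r')$. Since $\Sigma(r'') \geq p$, \cref{prop:XrAst0IffSrMin} gives $X_{r''}^* \neq 0$, whence $\dim X_{r''} = p+1$ and \cref{prop:JHXrppxV2} describes the six Jordan--Hölder constituents of $X_{r''} \otimes V_2$: the ``cosocle side'' contributes $V_a$, $V_{a-2} \otimes \rD$ and $V_{a-4} \otimes \rD^2$ (grouping the constituents of the reducible $V_{2p-1}$ appropriately when $a = p$), and the ``socle side'' contributes $V_{p-a+3} \otimes \rD^{a-2}$, $V_{p-a+1} \otimes \rD^{a-1}$ and $V_{p-a-1} \otimes \rD^a$.

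For the socle of the claimed exact sequence, I would invoke \cref{lem:Xr2Vpa3} --- whose hypotheses $\Sigma(r'') \geq p$ and $r \geq 3p+2$ hold --- to obtain an $M$-linear embedding $\iota \colon V_{p-a+3} \otimes \rD^{a-2} \hookrightarrow X_{r-2}$ factoring through $X_{r''}^* \otimes V_2$. Under the hypothesis $\Sigma(r') < p$, Proposition~4.9(i) of \cite{BG} gives $X_{r-1} = V_{a-2} \otimes \rD \oplus V_a$, which for $a \in \{5, \ldots, p\}$ admits no Jordan--Hölder constituent isomorphic to $V_{p-a+3} \otimes \rD^{a-2}$. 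Irreducibility of $\iota(V_{p-a+3} \otimes \rD^{a-2})$ then forces its intersection with $X_{r-1}$ to be trivial, so $\iota$ descends to an injection $V_{p-a+3} \otimes \rD^{a-2} \hookrightarrow X_{r-2}/X_{r-1}$.

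For the cosocle, I would argue that under the composition $X_{r''} \otimes V_2 \twoheadrightarrow X_{r-2} \twoheadrightarrow X_{r-2}/X_{r-1}$ the cosocle constituents $V_a$ and $V_{a-2} \otimes \rD$ of $X_{r''} \otimes V_2$ are killed: they arise from the standard translates $(kX+Y)^r$ and $X(kX+Y)^{r-1}$ listed in \cref{lem:Xr2Generators}, each lying inside $X_r \subseteq X_{r-1}$. Thus the cosocle of $X_{r-2}/X_{r-1}$ is a quotient of $V_{a-4} \otimes \rD^2$. The principal obstacle is verifying that this quotient is nonzero. I expect to settle this either by a dimension count --- using $\dim X_{r-1} = 2a$ and forcing the two remaining socle constituents $V_{p-a+1} \otimes \rD^{a-1}$ and $V_{p-a-1} \otimes \rD^a$ into $\ker \phi$ to pin down $\dim X_{r-2} = 2a + p + 1$ --- or by constructing an explicit polynomial $F \in V_r^{**} \cap X_{r-2}$ whose reduction modulo $X_{r-1}$ and modulo the identified submodule $V_{p-a+3} \otimes \rD^{a-2}$ remains nonzero, using the combinatorial identities of \cref{lem:binomialsumaeq2} to enforce the vanishing of lower derivatives modulo $p$. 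Assembling the injection of $V_{p-a+3} \otimes \rD^{a-2}$ with the surjection onto the nonzero $V_{a-4} \otimes \rD^2$ quotient then yields the claimed short exact sequence.
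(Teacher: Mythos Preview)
Your overall plan matches the paper's: both arguments start from \cref{lem:Xr2Vpa3} to embed $V_{p-a+3}\otimes\rD^{a-2}$ into $X_{r-2}$, invoke the epimorphism of \cref{lem:Xr2V2xXrpp} together with \cref{prop:JHXrppxV2} to bound the possible constituents, and use the explicit description $X_{r-1}=V_{a-2}\otimes\rD\oplus V_a$ from \cite[Proposition~4.9(i)]{BG}. Where you diverge is in handling the $V_{a-4}\otimes\rD^2$ quotient.

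The paper does not attempt your option~(a). Instead it extracts a congruence condition from the hypotheses: since $\Sigma(r')=a-1$ is minimal, the bottom $p$-adic digit $r_0$ of $r$ satisfies $r_0\le a-1$, so $r\not\equiv a\bmod p$; and if $r_0=a-1$ then $r=r_0+p^n$ would force $\Sigma(r'')$ minimal, contradicting $\Sigma(r'')\ge p$, so $r\not\equiv a-1\bmod p$ either. These two congruences are exactly the hypotheses of \cref{lem:Xr-2AstAstModXr-2AstAstAstUnequalModP}, which then gives $X_{r-2}^{**}/X_{r-2}^{***}\ne 0$ and hence (via \cref{lem:VrAstQuotients}\ref{en:VrAstAstModVrAstAstAst}) $V_{a-4}\otimes\rD^2\hookrightarrow X_{r-2}^{**}/X_{r-2}^{***}$. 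This is precisely your option~(b), made concrete: the explicit polynomial you anticipate is already packaged in that lemma, and the combinatorics of \cref{lem:binomialsumaeq2} enters there rather than needing to be redone. You should replace your vague ``either/or'' by this digit argument followed by a direct citation of \cref{lem:Xr-2AstAstModXr-2AstAstAstUnequalModP}.

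Your option~(a), forcing $V_{p-a+1}\otimes\rD^{a-1}$ and $V_{p-a-1}\otimes\rD^a$ into $\ker\phi$ by a pure dimension count, is not how the paper proceeds and you give no mechanism for it; the bound $\dim X_{r-2}<3p+3$ from \cref{cor:dimXr2dimXr1} excludes only one constituent, not two. The paper closes instead by combining the four exhibited constituents with \cref{cor:dimXr2dimXr1} and \eqref{eq:Xr1aSrpMin}.
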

    \begin{proof}
      By \cref{lem:Xr2Vpa3},
      \[
        X_{r-2}
        \hookleftarrow
        V_{p-a+3} \otimes \rD^{a-2}.
      \]
      Expand $r = r_0 + r_1 p + \dotsb$ $p$-adically.
      Because $\Sigma(r') = a-1$ in $\{ 3, \ldots, p-1 \}$ (and $r \geq p$), we have $r_0 \leq a-1$.
      Therefore $r \equiv r_0 \neq a \mod p$.
      If $r_0 = a-1$ in $\{ 3, \ldots, p-1 \}$, then $r = r_0 + p^n$;
      in particular, $\Sigma(r'')$ would be minimal.
      Therefore $r_0 \neq a-1 \mod p$.

      Thus we can apply \cref{lem:Xr2AstAstModXr2AstAstAstUnequalModP} (for $a > 4$) respectively \cref{lem:Xr2AstAstModXr2AstAstAstUnequalModPaEqual4} (for $a = 4$), yielding by \cref{lem:VrAstQuotients}.\ref{en:VrAstAstModVrAstAstAst},
      \[
        X_{r-2}^{**} / X_{r-2}^{***}
        \hookleftarrow
        V_{a-4} \otimes \rD^2.
      \]
      By \cref{lem:Xr2V2xXrpp}, the Jordan-Hölder series of $X_{r-2}$ is included in that of \cref{prop:JHXrppxV2}.
      
      We conclude by \cref{cor:dimXr2dimXr1} and \eqref{eq:Xr1aSrpMin} that the Jordan-Hölder series of $X_{r-2} / X_{r-1}$ is
      \[
        0 \to
        V_{p-a+3} \otimes \rD^{a-2} \to
        X_{r-2} / X_{r-1} \to
        V_{a-4} \otimes \rD^2 \to
        0.
        \qedhere
      \]
    \end{proof}

    \cref{prop:Xr2SrpMin} with \cref{prop:JHXrppxV2} and (the Jordan-Hölder series of $X_{r-1}$) yield the Jordan-Hölder series of $X_{r-2}$.

    As the Jordan-Hölder series of $Q = V_r / (X_{r-2} + V_r^{***})$ (and thus our main theorem) does not depend on whether $\Sigma(r'')$, $\Sigma(r')$ or $\Sigma(r)$ are minimal or not, we dispense with the cases $a = 2,3$ at this point.

    \subsubsection{Sum of the Digits of $r-1$ is \emph{non}-minimal but that of $r$ is minimal}
    \label{sec:Sr-min-Srp-non-min-Srpp-non-min}

    Because $\Sigma(r)$ is minimal, by \cref{prop:XrAst0IffSrMin} we have $\dim X_r < p$, therefore, by \cref{cor:dimXr2dimXr1}, we have $\dim X_{r-2} < 3p+3$;
    that is, $X_{r-2}$ has at most five Jordan-Hölder factors.
    We will show that all occur.

    Let $a$ in $ \{ 3, \ldots, p+1 \} $ such that $r \equiv a \mod (p-1)$.
    Let $r \geq 2p+1$ and $\Sigma(r) < p$, that is, the sum of the digits of $r$ is minimal.
    Recall the Jordan-Hölder series of $X_{r-1}$:
    \begin{enumerate}
      \item For $a = 3, \ldots, p-1$ and $a = p+1$, by \cite[Proposition 4.9.(ii)]{BG},
        \[
          \label{eq:Xr1aSrMin}
          0 \to V_{p-a+1} \otimes \rD^{a-1} \to X_{r-1} \to V_{a-2} \otimes \rD \oplus V_a \to 0.
        \]
      \item For $a = p$, we have $r = p^n$ for $n > 1$ and by \cite[Proposition 3.13.(iii)]{BG},
        \[
          \label{eq:Xr1aE1SrMin}
          0 \to V_1 \otimes \rD^{p-1} \to X_{r-1} \to W \to 0
        \]
        where $W = V_{2p-1} / V_{2p-1}^*$, that is, $0 \to V_{p-2} \otimes \rD \to  W \to V_1 \to 0$.
    \end{enumerate}

    \begin{prop}
      \label{prop:Xr2SrpNonMinSrMin}
      Let $r \geq 3p+2$ and $\Sigma(r) < p$, $\Sigma(r') \geq p$, $\Sigma(r'') \geq p$.
      Let $r \equiv a \mod (p-1)$.
      If $a$ in $ \{ 4, \ldots, p-1 \}$, then
      \[
        0 \to
        V_{p-a+3} \otimes \rD^{a-2} \to
        X_{r-2} / X_{r-1} \to
        V_{a-4} \otimes \rD^2 \to
        0.
      \]
    \end{prop}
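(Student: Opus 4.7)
The plan is to adapt the argument of \cref{prop:Xr2SrpMin}, exchanging the hypothesis $\Sigma(r') < p$ there for the present $\Sigma(r') \geq p$ and $\Sigma(r) < p$; it is the minimality of $\Sigma(r)$ that will now supply the decisive dimension bound on $X_{r-2}$.

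First, since $\Sigma(r'') \geq p$ and $r \geq 3p+2$, \cref{lem:Xr2Vpa3} yields an embedding $V_{p-a+3} \otimes \rD^{a-2} \hookrightarrow X_{r-2}$ of $\F_p[M]$-modules. By \cite[Proposition 4.9.(ii)]{BG}, since $\Sigma(r') \geq p$, the module $X_{r-1}$ has the three Jordan-H\"older factors $V_{p-a+1} \otimes \rD^{a-1}$, $V_{a-2} \otimes \rD$ and $V_a$; none is isomorphic to $V_{p-a+3} \otimes \rD^{a-2}$, so the composite $V_{p-a+3} \otimes \rD^{a-2} \hookrightarrow X_{r-2} \twoheadrightarrow X_{r-2}/X_{r-1}$ remains injective.

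Second, I would verify the $p$-adic congruences $r \not\equiv a \mod p$ and $r \not\equiv a-1 \mod p$ required to invoke the congruence lemma used in \cref{prop:Xr2SrpMin}, which, combined with \cref{lem:VrAstQuotients}, produces an embedding $V_{a-4} \otimes \rD^2 \hookrightarrow X_{r-2}^{**}/X_{r-2}^{***}$. If $p \nmid r$, the $p$-adic expansion of $r-1$ would differ from that of $r$ only in the $0$th digit, giving $\Sigma(r') = \Sigma(r) - 1 < p$, contradicting $\Sigma(r') \geq p$. Hence $p \mid r$; since $a \in \{4, \ldots, p-1\}$, neither $a$ nor $a-1$ is divisible by $p$, so both required inequalities hold, and $V_{a-4} \otimes \rD^2$ appears as a Jordan-H\"older factor of $X_{r-2}$ not present in $X_{r-1}$.

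Third, I close the count. Since $\Sigma(r)$ is minimal, \cref{prop:XrAst0IffSrMin} gives $X_r^* = 0$, whence $\dim X_r = a+1 < p+1$. The contrapositive of \cref{cor:dimXr2dimXr1} forces $\dim X_{r-2} < 3p+3$. The epimorphism $X_{r''} \otimes V_2 \twoheadrightarrow X_{r-2}$ of \cref{lem:Xr2V2xXrpp} constrains the Jordan-H\"older factors of $X_{r-2}$ to the six listed in \cref{prop:JHXrppxV2}, whose total dimensions add up to exactly $3p+3$, so the strict bound forces at least one to drop out. Since the three factors of $X_{r-1}$ must appear, and Steps 1--2 produce $V_{p-a+3} \otimes \rD^{a-2}$ and $V_{a-4} \otimes \rD^2$, the only factor that can be absent is $V_{p-a-1} \otimes \rD^a$. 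This pins down the Jordan-H\"older constituents of $X_{r-2}$ and yields the claimed short exact sequence for $X_{r-2}/X_{r-1}$, with $V_{p-a+3} \otimes \rD^{a-2}$ as the submodule (by the embedding of Step 1) and $V_{a-4} \otimes \rD^2$ as the quotient.

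The main obstacle I anticipate is justifying the ordering of the short exact sequence: the fact that $V_{p-a+3} \otimes \rD^{a-2}$ is a genuine submodule, rather than merely a composition factor, of $X_{r-2}/X_{r-1}$ is essential, and it rests on the explicit $M$-linear section constructed in the proof of \cref{lem:Xr2Vpa3}; the complementary placement of $V_{a-4} \otimes \rD^2$ inside the deep singular layer $X_{r-2}^{**}/X_{r-2}^{***}$ then forces it to be the quotient rather than the sub.
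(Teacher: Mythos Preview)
Your proposal is correct and follows essentially the same route as the paper: both use \cref{lem:Xr2Vpa3} to produce $V_{p-a+3}\otimes\rD^{a-2}$, observe that the hypotheses force $p\mid r$ so that \cref{lem:Xr-2AstAstModXr-2AstAstAstUnequalModP} applies to produce $V_{a-4}\otimes\rD^2$, and then close with the dimension bound coming from $\Sigma(r)$ minimal via \cref{cor:dimXr2dimXr1}. You add a little extra care in checking that $V_{p-a+3}\otimes\rD^{a-2}$ survives in the quotient by $X_{r-1}$ and in explaining the sub/quotient ordering, which is implicit in the paper; note also that for $a=4$ the relevant congruence lemma is \cref{lem:Xr-2AstAstModXr-2AstAstAstUnequalModPaEqual4} rather than \cref{lem:Xr-2AstAstModXr-2AstAstAstUnequalModP}, a point neither you nor the paper flags explicitly but which does not affect the argument since $r\equiv 0\not\equiv 2,3\bmod p$.
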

    \begin{proof}
      By \cref{lem:Xr2Vpa3},
      \[
        X_{r-2}
        \hookleftarrow
        V_{p-a+3} \otimes \rD^{a-2}.
      \]
      Expand $r = r_0 + r_1 p + \dotsb$ $p$-adically.

      If $\Sigma(r-1)$ is \emph{non}-minimal but $\Sigma(r)$ is minimal, then $r \equiv 0 \mod p$.
      In particular, for $a = \{ 4, \ldots, p-1 \}$, we have $r \not\equiv a, a-1 \mod p$.

      Thus we can apply \cref{lem:Xr2AstAstModXr2AstAstAstUnequalModP} (for $a > 4$) respectively \cref{lem:Xr2AstAstModXr2AstAstAstUnequalModPaEqual4} (for $a = 4$), yielding by \cref{lem:VrAstQuotients}.\ref{en:VrAstAstModVrAstAstAst},
      \[
        X_{r-2}^{**} / X_{r-2}^{***}
        \hookleftarrow
        V_{a-4} \otimes \rD^2.
      \]
      By \cref{lem:Xr2V2xXrpp}, the Jordan-Hölder series of $X_{r-2}$ is included in that of \cref{prop:JHXrppxV2}.
      Because $\Sigma(r)$ is minimal, by \cref{prop:XrAst0IffSrMin} we have $\dim X_r < p$, therefore, by \cref{cor:dimXr2dimXr1}, we have $\dim X_{r-2} < 3p+3$;
      that is, $X_{r-2}$ has at most five Jordan-Hölder factors;
      whereas $X_{r-1}$ has three Jordan-Hölder factors by \eqref{eq:Xr1aSrMin}.

      Since neither $V_{a-4} \otimes D^2$ nor $V_{p-a-3} \otimes D^{a-2}$ are Jordan-Hölder factors of $X_{r-1}$, we can conclude by \cref{cor:dimXr2dimXr1} that the Jordan-Hölder series of $X_{r-2} / X_{r-1}$ is
      \[
        0 \to
        V_{p-a+3} \otimes \rD^{a-2} \to
        X_{r-2} / X_{r-1} \to
        V_{a-4} \otimes \rD^2 \to
        0.
        \qedhere
      \]
    \end{proof}

    \cref{prop:Xr2SrpNonMinSrMin} with \cref{prop:JHXrppxV2} and (the Jordan-Hölder series of $X_{r-1}$) yield the Jordan-Hölder series of $X_{r-2}$.

    \begin{lem}[Extension of {\cite[Lemma 3.10]{BG}}]
      \label{lem:Xr2SrMinrEqPn}
      If $r = p^n$ for some $n > 1$, then $\dim X_{r-2} = 2p + 4$.
    \end{lem}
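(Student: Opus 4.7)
The plan is to compute $\dim X_{r-2}$ as $\dim X_{r-1} + \dim X_{r-2}/X_{r-1}$, bracketing each summand.  First, the setup: since $r = p^n$ with $n > 1$, we have $r \equiv 1 \equiv p \mod p-1$, so the representative of $r \mod p-1$ in $\{3,\ldots,p+1\}$ is $a = p$.  Moreover, $\Sigma(r) = 1$ is minimal, so \cref{prop:XrAst0IffSrMin} gives $X_r^* = 0$; whereas $\Sigma(r') = n(p-1)$ and $\Sigma(r'') = (p-2) + (n-1)(p-1)$ are both $\geq p$ (for $n \geq 2$ and $p \geq 3$), hence non-minimal, so $X_{r'}^*$ and $X_{r''}^*$ are nonzero.

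For the upper bound, I would combine \cite[Proposition 3.13.(iii)]{BG} (giving $\dim X_{r-1} = p+3$, with Jordan-Hölder factors $V_1 \otimes \rD^{p-1}$, $V_{p-2} \otimes \rD$, $V_1$) with \cref{cor:XrpxV1Xr2} applied to $a = p$ (requiring $p \geq 5$), producing the short exact sequence
\[
  0 \to V_3 \otimes \rD^{p-2} \to (X_{r''} \otimes V_2)/(X_{r'} \otimes V_1) \to V_{p-4} \otimes \rD^2 \to 0
\]
of dimension $p+1$.  The multiplication epimorphism $X_{r''} \otimes V_2 \twoheadrightarrow X_{r-2}$ of \cref{lem:Xr2V2xXrpp} sends $X_{r'} \otimes V_1$ onto $X_{r-1}$, inducing a surjection $(X_{r''} \otimes V_2)/(X_{r'} \otimes V_1) \twoheadrightarrow X_{r-2}/X_{r-1}$, so $\dim X_{r-2}/X_{r-1} \leq p+1$ and $\dim X_{r-2} \leq 2p+4$.

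For the matching lower bound, it suffices to show that both Jordan-Hölder factors $V_3 \otimes \rD^{p-2}$ and $V_{p-4} \otimes \rD^2$ appearing above survive in $X_{r-2}/X_{r-1}$; since each is distinct from the three factors $V_1 \otimes \rD^{p-1}$, $V_{p-2} \otimes \rD$, $V_1$ of $X_{r-1}$, it is enough to exhibit them in $X_{r-2}$ itself.  The factor $V_3 \otimes \rD^{p-2}$ is furnished by \cref{lem:Xr2Vpa3} applied to $a = p$.  The main obstacle is the factor $V_{p-4} \otimes \rD^2$: since $r = p^n$ forces $r \equiv a \equiv 0 \mod p$, the generic shortcut (as in the proof of \cref{prop:Xr2SrpMin}) does not directly apply.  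Instead, I would exhibit an explicit $F \in X_{r-2} \cap V_r^{**}$ whose image in $V_r^{**}/V_r^{***}$ generates a copy of $V_{p-4} \otimes \rD^2$ (cf.\ \cref{lem:VrAstQuotients}.\ref{en:VrAstAstModVrAstAstAst}).  Concretely, I would take an appropriate $\F_p$-linear combination of the generators $X^2(kX+Y)^{r-2}$, $Y^2(X+kY)^{r-2}$, and $XY(kX+Y)^{r-2}$ from \cref{lem:Xr2Generators}; using Lucas's theorem together with the fact that $\binom{r}{j} \equiv 0 \mod p$ for every $0 < j < r$ (because $r = p^n$), the coefficients can be chosen so that $F$ satisfies the criteria of \cref{lem:VrAstCriteria}.\ref{en:VrAstAstCriterion} but fails those of \cref{lem:VrAstCriteria}.\ref{en:VrAstAstAstCriterion}.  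With both Jordan-Hölder factors located, $\dim X_{r-2}/X_{r-1} \geq 4 + (p-3) = p+1$ matches the upper bound, giving $\dim X_{r-2} = (p+3)+(p+1) = 2p+4$.
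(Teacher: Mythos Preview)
Your structural approach via $\dim X_{r-1} + \dim X_{r-2}/X_{r-1}$ is genuinely different from the paper's, which simply exhibits an explicit spanning set of $2p+4$ elements (using that $(X+mY)^{p^n}=X^{p^n}+m^{p^n}Y^{p^n}$ to eliminate one family of generators) and then checks linear independence coefficient-by-coefficient via Lucas's Theorem and the known $p$-adic expansion of $r-2=p^n-2$.

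Your upper bound has a soft spot: you treat $X_{r'}\otimes V_1$ as a submodule of $X_{r''}\otimes V_2$ mapped by $\phi$ into $X_{r-1}$, but \cref{cor:XrpxV1Xr2} is only a comparison of Jordan--H\"older series, not a literal inclusion. One can build such an embedding via the derivative section $f\mapsto \tfrac{1}{r-1}(f_x\otimes X+f_y\otimes Y)$ from $X_{r'}$ into $X_{r''}\otimes V_1$ (Euler's identity makes it a section, and $p\nmid r-1$ here), then multiply into $X_{r''}\otimes V_2$; checking injectivity is a short computation. So this part is repairable.

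The lower bound, however, has a real obstruction. Your plan is to produce $F\in X_{r-2}\cap V_r^{**}$ with nonzero image in $V_r^{**}/V_r^{***}$. But for $r=p^n$ we have $r\equiv p\bmod p-1$ and $r\equiv 0\equiv p\bmod p$, so \cref{lem:Xr2r4p-1AstModAstAstEqualModP} with $a=p$ gives $X_{r-2}^{*}/X_{r-2}^{***}=0$, i.e.\ $X_{r-2}^{**}=X_{r-2}^{***}$. Thus every $F\in X_{r-2}^{**}$ already lies in $V_r^{***}$ and has zero image in $V_r^{**}/V_r^{***}$; no choice of coefficients in your linear combination can avoid this. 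The factor $V_{p-4}\otimes\rD^2$ really does appear in $X_{r-2}$, but it lives inside $X_{r-2}^{***}\subset V_r^{***}$, where isolating it directly is much harder. This is exactly why the paper reverses the logic: it first pins down $\dim X_{r-2}=2p+4$ by the elementary linear-algebra count, and only then reads off the extra Jordan--H\"older factors by comparison with \cref{prop:JHXrppxV2}.
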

    \begin{proof}
      By \cref{lem:Xr2Generators},
      \[
        \{ X^2 (kX + Y)^{r-2}, XY (lX + Y)^{r-2}, Y^2 (X + mY)^{r-2}, X^r, Y^r, X^{r-1}Y, XY^{r-1} : k,l,m \in \F_p \}
      \]
      is a set of generators of $X_{r-2}$.
      Because
      \[
        (X+kY)^2
        =
        X^2 + 2k XY + k^2 Y^2,
      \]
      and therefore
      \[
        (X+kY)^r
        =
        X^2 (X+kY)^{r-2} + 2k XY (X+kY)^{r-2} + k^2 Y^2 (X+kY)^{r-2} ,
      \]
      the span over $\F_p$ of the sets
      \[
        \{ X^2 (kX + Y)^{r-2}, XY (lX + Y)^{r-2}, Y^2 (X + mY)^{r-2}, X^r, Y^r, X^{r-1}Y : k,l,m \in \F_p \}
      \]
      and
      \[
        \{ X^2 (kX + Y)^{r-2}, XY (lX + Y)^{r-2}, (X+mY)^r,  X^{r-2}Y^2, Y^r, X^{r-1}Y, X^r : k,l,m \in \F_p \}
      \]
      are equal.
      Because $r = p^n$, we have $(X+mY)^r = X^r + m^r Y^r$, and therefore the span of
      \[
        \{ (X+mY)^r : m \in \F_p \}
      \]
      equals that of $X^r$ and $Y^r$.
      Therefore the span over $\F_p$ of
      \[
        \{ X^2 (kX + Y)^{r-2}, XY (lX + Y)^{r-2}, Y^2 (X + mY)^{r-2}, X^r, Y^r, X^{r-1}Y : k,l,m \in \F_p \}
      \]
      equals that of
      \[
        \{ X^2 (kX + Y)^{r-2}, XY (lX + Y)^{r-2}, X^{r-2} Y^2, X^r, Y^r, X^{r-1}Y : k,l \in \F_p \}.
      \]
      We show that the elements of the latter set are linearly independent, that is, if
      \[
        A X^r + B Y^r + C X^{r-1}Y + D X^{r-2}Y^2 + \sum_{k \in \F_p} e_k X^2 (kX + Y)^{r-2} + \sum_{l \in \F_p} f_l XY (lX + Y)^{r-2}
        =
        0,
        \tag{$*$}
      \]
      then the coefficients $A,B,C,D$ and $e_k, f_l$ for $f,l$ in $\F_p$ all vanish.
      Let $t$ in $ \{ 1, \ldots, r \} $.
      Comparing the coefficients of $X^{t+2} Y^{r-2-t}$ on both sides of $(*)$ gives
      \[
        \binom{r-2}{t} \sum_{k = 1, \ldots, p-1} e_k k^t
        +
        \binom{r-2}{t+1} \sum_{l = 1, \ldots, p-1} f_l l^{t+1}
        =
        0.
        \tag{$**$}
      \]
      Let
      \[
        E_t
        :=
        \sum_{k = 1, \ldots, p-1} e_k k^t
        \text{ \quad and \quad }
        F_{t+1}
        :=
        \sum_{l = 1, \ldots, p-1} f_l l^{t+1}.
      \]
      Because $\# \F_p^* = p-1$, the sums $E_t$ and $F_{t+1}$ only depend on $t \mod (p-1)$.
      Because the Vandermonde determinant is nonzero, if $E_1, \ldots, E_{p-1} = 0$ then $e_1, \ldots, e_{p-1} = 0$;
      likewise if $F_1, \ldots, F_{p-1} = 0$ then $f_1, \ldots, f_{p-1} = 0$.
      It therefore suffices to show that $E_1, \ldots, E_{p-1} = 0$ and $F_1, \ldots, F_{p-1} = 0$.

      Write
      \[
        r-2
        =
        p^n-2
        =
        r_{n-1} p^{n-1} + \dotsb + r_1 p + r_0
        =
        (p-1)p^{n-1} + \dotsb + (p-1) p + p-2.
      \]
      For $t = 1, \ldots, p-2$, put $t' = t + p - 1$.
      Then $t' \leq r$ and $t' \equiv t \mod (p-1)$.
      By $(**)$,
      \begin{align}
        \binom{r-2}{t} E_t  + \binom{r-2}{t+1} F_{t+1}  = 0 \\
        \binom{r-2}{t'} E_t + \binom{r-2}{t'+1} F_{t+1} = 0.
      \end{align}
      The determinant of this linear equation system is
      \[
        \binom{r_0}{t+1} \binom{r_1}{1} \binom{r_0}{t-1}
        -
        \binom{r_0}{t} \binom{r_1}{1} \binom{r_0}{t}
        \equiv
        - \binom{r_1}{1} \binom{r_0}{t-1} \binom{r_0}{t} \frac{r_0+1}{t(t+1)}
        \not\equiv
        0
        \mod p
      \]
      because $0 < r_0 + 1, r_1 \leq p-1$.
      Therefore $E_t,F_t = 0$.

      For $t = p-1$, put $t' = r_0 + p$.
      Then $t' \leq r-2$ and $t' \equiv t \mod p$.
      We compute
      \[
        \binom{r-2}{t'+1}
        \equiv
        \binom{r_1}{1} \binom{r_0}{p-1}
        \equiv
        0
        \mod p
        \text{ \quad and \quad }
        \binom{r-2}{t'}
        \equiv
        \binom{r_1}{1} \binom{r_0}{p-2}
        \not\equiv
        0
        \mod p.
      \]
      Therefore $(**)$ gives $E_t = 0$.
      Similarly, choosing $t' = p(p-1)$ in $(**)$ yields $F_t = 0$.
    \end{proof}

    \begin{prop}
      Let $\Sigma(r'') \geq p$, $\Sigma(r') \geq p$ and $\Sigma(r) < p$.
      If $r \equiv p \mod (p-1)$ and $r \geq 3p+2$, then $r = p^n$ for $n > 1$ and the Jordan-Hölder series of $X_{r-2} / X_{r-1}$ is
      \[
        0 \to
        V_3 \otimes \rD^{p-2} \to
        X_{r-2} / X_{r-1} \to
        V_{p-4} \otimes \rD^2 \to
        0.
      \]
    \end{prop}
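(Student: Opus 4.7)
The plan is to parallel the argument of the previous proposition, but the direct appeal to \cref{lem:Xr-2AstAstModXr-2AstAstAstUnequalModP} fails here because the hypotheses force $r = p^n$ and hence $r \equiv 0 \equiv p \equiv a \mod p$. Instead, I would match the two Jordan-Hölder factors by a dimension count that compares $X_{r-2}/X_{r-1}$ to the explicit quotient described in \cref{cor:XrpxV1Xr2}.

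First I would observe that $r = p^n$ for some $n \geq 2$: the congruence $r \equiv p \equiv 1 \mod p-1$ together with $\Sigma(r) < p$ forces $\Sigma(r) = 1$, so $r$ is a power of $p$, and the bound $r \geq 3p + 2 > p$ excludes $n = 1$. By \cref{lem:Xr2SrMinrEqPn} this gives $\dim X_{r-2} = 2p + 4$. The Jordan-Hölder series of $X_{r-1}$ for $a = p$ with $\Sigma(r)$ minimal, recalled above, has successive factors $V_1 \otimes \rD^{p-1}$, $V_{p-2} \otimes \rD$ and $V_1$, so $\dim X_{r-1} = 2 + (p-1) + 2 = p+3$, and therefore $\dim X_{r-2}/X_{r-1} = p+1$.

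Next, since $\Sigma(r'')$ and $\Sigma(r')$ are non-minimal, \cref{prop:XrAst0IffSrMin} gives $X_{r''}^{*} \neq 0$ and $X_{r'}^{*} \neq 0$, so \cref{cor:XrpxV1Xr2} (applicable because $a = p \in \{5, \ldots, p+3\}$ for $p \geq 5$) supplies the short exact sequence
\[
  0 \to V_3 \otimes \rD^{p-2} \to (X_{r''} \otimes V_2)/(X_{r'} \otimes V_1) \to V_{p-4} \otimes \rD^2 \to 0
\]
of total dimension $4 + (p-3) = p+1$. The compatible multiplication epimorphisms $X_{r''} \otimes V_2 \twoheadrightarrow X_{r-2}$ and $X_{r'} \otimes V_1 \twoheadrightarrow X_{r-1}$ induce a $\Gamma$-linear surjection $(X_{r''} \otimes V_2)/(X_{r'} \otimes V_1) \twoheadrightarrow X_{r-2}/X_{r-1}$; since both $\F_p[\Gamma]$-modules have dimension $p+1$, this surjection is an isomorphism, and the displayed short exact sequence transfers to $X_{r-2}/X_{r-1}$, yielding the claim.

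The main obstacle is precisely the failure of the singular-quotient approach of the previous proposition, caused by $r \equiv a \mod p$; sidestepping it via the dimension count through \cref{cor:XrpxV1Xr2}, \cref{lem:Xr2SrMinrEqPn}, and the recalled structure of $X_{r-1}$ is the crux of this case. As an independent consistency check, \cref{lem:Xr2Vpa3} already provides the embedding $V_3 \otimes \rD^{p-2} \hookrightarrow X_{r-2}$, confirming that this factor appears as the subobject in the resulting sequence.
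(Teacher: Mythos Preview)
Your strategy shares the paper's key ingredients: both use \cref{lem:Xr2SrMinrEqPn} to get $\dim X_{r-2}=2p+4$, the recalled structure \eqref{eq:Xr1aE1SrMin} to get $\dim X_{r-1}=p+3$, and hence $\dim X_{r-2}/X_{r-1}=p+1$. The paper then finishes more directly: it lists the six Jordan--H\"older factors of $X_{r''}\otimes V_2$ from \cref{prop:JHXrppxV2} for $a=p$, removes the three lying in $X_{r-1}$, and observes that among the remaining three factors $V_3\otimes\rD^{p-2}$, $V_{p-4}\otimes\rD^2$, $V_{p-2}\otimes\rD$ the only sub-multiset of total dimension $p+1$ is $\{V_3\otimes\rD^{p-2},\,V_{p-4}\otimes\rD^2\}$.

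Your detour through \cref{cor:XrpxV1Xr2} introduces a step that is not justified as written. To obtain a surjection $(X_{r''}\otimes V_2)/(X_{r'}\otimes V_1)\twoheadrightarrow X_{r-2}/X_{r-1}$ you need an actual embedding $X_{r'}\otimes V_1\hookrightarrow X_{r''}\otimes V_2$ whose image is carried into $X_{r-1}$ by the multiplication map $\phi$; calling the two multiplication maps ``compatible'' does not produce one, and \cref{cor:XrpxV1Xr2} itself is proved only by comparing Jordan--H\"older factors, not by exhibiting such an embedding. One can construct a candidate via the derivative section $X_{r'}\to X_{r''}\otimes V_1$ (injective since $r'=p^n-1$ is prime to $p$) followed by $\id\otimes m\colon X_{r''}\otimes V_1\otimes V_1\to X_{r''}\otimes V_2$, but you would still need to check that this composite is injective. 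The paper's direct comparison through \cref{prop:JHXrppxV2} sidesteps this entirely and is the cleaner route; your argument becomes complete if you replace the appeal to \cref{cor:XrpxV1Xr2} by that factor-and-dimension comparison.
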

    \begin{proof}
      By \cref{prop:JHXrppxV2} for $a = p$, we have
      \begin{align}
        0 & \to (V_3 \otimes \rD^{p-2}) \op (V_1 \otimes \rD^{p-1})\\
          & \to X_{r''} \ox V_2\\
          & \to (V_{p-4} \otimes \rD^2) \op V_{2p-1} \to 0
          \tag{$*$}
      \end{align}
      where $V_{2p-1}$ has Jordan-Hölder series $V_{p-2} \otimes \rD$, $V_1$ and $V_{p-2} \otimes \rD$.
      By \cref{lem:Xr2SrMinrEqPn} and \cite[Proposition 3.13.(iii)]{BG}, we have $\dim X_{r-1} = p + 3$ and $\dim X_{r-2} = 2p + 4$.
      By comparing \cref{eq:Xr1aE1SrMin} with $(*)$, the Jordan-Hölder factors $V_3 \otimes \rD^{p-2}$ and $V_{p-4} \otimes \rD^2$ must appear in the Jordan-Hölder series of $X_{r-2}$.
    \end{proof}

    \subsubsection{Sum of the Digits of $r-1$ and $r$ are \emph{non}-minimal}
    \label{sec:SrSrpSrpp-non-minimal}

    We show that if $\Sigma(r'')$, $\Sigma(r')$ and $\Sigma(r)$ are all non-minimal, then $X_{r-2}$ is maximal, that is, $\dim X_{r-2} = 3p+3$.

    We recall that $\Sigma(r)$ is non-minimal if and only if, for $a$ in $ \{ 1, \ldots, p-1 \}$ such that $r \equiv a \mod (p-1)$, we have $\Sigma(r) > a$, that is, if and only if $\Sigma(r) \geq p$.
    Therefore, in analogy to \cite[Lemma 4.3]{BG}, we conclude that $\Sigma(r'')$, $\Sigma(r')$ and $\Sigma(r)$ are non-minimal if and only if
    \begin{enumerate}
      \item either $p \nmid r',r$ and $\Sigma(r'')$ non-minimal,
      \item or $r = p^n u$ for $n \geq  1$, and $\Sigma(u)$ non-minimal,
      \item or $r' = p^n u'$ for $n \geq  1$ and $\Sigma(u')$ non-minimal.
    \end{enumerate}
    We will prove successively that $\dim X_{r-2} = 3p+3$ is maximal in each one of these possibilities:

    \begin{lemma}[Analogue of {\cite[Lemma 4.2]{BG}}]
    \label{lem:Xr2FullDimGeneralPNotDivRpR}
      Let $p > 3$ and let $r \geq 3p+2$.
      If $\Sigma(r'') \geq p$ and $p \nmid r', r$, then $\dim X_{r-2} = 3p+3$.
    \end{lemma}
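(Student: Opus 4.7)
The plan is to show that the $3p+3$ generators of $X_{r-2}$ exhibited in \cref{lem:Xr2Generators}, namely
\[
  S := \{X^r, Y^r, X^{r-1}Y\} \cup \{X^2(jX+Y)^{r-2}, XY(lX+Y)^{r-2}, Y^2(X+kY)^{r-2} : j,l,k \in \F_p\},
\]
are $\F_p$-linearly independent. Combined with the upper bound $\dim X_{r-2} \leq 3p+3$ from \cref{cor:Xr2Iso}, this forces $\dim X_{r-2} = 3p+3$ and, equivalently, the multiplication epimorphism $\phi$ of \cref{lem:Xr2V2xXrpp} to be an isomorphism.

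I would assume a vanishing linear relation
\[
  A X^r + B Y^r + C X^{r-1}Y + \sum_j e_j X^2(jX+Y)^{r-2} + \sum_l g_l XY(lX+Y)^{r-2} + \sum_k f_k Y^2(X+kY)^{r-2} = 0,
\]
expand by the binomial theorem, and compare coefficients of $X^t Y^{r-t}$ for each $t = 0, 1, \ldots, r$. With $E_i := \sum_{j \in \F_p^*} j^i e_j$, $G_i := \sum_{l \in \F_p^*} l^i g_l$, $F_i := \sum_{k \in \F_p^*} k^i f_k$ (each depending only on $i \bmod (p-1)$ once $i \geq 1$), the comparison for $3 \leq t \leq r-3$ yields
\[
  \binom{r-2}{t-2} E_{t-2} + \binom{r-2}{t-1} G_{t-1} + \binom{r-2}{t} F_{r-2-t} \equiv 0 \pmod{p}. \tag{$\dagger$}
\]
Writing $r \equiv a \pmod{p-1}$, the unknowns in $(\dagger)$ for $t \equiv i+2 \pmod{p-1}$ are the triple $(E_i, G_{i+1}, F_{a-4-i})$;
the remaining equations at the six values $t \in \{0, 1, 2, r-2, r-1, r\}$ supply additional constraints involving $A, B, C, e_0, g_0, f_0$.

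The crux is then to exhibit, for each $i \in \{1, \ldots, p-1\}$, three values $t_1, t_2, t_3 \in \{3, \ldots, r-3\}$ all congruent to $i+2 \pmod{p-1}$ for which the $3 \times 3$ matrix $\bigl(\binom{r-2}{t_\nu - \epsilon}\bigr)_{\nu=1,2,3;\,\epsilon = 0,1,2}$ is invertible modulo $p$. Here the hypotheses enter: expanding $r = r_0 + r_1 p + r_2 p^2 + \dotsb$, the conditions $p \nmid r$ and $p \nmid r-1$ force $r_0 \geq 2$, so $r-2$ has $p$-adic expansion $(r_0-2) + r_1 p + r_2 p^2 + \dotsb$ with no carrying and $\Sigma(r-2) = \Sigma(r) - 2 \geq p$ by hypothesis, while the bound $r \geq 3p+2$ ensures at least three elements of $\{3,\ldots,r-3\}$ in each residue class mod $p-1$. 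The digit mass $\Sigma(r-2) \geq p$ permits splitting the target residue $i+2$ as three distinct $p$-adic presentations $t_\nu = t_{\nu,0} + t_{\nu,1} p + \dotsb$ with $t_{\nu,n} \leq (r-2)_n$ for every $n$;
by Lucas's Theorem each $\binom{r-2}{t_\nu}$ is then nonzero modulo $p$, and by arranging the three presentations to differ on well-chosen digits the rows of the matrix can be made linearly independent. A case analysis on the small digits of $r-2$, in the spirit of \cref{lem:XrInXr1InXr2} and \cref{lem:Xr2SrMinrEqPn}, reduces the invertibility of the $3 \times 3$ matrix to the nonvanishing of small Vandermonde-type determinants modulo $p$;
the assumption $p > 3$ is used precisely here, to keep enough room for three distinct choices of low digits.

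Once $E_i = G_{i+1} = F_{a-4-i} = 0$ is known for every $i \in \{1, \ldots, p-1\}$, the classical Vandermonde determinant $\det(j^i)_{1 \leq i, j \leq p-1}$ being nonzero in $\F_p$ forces $e_j = g_l = f_k = 0$ for every $j, l, k \in \F_p^*$. The six boundary equations at $t = 0, 1, 2, r-2, r-1, r$ then specialise to $B = g_0 = e_0 = f_0 = C = A = 0$ in turn, completing the argument. The main obstacle --- and the only step requiring genuine work --- is the combinatorial case analysis producing the three witnesses $t_\nu$ with an invertible $3 \times 3$ binomial matrix;
everything else is routine bookkeeping or a standard Vandermonde argument.
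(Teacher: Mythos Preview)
Your plan is the paper's: assume a vanishing linear relation among the $3p+3$ generators, pass to power sums, and for each residue class modulo $p-1$ exhibit witnesses $t$ making the resulting linear system nonsingular. Two points of execution worth flagging: first, the paper does not handle every class via a $3\times 3$ system --- for $t$ at least the first nonzero digit of $r-2$ it instead chooses a single witness $t'$ so that two of the three binomials $\binom{r-2}{t'},\binom{r-2}{t'+1},\binom{r-2}{t'+2}$ vanish by Lucas, yielding $D_t=0$ (and then $E_t=0$) from one equation each; second, in the genuine $3\times 3$ case the determinant is not Vandermonde-type but a Hankel matrix of binomials $\bigl(\binom{r_0}{t+i-j}\bigr)_{i,j}$, and the paper evaluates it via Krattenthaler's identity \cite[(2.17)]{Krattenthaler:DeterminantFormulas}, whose nonvanishing needs precisely that the least digit of $r-2$ is $<p-2$ --- this is where $p\nmid r,r'$ actually enters.
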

    \begin{proof}
      We need to show that the spanning set
      \[
        \{ X^r, Y^r, X^{r-1}Y,  X^2(j X + Y)^{r-2}, Y^2(X + kY)^{r-2}, XY(lX + Y )^{r-2} : j,k,l \in \F_p\}
      \]
      is linearly independent;
      that is, if there are constants $A, B, C$ and $d_j, e_k, f_l \in \mathbb{F}_p$ for $j, k, l = 0, 1,..., p-1$ satisfying
      \begin{align}
        0 = & \quad A X^r+B Y^r +C X^{r-1}Y\\
            & + \sum_j d_j Y^2(X + jY)^{r-2} + \sum_k e_k XY(kX + Y )^{r-2} + \sum_l f_l X^2(l X + Y)^{r-2} \tag{$*$}
      \end{align}
      then $A, B, C = 0$ and  $d_j, e_k, f_l =0$ for $j, k, l = 0, 1,..., p-1$.

      Let us assume $(*)$.
      Put
      \[
        D_i := \sum d_j j^i,\; E_i := \sum e_k k^{r-3-i},\; F_i := \sum f_l l^{r-4-i} \quad \text{for $i = 0, \ldots, r-4$}
      \]
      Because $\# \F_p^* = p-1$, we have $D_{i'} \equiv D_{i''}$ for all $i' \equiv i'' \mod (p-1)$ for $i', i'' > 0$.
      If $D_1$, \ldots, $D_{p-1} = 0$, then $d_1, \ldots, d_{p-1} = 0$ (and therefore $d_0 = 0$), because the system of linear equations of $D_1$, \ldots, $D_{p-1} = 0$ has full rank (by its nonzero Vandermonde determinant).
      Likewise if $E_1$, \ldots, $E_{p-1} = 0$, then $e_1, \ldots, e_{p-1} = 0$ and if $F_1$, \ldots, $F_{p-1} =0$, then $f_1, \ldots, f_{p-1} = 0$.
      To show that all coefficients $A$, $B$, $C$ e $d_j$, $e_k$ and $f_l$ for $j,k,l = 0$, \ldots, $p-1$ vanish, it therefore suffices to show $D_1, \ldots, D_{p-1} = 0$ and $E_1, \ldots, E_{p-1} = 0$.

      By comparing the coefficient of $X^{r-2-t}Y^{t+2}$ on both sides of $(*)$ for $t$ in  $\{ 1$, \ldots, $r-5 \}$,
      \[
        0
        =
        \binom{r-2}{t} D_{t} + \binom{r-2}{t+1} E_{t} +  \binom{r-2}{t+2} F_t.
        \label{eq:linear-equations}
      \]
      We will show that \cref{eq:linear-equations} forces $D_{t'}$, $E_{t''}$ and $F_{t'''}$ to vanish for $t'$ and $t''$ in full sets of representatives of $\{ 1, \ldots, p-1 \}$.
      That is, for every $t$ in $ \left\{ 1, \ldots, p-1 \right\} $ there is $t'$, $t''$ and $t'''$ with $t' \equiv t$, $t'' \equiv t$ and $t''' \equiv t \mod (p-1)$ such that $D_{t'}$, $E_{t''}$ and $F_{t'''}$ vanish.

      Expand $r-2 = r_0 + r_1 p + r_2 p^2 + \dotsb$ with $r_0, r_1, \ldots \in \left\{ 0, \ldots, p-1 \right\}$.
      Let $i$ be the smallest index such that $r_i \neq 0$.
      Fixate $t$ in $ \left\{ 1, \ldots, p-1 \right\} $.

      \begin{enumerate}[{Case} 1.]
        \item
          Suppose $t \in \left\{ 1, \ldots, r_i-1 \right\}$.

          If $r_0 = 0$, then $i > 0$.
          By Lucas' Theorem,
          \begin{itemize}
            \item
              for $t':= t p^i$, we have $\binom{r-2}{t'} \not\equiv 0$ and $\binom{r-2}{t' + 1}$, $\binom{r-2}{t' + 2} \equiv 0 \mod p$,
              thus \cref{eq:linear-equations} yields $D_{t'} = 0$;
            \item
              for $t'' := (t+1) p^i - 1$, we have $\binom{r-2}{t''+1} \not\equiv 0$ and $\binom{r-2}{t''+2}$, $\binom{r-2}{t''} \equiv 0 \mod p$, thus \cref{eq:linear-equations} yields $E_{t''} = 0$.
            \item
              The following choice of $t'$ satisfies $t' \equiv t \mod (p-1)$ and $\binom{r-2}{t'+2} \not\equiv 0$, so that  $F_{t'} = 0$ by \cref{eq:linear-equations} as we already know $D_{t'} = E_{t'} = 0$:
              \begin{itemize}
                \item If $t < p-2$, put $t' := (t+2) p^i - 2$.
                \item Otherwise, if $t = p-2$ (thus $r_i = p-1$),
                  then put $t' = p^i - 2$.
              \end{itemize}
          \end{itemize}
          Because $t', t'' \equiv t \mod p$, we have $D_t = D_{t'} = 0$ and $E_t = E_{t''} = 0$.
          We can therefore assume that $r_0 > 1$;
          in particular, $i = 0$.

          In the following, we choose $t', t'' \equiv t \mod (p-1)$ such that $(+)$ yields modulo $p$ the system of equations:
          \begin{alignat}{4}
            \binom{r-2}{t}   D_t & +{} & \binom{r-2}{t+1} & E_t +{} & \binom{r-2}{t+2} &   F_t   & {}\equiv 0 \\
            \binom{r-2}{t'}  D_t & +{} & \binom{r-2}{t'+1} & E_t +{} & \binom{r-2}{t'+2} &  F_t  & {}\equiv 0 \\
            \binom{r-2}{t''} D_t & +{} & \binom{r-2}{t''+1} & E_t +{} & \binom{r-2}{t''+2} & F_t & {}\equiv 0
          \end{alignat}
          We show $D_t = E_t = F_t = 0$ by proving that the determinant of the matrix $M$ attached to this system of equations is nonzero, that is,
          \[
            \norm{M}
            =
            \begin{vmatrix}
              \binom{r-2}{t}   & \binom{r-2}{t+1}   &  \binom{r-2}{t+2} \\
              \binom{r-2}{t'}  & \binom{r-2}{t'+1}  &  \binom{r-2}{t'+2} \\
              \binom{r-2}{t''} & \binom{r-2}{t''+1} &  \binom{r-2}{t''+2}
            \end{vmatrix}
            \not\equiv 0 \mod p.
          \]
          \begin{enumerate}[{label*=\arabic*.}]
            \item
              There is an index $i > 0$ such that $r_i > 1$.
              Put $t' := t+p^{i} - 1$ and $t'' := t + 2p^{i} - 2$
              \begin{enumerate}[{label*=\arabic*.}]
                \item
                  \label{en:Xr2FullDim-111}
                  Suppose $t \in \left\{ 2, \ldots, r_0-2 \right\}$.
                  By Lucas' Theorem, we have
                  \begin{itemize}
                    \item $\binom{r-2}{t'} = \binom{r_0}{t-1}\binom{r_{i}}{1}$, $\binom{r-2}{t'+1} = \binom{r_0}{t}\binom{r_{i}}{1}$ and $\binom{r-2}{t'+2} = \binom{r_0}{t+1}\binom{r_{i}}{1}$, as well as
                    \item $\binom{r-2}{t''} = \binom{r_0}{t-2}\binom{r_{i}}{2}$, $\binom{r-2}{t''+1} = \binom{r_0}{t-1}\binom{r_{i}}{2}$ and $\binom{r-2}{t''+2} = \binom{r_0}{t}\binom{r_{i}}{2}$.
                  \end{itemize}
                  Thus,
                  \begin{align}
                    \norm{M}
                        & \equiv
                        \begin{vmatrix}
                          \binom{r_0}{t}   & \binom{r_0}{t+1}   &  \binom{r_0}{t+2} \\
                          \binom{r_0}{t-1}\binom{r_{i}}{1}  & \binom{r_0}{t} \binom{r_{i}}{1} &  \binom{r_0}{t+1}\binom{r_{i}}{1} \\
                          \binom{r_0}{t-2} \binom{r_{i}}{2}& \binom{r_0}{t-1}\binom{r_{i}}{2} &  \binom{r_0}{t}\binom{r_{i}}{2}
                        \end{vmatrix}\\
                        & =
                        r_{i} \binom{r_{i}}{2} \cdot
                        \begin{vmatrix}
                          \binom{r_0}{t}   & \binom{r_0}{t+1}   &  \binom{r_0}{t+2} \\
                          \binom{r_0}{t-1}  & \binom{r_0}{t}  &  \binom{r_0}{t+1} \\
                          \binom{r_0}{t-2} & \binom{r_0}{t-1} &  \binom{r_0}{t}
                        \end{vmatrix}\\
                        \mod p.
                  \end{align}
                  By \cite[(2.17)]{Krattenthaler:DeterminantFormulas} (for $a=t$ and $a+b = r_0$ in the notation of \textit{loc. cit.}),
                  \[
                    \begin{vmatrix}
                      \binom{r_0}{t}   & \binom{r_0}{t+1}   &  \binom{r_0}{t+2} \\
                      \binom{r_0}{t-1}  & \binom{r_0}{t}  &  \binom{r_0}{t+1} \\
                      \binom{r_0}{t-2} & \binom{r_0}{t-1} &  \binom{r_0}{t}
                    \end{vmatrix}
                    \equiv
                    \prod_{i=1,2,3} \prod_{j=1, \ldots, t} \prod_{k = 1, \ldots, r_0-t} \frac{i+j+k-1}{i+j+k-2}
                    \mod p
                  \]
                  For this product to be nonzero, every factor has to be nonzero.
                  Because $j+k \leq r_0$, we have $i+j+k-1$ in $ \{ 2, \ldots, r_0+2 \}$.
                  This set does not contain $0$ in $\F_p$ if and only if $r_0 < p -2$.
                  Because $p \nmid r', r$, we have $r_0 < p-2$, and conclude $\norm{M} \neq 0$ in $\F_p$.
                  That is, $D_t = E_t = F_t = 0$.
                \item
                  \label{en:Xr2FullDim-112}
                  Suppose $t = 1$.
                  Then $t'' = 2p^i - 1 = p^i + p^i - 1 = p^i + (p-1)(1 + p + \cdots + p^{i-1})$ (and $t' = p^i$).
                  Because $r_0 < p -2$, by Lucas' Theorem, $\binom{r-2}{t''} \equiv 0 \mod p$.
                  Therefore
                  \begin{align}
                    \norm{M}
                        & \equiv
                        r_{i} \binom{r_{i}}{2} \cdot
                        \begin{vmatrix}
                          \binom{r_0}{1}   & \binom{r_0}{2}   &  \binom{r_0}{3} \\
                          \binom{r_0}{0}  & \binom{r_0}{1}  &  \binom{r_0}{2} \\
                          0 & \binom{r_0}{0} &  \binom{r_0}{1}
                        \end{vmatrix}
                        =
                        r_{i} \binom{r_{i}}{2} \frac{r_0 (r_0 + 1)(r_0 + 2)}{6}
                        \neq 0 \mod p,
                  \end{align}
                  because $r_0 < p-2$.
                  This determinant is well-defined because by assumption $p > r_0 +2 \geq 3$.
                \item
                  \label{en:Xr2FullDim-113}
                  Suppose $t = r_0 - 1$.
                  Because $p \not\mid r, r-1$, we have $r_0 < p-2$.
                  Therefore $t+ 1 = r_0 + 1 < p-1$.
                  Thus $\binom{r-2}{t + 2} \equiv \binom{r_0}{r_0 + 1} \equiv 0 \mod p$ by Lucas' Theorem.
                  Therefore, similarly to the case $t = 1$,
                  \begin{align}
                    \norm{M}
                        & \equiv
                        r_{i} \binom{r_{i}}{2} \cdot
                        \begin{vmatrix}
                          \binom{r_0}{t}   & \binom{r_0}{t+1}   &  0\\
                          \binom{r_0}{t-1}  & \binom{r_0}{t}  &  \binom{r_0}{t+1} \\
                          \binom{r_0}{t-2} & \binom{r_0}{t-1} &  \binom{r_0}{t}
                        \end{vmatrix}
                        \neq 0 \mod p.
                  \end{align}
              \end{enumerate}
            \item
              All $r_1$, $r_2$, \ldots $\leq 1$.
              Because $\Sigma(r'') \geq p$ and $r_0 < p-1$, there are $0 < i' < i''$ such that $r_{i'}$ and $r_{i''} = 1$.
              Put $t' := t + p^{i'} - 1$ and $t'' := t + p^{i''} + p^{i'} - 2$.
              \begin{enumerate}[{label*=\arabic*.}]
                \item
                  Suppose $t \in \left\{ 2, \ldots, r_0-2 \right\}$.
                  Then, similar to \ref{en:Xr2FullDim-111},
                  \[
                    \norm{M}
                    \equiv
                    r_{i'}^2 r_{i''}
                    \begin{vmatrix}
                      \binom{r_0}{t}   & \binom{r_0}{t+1}   &  \binom{r_0}{t+2} \\
                      \binom{r_0}{t-1}  & \binom{r_0}{t}  &  \binom{r_0}{t+1} \\
                      \binom{r_0}{t-2} & \binom{r_0}{t-1} &  \binom{r_0}{t}
                    \end{vmatrix}
                    \neq
                    0
                    \mod p.
                  \]
                \item
                  Suppose $t = 1$.
                  Because $\Sigma(r'') \geq p$ and $r_0 < p-1$, there are $0 < i' < i''$ such that $r_{i'}$ and $r_{i''} = 1$.
                  Put $t' := t + p^{i'} - 1$ and $t'' := t + p^{i''} + p^{i'} - 2$.
                  Then $t'' = p^{i''} + p^{i'} - 1 =  p^{i''}+ (p-1)(1 + p + \cdots + p^{i'-1})$.
                  Then, similar to \ref{en:Xr2FullDim-112},
                  \begin{align}
                    \norm{M}
                            & \equiv
                            r_{i''} r_{i'}^2 \cdot
                            \begin{vmatrix}
                              \binom{r_0}{1}   & \binom{r_0}{2}   &  \binom{r_0}{3} \\
                              \binom{r_0}{0}  & \binom{r_0}{1}  &  \binom{r_0}{2} \\
                              0 & \binom{r_0}{0} &  \binom{r_0}{1}
                            \end{vmatrix}
                            \neq 0 \mod p.
                  \end{align}
                \item
                  Suppose $t = r_0-1$.
                  Then $\binom{r-2}{t+2} \equiv 0 \mod p$.
                  Putting $t' := t + p^{i'} - 1$ and $t'' := t + p^{i''} + p^{i'} - 2$, similar to \ref{en:Xr2FullDim-113},
                  \begin{align}
                    \norm{M}
                            & \equiv
                            r_{i''} r_{i'} \cdot
                            \begin{vmatrix}
                              \binom{r_0}{t}   & \binom{r_0}{t+1}   &  0\\
                              \binom{r_0}{t-1}  & \binom{r_0}{t}  &  \binom{r_0}{t+1} \\
                              \binom{r_0}{t-2} & \binom{r_0}{t-1} &  \binom{r_0}{t}
                            \end{vmatrix}
                            \neq 0 \mod p.
                  \end{align}
              \end{enumerate}
          \end{enumerate}
        \item
          Suppose $t \in \left\{ r_i, \ldots, p-1 \right\}$.
          \begin{itemize}
            \item
              By assumption $\Sigma(r'') = r_i + \dotsb + r_m \geq p$, so we can write $t = r_i + s_{i+1} + \dotsb + s_m$ with $s_j$ in $ \{ 0, \ldots, r_j \} $ for $j = i+1, \ldots, m$.
              Put $t' = r_i + s_{i+1} p + \dotsb + s_m p^m$.
              Then $t' \equiv t \mod (p-1)$ and $\binom{r-2}{t'} \not\equiv 0 \mod p$ by Lucas' Theorem.
              If
              \begin{itemize}
                \item
                  either $i = 0$, then, because $p \not\mid r-1,r$, we have $r_0 < p-2$.
                  Therefore $\binom{r-2}{t'+1}, \binom{r-2}{t'+2} \equiv 0 \mod p$ by Lucas' Theorem.
                \item
                  or $i > 0$, then $r_0 = 0$.
                  Therefore $\binom{r-2}{t'+1}, \binom{r-2}{t'+2} \equiv 0 \mod p$ by Lucas' Theorem.
              \end{itemize}
              By \cref{eq:linear-equations}, in either case $D_t = D_{t'} = 0$.
            \item
              To show $E_t =0$, we choose $t'$ with $t' \equiv t \mod (p-1)$ as follows:
              \begin{itemize}
                \item
                  If $i = 0$, then let $r_0' = r_0 - 1$.
                  Because by assumption $\Sigma(r'') = r_0 + \dotsb + r_m \geq p$ and $t \leq p-1$, we can write $t = r_0' + s'_1 + \dotsb$ with $s'_j$ in $ \{ 0, \ldots, r_j \} $ for $j = 1, 2, \ldots$.
                  Put $t' = r_0' + s'_1 p + \dotsb$.
                  Then $t' \equiv t \mod (p-1)$.

                  Because $i = 0$ and $p \not\mid r-1,r$, we have $r_0 < p-2$.
                  Therefore $\binom{r-2}{t'+1} \nequiv 0$ and $\binom{r-2}{t'+2} \equiv 0 \mod p$ by Lucas' Theorem.
                \item
                  If $i > 0$, then let $r_i' = r_i - 1$.
                  Because by assumption $\Sigma(r'') = r_i + \dotsb + r_m \geq p$ and $t \leq p-1$, we can write $t = r_i' + s'_{i+1} + \cdots$ with $s'_j$ in $ \{ 0, \ldots, r_j \} $ for $j = 1, 2, \ldots$.
                  Put $t' = (p-1) + \dotsb + (p-1) p^{i-1} + r_i' p^i + s'_{i+1} p^{i+1} + \dotsb$.
                  Then $t' \equiv t \mod (p-1)$.
                  Because $t'+ 1 = r_i + s'_{i+1} p^{i+1} + \dotsb$, by Lucas' Theorem $\binom{r-2}{t'+1} \nequiv 0 \mod p$.

                  Since $i > 0$, in particular $r_0 = 0$, that is, $t'+ 2 = 1 + r_i p^i + s'_{i+1} p^{i+1} + \dotsb$.
                  By Lucas' Theorem, $\binom{r-2}{t'+2} \equiv  0 \mod p$ .
              \end{itemize}
              Since $D_t = 0$, we conclude by \cref{eq:linear-equations}, that in either case $E_t = 0$.
            \item
              To show $F_t =0$, we choose $t'$ with $t' \equiv t \mod (p-1)$ as follows:
              \begin{itemize}
                \item If $\Sigma(r'') = p$ and $t = p-1$, then, since $\Sigma(r'') \geq p$, we can write $t+2 = s'_0 + s'_1 + \cdots + s'_m$ with $s'_j$ in $\{ 0, \ldots, r_j \}$ for $j = 0, 1, 2, \ldots$ and $s'_0 < 2$.
                \item
                  Otherwise we can write $T = t+2 = s'_0 + s'_1 + \cdots + s'_m$ with $s'_j$ in $\{ 0, \ldots, r_j \}$ for $j = 0, 1, 2, \ldots$.
              \end{itemize}
              Put $T' = s_0' + s'_1 p + \dotsb$ and $t' = T'-2$.
              Then $t' \equiv t \mod (p-1)$ and $\binom{r-2}{t'+2} \nequiv 0$ by Lucas' Theorem.
              Since $D_t, E_t = 0$, we conclude by \cref{eq:linear-equations}, that in either case $F_t = 0$.
              \qedhere
          \end{itemize}
      \end{enumerate}
    \end{proof}

    \begin{lem}
      \label{lem:Xr2FullDimGeneralPDivR}
      Let $p > 3$ and write $r = p^n u$ for $n \geq  1$ such that $p \nmid u$.
      If $\Sigma(u)$ is non-minimal, then $\dim X_{r-2} = 3p + 3$.
    \end{lem}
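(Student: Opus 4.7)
The plan is to adapt the direct linear-independence argument of \cref{lem:Xr2FullDimGeneralPNotDivRpR}. By \cref{cor:Xr2Iso} we always have $\dim X_{r-2} \leq 3p+3$, so it suffices to show that the $3p+3$ generators of $X_{r-2}$ listed in \cref{lem:Xr2Generators} are linearly independent over $\F_p$.

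I would first record the $p$-adic expansion of $r-2$. Since $r = u p^n$ with $u = u_0 + u_1 p + \cdots$ and $u_0 \geq 1$, borrowing through the first $n$ digits gives
\[
  r-2 = (p-2) + (p-1)p + \cdots + (p-1)p^{n-1} + (u_0-1)p^n + u_1 p^{n+1} + \cdots,
\]
so the digits $s_j := (r-2)_j$ satisfy $s_0 = p-2$, $s_j = p-1$ for $1 \leq j \leq n-1$, $s_n = u_0-1$, and $s_{n+k} = u_k$ for $k \geq 1$. The hypothesis $\Sigma(u) \geq p$ then yields $\Sigma(r-2) = (p-3) + (n-1)(p-1) + \Sigma(u) \geq 2p-3 \geq p$, where $p > 3$ is used.

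Assuming a linear dependence of the generators and setting $D_t := \sum_j d_j j^t$, $E_t := \sum_k e_k k^{r-3-t}$, $F_t := \sum_l f_l l^{r-4-t}$, comparing the coefficients of $X^{r-2-t}Y^{t+2}$ yields, for each $t \in \{0, \ldots, r-2\}$,
\[
  \binom{r-2}{t}D_t + \binom{r-2}{t+1}E_t + \binom{r-2}{t+2}F_t \equiv 0 \mod p.
\]
As in the previous lemma, a Vandermonde argument reduces the task to showing $D_t = E_t = F_t = 0$ for every residue $t \in \{1, \ldots, p-1\}$, and this in turn follows once I exhibit, for each such $t$, two auxiliary indices $t', t'' \equiv t \mod p-1$ making the $3 \times 3$ matrix $\bigl(\binom{r-2}{\tau+j}\bigr)_{\tau \in \{t,t',t''\},\, j \in \{0,1,2\}}$ invertible modulo $p$. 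Using Lucas's theorem together with the closed form $\binom{p-2}{s} \equiv (-1)^s(s+1) \mod p$ for $0 \leq s \leq p-2$, the case split would mirror the two cases of the previous proof, but now with $s_0 = p-2$ replacing $r_0 \in \{2, \ldots, p-3\}$, and with the non-minimality of $\Sigma(u)$ (rather than $\Sigma(r-2)$ directly) guaranteeing the existence of a digit $s_j$ with $j \geq n$ that is suitable for the auxiliary shifts $t' = t + p^j - 1$ and $t'' = t + 2(p^j - 1)$.

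The main obstacle I expect is the boundary range $t \in \{p-3, p-2, p-1\}$: here $\binom{p-2}{p-1} = 0$ together with the saturated low digits forces vanishing entries in the first row, so no naive shift by $p-1$ alone produces an invertible matrix. One must instead invoke a digit of $r-2$ strictly beyond position $n-1$, paralleling case 2 of \cref{lem:Xr2FullDimGeneralPNotDivRpR}: pack $t$ into the digits $s_n, s_{n+1}, \ldots$ coming from $u$, using $\Sigma(u) \geq p$ to guarantee sufficient available mass, and choose $t', t''$ so that the dominant factor of the determinant extracted by Lucas is a nonzero small binomial or Krattenthaler determinant in the digits $s_0, \ldots, s_n$. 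The hypothesis $p > 3$ enters both in the bound $2p - 3 \geq p$ and in keeping these small determinants (products of factors like $(s+1)$ and $\binom{s_n}{j}$) nonzero $\mod p$.
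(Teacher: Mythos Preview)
Your setup and overall strategy are right, but the obstacle is not where you think it is. With $s_0 = p-2$ the naive shifts $t' = t + p^j - 1$, $t'' = t + 2(p^j-1)$ reproduce exactly the $3\times 3$ Krattenthaler matrix $\bigl(\binom{s_0}{t+i-j}\bigr)_{i,j}$ of \cref{lem:Xr2FullDimGeneralPNotDivRpR}, and that determinant vanishes modulo $p$ for \emph{every} $t$, not only near the boundary: using $\binom{p-2}{s} \equiv (-1)^s(s+1)$ one checks that the three rows satisfy the relation $\text{Row}_1 + 2\,\text{Row}_2 + \text{Row}_3 = 0$. Equivalently, in the product formula the factor $i+j+k-1$ reaches $s_0 + 2 = p$. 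So your plan for the generic range already fails, and patching only $t\in\{p-3,p-2,p-1\}$ would not help.

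The paper avoids this by choosing $t', t''$ that share the saturated low block of $r-2$: it sets $r(x) = xp^n - 2$ and takes $t' = r(t+2)$, $t'' = r(t+1+p^i)$. Then $t'+1$ and $t''+1$ have last digit $p-1 > s_0$, so Lucas forces $\binom{r-2}{t'+1} \equiv \binom{r-2}{t''+1} \equiv 0$ and the $E_t$-column drops out. What remains is a $2\times 2$ system in $D_t, F_t$, whose determinant is the $2\times 2$ Krattenthaler determinant in the digit $u_0' = u_0 - 1 \leq p-2$; there the product formula only sees $i\in\{1,2\}$, so the maximal factor is $u_0'+1 \leq p-1$ and the determinant is nonzero. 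The remaining range of $t$ is then handled by packing into the digits of $u$ (using $\Sigma(u)\geq p$) to kill $D_t$ and one of $E_t, F_t$ separately, much as in Case~2 of the previous lemma. The point to take away is that when $s_0 = p-2$ you cannot hope for an invertible $3\times 3$ block in the bottom digit; you must exploit the saturated block to eliminate the middle column first.
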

    \begin{proof}
      For every $x$ in $\N$ put
      \[
        r(x)
        :=
        x p^n - 2
        =
        x p^n - p^n + p^n - 2
        =
        p^n (x-1) + (p-1)[p^{n-1} + \dotsb + p] + (p-2).
      \]
      We notice that $r(x) \equiv x-2 \mod (p-1)$.
      Expand $p$-adically $u = u_0 + u_1 p + u_2 p^2 + \dotsb$ with $u_0$, $u_1$, $u_2$, \ldots in $\left\{ 0, \ldots, p-1 \right\}$ and $u_0 > 0$.
      Then
      \[
        r-2
        =
        r(u)
        =
        [(u_0-1) + u_1 p + u_2 p^2 + \dotsb] p^n + (p-1) (p^{n-1} + \dotsb + p) + (p-2).
      \]
      Using the notation of \cref{lem:Xr2FullDimGeneralPNotDivRpR}, we will show that \cref{eq:linear-equations} forces $D_{t'}$ and $E_{t''}$ or $F_{t''}$ to vanish for $t'$ and $t''$ in full sets of representatives of $\{ 1, \ldots, p-1 \}$.
      That is, for every $t$ in $ \left\{ 0, \ldots, p-2 \right\} $ there is $t'$ and $t''$ with $t' \equiv t$ and $t'' \equiv t \mod (p-1)$ such that $D_{t'}$ and $E_{t''}$ vanish.

      \begin{enumerate}[{Case} 1.]
        \item Suppose $t \in \left\{ 0, \ldots, u_0-3 \right\}$.
          Let $i$ be the smallest index $> 0$ such that $u_i > 0$ (which exists because $u_0 \leq p-1$ and $\Sigma(u) \geq p$).
          Put $t' = r(t+2)$ and $t'' = r(t+1 + p^i)$
          Then $t'$ and $t'' \equiv t \mod (p-1)$.
          By Lucas' Theorem,
          \begin{itemize}
            \item we have $\binom{r-2}{t'} \equiv \binom{u_0-1}{t+1} \neq 0$ and $\binom{r-2}{t''} \equiv u_i \binom{u_0-1}{t} \neq 0$,
            \item we have $\binom{r-2}{t'+2} \equiv \binom{u_0-1}{t+2} \neq 0$ and $\binom{r-2}{t''+2} \equiv u_i \binom{u_0-1}{t+1} \neq 0$, and
            \item we have $\binom{r-2}{t'+1} \equiv 0$ and $\binom{r-2}{t''+1} \equiv 0$.
          \end{itemize}
          Therefore $(+)$ yields modulo $p$ the system of equations:
          \begin{alignat}{4}
            \binom{r-2}{t'}  D_t & +{} & \binom{r-2}{t'+2}  & F_t & {}\equiv 0 \\
            \binom{r-2}{t''} D_t & +{} & \binom{r-2}{t''+2} & F_t & {}\equiv 0
          \end{alignat}
          To see that $D_t = F_t = 0$, we will prove that the determinant of the matrix $M$ attached to this system of equations is nonzero, that is,
          \[
            \norm{M}
            \equiv
            \begin{vmatrix}
              \binom{r-2}{t'}  &  \binom{r-2}{t'+2} \\
              \binom{r-2}{t''} &  \binom{r-2}{t''+2}
            \end{vmatrix}
            \neq 0 \mod p.
          \]
          Putting $u'_0  = u_0-1$, by \cite[(2.17)]{Krattenthaler:DeterminantFormulas},
          \begin{align}
            \begin{vmatrix}
              \binom{r-2}{t'}  &  \binom{r-2}{t'+2} \\
              \binom{r-2}{t''} &  \binom{r-2}{t''+2}
            \end{vmatrix}
            & \equiv
            u_i
            \begin{vmatrix}
              \binom{u_0'}{t+1}  &  \binom{u_0'}{t+2} \\
              \binom{u_0'}{t} &  \binom{u_0'}{t+1}
            \end{vmatrix}\\
            & =
            u_i
            \prod_{i=1,2} \prod_{j=1, \ldots, t+1} \prod_{k = 1, \ldots, u_0' - (t+1)} \frac{i+j+k-1}{i+j+k-2}
            \mod p
          \end{align}
          For this product to be nonzero, every factor has to be nonzero.
          Because $j+k \leq u'_0$, we have $i+j+k-1$ in $ \{ 2, \ldots, u_0'+1 \}$.
          This set does not contain $0$ in $\F_p$ if and only if $u_0' < p-1$.
          Because $u_0 \leq p-1$, we have $u'_0 = u_0-1 < p-1$, and conclude $\norm{M} \neq 0 \mod p$.
          That is, $D_t = F_t = 0$.

          To see that $E_t = 0$, put $t' = r(t+1)+1$.
          Then $\binom{r-2}{t'+1} \not\equiv 0$.
        \item
          Suppose either $u_0 = 1$ or, otherwise, $t \in \left\{ u_0-2, \ldots, p-2 \right\}$.
          \begin{itemize}
            \item
              To show $D_t = 0$, we choose $t'$ with $t' \equiv t \mod (p-1)$ as follows:
              Because by assumption $\Sigma(u) = u_0 + u_1 + \dotsb + u_m \geq p$ and $t \leq p-2$, we can write $t+2 = u_0 + s_1 + \dotsb + s_m$ with $s_j$ in $ \{ 0, \ldots, u_j \} $ for $j = 1, \ldots, m$.
              Put $t' = r(u_0 + s_1 p + \dotsb + s_m p^m)$.
              Then $t' \equiv t \mod (p-1)$.
              We have $\binom{r-2}{t'} \nequiv 0 \mod p$ and $\binom{r-2}{t'+1}, \binom{r-2}{t'+2} \equiv 0 \mod p$ by Lucas' Theorem.
              By \cref{eq:linear-equations}, we conclude $D_t = D_{t'} = 0$.
            \item
              To show $E_t$ or $F_t = 0$, we choose $t'$ with $t' \equiv t \mod (p-1)$ as follows:
              \begin{enumerate}[{label*=\arabic*.}]
                \item
                  We have $t \leq p-3$:
                  Because by assumption $\Sigma(u) = u_0 + \dotsb + u_m \geq p$ and $t \leq p-3$, we can write $t+3 = u_0 + s'_1 + \dotsb$ with $s'_j$ in $ \{ 0, \ldots, u_j \} $ for $j = 1, \ldots, m$.
                  Put $t' = r(u_0 + s'_1 p + \dotsb + s'_m p^m) - 1$.
                  Then $t' \equiv t \mod (p-1)$.
                  By Lucas' Theorem, $\binom{r-2}{t'+1} \nequiv 0$ and $\binom{r-2}{t'+2} \equiv 0 \mod p$.
                \item
                  We have $t = p-2$:
                  \begin{enumerate}[{label*=\arabic*.}]
                    \item
                      If $n = 1$ and $u_0 > 1$ or $n > 1$, then $\binom{r-2}{t+2} \nequiv 0 \mod p$ by Lucas' Theorem.
                      In addition, $\binom{r-2}{t+1} \equiv 0$.
                    \item
                      If $n = 1$ and $u_0 = 1$, then let $i$ be the smallest index $> 0$ such that $u_i > 0$ (which exists because $\Sigma(u) \geq p$).
                      Let $t' = r(p^i)$.
                      Then $t' \equiv p-2 = t \mod (p-1)$.
                      We have $\binom{r-2}{t'+1} \equiv 0$ and $\binom{r-2}{t'+2} \nequiv 0 \mod p$ by Lucas' Theorem.
                  \end{enumerate}
              \end{enumerate}
              Because $D_t = 0$, we conclude by \cref{eq:linear-equations} that $F_t = 0$.
              \qedhere
          \end{itemize}
      \end{enumerate}
    \end{proof}

    \begin{lem}
      \label{lem:Xr2FullDimGeneralPDivRp}
      Let $p > 3$ and write $r-1 = p^n u$ for $n \geq  1$ such that $p \nmid u$.
      If $\Sigma(u)$ is non-minimal, then $\dim X_{r-2} = 3p + 3$.
    \end{lem}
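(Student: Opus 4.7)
The plan is to mimic the strategy of \cref{lem:Xr2FullDimGeneralPDivR} (where the hypothesis was $p \mid r$), now in the case $p \mid r-1$. Writing $u = u_0 + u_1 p + u_2 p^2 + \cdots$ with $u_0 > 0$, the $p$-adic expansion of $r - 2 = p^n u - 1$ is
\[
r - 2 = (p-1) + (p-1) p + \cdots + (p-1) p^{n-1} + (u_0 - 1) p^n + u_1 p^{n+1} + \cdots,
\]
so it differs from the expansion of $r - 2 = p^n u - 2$ used in the previous lemma only in its leading digit: $p-1$ here versus $p-2$ there (with the understanding that, if $u_0 = 1$, one must borrow further, so that position $n$ becomes $0$ and the role of $u_0 - 1$ is taken over by the first nonzero $u_j$). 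As before, \cref{lem:Xr2Generators} supplies a spanning set of $X_{r-2}$ of size $3p+3$. Assuming a linear dependence, one derives, by comparing the coefficient of $X^{r-2-t} Y^{t+2}$ on both sides, the system
\[
\binom{r-2}{t} D_t + \binom{r-2}{t+1} E_t + \binom{r-2}{t+2} F_t \equiv 0 \mod p
\]
for $t = 0, \ldots, r-4$, and the Vandermonde argument reduces the proof to showing $D_t = E_t = F_t = 0$ for a full set of representatives $t \pmod{p-1}$ of $\{1, \ldots, p-1\}$.

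The crux is to select, for each $t \in \{1, \ldots, p-1\}$, two indices $t', t'' \equiv t \pmod{p-1}$ for which the $3 \times 3$ system at $(t, t', t'')$ has invertible determinant mod $p$, or for which $\binom{r-2}{t'+1}, \binom{r-2}{t''+1}$ both vanish, reducing to a $2 \times 2$ system in $D_t, F_t$ (after which $E_t$ follows from the original equation provided $\binom{r-2}{t+1} \neq 0$). In the analogue of Case~1 (small $t$) I would take $t' = (t+2) p^n - 2$ and $t'' = (t+1+p^i) p^n - 2$, where $i \geq 1$ is the least index with $u_i > 0$ (which exists by $\Sigma(u) \geq p$ whenever $u_0 < p$). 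Lucas's Theorem then expresses the binomial rows in terms of $(p-1)$, $\binom{u_0-1}{t+j}$, $u_i$ and $\binom{u_i}{2}$; the resulting determinant collapses, exactly as in the previous proof, to an expression to which \cite[(2.17)]{Krattenthaler:DeterminantFormulas} applies, and non-vanishing mod $p$ again reduces to $u_0 - 1 \leq p - 2$. In the analogue of Case~2 (large $t$), the decomposition $t + 2 = u_0 + s_1 + \cdots + s_m$ afforded by $\Sigma(u) \geq p$ would be used to pick $t' = (u_0 + s_1 p + \cdots + s_m p^m) p^n - 2$, for which Lucas's Theorem forces $\binom{r-2}{t'+1}$ and $\binom{r-2}{t'+2} \equiv 0 \mod p$, hence kills $D_t$, and a parallel choice of $t''$ kills $F_t$.

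The main obstacle is the combination of three complications that did not arise so cleanly in \cref{lem:Xr2FullDimGeneralPDivR}: the leading digit of $r-2$ is now $p-1$ rather than $p-2$, so the auxiliary indices $t'$ and $t''$ have to be re-chosen to restore the vanishing of $\binom{r-2}{t'+1}$ and $\binom{r-2}{t''+1}$ exploited there; the degenerate case $u_0 = 1$ shifts the position of the first relevant digit from $n$ to that of the first nonzero $u_j$, so indices must be shifted accordingly; and the marginal values $t \in \{p-3, p-2\}$ need separate treatment since $\binom{r-2}{t+1}$ or $\binom{r-2}{t+2}$ may vanish and therefore $E_t$ or $F_t$ cannot be read off from the $t$-equation alone. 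In all these subcases the hypothesis $\Sigma(u) \geq p$ (and $p > 3$) provides enough nonzero digits $u_j$ and enough room for the auxiliary indices to cover the argument, yielding $\dim X_{r-2} = 3p+3$.
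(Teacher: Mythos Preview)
Your overall framework is right, but the specific index choices you propose do not work, and the paper's actual proof uses genuinely different choices that you do not anticipate.

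In your Case~1 you propose the rows $t$, $t'=(t+2)p^n-2$, $t''=(t+1+p^i)p^n-2$. With $u'=u_0-1$ and using $\binom{p-1}{k}\equiv(-1)^k$, Lucas's Theorem gives the three rows
\[
(-1)^t(1,-1,1),\quad (-\tbinom{u'}{t+1},\ \tbinom{u'}{t+1},\ \tbinom{u'}{t+2}),\quad u_i(-\tbinom{u'}{t},\ \tbinom{u'}{t},\ \tbinom{u'}{t+1}).
\]
In each row the second entry is the negative of the first, so column~1 plus column~2 is zero and the $3\times 3$ determinant vanishes identically. In particular it does \emph{not} collapse to a Krattenthaler determinant. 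The underlying reason is exactly what you flagged: because the leading digit of $r-2$ is now $p-1$, the middle binomial $\binom{r-2}{t'+1}$ no longer vanishes, and the sign pattern it picks up makes the system degenerate. The paper's fix is not a minor re-indexing: it abandons the row at~$t$ altogether and uses three auxiliary rows $t'=p^n t$, $t''=r(p^i+t+1)-1$, $t'''=r(t+1)$ (with $r(x)=xp^n-1$), none of which has vanishing middle binomial; the resulting determinant is computed directly by a row reduction to $-u_i\binom{u'}{t+1}\binom{u'+1}{t+1}^2$, with no appeal to Krattenthaler.

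Your Case~2 has the analogous problem. With $t+2=u_0+s_1+\cdots$ and $t'=(u_0+s_1p+\cdots)p^n-2$, one has $t'+1=(u_0+s_1p+\cdots)p^n-1$, whose digit at position~$n$ is $u_0-1$, the same as that of $r-2$; hence $\binom{r-2}{t'+1}\not\equiv 0$ and you cannot read off $D_t=0$. The paper instead decomposes $t+1=u_0+s_1+\cdots$ and takes $t'=r(u_0+s_1p+\cdots)=(u_0+s_1p+\cdots)p^n-1$; then $t'+1$ has digit $u_0$ at position~$n$, exceeding $u_0-1$, so both $\binom{r-2}{t'+1}$ and $\binom{r-2}{t'+2}$ vanish and $D_t=0$ follows. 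A second choice $t'=r(u_0+s'_1p+\cdots)-1$ with $t+2=u_0+s'_1+\cdots$ then kills $E_t$.
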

    \begin{proof}
      For every $x$ in $\N$ such that $p \nmid x$, define
      \[
        r(x)
        :=
        x p^n - 1
        =
        (x p^n - p^n) + p^n - 1
        =
        (p^n (x-1)) + (p-1)[p^{n-1} + \dotsb + p + 1]
      \]
      We notice that $r(x) \equiv x-1 \mod (p-1)$.
      Expand $p$-adically $u = u_0 + u_1 p + u_2 p^2 + \dotsb$ with $u_0$, $u_1$, $u_2$, \ldots in $\left\{ 0, \ldots, p-1 \right\}$ and $u_0 > 0$.
      Then
      \[
        r-2
        =
        r(u)
        =
        [(u_0-1) + u_1 p + u_2 p^2 + \dotsb] p^n + (p-1) (p^{n-1} + \dotsb + p + 1).
      \]
      Using the notation of \cref{lem:Xr2FullDimGeneralPNotDivRpR}, we will show that \cref{eq:linear-equations} forces $D_{t'}$ and $E_{t''}$ to vanish for $t'$ and $t''$ in full sets of representatives of $\{ 1, \ldots, p-1 \}$.
      That is, for every $t$ in $ \left\{ 0, \ldots, p-2 \right\} $ there is $t'$ and $t''$ with $t' \equiv t$ and $t'' \equiv t \mod (p-1)$ such that $D_{t'}$ and $E_{t''}$ vanish.

      \begin{enumerate}[{Case} 1.]
        \item Suppose $t \in \left\{ 0, \ldots, u_0-2 \right\}$.

          As in \cref{lem:Xr2FullDimGeneralPNotDivRpR}, we choose $t'$, $t''$ and $t''' \equiv t \mod (p-1)$ such that \cref{eq:linear-equations} yields modulo $p$ the system of equations
          \begin{alignat}{4}
            \binom{r-2}{t'}  D_t & +{} & \binom{r-2}{t'+1} & E_t +{} & \binom{r-2}{t'+2} &  F_t  & {}\equiv 0 \\
            \binom{r-2}{t''} D_t & +{} & \binom{r-2}{t''+1} & E_t +{} & \binom{r-2}{t''+2} & F_t & {}\equiv 0\\
            \binom{r-2}{t'''}   D_t & +{} & \binom{r-2}{t'''+1} & E_t +{} & \binom{r-2}{t'''+2} &   F_t   & {}\equiv 0
          \end{alignat}
          and prove that the determinant of the matrix $M$ attached to this system of equations is nonzero, that is,
          \[
            \norm{M}
            =
            \begin{vmatrix}
              \binom{r-2}{t'}  & \binom{r-2}{t'+1}  &  \binom{r-2}{t'+2} \\
              \binom{r-2}{t''} & \binom{r-2}{t''+1} &  \binom{r-2}{t''+2} \\
              \binom{r-2}{t'''}   & \binom{r-2}{t'''+1}   &  \binom{r-2}{t'''+2}
            \end{vmatrix}
            \not\equiv 0 \mod p.
          \]
          Put $t' = p^n t$, $t'' = r(p^i + t + 1) - 1$, $t''' = r(t+1)$ for the smallest $i > 0$ such that $u_i > 0$ (which exists because $u_0 \leq p-1$ and $\Sigma(u) \geq p$).
          Then $t'$, $t''$ and $t''' \equiv t \mod (p-1)$.
          By Lucas' Theorem, with $u' = u_0 - 1$,
          \begin{itemize}
            \item
              we have $\binom{r-2}{t'} \equiv \binom{u'}{t}$, $\binom{r-2}{t'+1} \equiv (p-1) \binom{u'}{t}$ and $\binom{r-2}{t'+2} \equiv \binom{p-1}{2} \binom{u'}{t}$,
            \item
              we have $\binom{r-2}{t''} \equiv u_i (p-1) \binom{u'}{t}$, $\binom{r-2}{t''+1} \equiv u_i \binom{u'}{t}$ and $\binom{r-2}{t''+2} \equiv u_i \binom{u'}{t+1}$,
            \item
              we have $\binom{r-2}{t'''} \equiv \binom{u'}{t}$, $\binom{r-2}{t'''+1} \equiv \binom{u'}{t+1}$ and $\binom{r-2}{t'''+2} \equiv (p-1) \binom{u'}{t+1}$,
          \end{itemize}
          Therefore,
          \begin{align}
            \norm{M}
              & \equiv
              u_i
              \begin{vmatrix}
                \binom{u'}{t}   & (p-1) \binom{u'}{t}   &  \binom{p-1}{2} \binom{u'}{t} \\
                (p-1) \binom{u'}{t} & \binom{u'}{t} &  \binom{u'}{t}\\
                \binom{u'}{t}  & \binom{u'}{t}  &  (p-1) \binom{u'}{t}
              \end{vmatrix}\\
              & =
              u_i
              \begin{vmatrix}
                \binom{u'}{t}   & - \binom{u'}{t}   &  \binom{u'}{t} \\
                - \binom{u'}{t} & \binom{u'}{t} &  \binom{u'}{t}\\
                \binom{u'}{t}  & \binom{u'}{t}  &  - \binom{u'}{t}
              \end{vmatrix}\\
              & =
              u_i
              \begin{vmatrix}
                0  & 0   &  \binom{u'+1}{t+1} \\
                - \binom{u'}{t} & \binom{u'}{t} &  \binom{u'}{t}\\
                \binom{u'}{t}  & \binom{u'}{t}  &  - \binom{u'}{t}
              \end{vmatrix}\\
              & =
              u_i
              \binom{u'+1}{t+1} \binom{u'}{t}
              \left[ -\binom{u'}{t} - \binom{u'}{t} \right]
              =
              - u_i
              \binom{u'}{t} \binom{u'+1}{t+1}^2
              \mod p.
          \end{align}
          Because $t < u' < p-1$, we have $\norm{M} \neq 0$.
        \item
          Suppose $t \in \left\{ u_0-1, \ldots, p-2 \right\}$.
          \begin{itemize}
            \item
              To show $D_t = 0$, we choose $t'$ with $t' \equiv t \mod (p-1)$ as follows:
              Because by assumption $\Sigma(u) = u_0 + u_1 + \dotsb \geq p$ and $u_0 \leq t+1 \leq p-1 \leq p$, we can write $t+1 = u_0 + s_1 + \dotsb$ with $s_j$ in $ \{ 0, \ldots, u_j \} $ for $j = 1, 2, \ldots$.
              Put $t' = r(u_0 + s_1 p + \dotsb)$.
              Then $t' \equiv t \mod (p-1)$.
              By Lucas' Theorem, $\binom{r-2}{t'} \nequiv 0$ but $\binom{r-2}{t'+1}$ and $\binom{r-2}{t'+2} \equiv 0 \mod p$.
              By \cref{eq:linear-equations}, we conclude $D_t \equiv D_{t'} = 0 \mod p$.
            \item
              To show $E_t$ or $F_t = 0$, we choose $t'$ with $t' \equiv t \mod (p-1)$ as follows:
              Because by assumption $\Sigma(u) = u_0 + _1 + \dotsb \geq p$ and $u_0 \leq t+2 \leq p$, we can write $t+2 = u_0 + s_1 + \dotsb + s_m$ with $s_j$ in $ \{ 0, \ldots, u_j \} $ for $j = 1, 2, \ldots$.
              Put $t' = r(u_0 + s'_1 p + \dotsb) - 1$.
              Then $t' \equiv t \mod (p-1)$.
                by Lucas' Theorem, $\binom{r-2}{t'}$ and $\binom{r-2}{t'+1} \nequiv 0$, but $\binom{r-2}{t'+2} \equiv 0 \mod p$.
              Because $D_t \equiv 0 \mod p$, we conclude by \cref{eq:linear-equations} that $E_t \equiv E_{t'} = 0 \mod p$.
              \qedhere
          \end{itemize}
      \end{enumerate}
    \end{proof}

    \begin{cor}
      Let $p > 3$.
      If $r \geq 3p+2$ and $\Sigma(r'')$, $\Sigma(r')$ and $\Sigma(r)$ are non-minimal, then $X_{r-2} \iso X_{r''} \otimes V_2$ and its Jordan-Hölder series is that of \cref{prop:JHXrppxV2}.
    \end{cor}
    \begin{proof}
      If $\Sigma(r'')$, $\Sigma(r')$ and $\Sigma(r)$ are non-minimal, then by the preceding \cref{lem:Xr2FullDimGeneralPNotDivRpR}, \cref{lem:Xr2FullDimGeneralPDivR} and \cref{lem:Xr2FullDimGeneralPDivRp}, the dimension of $X_{r-2}$ is equal to that of $X_{r''} \otimes V_2$, hence the natural epimorphism
      $
        X_{r''} \otimes V_2
        \twoheadrightarrow
        X_{r-2}
      $
      is an isomorphism.
    \end{proof}

    \subsection{Sum of the Digits of $r-2$ is minimal}
    \label{sec:sigma-minimal}

    Let $a$ in $\{ 3, \ldots, p + 1 \}$ such that $r \equiv a \mod (p-1)$.
    Let $r'' = r - 2$.
    We assume in this \cref{sec:sigma-minimal} that $\Sigma(r'')$ is minimal, that is, $\Sigma(r'') < p$, or, equivalently, $\Sigma(r'') = a-2$.

    If $r$ satisfies the conditions of \cref{lem:XrInXr1InXr2}, that is, $r \leq p$ or $r = p^n + r_0$ where $r_0$ in $ \{2, \ldots, p-1 \}$ and $n > 0$, then the inclusion $X_{r-1} \subseteq X_{r-2}$ is an equality.
    Therefore, the Jordan-Hölder series of $X_{r-2} = X_{r-1}$ is known
    \begin{itemize}
      \item for $a = p$ by \cite[Proposition 3.13]{BG}, and
      \item for $a = 2, \ldots, p-1$ by \cite[Proposition 4.9]{BG}.
    \end{itemize}

    Otherwise, $X_{r-2}$ has at least three distinct Jordan-Hölder factors by \cref{lem:XrInXr1InXr2}:
    By \cref{prop:XrAst0IffSrMin} and \cref{lem:JHXr},
    \[
      X_{r''} = V_{a-2}
      \quad \text{ and }
      X_{r''}^* = 0.
    \]
    By \cref{lem:Xr2V2xXrpp}, there is thus an $\F_p[M]$-linear surjection
    \begin{equation}
      \label{eq:Va2V2ToXr2}
      \phi : V_{a-2} \otimes V_2 \twoheadrightarrow X_{r-2}
    \end{equation}

      \subsubsection{$r \equiv 3 \mod (p-1)$}

      \begin{prop}
        Let $r \geq p$.
        If $r \equiv 3 \mod (p-1)$ and $\Sigma(r'') < p$, then
        \[
          V_1 \otimes V_2
          \cong
          V_1 \otimes \rD \oplus V_3 \iso X_{r-2}.
        \]
      \end{prop}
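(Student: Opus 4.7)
The plan is to identify $r$ explicitly, invoke the Frobenius-twist description of $X_{r''}$, and then show that the natural epimorphism of \cref{lem:Xr2V2xXrpp} is an isomorphism by a direct monomial count.

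First I would unwind the hypotheses. Since $a = 3$ we have $r'' \equiv 1 \mod p-1$; since $\Sigma(r'') \equiv r'' \equiv 1 \mod p-1$ and $\Sigma(r'') < p$, the only possibility is $\Sigma(r'') = 1$, i.e.\ $r'' = p^n$ for some $n \geq 0$. The hypothesis $r \geq p$ then forces $n \geq 1$. By \cref{prop:XrAst0IffSrMin} we get $X_{r''}^* = 0$, and \cref{lem:JHXr} gives $X_{r''} = V_1$. Concretely, the $\F_p[M]$-isomorphism $V_1 \iso X_{r''}$ is the Frobenius twist $X \mapsto X^{p^n}$, $Y \mapsto Y^{p^n}$.

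Next, by \cref{lem:JHTensor}.\ref{en:JHTensor0p-1} (applicable because $1 + 2 \leq p-1$, which is forced by $a = 3 \in \{3,\ldots,p+1\}$ together with the ambient assumption $p \geq 5$),
\[
  V_1 \otimes V_2 \cong V_3 \oplus (V_1 \otimes \rD),
\]
and \cref{lem:Xr2V2xXrpp} furnishes a natural epimorphism
\[
  \phi \colon V_1 \otimes V_2 \cong X_{r''} \otimes V_2 \twoheadrightarrow X_{r-2}.
\]
It remains to show $\phi$ is injective. For this I would track the six standard tensors: composing the Frobenius twist with the multiplication map sends the basis $\{X,Y\} \otimes \{X^2, XY, Y^2\}$ of $V_1 \otimes V_2$ to the six monomials
\[
  X^{p^n+2},\ X^{p^n+1}Y,\ X^{p^n}Y^2,\ X^2 Y^{p^n},\ XY^{p^n+1},\ Y^{p^n+2}
\]
in $V_r$. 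Since $n \geq 1$ and $p \geq 5$, we have $p^n \geq 5 > 2$, so these six exponent pairs are pairwise distinct; hence the images are $\F_p$-linearly independent in $V_r$. Therefore $\ker \phi = 0$, so $\phi$ is the claimed isomorphism.

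The only conceptual subtlety is ensuring that these six monomials do not collide (which would happen if $p^n \in \{0,1,2\}$); this is ruled out precisely by the assumption $r \geq p$, which upgrades $n \geq 0$ to $n \geq 1$. Everything else is bookkeeping: decoding minimality of $\Sigma(r'')$ into the Frobenius-twist description of $X_{r''}$, and invoking the already-established tensor-product decomposition and surjection.
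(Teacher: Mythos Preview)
Your proof is correct but takes a different route from the paper. Both proofs start the same way: identify $X_{r''} \cong V_1$ (so $X_{r''} \otimes V_2 \cong V_1 \otimes V_2 \cong V_3 \oplus (V_1 \otimes \rD)$) and invoke the epimorphism $\phi$ of \cref{lem:Xr2V2xXrpp}. The difference is in how injectivity is established. The paper argues by Jordan--H\"older count: since $r \geq p$, \cref{lem:XrInXr1InXr2} gives $X_r \subsetneq X_{r-1} \subseteq X_{r-2}$, so $X_{r-2}$ has at least two Jordan--H\"older factors; as the source has exactly two, $\phi$ must be an isomorphism. You instead make the Frobenius twist $V_1 \iso X_{r''}$ explicit and check by hand that the six basis tensors land on six distinct monomials in $V_r$. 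Your argument is more elementary and self-contained (it avoids the structural lemma on the filtration $X_r \subset X_{r-1} \subset X_{r-2}$), while the paper's counting argument is the template reused throughout the rest of \cref{sec:sigma-minimal} for the other values of $a$, where an explicit basis check would be messier.
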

      \begin{proof}
        For $a=3$ the right-hand side of \cref{eq:Va2V2ToXr2} is $V_1 \otimes V_2$.
        By \cite[Prop. 4.9]{BG}
        \[
          V_1 \otimes V_2
          =
          V_1 \otimes \rD \oplus V_3
          \twoheadrightarrow
          X_{r-2}.
        \]
        That is, there is an epimorphism with only two Jordan-Hölder factors
        onto $X_{r-2}$.
        Because $r \geq p$, by \cref{lem:XrInXr1InXr2}.(i) we have $0 \neq X_r \neq X_{r-1}$, therefore $X_{r-2}$ has at least two Jordan-Hölder factors;
        therefore this epimorphism must be an isomorphism.
      \end{proof}

      Alternatively, if $r \equiv 3 \mod (p-1)$ and $\Sigma(r'')$ is minimal, that is, $\Sigma(r'') = 1$, then $r = p^n + 2$.
      In particular, $r$ satisfies the conditions of \cref{lem:XrInXr1InXr2}, and the inclusion $X_{r-1} \subseteq X_{r-2}$ is an equality.
      By \cref{lem:JHTensor}.\ref{en:JHTensor0p-1},
      \[
        V_1 \otimes V_2 = V_1 \otimes \rD \oplus V_3 \iso X_{r-1}.
      \]

      \subsubsection{$r \equiv 4, \ldots, p-1 \mod (p-1)$}

      Let $a$ in $\{ 4, \ldots, p-1 \}$ such that $r \equiv a \mod (p-1)$.
      By \cref{lem:digits-minimality}, if $\Sigma(r'')$ is minimal, then $\Sigma(r')$ and $\Sigma(r)$ are minimal, too.

      \begin{prop}
        \label{prop:dimXr-2min}
        Let $p > 2$.
        Let $a$ in $\left\{ 3, \ldots, p-1 \right\}$ such that $r-2 \equiv a-2 \mod (p-1)$ and $r \geq p$.
        Let $\Sigma(r'') < p$.
        \begin{enumerate}
          \item If $r = p^n + r_0$ where $r_0 = a-1$ and $n > 0$, then
            \[
              X_{r-2}
              =
              V_{a-2} \otimes \rD \oplus V_a,
            \]
          \item otherwise,
            \[
            X_{r-2}
            \cong
            V_a \oplus (V_{a-2} \otimes \rD) \oplus (V_{a-4} \otimes \rD^2).
          \]
      \end{enumerate}

    \end{prop}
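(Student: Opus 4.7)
The plan is to exploit the surjection $\phi\colon V_{a-2}\otimes V_2 \twoheadrightarrow X_{r-2}$ of \cref{eq:Va2V2ToXr2}. Its existence will follow from the hypothesis $\Sigma(r'') < p$: via \cref{prop:XrAst0IffSrMin} this forces $X_{r''}^{\ast} = 0$, and then \cref{lem:JHXr} identifies $X_{r''}$ with $V_{a-2}$. Since $a \in \{4,\ldots,p-1\}$ satisfies $2 + (a-2) = a \leq p-1$, \cref{lem:JHTensor}.\ref{en:JHTensor0p-1} will decompose the domain as
\[
  V_{a-2} \otimes V_2 \cong V_a \oplus (V_{a-2}\otimes \rD) \oplus (V_{a-4}\otimes \rD^2),
\]
a semisimple $\F_p[\Gamma]$-module with exactly three Jordan-H\"older factors. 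Consequently $X_{r-2}$ must be a direct sum of some sub-collection of these three summands, and the task will reduce to deciding which of them survive in each case.

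For part (i), the idea is that $r = p^n + (a-1)$ with $a-1 \in \{3,\ldots,p-2\}$ falls under the equality clause of \cref{lem:XrInXr1InXr2}, giving $X_{r-2} = X_{r-1}$. By \cref{lem:digits-minimality}, minimality of $\Sigma(r'')$ for $a \geq 4$ propagates to minimality of $\Sigma(r')$, so \cite[Proposition 4.9.(i)]{BG} will identify $X_{r-1}$ with $V_{a-2}\otimes \rD \oplus V_a$, the stated decomposition.

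For part (ii), the strategy is first to establish the strict inclusion $X_{r-1} \subsetneq X_{r-2}$ and then to count Jordan-H\"older factors. Under the standing hypotheses, the conditions of \cref{lem:XrInXr1InXr2} for equality reduce as follows: the alternative $r \leq p$ is impossible since the congruence $r \equiv a \mod p-1$ forbids $r = p$ (as $p \equiv 1 \not\equiv a \mod p-1$), while $r < p$ would force $r = a$, contradicting $r \geq p$; and for $r = p^n + r_0$ with $r_0 \in \{2,\ldots,p-1\}$ and $n > 0$, a direct Lucas-type digit computation yields $\Sigma(r-2) = r_0 - 1$, so the hypothesis $\Sigma(r'') = a-2$ pins down $r_0 = a-1$, which is precisely case (i). Hence in the "otherwise" branch $X_{r-1} \subsetneq X_{r-2}$, and since $X_{r-1}$ has exactly two Jordan-H\"older factors, $X_{r-2}$ must have at least three; combined with the upper bound from $\phi$, all three summands of $V_{a-2}\otimes V_2$ will appear in $X_{r-2}$, making $\phi$ an isomorphism.

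The main obstacle will be the digit-sum casuistry used in (ii) to rule out the equality $X_{r-1} = X_{r-2}$, though this is manageable given how restrictive the condition $\Sigma(r-2) = a-2$ already is; the remaining steps are direct applications of the structural results assembled in \cref{sec:Xr-2}.
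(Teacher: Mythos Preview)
Your proof is correct and follows essentially the same route as the paper: both use the surjection from $V_{a-2}\otimes V_2$ (decomposed via \cref{lem:JHTensor}), handle part (i) through $X_{r-2}=X_{r-1}$ and \cite[Proposition 4.9.(i)]{BG}, and for part (ii) count Jordan--H\"older factors via \cref{lem:XrInXr1InXr2} to force the surjection to be an isomorphism. The paper is terser, simply citing \cref{lem:XrInXr1InXr2} for the strict chain $0\subset X_r\subset X_{r-1}\subset X_{r-2}$ (hence at least three factors) rather than your more explicit case analysis of the equality conditions and your appeal to the exact structure of $X_{r-1}$.
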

    \begin{proof}
      If $r \equiv 3 \mod (p-1)$ and $\Sigma(r'')$ is minimal, that is, $\Sigma(r'') = 1$, then $r = p^n + 2$.
      In particular, $r$ satisfies the conditions of \cref{lem:XrInXr1InXr2}, and the inclusion $X_{r-1} \subseteq X_{r-2}$ is an equality, and by \cite[Lemma 4.5]{BG},
      \[
        V_1 \otimes V_2 = V_1 \otimes \rD \oplus V_3 \iso X_{r-1}.
      \]
      Let $a$ in $ \left\{ 4, \ldots, p-1 \right\}$ such that $r-2 \equiv a-2 \mod (p-1)$.
      If $r = p^n + r_0$ where $r_0$ in $\{2, \ldots, p-1 \}$ and $n > 0$, then the inclusion $X_{r-1} \subseteq X_{r-2}$ is an equality and, by \cite[Proposition 4.9.(i)]{BG}
      \[
        X_{r-2}
        =
        X_{r-1}
        =
        V_{a-2} \otimes \rD \oplus V_a.
      \]
      Otherwise, \cref{eq:Va2V2ToXr2} becomes by \cref{prop:JHXrppxV2},
      \[
        V_{a-2} \otimes V_2
        =
        V_{a} \oplus (V_{a-2} \otimes \rD) \oplus (V_{a - 4} \otimes \rD^2) \twoheadrightarrow X_{r-2}.
      \]
      By \cref{lem:XrInXr1InXr2} the right-hand side has at least three Jordan-Hölder factors.
      Because the map is surjective, these are exhausted by those of the left-hand side.
      Thus the surjection is a bijection.
    \end{proof}

    \subsubsection{$r \equiv p \mod (p-1)$}

    If $a = p$, then $\Sigma(r'')$ is minimal if and only if $\Sigma(r'') = p-2$.
    Therefore, as observed in \cref{lem:digits-minimality}, indeed $\Sigma(r') = p-1$ is minimal, but $\Sigma(r) = p$ is non-minimal!

    \begin{prop}
      Let $r \geq p$ and $r \equiv p \mod (p-1)$.
      Let $\Sigma(r'') < p$.
      \begin{enumerate}
        \item If $r = p^n + (p-1)$, then
          \[
            X_{r-2} \iso V_{2p-1},
          \]
        \item otherwise,
          \[
            X_{r-2} \cong V_{p-4} \otimes \rD^2 \oplus V_{2p-1}.
          \]
      \end{enumerate}
    \end{prop}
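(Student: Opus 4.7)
The plan is to exploit the surjection from \cref{lem:Xr2V2xXrpp} combined with the minimality hypothesis on $\Sigma(r'')$. Since $\Sigma(r'')<p$ is minimal (here $\Sigma(r'')=p-2$), \cref{prop:XrAst0IffSrMin} gives $X_{r''}^{*}=0$, and then \cref{prop:Xrm012ModAst}(iii) (applied with $a=p$, so $a-2=p-2$) yields $X_{r''}\cong V_{p-2}$. The multiplication map of \cref{lem:Xr2V2xXrpp} therefore becomes an epimorphism
\[
  \phi\colon V_{p-2}\otimes V_{2}\twoheadrightarrow X_{r-2},
\]
and \cref{prop:JHXrppxV2} for $a=p$ with $X_{r''}^{*}=0$ identifies its source as
\[
  V_{p-2}\otimes V_{2}\;\cong\;(V_{p-4}\otimes \rD^{2})\oplus V_{2p-1},
\]
where $V_{2p-1}$ has successive semisimple Jordan--Hölder factors $V_{p-2}\otimes\rD$, $V_{1}$, $V_{p-2}\otimes\rD$. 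So the image $X_{r-2}$ inherits Jordan--Hölder factors from this list, and the task in both cases is to identify exactly which appear and how they are extended.

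For case (2), I would argue that $\phi$ is an isomorphism. To show its kernel is trivial it suffices to show each irreducible summand of the source, namely $V_{p-4}\otimes \rD^{2}$ and each Jordan--Hölder factor of $V_{2p-1}$, maps nontrivially into $X_{r-2}$. The submodule $V_{2p-1}$ contains $\phi(Y^{p-2}\otimes X^{2})=X^{2}Y^{r-2}$, the generator of $X_{r-2}$, so that summand survives. The more delicate point is to show that $V_{p-4}\otimes \rD^{2}$ is not killed; this I would do by adapting the section construction from the proof of \cref{lem:Xr2Vpa3} (via partial derivatives followed by the $M$-linear multiplication $V_{1}\otimes V_{1}\to V_{2}$) to lift $X^{p-4}\in V_{p-4}\otimes\rD^{2}$ to an explicit element of $V_{p-2}\otimes V_{2}$, and then use the hypothesis that $r$ is not of the form $p^{n}+(p-1)$ to write $r-2$ as a sum of monomials supporting a nonzero image in $X_{r-2}$. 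This uses Lucas's Theorem to verify the relevant binomial coefficients are nonzero.

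For case (1), the argument goes through $X_{r-1}$. Since $r=p^{n}+(p-1)$ has $r_{0}=p-1\in\{2,\ldots,p-1\}$, \cref{lem:XrInXr1InXr2} gives $X_{r-2}=X_{r-1}$. Now $r-1=p^{n}+(p-2)$ satisfies $r-1\equiv 0\bmod p-1$ with $\Sigma(r-1)=p-1$ minimal, so the analogous epimorphism $X_{r'}\otimes V_{1}\twoheadrightarrow X_{r-1}$ becomes $V_{p-1}\otimes V_{1}\cong V_{2p-1}\twoheadrightarrow X_{r-1}$ (by \cref{prop:XrAst0IffSrMin} and \cref{lem:JHTensor}(ii)). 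The point is then to compare this with the surjection $\phi$ from $V_{p-2}\otimes V_{2}$ in order to read off both the $V_{2p-1}$ quotient and the additional socle $V_{1}\otimes \rD^{p-1}$ claimed in the short exact sequence, using the special form $r=p^{n}+(p-1)$ to exhibit an element of the socle not accounted for by the image of $V_{p-1}\otimes V_{1}$.

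The main obstacle is the case-(1) analysis: pinning down the short exact sequence exactly (rather than just listing the Jordan--Hölder factors) requires tracking how the kernel of $\phi$ sits inside $V_{p-4}\otimes\rD^{2}\oplus V_{2p-1}$ and showing that in the specific shape $r=p^{n}+(p-1)$ the surviving part of the source fits into the claimed extension. In particular, verifying that the summand $V_{p-4}\otimes\rD^{2}$ is killed here (in contrast to case (2)) and recognising the $V_{1}\otimes\rD^{p-1}$ socle will rely on explicit binomial-coefficient computations via Lucas's Theorem tailored to this special $p$-adic shape of $r$.
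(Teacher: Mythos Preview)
Your approach for case (2) has a genuine gap. Showing that each direct summand of $V_{p-4}\otimes\rD^2\oplus V_{2p-1}$ has nonzero image in $X_{r-2}$ does \emph{not} establish that $\phi$ is injective: $V_{2p-1}$ is indecomposable with three Jordan--H\"older factors, so a nonzero map out of it can still kill its socle $V_{p-2}\otimes\rD$. Your proposed adaptation of \cref{lem:Xr2Vpa3} also does not apply here, since that argument embeds $V_{p-a+3}\otimes\rD^{a-2}$ via $X_{r''}^*\otimes V_2$, and under the present hypothesis $X_{r''}^*=0$. The paper's argument avoids all of this by a simple count: since $\Sigma(r'')=p-2$ forces $\Sigma(r)=p$ to be non-minimal, \cref{prop:XrAst0IffSrMin} gives $X_r^*\neq 0$; together with the strict inclusions $X_r^*\subset X_r\subset X_{r-1}\subset X_{r-2}$ from \cref{lem:XrInXr1InXr2}, this shows $X_{r-2}$ has at least four Jordan--H\"older factors. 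The source has exactly four, so $\phi$ is an isomorphism.

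For case (1), the paper does exactly what you start to do---reduce to $X_{r-2}=X_{r-1}$ via \cref{lem:XrInXr1InXr2}---and then simply quotes the known structure of $X_{r-1}$ from \cite{BG}. Your subsequent plan is self-contradictory: once you have the epimorphism $V_{p-1}\otimes V_1\twoheadrightarrow X_{r-1}=X_{r-2}$, there cannot be any element of $X_{r-2}$ ``not accounted for by the image of $V_{p-1}\otimes V_1$''. The right move is to stop after the reduction and cite the relevant result in \cite{BG} directly.
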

    \begin{proof}
      Because $\Sigma(r') = p-1 < p$ is minimal, by \cite[Prop. 3.3.(i)]{BG}
      \[
        X_{r-1} \iso V_{2p-1}.
      \]
      If $r = p^n + (p-1)$, then the inclusion $X_{r-1} \subseteq X_{r-2}$ is an equality.

      Otherwise, by \cref{prop:XrAst0IffSrMin}, we have $X_{r''}^* = 0$.
      Therefore \cref{eq:Va2V2ToXr2} becomes by \cref{prop:JHXrppxV2},
      \[
        V_{p-4} \op V_{2p-1} \twoheadrightarrow X_{r-2}
      \]
      where $V_{2p-1}$ has successive semisimple Jordan-Hölder factors $V_{p-2} \otimes \rD$, $V_1$ and $V_{p-2} \otimes \rD$.
      Because $\Sigma(r) = p$ is non-minimal, $X_r^* \neq 0$ by \cref{prop:XrAst0IffSrMin}.
      Therefore, by \cref{lem:XrInXr1InXr2} (as we assume $r$ not to be of the form $r = p^n + r_0$ for some $n \geq 1$ and $r_0 < p$), there are proper inclusions
      \[
        0 \subset X_r^* \subset X_r \subset X_{r-1} \subset X_{r-2}.
      \]
      In particular, $X_{r-2}$ has at least $4$ Jordan-Hölder factors.
      Therefore, all $4$ Jordan-Hölder factors of the left-hand side must appear on the right-hand side of the epimorphism $V_{p-4} \otimes \rD^2 \oplus V_{2p-1} \twoheadrightarrow X_{r-2}$;
      therefore, it must be an isomorphism.
    \end{proof}

    \subsubsection{$r \equiv p + 1 \mod (p-1)$}

    If $a = p+1$, then $\Sigma(r'')$ is minimal if and only if $\Sigma(r'') = p-1$.
    Therefore, as observed in \cref{lem:digits-minimality}:
    $\Sigma(r') = p$ is not minimal.
    If $r = p^n + p$ for some $n > 1$, then $\Sigma(r) = 2$ is minimal.
    Otherwise, if $r \neq p^n + p$ for any $n > 1$, then $\Sigma(r) = p+1$ is not minimal.

    \begin{obs*}
      We have $\dim X_{r-2} / X_{r-1} \leq p+1$.
      To see this, let
      \[
        X_{r'} \otimes V_1 \twoheadrightarrow X_{r-1}
        \quad \text{ and } \quad
        X_{r''} \otimes V_2 \twoheadrightarrow X_{r-2}
      \]
      be the natural $\F[M]$-linear epimorphisms given by multiplication.
      Let $X_{r-1} \to X_{r-2}$ be the inclusion and
      \[
        X_{r'} \otimes V_1 \to X_{r''} \otimes V_2
      \]
      the $\F[M]$-linear monomorphism given by $X^{r'} \otimes Y \mapsto X^{r''} \otimes XY$.
      The diagram
      \[
      \begin{CD}
        X_{r''} \otimes V_2 @ > >> X_{r-2} \\
        @AAA@AAA\\
        X_{r'} \otimes V_1 @ > >> X_{r-1}
      \end{CD}
    \]
    commutes as, by $\F[M]$-linearity, it suffices to check that $X^{r'} \otimes Y \mapsto X^{r''} \cdot XY$ either way.
    Therefore the image of $X_{r'} \otimes V_1$ taking the left upper route (that is, under the mapping $X_{r'} \otimes V_1 \to X_{r''} \otimes V_2 \to X_{r-2}$) is included in $X_{r-1}$ inside $X_{r-2}$.
    Therefore the surjection
    \[
      X_{r''} \otimes V_2 / X_{r'} \otimes V_1
      \twoheadrightarrow
      X_{r-2} / X_{r-1}
    \]
    is well defined.
    Since the left-hand side has dimension $\leq p+1$, so the right-hand side as well.
    \end{obs*}

    \begin{prop}[Extension of {\cite[Proposition 3.3]{BG}}]
    \label{prop:4JHIsoXr-2}
      Let $r \geq p$ and $r \equiv p+1 \mod (p-1)$.
      If $r = p^n + p$ for some $n$, then $X_{r-2}$ has four Jordan-Hölder factors of $V_{3p-1}$, missing one of $V_{p-3} \otimes \rD^2$.
    \end{prop}
    \begin{proof}
      By \cref{prop:XrAst0IffSrMin}, we have $X_{r''}^* = 0$.
      Therefore \cref{eq:Va2V2ToXr2} becomes by \cref{prop:JHXrppxV2},
      \[
        V_{3p-1} \twoheadrightarrow X_{r-2}
      \]
      We recall that by \cref{cor:JHVxV2}, the successive semisimple Jordan-Hölder factors of the $\F_p[M]$-module $V_{3p-1}$ are $V_{3p-1} = U_2 \op (U_0 \otimes \rD)$ where
      \begin{itemize}
        \item we have $U_0 = V_{p-1}$, and
        \item the $\F_p[M]$-module $U_2$ has successive semisimple Jordan-Hölder factors $V_{p-3} \ox \rD^2$, $(V_0 \otimes \rD) \op V_2$ and $V_{p-3} \ox \rD^2$.
      \end{itemize}
      In particular, $V_{3p-1}$ has $5$ Jordan-Hölder factors.

      By \cite[Proposition 4.9.(ii)]{BG},
      \[
        0 \to
        V_{p-1} \otimes \rD
        \to
        X_{r-1}
        \to
        V_0 \otimes \rD \oplus V_2
        \to 0
      \]
      In particular, $X_{r-1}$ has $3$ Jordan-Hölder factors.

      Because $r \equiv p+1 \mod (p-1)$, impossibly $r = p^n + r_0$ for $1 < r_0 < p$.
      Hence, by \cref{lem:XrInXr1InXr2},
      \[
        X_{r-1} \subset X_{r-2}.
      \]
      Therefore $X_{r-2}$ has at least $4$ (and at most $5$) Jordan-Hölder factors.
      Since $\dim X_{r-2} / X_{r-1} \leq p+1$ by the preceding observation, only one of the $V_{p-3} \otimes \rD^2$ Jordan-Hölder factors can be in $X_{r-2} / X_{r-1}$.
      We conclude that $X_{r-2}$ has exactly $4$ Jordan-Hölder factors.
    \end{proof}

    \begin{prop}[Extension of {\cite[Proposition 3.3]{BG}}]
    \label{prop:V3p-1IsoXr-2}
      Let $r \geq p$ and $r \equiv p+1 \mod (p-1)$.
      If $r \neq p^n + p$ for any $n$ and $\Sigma(r'') < p$, then
      \[
        X_{r-2} \cong V_{3p-1}.
      \]
    \end{prop}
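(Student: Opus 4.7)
The plan follows the same strategy as the preceding propositions in \cref{sec:sigma-minimal}. Since $\Sigma(r'')$ is minimal, \cref{prop:XrAst0IffSrMin} gives $X_{r''}^* = 0$, whence \cref{lem:JHXr} yields $X_{r''} \cong V_{p-1}$. Substituting into \cref{lem:Xr2V2xXrpp} and applying \cref{lem:JHTensor}.(ii) with $(m, n) = (2, p-1)$, where the second summand vanishes because $V_{-1} = 0$, one obtains an epimorphism
\[
V_{3p-1} \;\cong\; V_{p-1} \otimes V_2 \;\cong\; X_{r''} \otimes V_2 \;\twoheadrightarrow\; X_{r-2}.
\]

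To prove this epimorphism is an isomorphism it suffices to account for all five Jordan--Hölder factors of $V_{3p-1}$ listed in \cref{prop:JHXrppxV2} (case $a = p+1$ with $X_{r''}^* = 0$) inside $X_{r-2}$. First I would verify that for $r \equiv p+1 \equiv 2 \mod p-1$ the exclusion of \cref{lem:XrInXr1InXr2} never applies: the equation $r = p^n + r_0$ with $r_0 \in \{2, \ldots, p-1\}$ would force $r_0 \equiv 1 \mod p-1$, which has no representative in that range, so $X_{r-1} \subsetneq X_{r-2}$. Moreover, by \cref{lem:digits-minimality} applied to the standard representative $a = 2$, the minimality of $\Sigma(r'')$ together with $r > p$ forces both $\Sigma(r')$ and $\Sigma(r)$ to be non-minimal, hence $X_{r'}^*, X_r^* \neq 0$ by \cref{prop:XrAst0IffSrMin}. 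Combined with \cref{lem:XrInXr1InXr2} this produces the strict chain
\[
0 \;\subsetneq\; X_r^* \;\subsetneq\; X_r \;\subsetneq\; X_{r-1} \;\subsetneq\; X_{r-2},
\]
from which, by \cref{lem:JHXr} for the factors in $X_r$ and the $\Sigma(r')$-non-minimal description of $X_{r-1}/X_r$ furnished by \cite[Proposition 3.13]{BG}, one reads off the Jordan--Hölder constituents.

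The main obstacle will be the Jordan--Hölder bookkeeping: the quotient $X_{r-1}/X_r$ must contribute \emph{two} Jordan--Hölder factors (not just one), for otherwise the chain only produces four factors while $V_{3p-1}$ has five. Making this precise requires unravelling the $a = 2$ case of the structure theorem for $X_{r-1}$, which sits at the boundary of the standard ranges in \cite{BG}; the delicate part is ensuring that each of the five Jordan--Hölder factors of $V_{3p-1}$ (including the double occurrence of $V_{p-3} \otimes \rD^2$) is recovered along the filtration, after which the dimension count forces $\dim X_{r-2} = 3p$ and hence the isomorphism.
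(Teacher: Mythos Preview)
Your proposal is correct and follows essentially the same strategy as the paper: establish the epimorphism $V_{3p-1}\twoheadrightarrow X_{r-2}$ from $X_{r''}^*=0$, rule out $r=p^n+r_0$ so that $X_{r-1}\subsetneq X_{r-2}$, and then count Jordan--H\"older factors. The only difference is organizational: the paper bypasses your ``main obstacle'' by citing \cite[Proposition~4.9]{BG} directly for the full four-factor structure of $X_{r-1}$ (since $\Sigma(r')$ is non-minimal), rather than assembling it from the chain $X_r^*\subset X_r\subset X_{r-1}$ as you do; note also that the relevant reference here is \cite[Proposition~4.9]{BG} rather than~3.13.
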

    \begin{proof}
      By \cref{prop:XrAst0IffSrMin}, we have $X_{r''}^* = 0$.
      Therefore \cref{eq:Va2V2ToXr2} becomes by \cref{prop:JHXrppxV2},
      \[
        V_{3p-1} \twoheadrightarrow X_{r-2}
      \]
      We recall that by \cref{lem:JHTensor}.\ref{en:JHTensorp2p-1}, the successive semisimple Jordan-Hölder factors of the $\F_p[M]$-module $V_{3p-1}$ are $V_{3p-1} = U_2 \op (U_0 \otimes \rD)$ where
      \begin{itemize}
        \item we have $U_0 = V_{p-1}$, and
        \item the $\F_p[M]$-module $U_2$ has successive semisimple Jordan-Hölder factors $V_{p-3} \ox \rD^2$, $(V_0 \otimes \rD) \op V_2$ and $V_{p-3} \ox \rD^2$.
      \end{itemize}
      In particular, $V_{3p-1}$ has $5$ Jordan-Hölder factors.

      Write $r = p^n u$ where $p$ does not divide $u$.
      We have $\Sigma(u) = \Sigma(r) \equiv 2 \mod (p-1)$.
      If $\Sigma(u - 1) < p$, that is, $\Sigma(u - 1) = 1$, then either $r = p^n + p$ for some $n$ or $\Sigma(r'') \geq p$, in contradiction to our assumptions.
      Therefore, we may apply \cite[Proposition 4.9.(iii)]{BG}, yielding
      \[
        0 \to
        V_{p-3} \otimes \rD^2 \oplus V_{p-1} \otimes \rD
        \to
        X_{r-1}
        \to
        V_0 \otimes \rD \oplus V_2
        \to 0
      \]
      In particular, $X_{r-1}$ has $4$ Jordan-Hölder factors.

      Because $r \equiv p+1 \mod (p-1)$, impossibly $r = p^n + r_0$ for $1 < r_0 < p$.
      Hence, by \cref{lem:XrInXr1InXr2},
      \[
        X_{r-1} \subset X_{r-2}.
      \]
      Therefore $X_{r-2}$ has at least $5$ Jordan-Hölder factors.
      Hence, all $5$ Jordan-Hölder factors of the left-hand side must appear on the right-hand side of the epimorphism $V_{3p-1} \twoheadrightarrow X_{r-2}$ and thus it is an isomorphism.
    \end{proof}


\section{Vanishing conditions on the singular quotients of $X_{r-2}$}
\label{sec:singular-quotients-x-r-2}

In this section, we study the singular quotients of $X_{r-2}$, that is, whether  $X_{r-2}^{*}/X_{r-2}^{**}$, $X_{r-2}^{**}/X_{r-2}^{***}$ and $X_{r-2}^{*}/X_{r-2}^{***}$ are zero or not by applying \cref{lem:VrAstCriteria} and \cref{lem:binomialsumaeq2}.
In correspondence with \cref{lem:VrAstQuotients}, we will choose $a$ such that $r \equiv a \mod (p-1)$ for $X_{r-2}^*/X_{r-2}^{**}$ in the range $\{ 3, \ldots, p+1 \}$, whereas for $X_{r-2}^{**}/X_{r-2}^{***}$ in $\{ 5, \ldots, p+3 \}$.

\begin{lemma}
  \label{lem:Xr2r4p-1AstModAstAstEqualModP}
  Let $a \in \{ 4, \ldots, p \}$.
  If $r > p$ and $r \equiv a \mod (p-1)$ and $r \equiv a \mod p$, then
  \[
    0  =
    \begin{cases}
      X_{r-2}^*/X_{r-2}^{**}, & \text{\quad if $a = 4$}\\
      X_{r-2}^*/X_{r-2}^{***}, & \text{\quad if $5 \leq a \leq p$}.
    \end{cases}
  \]
\end{lemma}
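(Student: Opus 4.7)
The plan is to take an arbitrary $F \in X_{r-2}^*$, expand it in the standard generating set of $X_{r-2}$ given by \cref{lem:Xr2Generators} as
\[
F = A X^r + B Y^r + C X^{r-1}Y + \sum_{k} e_k X^2(kX+Y)^{r-2} + \sum_{l} f_l XY(lX+Y)^{r-2} + \sum_{m} d_m Y^2(X+mY)^{r-2},
\]
and then use the combined hypotheses $r \equiv a \mod p-1$ and $r \equiv a \mod p$, together with the binomial sum identities of \cref{lem:binomialsumaeq2}, to promote the $V_r^*$ criterion of \cref{lem:VrAstCriteria}.\ref{en:VrAstCriterion} to \cref{lem:VrAstCriteria}.\ref{en:VrAstAstCriterion} (for $a = 4$), respectively to \cref{lem:VrAstCriteria}.\ref{en:VrAstAstAstCriterion} (for $5 \leq a \leq p$).

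Concretely, binomial expansion of $F$ yields
\[
c_j = A \delta_{j,0} + B \delta_{j,r} + C \delta_{j,1} + \binom{r-2}{j} E_{r-2-j} + \binom{r-2}{j-1} F_{r-1-j} + \binom{r-2}{j-2} D_{j-2}
\]
with moment sums $E_t = \sum_k e_k k^t$, $F_t = \sum_l f_l l^t$, $D_t = \sum_m d_m m^t$, which depend only on $t$ modulo $p-1$ since $\#\F_p^* = p-1$. The hypothesis $F \in V_r^*$ reads $c_0 = c_r = 0$ and $\sum_{j \equiv s \mod p-1} c_j = 0$ for each residue $s$, i.e., a linear system in $A, B, C$ and the moment sums indexed by $s$. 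One then writes the $V_r^{**}$-obstructions $c_1$, $c_{r-1}$, and $\sum_{j \equiv s} j c_j$ as similar linear combinations, and via $j \binom{r-2}{j} = (r-2) \binom{r-3}{j-1}$ together with the convolution $\binom{r}{j} = \binom{r-2}{j} + 2 \binom{r-2}{j-1} + \binom{r-2}{j-2}$, reduces each residue-class binomial sum to the reference sum $\sum_{j \equiv a-1 \mod p-1,\; 0 < j < r-1} \binom{r}{j}$ of \cref{lem:binomialsumaeq2}, which vanishes modulo $p$ because $r \equiv a \mod p$. Combined with the $V_r^*$ relations this forces each obstruction to vanish, placing $F$ in $V_r^{**}$. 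For $5 \leq a \leq p$, an analogous argument with $c_2$, $c_{r-2}$, and $\sum_{j \equiv s} j(j-1) c_j$, this time invoking the $i = 2$ case of \cref{lem:binomialsumaeq2}, further places $F$ in $V_r^{***}$.

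The main obstacle is the careful residue-class bookkeeping: each shift $j \to j-1$ or $j \to j-2$ in the formula for $c_j$ shifts the residue class of the corresponding moment sum, and these shifts must be aligned with the shifts $a \to a-1, a-2$ in the hypothesis of \cref{lem:binomialsumaeq2}. The case distinction $a = 4$ versus $5 \leq a \leq p$ reflects the breakdown of the second-moment step at $a = 4$: the coefficient $\binom{r-2}{2} \equiv \binom{a-2}{2} \mod p$ appearing in $c_2$ is not compatible with the $V_r^*$ relations without the extra degree of freedom available once $a - 2 \geq 3$, which is exactly the threshold separating the two conclusions (and which corresponds to $V_{a-4}\otimes\rD^2 = \rD^2$ potentially surviving as a Jordan--Hölder factor of $X_{r-2}^{**}/X_{r-2}^{***}$ in the $a = 4$ case).
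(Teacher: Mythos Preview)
Your plan differs substantially from the paper's, and the sketch contains a genuine gap rather than just unfinished bookkeeping. The paper does not expand a general $F\in X_{r-2}^*$; it works with the single module generator $X^2Y^{r-2}$. The identity
\[
\sum_{k\in\F_p} k^{p-2}(kX+Y)^{r-1}X \;\equiv\; -(r-1)\,X^2Y^{r-2} - G(X,Y)\pmod p,
\qquad
G=\sum_{\substack{0<j<r-2\\ j\equiv a-2}}\binom{r-1}{j}X^{r-j}Y^j,
\]
shows $(r-1)X^2Y^{r-2}\in X_{r-1}+\langle G\rangle$; the hypothesis $r\equiv a\pmod p$ is used exactly once, to verify via \cref{lem:binomialsumaeq2} that $G\in V_r^{***}$ (resp.\ $V_r^{**}$ when $a=4$). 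Since $r-1\not\equiv 0$ this gives $X_{r-2}\subseteq X_{r-1}+V_r^{***}$, and the paper then extends \cite[Lemma~6.2]{BG} to $X_{r-1}^*=X_{r-1}^{***}$, whence the modular law yields $X_{r-2}^*\subseteq X_{r-1}^* + V_r^{***} \subseteq V_r^{***}$.

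The step in your outline that fails is the claim that each residue-class binomial sum ``reduces to the reference sum $\sum_{j\equiv a-1}\binom{r}{j}$''. The identities you invoke relate a class-$s$ sum for $\binom{r}{\cdot}$ to the class-$s,s{-}1,s{-}2$ sums for $\binom{r-2}{\cdot}$; they slide within a window of three adjacent classes but cannot transport an arbitrary $s$ to the fixed class $a-1$. For $s\notin\{0,1,2,a{-}2,a{-}1,a\}$ your $V_r^*$ condition is a single linear relation among the three moment sums $E_{a-2-s},F_{a-1-s},D_{s-2}$, and these are disjoint from the moment sums appearing in every other class; the obstruction $\sum_{j\equiv s}jc_j=0$ is a second linear relation among the same three unknowns, and nothing in \cref{lem:binomialsumaeq2}---which only addresses the classes $a,a{-}1,a{-}2$---forces the second to be a scalar multiple of the first. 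Closing this would require feeding in $\phi(X_{r''}^*\otimes V_2)\subseteq V_r^{***}$ from \cref{lem:analog47} to dispose of the generic classes first, at which point you have essentially rebuilt the paper's inductive reduction in a more laborious form.
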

\begin{proof}
  The proof is similar to the proof of \cite[Lemma 6.2]{BG}:
  Consider $\sum_{k \in \mathbb{F}_p} k^{p-2} (kX + Y)^r \in X_r$.
  Working $\mod p$:
  \begin{align}
    \sum_{k\in \mathbb{F}_p} k^{p-3} (kX + Y)^r
    \equiv & - \sum_{\substack{0 < j \leq r -2 \\ j \equiv a-2 \mod (p-1)}} \binom{r}{j}X^{r-j}Y^j\\
    yequiv - \binom{r}{2} X^2Y^{r-2} - G(X, Y)
  \end{align}
  where we claim that
  \[
    G(X,Y)
    \equiv \sum_{\substack{0 < j < r -2 \\ j \equiv a-2 \mod (p-1)}} \binom{r}{j}X^{r-j}Y^j
    \in
    \begin{cases}
      V_r^{**},	  & \text{ \quad for } a = 4,\\
      V_r^{***},	& \text{ \quad for }5 \leq a \leq p.
    \end{cases}
  \]
  Proof of our claim:
  Let $c_j$ denote the coefficients of $G$.
  If $a \geq 5$, we find $c_j = 0$ for $j = 0,1,2$ and $j = r-2,r-1,r$.
  If $a=4$, then $c_j = 0$ for $j = 0,1$ and $j = r-2,r-1,r$, but $c_2 \ne 0$.
  By \cref{lem:binomialsumaeq2} for $i = 2$ we have $\sum c_j, \sum j c_j, \sum j(j-1) c_j \equiv 0 \mod p$ to obtain $G(X,Y) \in V_r^{***}$ for $a \geq 5$ and $G(X,Y) \in V_r^{**}$ for $a = 4$:
  Therefore $\binom{r}{2} X^2Y^{r-2}$ is in $X_r + V_r^{***}$ for $a \geq 5$ and in $X_r + V_r^{**}$ for $a = 4$.
  Since the cases $a = p + 1$ and $a = p+2$ are excluded, neither $r \equiv a \equiv 1 \mod p$ nor $r \equiv a \equiv 2 \mod p$, and we conclude $X_{r-2} \subseteq X_r + V_r^{***}$ for $a \geq 5$ and $X_{r-2} \subseteq X_r + V_r^{**}$ for $a = 4$.

  By \cref{lem:analog47}, we have $X^*_r = X_r^{***}$.
  Now by following the argument at the end of the proof of \cite[Lemma 6.2]{BG}, we conclude $X_{r-2}^* \subseteq X_{r-2}^{***}$ for $a \geq 5$ and $X_{r-2}^* \subseteq X_{r-2}^{**}$ for $a = 4$.
\end{proof}

\subsection{$X_{r-2}^{*}/X_{r-2}^{**}$}

  \begin{lemma}
    \label{lem:Xr2AstModXr2AstAstUnequalModP}
    
    Let $a = 4, \ldots, p$ and $r \equiv a \mod (p-1)$.
    If $r \geq 2p + 1$ and $r \not\equiv a \mod p$, then
    \[
      X_{r-2}^*/X_{r-2}^{**} = V_{a-2} \otimes \rD.
    \]
  \end{lemma}
  \begin{proof}
    Consider the polynomial
    \begin{align}
      F(X,Y) & = (a-2)X^{r-1}Y + \sum_{k\in \mathbb{F}_p} k^{p + 2-a}(kX +Y)^{r-2}X^2 \in X_{r-2} \\
             & \equiv (a-r)X^{r-1}Y - \sum_{\substack{0 < j < r-3 \\ j \equiv a-3 \mod (p-1)} }\binom {r-2}{j} X^{j + 2}Y^{r-2-j} \mod p.
    \end{align}
    By \cref{lem:VrAstCriteria} we see $F(X,Y) \in V_r^*$ but the coefficient $c_1$ of $X^{r-1}Y$ in $F(X,Y)$ is $ a-r  \not\equiv 0\mod p$ by the hypothesis, so $F(X,Y) \not \in V_r^{**}$.
    Thus $      X_{r-2}^*/X_{r-2}^{**} \ne 0$.

    Since $\phi (X_{r''}^* \otimes V_2 ) \subseteq X_{r-2}^{**}$, the Jordan-Hölder factors are in (the non-singular part of) the right-hand side of the short exact sequence of \cref{prop:JHXrppxV2}.
    Using \cref{lem:VrAstQuotients}.(ii), the only possible Jordan-Hölder factor is $X_{r-2}^*/X_{r-2}^{**} = V_{a-2} \otimes \rD$.
  \end{proof}

  Since $V_r^*/V_r^{**}$ splits if and only if $a=p+1$, this is the only value of $a$ for which $X_{r-2}^{*}/X_{r-2}^{**}$ can be different from $V_r^*/V_r^{**}$, $V_{a-2} \otimes \rD$ (which is its socle in the non-split case) or $0$ (and indeed it is if $r \equiv a \mod p$):

  \begin{lemma}
    \label{lem:Xr2AstModXr2AstAstUnequalModPaEqualP1}
    If $r \geq 2p + 1$ and $r \equiv p+1 \mod (p-1)$, then
    \[
      X_{r-2}^{*}/X_{r-2}^{**} =
      X_{r-1}^{*}/X_{r-1}^{**} =
      \begin{cases}
        V_r^*/V_{r}^{**}, & \text{\quad if $r \not \equiv 0, 1 \mod p$}\\
        V_{p-1} \ox D, & \text{\quad if $r \equiv 0 \mod p$}\\
        V_{0} \ox D, & \text{\quad if $r \equiv 1 \mod p$}.
      \end{cases}
    \]
  \end{lemma}
  \begin{proof}
    Consider
    \[
      F(X,Y) := XY^{r-1} - X^{r-1}Y \in X_{r-1} \subseteq X_{r-2}.
    \]
    By \cref{lem:VrAstCriteria}, we have $F(X,Y) \in V_r^{*}$ but $F(X,Y) \notin V_r^{**}$ as the coefficient $c_1$ of $X^{r-1}Y$ is not zero.
    Thus, $X_{r-2}^* / X_{r-2}^{**} \not= 0$.
    Since the polynomial $F(X,Y)  \in X_{r-1}$ and $V_r^*/V_r^{**}$ splits for $a=p+1$, we can determine the Jordan-Hölder series of $ X_{r-1}^{*}/X_{r-1}^{**}$ by checking if the image of the polynomial $F(X,Y)$ maps to zero or not.
    This has been studied already in Section 5 of \cite{BG}, yielding that $X_{r-2}^{*}/X_{r-2}^{**}$ contains the quotient $X_{r-1}^{*}/X_{r-1}^{**}$.
    In fact, by \cite[Lemma 4.32(i)]{GhateVangala} one gets equality so we can use the results of \cite{BG} to get the structure of the quotient.
  \end{proof}

  By \cref{lem:analog47}, for $a = 3$ and $p \nmid r-2$, we have $X_{r''}^* \neq X_{r''}^{**}$, so not necessarily $\phi (X_{r''}^* \otimes V_2 ) \subseteq X_{r-2}^{**}$.
  (We observe in particular that $r \equiv 3 \mod (p-1)$ and $r \not\equiv 2 \mod p$ imply $\Sigma(r'') \geq p$ (otherwise $\Sigma(r'') = 1$, that is, $r'' = p^n$ for some $n$), thus $X_{r''}^* \neq 0$.)
  Indeed, there is no inclusion:

  \begin{lemma}
    \label{lem:aEq3VrAst}
    If $r \geq 2p + 1$ and  $r \equiv 3 \mod (p-1)$, then
    \[
      X_{r-2}^*/X_{r-2}^{**}
      =
      \begin{cases}
        V_r^*/V_{r}^{**}, & \text{\quad if $r \not \equiv 2 \mod p$}\\
        V_1 \ox D,        & \text{\quad if $r \equiv 2 \mod p$}.
      \end{cases}
    \]
  \end{lemma}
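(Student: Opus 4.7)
The plan is to exploit the submodule lattice of $V_r^*/V_r^{**}$: by \cref{lem:VrAstQuotients}\ref{en:VrAstModVrAstAst} with $a = 3$, the quotient fits in a non-split short exact sequence
\[
  0 \to V_1 \otimes \rD \to V_r^*/V_r^{**} \to V_{p-2} \otimes \rD^2 \to 0,
\]
whose only proper nonzero $\F_p[\Gamma]$-submodule is the socle $V_1 \otimes \rD$. Since $X_{r-2}^*/X_{r-2}^{**}$ embeds into $V_r^*/V_r^{**}$, it must equal $0$, $V_1 \otimes \rD$, or the full quotient. To decide which, I will analyze the composite cosocle projection
\[
  \pi \colon X_{r-2}^* \into V_r^* \onto V_r^*/V_r^{**} \onto V_{p-2} \otimes \rD^2,
\]
noting that $X_{r-2}^*/X_{r-2}^{**} = V_r^*/V_r^{**}$ iff $\pi \ne 0$, while $X_{r-2}^*/X_{r-2}^{**} = V_1 \otimes \rD$ iff $\pi = 0$ but $X_{r-2}^* \ne X_{r-2}^{**}$.

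For the full case $r \not\equiv 1, 2 \mod p$, the idea is to test $\pi$ on the polynomial $F = \phi(G \otimes X^2) \in X_{r-2}$ with $G = \sum_{k \in \F_p} (kX + Y)^{r-2}$, already shown in \cref{lem:phi3} to lie in $V_r^* \setminus V_r^{**}$ when $r \not\equiv 2 \mod p$. To evaluate $\pi(F)$, I will use the identification $V_r^* = \theta \cdot V_{r-p-1}$ (together with the induced iso $V_r^*/V_r^{**} \iso (V_{r-p-1}/V_{r-p-1}^*) \otimes \rD$) to write $F = \theta g$ for some $g \in V_{r-p-1}$, so that $\pi(F)$ corresponds to the cosocle image of $g$ in $V_{p-2} \otimes \rD$. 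The recurrence $c_j^F = b_{j-1}^g - b_{j-p}^g$ for the coefficients of $g$ is solved via Lucas's theorem, and the relevant binomial sum is then handled by \cref{lem:binomialsumaeq2}; this shows the cosocle image is nonzero precisely when $r \not\equiv 1 \mod p$.

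For the socle cases I need both (i) an explicit socle element to guarantee $X_{r-2}^* \ne X_{r-2}^{**}$ and (ii) vanishing of $\pi$ on all of $X_{r-2}^*$. For (i): if $r \equiv 1 \mod p$ then the same $F$ works, since the previous computation gives $\pi(F) = 0$ while $F \notin V_r^{**}$; if $r \equiv 2 \mod p$ then $F$ falls into $V_r^{**}$ and I will replace it by a variant such as $\phi(G' \otimes X^2)$ with $G' = \sum_k k (kX+Y)^{r-2}$, or $\phi(G \otimes XY)$, tailored so that the relevant first-order coefficient still survives modulo $p$ under $p \mid r - 2$. For (ii): invoking the spanning set of \cref{lem:Xr2Generators}, every element of $X_{r-2}^*$ is an $\F_p$-linear combination of the corner monomials together with $X^2(jX+Y)^{r-2}$, $XY(lX+Y)^{r-2}$, $Y^2(X+kY)^{r-2}$; computing $\pi$ on each family reduces to verifying binomial-sum congruences modulo $p$ that hold whenever $r \equiv 1$ or $r \equiv 2 \mod p$, via \cref{lem:binomialsumaeq2} for $i = 1, 2$ together with Lucas's theorem.

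The main obstacle I foresee is precisely this last global vanishing step. Producing a single witness is routine once \cref{lem:phi3} is available, but ruling out every non-socle contribution from $X_{r-2}^*$ requires a simultaneous check of several binomial identities modulo $p$, one per generator family, and is where the hypothesis $p \mid r - 1$ or $p \mid r - 2$ becomes indispensable.
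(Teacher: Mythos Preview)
Your overall framework matches the paper's exactly: both use that $V_r^*/V_r^{**}$ is uniserial with socle $V_1\otimes\rD$, so $X_{r-2}^*/X_{r-2}^{**}$ is either $0$, the socle, or the whole thing, and the decision is made by testing the cosocle projection $\pi$ to $V_{p-2}\otimes\rD^2$.

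For the case $r\not\equiv 1,2\pmod p$ your approach is essentially the paper's, just with a different test element: you use $F=\phi(G\otimes X^2)$ from \cref{lem:phi3}, whereas the paper builds an element from $(kX+Y)^{r-1}\cdot X$ and $X^2Y^{r-2}$, then reads off directly that $F\equiv (r-2)\,\theta\, X^{r-2p}Y^{p-1}\bmod V_r^{**}$ and invokes \cref{lem:Analog85}. Either works.

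The gap is in your socle cases. Your step (ii) proposes to show $\pi$ vanishes on all of $X_{r-2}^*$ by ``computing $\pi$ on each family'' of generators from \cref{lem:Xr2Generators}. But those generators span $X_{r-2}$ over $\F_p$, not $X_{r-2}^*$; none of $X^2(jX+Y)^{r-2}$, $XY(lX+Y)^{r-2}$, $Y^2(X+kY)^{r-2}$ lies in $V_r^*$ individually, so $\pi$ is not defined on them. You would have to parametrise exactly which $\F_p$-linear combinations land in $V_r^*$ and evaluate $\pi$ on those, which is a different (and much messier) computation than the one you sketch.

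The paper avoids this entirely by arguing structurally. For $r\equiv 2\pmod p$ one has $p\mid r''$, so \cref{lem:analog47} gives $X_{r''}^*=X_{r''}^{**}$, hence $\phi(X_{r''}^*\otimes V_2)\subseteq X_{r-2}^{**}$. The short exact sequence of \cref{prop:JHXrppxV2} then forces $X_{r-2}^*/X_{r-2}^{**}$ to sit inside $V_1\otimes\rD$, and the witness $X^2Y^{r-2}-X^{r-1}Y$ shows it is nonzero. For $r\equiv 1\pmod p$ the containment $\phi(X_{r''}^*\otimes V_2)\subseteq X_{r-2}^{**}$ fails (indeed \cref{lem:phi3} says so), and the paper instead tracks individual Jordan--H\"older factors using \cref{lem:JHXr}, \cref{lem:analog47}, and the results of \cite{BG} on $X_{r-1}^*/X_{r-1}^{**}$ and $\phi(X_{r'}^*\otimes V_1)$. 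This structural bookkeeping is what replaces your global vanishing computation; without it, step (ii) as written does not go through.
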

  \begin{proof}
    \hfill
    \begin{itemize}
      \item Let $r \not \equiv 2 \mod p$.
        Consider
        \[
          F(X,Y)
          :=
          XY^{r-1} - X^{r-2}Y^2  .
        \]
        We see that the $0 \ne F(X,Y) \in X_{r-2}^*/ X_{r-2}^{**}$.
        By the same calculation as in \cite[Theorem 8.6]{BG}, we see that if $r \not \equiv 2 \mod p$, then $F(X,Y)$ generates $V_r^*/V_r^{**}$.
        Hence $ X_{r-2}^*/X_{r-2}^{**}
        =
        	V_r^*/V_{r}^{**}$.

      \item For the case $r \equiv 2 \mod p$, we have by \cref{prop:JHXrppxV2} the short exact sequence:
        \begin{align}
          0 \to (V_{2p-1} \otimes \rD) \op (V_{p-4} \ox \rD^3) \to X_{r''} \ox V_2 \to (V_1 \otimes \rD) \op V_3 \to 0.
        \end{align}
        where $V_{2p-1}$ has $V_{p-2} \ox \rD, V_{p-2} \ox \rD$ and $V_1$ as factors.
        Let $\phi \colon X_{r''} \otimes V_2 \to X_{r-2}$ be the natural mapping.
        Because $p | r''$, by \cref{lem:analog47}, we have $X_{r''}^{*} = X_{r''}^{**}$, so  $\phi (X_{r''}^* \otimes V_2) \subseteq X_{r-2}^{**}$.
        We obtain
        \begin{equation}
          \frac{X_{r''} \otimes V_2}{X_{r''}^* \otimes V_2}
          \twoheadrightarrow
          \frac{X_{r-2}}{X_{r-2}^{**}}
        \end{equation}
        leaving $V_{3}$ and $V_1 \otimes \rD$ as only possible Jordan-Hölder factors of $X_{r-2}/X_{r-2}^*$.
        We have $0 \ne F(X,Y) = X^2Y^{r-2} - X^{r-1}Y \in X_{r-2}^*/X_{r-2}^{**}$.
        Since $V_{3}$ does not appear in \cref{lem:VrAstQuotients}.\ref{en:VrAstModVrAstAst}, we can conclude $X_{r-2}^{*}/X_{r-2}^{**} = V_1 \ox D$.
        \qedhere
    \end{itemize}
  \end{proof}

\subsection{$X_{r-2}^{**}/X_{r-2}^{***}$}

  \begin{lemma}
    \label{lem:Xr2AstAstModXr2AstAstAstUnequalModP}
    Let $r \geq 3p + 2$ and $r \equiv a \mod (p-1)$ for $a = 5, \ldots, p$.
    If $r \not\equiv a, a-1 \mod p$, then
    $
      X_{r-2}^{**}/X_{r-2}^{***} \cong V_{a-4} \otimes \rD^2.
    $
  \end{lemma}
  \begin{proof}
    For $A$, $B$ and $C$ constants in $\F_p$, let $F(X,Y)$ in $X_{r-2}$ be given by:

    \begin{align}
      F(X,Y) =      & A \cdot \left[ (r-2)X^2Y^{r-2} + \sum_{k \in \mathbb{F}_p} k^{p-2} (kX +Y)^{r-2}XY \right] \\
      +    & B \cdot \left[\frac{(r-1)(r-2)}{2} X^2Y^{r-2} + \sum_{k \in \mathbb{F}_p} k^{p+3-a} (X + kY)^{r-1} Y \right]\\
      +    & C \cdot X^2 Y^{r-2}\\
      \equiv & A \cdot \left[ - \sum_{\substack{0 < j < r-3 \\ j \equiv a-3 \mod (p-1)} } \binom{r-2}{j} X^{r-j-1}Y^{j+1} \right] \\
      +      & B \cdot \left[ - \sum_{\substack{0 < j < r-3 \\ j \equiv a-3 \mod (p-1)} } \binom{r-1}{j} X^{r-j-1}Y^{j+1} \right]\\
      +      & C \cdot X^2 Y^{r-2} \mod p.
    \end{align}
    By \cref{lem:binomialsumaeq2} for $i = 1$, we obtain the following system of linear equations for $\sum_j c_j$ and $\sum_j j c_j$ to simultaneously vanish:
    \[
      \sum_j c_j = C + \alpha  A  + \frac{ \alpha \beta}{2}  B  = 0
    \]
    and
    \[
      \sum_j j c_j = (r-2)C +  \alpha (r-1) A   +  \frac{\alpha ( (\beta-2)r +2 ) B}{2}  = 0
    \]
    where $\alpha = r-a$ and $\beta = a+r-3$.
    For $F$ not to be in $V_r^{***}$, we need $C \neq 0$.

    The determinant given by the rightmost two columns is
    \[
      \frac{  \alpha^2 ((\beta-2) r+2)}{2} - \frac{\alpha^2 \beta (r-1)}{2} = \frac{\alpha^2 (\beta-2r+2)}{2}
    \]
    and thus is nonzero if and only if $\alpha = r-a \not\equiv 0 \mod p$ and $2r - 2 - \beta =r - a +1 \not \equiv 0 \mod p$, that is, $r \not\equiv a-1 \mod p$.
    Thus, if $r\not \equiv a, a-1 \mod p$, then we can find $\alpha$ and $\beta$ such that $F$ is in $X_{r-2}^{**}$, but not in $X_{r-2}^{***}$, due to the nonzero coefficient of $X^2 Y^{r-2}$.

    By \cref{lem:analog47}, we have $\phi (X_{r''}^* \otimes V_2 ) \subseteq X_{r-2}^{***}$.
    Therefore the searched-for Jordan-Hölder factors are in (the non-singular part of) the right-hand side of the short exact sequence of \cref{prop:JHXrppxV2}.
    Using \cref{lem:VrAstQuotients}.(iii), the only possible Jordan-Hölder factor is $X_{r-2}^{**}/X_{r-2}^{***} = V_{a-4} \otimes \rD^2$.
  \end{proof}

  We recall that the case $r \equiv a \mod p$ was examined in \cref{lem:Xr2r4p-1AstModAstAstEqualModP}.
  It remains to examine the case $r \equiv a-1 \mod p$.
  We do not show here that $X_{r-2}^{**}/X_{r-2}^{***} \cong 0$, equivalently, that both factors from $V_r^{**}/V_r^{***}$ are in the Jordan-Hölder series of $Q$. However, in \cref{sec:ElimJH} we show that either both factors are in the kernel of $\ind_{KZ}^G Q \twoheadrightarrow \bar\Theta_{k,a_p}$ or only one of them appears as the final factor.
  (In fact, the recent preprint \cite[Lemma 4.15]{GhateVangala} shows $X_{r-2}^{**}/X_{r-2}^{***} \cong 0$.)

  We will now compute $X_{r-2}^{**}/X_{r-2}^{***}$ for the remaining cases $p+1, p+2$ and $p+3$:

  \begin{lemma}
    \label{lem:Xr2AstAstModXr2AstAstAstUnequalModPaEqualp1}
    If $r \geq 3p + 2$ and $r \equiv p + 1 \mod (p-1)$ and $r \not\equiv 0, 1 \mod p$, then
    \[
      X_{r-2}^{**}/X_{r-2}^{***} \neq 0.
    \]
  \end{lemma}
  \begin{proof}
    Consider
    $
    F(X,Y) \in X_{r}
    $
    given by
    \[
      F(X,Y)
      =
      \sum_{k \in \mathbb{F}_p}  (kX + Y)^{r}
      \equiv
      - \sum_{\substack{0 < j < r \\ j \equiv 2 \mod (p-1)}} \binom {r}{j}X^{r-j}Y^{j}
      \mod p.
    \]
    Working $\mod p$:
    \[
      - F(X,Y) \equiv \sum_{\substack{0 < j < r \\ j \equiv 2 \mod (p-1)}} \binom {r}{j}X^{j}Y^{r-j}.
    \]
    Let $c_j$ denote the coefficients of $-F$.
    By \cref{lem:binomialsumaeq2} we see that $\sum_j c_j = \sum_{j} \binom{r}{j} \equiv 0 \mod p$.
    We compute
    \begin{align}
        \sum_j j c_j
        & =
        \sum_{0 < j \equiv 2 < r} j \binom{r}{j}\\
        & =
        r \sum_{0 < j' \equiv 1 < r'} \binom{r'}{j'} \equiv 0 \mod p
    \end{align}
    by \cref{lem:binomialsumaeq2}.
    Therefore, by \cref{lem:VrAstCriteria}, we have $F(X,Y) \in V_r^{**}$, but $F(X,Y) \notin V_r^{***}$ because the coefficient $c_{2}$ of $X^{r-2}Y^2$ is $\binom{r}{2} \not\equiv  0 \mod p$ by hypothesis.
    Thus, $X_{r-2}^{**}/X_{r-2}^{***} \ne 0$.
    (In fact, we have shown that even $X_{r}^{**}/X_{r}^{***} \ne 0$.)
     \end{proof}

\begin{lemma}
    \label{lem:aEq3VrAstAstnot012}
    If $r \geq 3p+2$ and $r \equiv p+2 \mod (p-1)$ and $r \not\equiv 0, 1,2 \mod p$, then
    \[
      X_{r-2}^{**}/X_{r-2}^{***}  = V_r^{**}/V_r^{***}.
    \]
  \end{lemma}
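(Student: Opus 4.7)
The plan is to exploit the non-split extension
\[
0 \to V_{p-2} \otimes \rD^2 \to V_r^{**}/V_r^{***} \to V_1 \otimes \rD^p \to 0
\]
provided by \cref{lem:VrAstQuotients}.\ref{en:VrAstAstModVrAstAstAst} (with $a = p+2 \neq p+3$). Since the socle $V_{p-2} \otimes \rD^2$ and the quotient $V_1 \otimes \rD^p = V_1 \otimes \rD$ have dimensions $p-1$ and $2$, which are distinct for $p \geq 5$, the only nonzero $\Gamma$-submodules of $V_r^{**}/V_r^{***}$ are the socle and the whole module. Hence to prove the claimed equality it suffices to construct one polynomial $F \in X_{r-2} \cap V_r^{**}$ whose image $\bar F \in V_r^{**}/V_r^{***}$ is not contained in the socle; the inclusion $X_{r-2}^{**}/X_{r-2}^{***} \hookrightarrow V_r^{**}/V_r^{***}$ will then be forced to be an equality.

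To build $F$, I would adapt the strategy of \cref{lem:aEq3VrAst} to the deeper singular layer by considering an $\F_p$-linear combination
\[
F = A \sum_{k \in \F_p} k^{p-2}(kX+Y)^{r-1}X
  + B \sum_{l \in \F_p} l^{p-1}(X+lY)^{r-1}Y
  + C \cdot X^{r-1}Y + D \cdot X^2 Y^{r-2},
\]
each summand lying in $X_{r-1} \subseteq X_{r-2}$. Expanding by Lucas's theorem, every summand reduces modulo $p$ to a polynomial supported on exponents of $Y$ congruent to $1 \mod p-1$, and the four quantities $c_1$, $c_{r-2}$, $\sum_j c_j$, and $\sum_j j c_j$ become linear forms in $(A,B,C,D)$ whose coefficients are computable via \cref{lem:binomialsumaeq2} for $i = 1, 2$. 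The three conditions $c_1 = \sum c_j = \sum j c_j = 0$ required by \cref{lem:VrAstCriteria}.\ref{en:VrAstAstCriterion} leave a one-parameter family of solutions; the hypothesis $r \not\equiv 0, 1, 2 \mod p$ should keep the relevant pivots invertible in $\F_p$ and permit an additional choice ensuring $c_{r-2} \neq 0$, placing $F$ in $V_r^{**} \setminus V_r^{***}$ by \cref{lem:VrAstCriteria}.\ref{en:VrAstAstAstCriterion}.

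The hard part will be verifying that $\bar F$ is not contained in the socle $V_{p-2} \otimes \rD^2$. Via the $\Gamma$-equivariant isomorphism
\[
V_r^{**}/V_r^{***} \cong (V_{r-2p-2}/V_{r-2p-2}^*) \otimes \rD^2
\]
induced by $\theta^2$-multiplication, the image $\bar F$ corresponds to $g := F/\theta^2$ taken modulo $V_{r-2p-2}^*$. Since $r - 2p - 2 \equiv p - 2 \mod p - 1$, \cref{lem:VrAstQuotients}.\ref{en:VrModVrAst} yields the non-split sequence
\[
0 \to V_{p-2} \to V_{r-2p-2}/V_{r-2p-2}^* \to V_1 \otimes \rD^{p-2} \to 0,
\]
and showing $\bar F$ meets the top factor $V_1 \otimes \rD^p$ reduces to showing $g$ surjects onto $V_1 \otimes \rD^{p-2}$. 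The idea is to exploit the leading identity $\theta^2 \cdot Y^{r-2p-2} = X^2 Y^{r-2} - 2 X^{p+1}Y^{r-p-1} + X^{2p} Y^{r-2p}$ so that the nonvanishing of $c_{r-2}$ forces a nonzero $Y^{r-2p-2}$-type contribution in $g$ that survives the projection onto the quotient. Carrying this out rigorously requires an analog of \cref{lem:Analog85} one layer deeper (i.e., for $\theta^2$-division rather than $\theta$-division), and is the technically demanding step of the proof; with this identification in hand, the argument of the first paragraph closes the lemma.
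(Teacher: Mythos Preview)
Your overall plan is correct and essentially the same as the paper's: construct $F \in X_{r-2} \cap V_r^{**}$ with $F \notin V_r^{***}$, then show its class in $V_r^{**}/V_r^{***}$ hits the top factor $V_1 \otimes \rD$ of the non-split extension, forcing equality. The paper does exactly this, using three summands with two free constants $A_1, A_2$ rather than your four, but the linear-algebra step to arrange $\sum c_j = \sum j c_j = 0$ and $c_{r-2} \neq 0$ (precisely where $r \not\equiv 0,1,2 \bmod p$ is needed) is the same in spirit.

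Your treatment of the ``hard part,'' however, contains a genuine gap. First, you do not need an analog of \cref{lem:Analog85} one layer deeper: that lemma is already stated for $V_r^{**}/V_r^{***}$ and $\theta^2$-division, so it is precisely the tool required here. Second, and more seriously, your argument that ``the nonvanishing of $c_{r-2}$ forces a nonzero $Y^{r-2p-2}$-type contribution in $g$ that survives the projection onto the quotient'' is backwards. By \cref{lem:Analog85}, the monomial $\theta^2 Y^{r-2p-2}$ maps to $0$ in $J_1 = V_1 \otimes \rD^p$; it lies in the socle $V_{p-2}\otimes\rD^2$. So detecting a $Y^{r-2p-2}$-coefficient in $g = F/\theta^2$ proves nothing about hitting the top. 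What the paper does instead is compute $F \bmod V_r^{***}$ directly and identify it as $\binom{r-1}{2}\,\theta^2 X^{r-3p-1}Y^{p-1}$; by \cref{lem:Analog85} that particular monomial maps to $Y^{p-a+3} = Y \neq 0$ in $J_1$, and $\binom{r-1}{2} \not\equiv 0$ since $r \not\equiv 1,2 \bmod p$. To complete your approach you must perform this reduction modulo $V_r^{***}$ explicitly (the paper points to the argument of \cref{lem:Xr2r4AstAstModAstAstAstEqualModP}) rather than relying on $c_{r-2}$.
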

  \begin{proof}
    Consider the polynomial
    \begin{align}
      F(X,Y) & := A_1 X^{r-2}Y^{2}\\
             & -  A_2 \sum_{k \in \mathbb{F}_p} k^{p-2}(kX+Y)^{r} \\
             & - \sum_{k \in \mathbb{F}_p} k^{p-1}(kX+Y)^{r-1} X\\
             & - A_3 \sum_{k \in \mathbb{F}_p} k (kX+Y)^{r-2} X^2
    \end{align}
    in $X_{r-2}$ where $A_1$, $A_2$ and $A_3$ are constants that can be chosen such that
    \begin{align}
      A_1 + 3A_2 & \equiv -1 \mod p,\\
      r A_2 & \equiv -1 \mod p, \text{ and }\\
      2A_1 + 2r A_2 - (r-2)A_3 & \equiv  1 - r \mod p.
    \end{align}
    For this, we observe that we can put $A_2 = -r^{-1}$ as $r \nequiv 0 \mod p$ and that the determinant of the linear equation system in $A_1$ and $A_3$ is $r-2 \nequiv 0 \mod p$.

    We have
    \begin{align}
      F(X,Y) & \equiv A_1 X^{r-2}Y^{2}\\
             & +  A_2  \sum_{\substack{0 < j \leq r-1,\\ j \equiv 2 \mod (p-1)}} \binom{r}{j} X^{r-j}Y^j\\
             & + \sum_{\substack{0 < j \leq r-1,\\ j \equiv 2 \mod (p-1)}} \binom{r-1}{j} X^{r-j}Y^j\\
             & + A_3 \sum_{\substack{0 < j \leq r-1,\\ j \equiv 2 \mod (p-1)}} \binom{r-2}{j} X^{r-j}Y^j \mod p.
    \end{align}
    Denote the coefficient of $X^{r-j}Y^j$ by $c_j$.
    First, we note that $c_0, c_1, c_r$ do not occur.
    The coefficient $c_{r-1}$ vanishes as $A_2 r + 1 \equiv 0 \mod p$.
    By \cref{lem:binomialsumaeq2},
    \[
      \sum_j c_j = 3A_2 + A_1 + 1 \equiv 0 \mod p
    \]
    and
    \[
      \sum j c_j \equiv 2 A_1 + 2A_2 r + (r-1) + (r-2) A_3 \equiv 0 \mod p.
    \]

    Since all indices of nonzero coefficients in $F$ are congruent $\mod (p-1)$, we can apply \cref{lem:VrAstCriteria} and obtain $F(X,Y) \in V_r^{**}$.

    Using \cref{lem:VrAstCriteria} and \cref{lem:binomialsumaeq2},
    \[
      F(X,Y) \equiv
      \binom{r-1}{2}\theta^2 X^{r-3p-1}Y^{p-1} + \left(A_1 + A_2 \binom{r}{2} + \binom{r-1}{2} + A_3 \binom{r-2}{2}\right) \theta^2 X^{r-2p-2}
      \mod V_r^{***},
    \]
    which by \cref{lem:Analog85} maps to a non-zero element in $V_1 \ox \rD$ as $r \not \equiv 0,1,2 \mod p$.
    Hence $X_{r-2}^{**}/X_{r-2}^{***} = V_r^{**}/V_r^{***}$ as the short exact sequence of \cref{lem:VrAstQuotients}.(iii) does not split.
  \end{proof}

  \begin{lemma}
    \label{lem:aEq3VrAstAst}
    If $r \geq 3p+2$ and $r \equiv p+2 \mod (p-1)$ and $r \equiv 0 \mod p$, then
    \[
      X_{r-2}^{**}/X_{r-2}^{***}   \not= 0.
    \]
  \end{lemma}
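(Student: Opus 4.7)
The plan is to exhibit an explicit polynomial $F \in X_{r-1} \subseteq X_{r-2}$ that lies in $V_r^{**}$ but not in $V_r^{***}$, and then invoke \cref{lem:VrAstCriteria}. The polynomial used in \cref{lem:aEq3VrAstAstnot012} breaks here: one of the three linear conditions imposed there is $A_2 r + 1 \equiv 0 \pmod p$, which becomes unsolvable when $r \equiv 0 \pmod p$. The idea is therefore to drop the $\sum k^{p-2}(kX+Y)^r$ building block (whose only role was to control $c_{r-1}$) and replace it by a single copy of the generator $XY^{r-1}$ of $X_{r-1}$, yielding a much simpler polynomial.

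Concretely, I would take
\[
  F(X,Y) := -XY^{r-1} - \sum_{k \in \F_p^*} X(kX+Y)^{r-1}.
\]
Since $XY^{r-1}$ generates $X_{r-1}$ and each $X(kX+Y)^{r-1}$ is the image of $XY^{r-1}$ under the unipotent matrix $\begin{pmatrix} 1 & k \\ 0 & 1 \end{pmatrix}$, we have $F \in X_{r-1} \subseteq X_{r-2}$. Expanding with the standard identity $\sum_{k=1}^{p-1} k^s \equiv -1 \pmod p$ when $(p-1) \mid s$ and $s \geq 0$, else $\equiv 0$, and using $r-1 \equiv 2 \pmod{p-1}$, only the indices $j \equiv 2 \pmod{p-1}$ contribute; moreover, the $j = r-1$ contribution $\binom{r-1}{r-1} = 1$ is cancelled exactly by the extra $-XY^{r-1}$. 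Hence
\[
  F \equiv \sum_{\substack{0 < j < r-1 \\ j \equiv 2 \mod p-1}} \binom{r-1}{j} X^{r-j} Y^j \pmod p.
\]

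By \cref{lem:VrAstCriteria}.(2), to conclude $F \in V_r^{**}$ it remains to check that $\sum c_j$ and $\sum j c_j$ vanish mod $p$. The first follows from \cref{lem:binomialsumaeq2} applied to $r-1 \equiv 2 \pmod{p-1}$ in the case $i=0$. For the second, I would use $j\binom{r-1}{j} = (r-1)\binom{r-2}{j-1}$ to reduce to $\sum_{j' \equiv 1, 0 < j' < r-2} \binom{r-2}{j'}$, which is $\equiv 0 \pmod p$ again by \cref{lem:binomialsumaeq2} applied to $r-2 \equiv 1 \pmod{p-1}$.

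Finally, by \cref{lem:VrAstCriteria}.(3), to see $F \notin V_r^{***}$ it suffices to exhibit one nonzero interior coefficient. The cleanest choice is $c_2 = \binom{r-1}{2} = (r-1)(r-2)/2$: under $r \equiv 0 \pmod p$ (with $p$ odd) this equals $1 \pmod p$, hence is nonzero. Thus $F \in X_{r-2}^{**} \setminus X_{r-2}^{***}$, which proves $X_{r-2}^{**}/X_{r-2}^{***} \neq 0$. The main obstacle is psychological rather than technical: recognising that the generic-case polynomial should be \emph{simplified}, not augmented with further parameters, once the $A_2 r \equiv -1$ condition becomes unsolvable; the single correction $-XY^{r-1}$ simultaneously kills $c_{r-1}$ and keeps $F$ inside $X_{r-1}$.
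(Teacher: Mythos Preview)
Your proof is correct and follows essentially the same approach as the paper: both exhibit (up to sign and a harmless typo in the paper's displayed formula) the polynomial $F \equiv \sum_{0<j<r-1,\ j\equiv 2} \binom{r-1}{j} X^{r-j}Y^j$ inside $X_{r-1}\subseteq X_{r-2}$, verify $F\in V_r^{**}$ via \cref{lem:binomialsumaeq2}, and conclude $F\notin V_r^{***}$ from $c_2=\binom{r-1}{2}\not\equiv 0$. Your write-up is in fact slightly cleaner than the paper's, making the cancellation of $c_{r-1}$ explicit.
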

  \begin{proof}
    Consider
    \[
      F(X,Y) =
      \sum_{k \in \mathbb{F}_p} X(kX+Y)^{r-1} \equiv
      - \sum_{\substack{0 < j < r-1,\\ j \equiv 2 \mod (p-1)}} \binom{r-1}{j} X^{r-j}Y^j
      \mod p.
    \]
    Denote the coefficients of $-F$ by $c_j$.
    First, we note that $c_0$, $c_1$, $c_{r-1}$, $c_r$ do not occur.
    By \cref{lem:binomialsumaeq2} for $i = 0$ we see that $\sum_j c_j = \sum_{j} \binom{r-1}{j} \equiv 0 \mod p$ and, again by \cref{lem:binomialsumaeq2} for $i = 0$,
    \begin{align}
      \sum_j j c_j
      & = \sum_{0 < j \equiv a-1 < r-1} j \binom{r-1}{j}\\
      & = (r-1) \sum_{0 < j' \equiv r-2 < r-2} \binom{r-2}{j'} \equiv 0 \mod p.
    \end{align}
    Therefore, by \cref{lem:VrAstCriteria}, we have $F(X,Y) \in V_r^{**}$, but $F(X,Y) \notin V_r^{***}$ because the coefficient $c_2$ of $X^{r-2}Y^2$ is $\binom{r-1}{2} \not \equiv 0 \mod p$ as $r \equiv 0 \mod p$ by assumption.
  \end{proof}

  \begin{lemma}
    \label{lem:Xr2r3AstAstModAstAstAstEqualModP}
    If $r \geq 3p + 2$ and $r \equiv p+2 \mod (p-1)$ and $r \equiv 2 \mod p$, then
    \[
      X_{r-2}^{**}/X_{r-2}^{***} = 0.
    \]
  \end{lemma}
  \begin{proof}
    By \cref{prop:JHXrppxV2} we have the short exact sequence:
    \begin{align}
      0 \to X_{r''}^* \otimes V_2 = V_{2p-1} \otimes \rD \op (V_{p-4} \ox \rD^3) \to X_{r''} \ox V_2 \to (V_1 \otimes \rD) \op V_3 \to 0
    \end{align}
    where the left-hand side either vanishes or equals $V_{2p-1} \otimes \rD$.
    Let $\phi \colon X_{r''}^* \otimes V_2 \to X_{r-2}$.
    For $r \equiv 2 \mod p$, that is $p | r''$, by \cref{lem:analog47} we have $X_{r''}^{*} = X_{r''}^{**} = X_{r''}^{***}$, so  $\phi (X_{r''}^* \otimes V_2) \subseteq X_{r-2}^{***}$.
    Therefore
    \[
      \frac{X_{r''} \otimes V_2}{X_{r''}^* \otimes V_2}
      \twoheadrightarrow
      \frac{X_{r-2}}{X_{r-2}^{***}}.
    \]
    Since the short exact sequence \cref{lem:VrAstQuotients}.(iii) does not split for $a = 3$, we have $X^{**}_{r-2} / X^{***}_{r-2} \neq 0$ if and only if $V_{p-2} \otimes \rD^2 \hookrightarrow  X^{**}_{r-2} / X^{***}_{r-2}$.
    As $V_{p-2} \otimes \rD^2$ does not appear in the right-hand side of the corresponding short exact sequence in \cref{prop:JHXrppxV2}, we conclude $X_{r-2}^{**}/X_{r-2}^{***} = 0$.
  \end{proof}

  \begin{lemma}
    \label{lem:Xr2AstAstModXr2AstAstAstUnequalModPaEqual4}
    If $r \geq 3p + 2$ and $r \equiv p+3 \mod (p-1)$ and $r \not\equiv 2,3 \mod p$, then
    \[
      V_{p-1} \otimes \rD^2 \hookrightarrow
      X_{r-2}^{**}/X_{r-2}^{***}.
    \]
  \end{lemma}
  \begin{proof}
    Consider
    \[
      F(X,Y) = \sum_{k \in \mathbb{F}_p}  (kX + Y)^{r-2}X^2 \in X_{r-2}.
    \]
    Working $\mod p$:
    \[
      - F(X,Y) \equiv \sum_{\substack{0 < j < r -2\\ j \equiv 2 \mod (p-1)}} \binom {r-2}{j}X^{r-j}Y^{j}.
    \]
    Let $c_j$ denote the coefficients of $F$.
    By \cref{lem:binomialsumaeq2} we see that $\sum c_j \equiv \sum j c_j \equiv 0 \mod p$.
    Therefore, by \cref{lem:VrAstCriteria}, we have $F(X,Y) \in V_r^{**}$, but $F(X,Y) \notin V_r^{***}$ because
    \[
      F(X,Y) \equiv
      \binom{r-2}{2} \theta^2 X^{r-2p-2} \mod V_r^{***}.
    \]
    The coefficient of $X^{r-2}Y^{2}$ is $\binom{r-2}{2} \not\equiv 0 \mod p$ by hypothesis.
    Thus, $X_{r-2}^{**}/X_{r-2}^{***} \ne 0$ (containing $V_{p-1} \otimes \rD^2$ by \cref{lem:Analog85}).
    \end{proof}

  \begin{lemma}
    \label{lem:Xr2r4AstAstModAstAstAstEqualMod2}
    If $r \geq 3p + 2$ and $r \equiv p+3 \mod (p-1)$ and $r \equiv 2 \mod p$, then
    \[
      X_{r-2}^{**}/X_{r-2}^{***} = V_0 \otimes \rD^2 .
    \]
  \end{lemma}
  \begin{proof}
    Let
    \[
      F(X,Y) := \sum_{k \in \F_p} k^{p-3} (kX+Y)^r +  3X^2 Y^{r-2} + 3X^{r-2} Y^2 \in X_{r-2}.
    \]
    Working $\mod p$:
    \begin{align}
      F(X,Y) & \equiv - \sum_{\substack{0 < j \leq r-2\\ j \equiv a-2 \equiv 2 \mod (p-1)}} \binom{r}{j} X^{r-j}Y^{j} + 3X^2 Y^{r-2} + 3X^{r-2} Y^2\\
             & \equiv - \sum_{\substack{0 < j < r-2\\ j \equiv 2 \mod (p-1)}} \binom{r}{j} X^{r-j}Y^{j} - \binom{r}{r-2}X^2 Y^{r-2}+ 3 X^2Y^{r-2} + 3 X^{r-2}Y^2.
    \end{align}
    As $r \equiv 2 \mod p$, we see that $ \binom{r}{r-2} \equiv 1 \mod p$. Thus,

    \[
      F(X,Y) \equiv - \sum_{\substack{0 < j < r-2\\ j \equiv 2 \mod (p-1)}} \binom{r}{j} X^{r-j}Y^{j} + 2 X^2Y^{r-2} + 3 X^{r-2}Y^2.
    \]
    Let $c_j$ denote the coefficients of $F$.

    By \cref{lem:binomialsumaeq2} for $a=4$ and $i = 2$, using $r \equiv 2 \mod p$,
    \[
      \sum c_j \equiv - \frac{(4-2)(4+2-1)}{2} + 2 + 3 \equiv 0 \mod p
    \]
    and
    \begin{align}
      \sum j c_j & \equiv - \sum_{\substack{0 < j < r-2\\ j \equiv 2 \mod (p-1)}} j \binom{r}{j}+2(r-2) + 3 \cdot 2\\
                 & \equiv - r \sum_{\substack{0 < j' < r'-2\\ j' \equiv 1 \mod (p-1)}}  \binom{r'}{j'}+2(r-2) + 3 \cdot 2\\
                 & \equiv - \frac{r((a-1)-(r'-1))(a-1+r-1-1)}{2} +0+6\\
                 & \equiv - \frac{2(3-1)(3+1-1)}{2} + 6 \equiv -6 + 6 \equiv 0 \mod p.
    \end{align}

    Therefore, by \cref{lem:VrAstCriteria}, we have $F(X,Y) \in V_r^{**}$, but $F(X,Y) \notin V_r^{***}$ because the coefficient $c_{r-2}$ of $X^2Y^{r-2}$ is $2 \not\equiv  0 \mod p$.
    Thus, $X_{r-2}^{**}/X_{r-2}^{***} \ne 0$.

    By \cref{lem:analog47} for $r \equiv 2 \mod p$, we have $\phi (X_{r''}^* \otimes V_2 ) \subseteq X_{r-2}^{***}$.
    Therefore the searched-for Jordan-Hölder factors are in (the non-singular part of) the right-hand side of the short exact sequence of \cref{prop:JHXrppxV2}.
    Using \cref{lem:VrAstQuotients}.(iii), the only possible Jordan-Hölder factor is $X_{r-2}^{**}/X_{r-2}^{***} = V_0 \otimes \rD^2$.
  \end{proof}

  We can refine \cref{lem:Xr2r4AstAstModAstAstAstEqualMod2} for the case $r \equiv 4 \mod p$, as follows:

  \begin{lemma}
    \label{lem:Xr2r4AstAstModAstAstAstEqualModP}
    If $r \geq 3p + 2$ and $r \equiv p+3 \mod (p-1)$ and $r \equiv 4 \mod p$, then $X_{r-2}^{**}/X_{r-2}^{***}$ contains the Jordan-Hölder factor  $V_{p-1} \ox \rD^2$.
  \end{lemma}
  \begin{proof}
    The proof is similar to the proof of \cite[Lemma 5.5]{BG}:
    We have the short exact sequence
    \begin{align}
      0 \to X_{r''}^* \otimes V_2 \to X_{r''} \ox V_2 \to X_{r''} / X_{r''}^*  \otimes V_2.
    \end{align}

    Let $F(X,Y) :=  X^{r-2} Y^2 - X^2 Y^{r-2}$ in $X_{r-2}^{**}$.
    We claim that $F(X,Y) \mapsto 0$ in the Jordan-Holder factor $V_0 \ox D$ of $V_r^{**}/V_r^{***}$ under the projection following \cite[Lemma 5.1]{BG}:
    \[
      \frac{X_{r-2}^{**}}{X_{r-2}^{***}} \hookrightarrow
      \frac{V_r^{**}}{V_r^{***}} \cong
      \frac{V_{r-2p-2} }{V_{r-2p-2}^*}\ox \rD^2 \rightarrow
      \frac{V_{2p-2}}{V_{2p-2}^*}\ox \rD^2 \twoheadrightarrow
      V_0 \ox \rD^2.
    \]
    Proof of our Claim:
   As in \cite[Lemma 5.1]{BG}, we have $X^{r-2p-2-i(p+1)}Y^{i(p-1)} \mapsto X^0 Y^0 =: e$ for $i = 1, \ldots,\frac{r-2p-2}{p-1}-1$, while the initial term $X^{r-2p-2}$ and the last term $Y^{r-2p-2}$ of the sum both vanish.
   Under this projection, the coefficient of the basis vector $e$ of $V_0 \otimes \rD^2$ is given by
   \begin{align}
     \sum_{i = 1, ..., \frac{r-2p-2}{p-1} - 1} i+1
     = &
     2 + \cdots + \frac{r-2p-2}{p-1}\\
     = &
     \left(\frac{r-2p-2}{p-1}\right)\left(\frac{r-2p-2}{p-1} + 1\right)/2 - 1\\
     \equiv & (-2)(-1) / 2 - 1 = 0 \mod p
   \end{align}
   because $r \equiv 4 \mod p$.
   That is, as claimed, $f \mapsto 0$ in $V_0 \ox \rD^2$.
   Thus $X_{r-2}^{**}/X_{r-2}^{***}$ contains $V_{p-1} \ox \rD^2$ as a Jordan-Hölder factor.
  \end{proof}


\section{The Jordan-Hölder series of $Q$}
\label{sec:Q}

To study the Jordan-Hölder series of $Q := V_r / (V_r^{***} + X_{r-2})$, we consider the following commutative diagram with exact rows and columns:
\[
\begin{CD}
  @.0@.0@.0@.\\
  @.@VVV@VVV@VVV @.\\
  0@ > >>\dfrac{X_{r-2}^{*}}{X_{r-2}^{***}}@ > >> \dfrac{X_{r-2}}{X_{r-2}^{***}}@ > >>\dfrac{X_{r-2}}{X_{r-2}^{*}}@ > >>0\\
  @.@VVV @VVV@VVV@.\\
  0@ > >>\dfrac{V_r^{*}}{V_r^{***}}@ > >> \dfrac{V_r}{V_r^{***}}@ > >>\dfrac{V_r}{V_r^{*}}@ > >>0\\
  @.@VVV@VVV @VVV @.\\
  0@ > >>\dfrac{V^*_r}{X_{r-2}^* + V_r^{***}} @ > >> Q@ > >> \dfrac{V_r}{X_{r-2} + V_r^*}@ > >>0\\
  @.@VVV@VVV @VVV @.\\
  @. 0@.0@.0@.\\
\end{CD}
\label{eq:CDQ}
\]
By \cref{prop:Xrm012ModAst} and \cref{lem:VrAstQuotients} the two Jordan-Hölder factors of $V_r/V_r^{*}$ and (one or two) Jordan-Hölder factors of $X_{r-2} / X_{r-2}^{*}$ are known, so we can determine the Jordan-Hölder factor on the right-hand side of the bottom line:
\[
  \label{eq:VrXr2VrAst}
  U
  :=
  \dfrac{V_r}{X_{r-2} + V_r^*}
  =
  \begin{cases}
    0,	& \text{ \quad for } a = 1, 2\\
    V_{p-a-1} \otimes \rD^a,	& \text{ \quad for } a = 3, \ldots, p-1
  \end{cases}
\]
where $a$ in $\{ 1, \ldots, p-1 \}$ such that $r \equiv a \mod (p-1)$.
Therefore, we are left with determining the Jordan-Hölder factor of the left-hand side of the bottom line,
\[
  W
  :=
  \dfrac{V_r^*}{X_{r-2}^* + V_r^{***}}.
\]
By \cref{lem:VrAstQuotients} the four Jordan-Hölder factors of $V_r^*/V_r^{***}$ are known, so by looking at the short exact sequence of the left column of Diagram \eqref{eq:CDQ}, we are reduced to determining the Jordan-Hölder factors of
\[
  X_{r-2}^*/X_{r-2}^{***},
\]
that is, of
\[
  X_{r-2}^*/X_{r-2}^{**}
  \quad \text{ and } \quad
  X_{r-2}^{**}/X_{r-2}^{***},
\]
where we computed in \cref{sec:singular-quotients-x-r-2} whether the quotient $X_{r-2}^*/X_{r-2}^{**}$ respectively $X_{r-2}^{**}/X_{r-2}^{***}$ is nonzero or not.

By \cref{sec:Xr-2}, we have the exact sequence:
\[
  \label{eq:XrppAstxV2Xr2xV2}
 \phi( X_{r''}^* \otimes V_2)
  \rightarrow X_{r-2}
  \rightarrow X_{r-2} / \phi( X_{r''}^* \otimes V_2) \to 0.
\]
Let $a$ in $\{3, \ldots, p + 1\}$ such that $r \equiv a \mod (p-1)$.
By \cref{lem:analog47} applied to $r''$,
\begin{itemize}
  \item for $a = 3$ and $p \mid  r-2$,
  \item for $a = 4$ and $r-2 \equiv 0,1 \mod p$, and
  \item for $a = 5, \ldots, p + 1$,
\end{itemize}
we have $X_{r''}^* = X_{r''}^{**} = X_{r''}^{***}$, so $\phi (X_{r''}^* \otimes V_2 ) \subseteq X_{r-2}^{***}$.
Thus, the Jordan-Hölder series of $X_{r-2}^*/X_{r-2}^{***}$ is included in the largest non-singular submodule of the right-hand side $X_{r-2} / \phi( X_{r''}^* \otimes V_2)$ of \eqref{eq:XrppAstxV2Xr2xV2}.

By \cref{prop:Xrm012ModAst}, the Jordan-Hölder factor $V_a$ (and $V_{p-a-1} \otimes \rD^a$ for $a = 1, 2$) in \eqref{eq:XrppAstxV2Xr2xV2} vanishes when we reduce $X_{r-2}$ in \eqref{eq:XrppAstxV2Xr2xV2} to its largest singular subspace $X_{r-2}^*$.
Thus, by \cref{prop:JHXrppxV2} there is a single Jordan-Hölder factor for $a = 3$, two Jordan-Hölder factors in $X_{r-2}^* / \phi (X_{r''}^* \otimes V_2 )$ for $a = 4, \ldots, p$, but three for $a = p+1$.
In particular,
\begin{itemize}
  \item if $a = 3$ and $r \equiv 2 \mod p$, we can prove $X_{r-2}^*/X_{r-2}^{**} \neq 0$ (and $X_{r-2}^{**}/X_{r-2}^{***} = 0$ in \cref{lem:aEq3VrAst} and \cref{lem:Xr2r3AstAstModAstAstAstEqualModP} respectively),
  \item or $a = 4$ and $r \equiv 2 \mod p$ or $a = 5, \ldots, p$ (except the case $r \equiv a, a-1 \mod p$) we can prove both $X_{r-2}^*/X_{r-2}^{**}$ and $X_{r-2}^{**}/X_{r-2}^{***}$ to be non-zero,
\end{itemize}
then we know all Jordan-Hölder factors of $X_{r-2}^* / X_{r-2}^{***}$.
The remaining cases when, the conditions of \cref{lem:analog47} are not satisfied, that is,
\begin{itemize}
  \item $a = 3$ and $r \not\equiv 2 \mod p$, or
  \item $a = 4$ and $r \not\equiv 2,3 \mod p$, or
  \item $a = p+1$, or
  \item  there are more than two Jordan-Hölder factors in $X_{r-2}^* / \phi (X_{r''}^* \otimes V_2 )$,
\end{itemize}
were handled separately in \cref{sec:singular-quotients-x-r-2}.

  \subsection{$a=3$}

  \begin{proposition}
    \label{prop:QrEq3}
    If $r \geq 3p + 2$ and $r \equiv 3 \mod (p-1)$, then the Jordan-Hölder series of $Q$ is:
    \[
      0 \to W \to Q \to U \to 0
    \]
    where  $U = V_{p-4} \otimes \rD^3$ and the Jordan-Hölder factors of $W$ are as follows:
    \begin{enumerate}
    \item None, if $r \not \equiv 0,1,2 \mod p$.
    \item At most the factor $V_1 \ox \rD$, if $r \equiv 0 \mod p$.
    \item If $r \equiv 1 \mod p$, then
      \begin{enumerate}
        \item None, if $X_{r-2}^{**} / X_{r-2}^{***} = V_r^{**} / V_r^{***}$,
        \item $V_{p-2} \otimes \rD^2$, if $X_{r-2}^{**} / X_{r-2}^{***} \neq 0$, or
        \item $V_r^{**} / V_r^{***}$, if $X_{r-2}^{**} / X_{r-2}^{***} = 0$.
      \end{enumerate}
    \item $V_{p-2} \ox D^2, V_{p-2} \ox D^2$ and $V_1 \ox \rD$ if $r \equiv 2 \mod p$.
  \end{enumerate}
  \end{proposition}
  \begin{proof}
    By \eqref{eq:VrXr2VrAst}, we have $U = V_{p-4} \otimes \rD^3$.
    We now use the results of the previous section.
    \begin{enumerate}
      \item By \cref{lem:aEq3VrAst} and by \cref{lem:aEq3VrAstAstnot012} none of the factors in $W$ appear as $X_{r-2}^{*}/X_{r-2}^{***} = V_r^{*}/V_r^{***}$.
      \item By \cref{lem:aEq3VrAst} we see that $X_{r-2}^{*}/X_{r-2}^{**} = V_r^{*}/V_r^{**}$ while by \cref{lem:aEq3VrAstAst} we have that $X_{r-2}^{**}/X_{r-2}^{***} \ne 0$. As $V_{p-2} \ox \rD^2$ is inside $X_{r-2}^{**}/X_{r-2}^{***}$  hence the only possible factor that appears in $W$ is at most $V_1 \ox \rD$. 
      \item If $r \equiv 1 \mod p$, then by \cref{lem:aEq3VrAst}, we know that $X_{r-2}^{*}/X_{r-2}^{**} = V_r^{*} / V_r^{**}$.
      \item If $r \equiv 2 \mod p$, then by \cref{lem:aEq3VrAst}, we know that $X_{r-2}^{*}/X_{r-2}^{**} = V_1 \ox \rD$ while by \cref{lem:Xr2r3AstAstModAstAstAstEqualModP} we know that $X_{r-2}^{**}/X_{r-2}^{***} = 0$ hence both factors of $V_r^{**} / V_r^{***}$ appear in $W$.
        \qedhere
    \end{enumerate}
  \end{proof}

  \subsection{$a=4$}

 \begin{proposition}
    \label{prop:QrEq4}
    If $r \geq 3p + 2$ and $r  \equiv 4 \mod (p-1)$, then the Jordan-Hölder series of $Q$ is:
    \[
      0 \to W \to Q \to U \to 0
    \]
    where $U = V_{p-5} \ox \rD^4$ and:
    \begin{enumerate}
      \item If $r \equiv 1 \mod p$, then $W$ has Jordan-Hölder factors $V_{p-3} \ox \rD^3$ and $V_0 \ox \rD^2$.
      \item If $r \equiv 4 \mod p$, then $W$ has Jordan-Hölder factors $V_{p-3} \ox \rD^3$ and $V_2 \ox \rD$.
      \item If $r \not \equiv 1,2,3,4 \mod p$, then $W$ has the single Jordan-Hölder factor $V_{p-3} \ox \rD^3$.
    \end{enumerate}
  \end{proposition}
  \begin{proof}
    By \eqref{eq:VrXr2VrAst}, we have $U = V_{p-5} \ox \rD^4$.
   \begin{enumerate}
      \item
        If $r \equiv 1 \mod p$, then $X_{r-2}^* / X_{r-2}^{**} = V_{2} \otimes \rD^3$ by \cref{lem:Xr2AstModXr2AstAstUnequalModP}  and $V_{p-1} \otimes \rD^2 \hookrightarrow X_{r-2}^{**} / X_{r-2}^{***}$ by \cref{lem:Xr2AstAstModXr2AstAstAstUnequalModPaEqual4}.
        Hence, $W$ has Jordan-Hölder factors $V_{p-3} \ox \rD^3$and $V_0 \ox \rD^2$
      \item
        If $r \equiv 4 \mod p$, then $X_{r-2}^* / X_{r-2}^{**} = 0$  by \cref{lem:Xr2r4p-1AstModAstAstEqualModP} and $X_{r-2}^{**} / X_{r-2}^{***} $ contains the factor $V_{p-1} \otimes \rD^2$ by \cref{lem:Xr2r4AstAstModAstAstAstEqualModP}. Also, \cite[Lemma 4.20]{GhateVangala} shows $X_{r-2}^{**} / X_{r-2}^{***} $ contains $V_0 \otimes \rD^2$.
        Hence, $W$ has Jordan-Hölder factors $V_{p-3} \ox \rD^3$ and $V_2 \ox \rD$.
      \item
        If $r \not \equiv 1, 2, 3, 4 \mod p$, then $0 \ne X_{r-2}^* / X_{r-2}^{**} = V_2 \ox D$ by \cref{lem:Xr2AstModXr2AstAstUnequalModP}.
        By \cite[Lemma 4.20]{GhateVangala}, we have that  $X_{r-2}^{**} / X_{r-2}^{***} = V_r^{**}/V_r^{***}$. Hence, there is only a single Jordan-Hölder factor $V_{p-3} \ox \rD^3$.
        \qedhere
    \end{enumerate}
  \end{proof}

\subsection{$a=p$}

\begin{proposition}
  \label{prop:QrEqp}
  If $r \geq 3p + 2$ and $r  \equiv p \mod (p-1)$ then the Jordan-Hölder series of $Q$ is:
  \[
    0 \to W \to Q \to U \to 0
  \]
  where $U = 0$ and:
  \begin{enumerate}
    \item If $r \equiv p \mod p$, then $W = V_r^*/V_r^{***}$.
    \item  If $r \not \equiv p, p-1 \mod p$, then the Jordan-Hölder factors of $W$ are  $V_{1}$ and  $V_{3} \ox \rD^{p - 2}$.
    \item If $r \equiv p-1 \mod p$, then the Jordan-Hölder factors of $W$ are $V_{1}$ and possibly $V_{p-4} \ox \rD^2$ and $ V_{3} \ox \rD^{p - 2}$.
  \end{enumerate}
\end{proposition}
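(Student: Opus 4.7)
The plan is to analyze the short exact sequence $0 \to W \to Q \to U \to 0$ from Diagram \eqref{eq:CDQ}, showing $U = 0$ first and then dissecting $W$ using the singular-quotient analysis of \cref{sec:singular-quotients-x-r-2}.

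For $U = 0$: by \cref{prop:Xrm012ModAst}(iii) with $a = p$, the module $X_{r-2}/X_{r-2}^* \cong V_p/V_p^*$ has dimension $p+1$. Since $X_{r-2}^* = X_{r-2} \cap V_r^*$, the natural map $X_{r-2}/X_{r-2}^* \hookrightarrow V_r/V_r^*$ is injective; and by \cref{lem:VrAstQuotients}(i) applied with the residue class $1 \in \{1,\dots,p-1\}$ (since $r \equiv p \equiv 1 \mod p-1$), the target $V_r/V_r^*$ also has dimension $p+1$. Hence the injection is an isomorphism and $X_{r-2} + V_r^* = V_r$, yielding $U = 0$.

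To analyze $W$, I would invoke \cref{obs:JHQindependentSrMinimal} to reduce to the generic case where $\phi \colon X_{r''} \otimes V_2 \xrightarrow{\sim} X_{r-2}$ is an isomorphism. Since $r'' \equiv p-2 \mod p-1$ with $p-2 \geq 3$, \cref{lem:analog47} gives $X_{r''}^* = X_{r''}^{***}$, whence $\phi(X_{r''}^* \otimes V_2) \subseteq X_{r-2}^{***}$. Applying \cref{prop:JHXrppxV2} for $a = p$, the complementary piece $X_{r-2}^* / \phi(X_{r''}^* \otimes V_2)$ embeds into the singular submodule of $V_{2p-1} \oplus (V_{p-4} \otimes \rD^2)$, whose JH factors are exactly $V_{p-2} \otimes \rD$ and $V_{p-4} \otimes \rD^2$ (the cosocle factors $V_1$ and $V_{p-2} \otimes \rD$ of $V_{2p-1}$ being accounted for by $X_{r-2}/X_{r-2}^*$). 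Thus the image of $X_{r-2}^*$ in $V_r^*/V_r^{***}$ has JH factors among $\{V_{p-2} \otimes \rD,\, V_{p-4} \otimes \rD^2\}$, and $W$ is obtained from $V_r^*/V_r^{***}$ by removing whichever of these two factors actually appear.

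I would then split into the three cases of the proposition. Case (1), $r \equiv 0 \mod p$: \cref{lem:Xr2r4p-1AstModAstAstEqualModP} applied with $a = p$ gives $X_{r-2}^* = X_{r-2}^{***}$, so neither socle factor is removed and $W = V_r^*/V_r^{***}$ in full. Case (2), $r \not\equiv 0, p-1 \mod p$: \cref{lem:Xr-2AstModXr-2AstAstUnequalModP} forces $V_{p-2} \otimes \rD$ into the image of $X_{r-2}^*$ while \cref{lem:Xr-2AstAstModXr-2AstAstAstUnequalModP} forces $V_{p-4} \otimes \rD^2$ into the image, so both socle factors are removed, leaving exactly $V_1$ and $V_3 \otimes \rD^{p-2}$ as JH factors of $W$. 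Case (3), $r \equiv p-1 \mod p$: \cref{lem:Xr-2AstModXr-2AstAstUnequalModP} still places $V_{p-2} \otimes \rD$ in the image (so $V_1 \in W$), but \cref{lem:Xr-2AstAstModXr-2AstAstAstUnequalModP} no longer applies and our tools do not determine whether $V_{p-4} \otimes \rD^2$ lies in $X_{r-2}^{**}/X_{r-2}^{***}$ or not.

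The main obstacle is precisely Case (3): the vanishing or non-vanishing of $X_{r-2}^{**}/X_{r-2}^{***}$ when $r \equiv p-1 \mod p$ is not accessible via the binomial-coefficient arguments of \cref{sec:singular-quotients-x-r-2}, which is exactly why the proposition can only assert $V_{p-4} \otimes \rD^2$ and $V_3 \otimes \rD^{p-2}$ as \emph{possible} additional JH factors. Addressing this boundary case requires a finer computation (as in \cite[Lemma 4.15]{GhateRavitheja}) and will be absorbed later when the Hecke-operator arguments in \cref{sec:ElimJH} eliminate these ambiguous factors from the final Galois representation.
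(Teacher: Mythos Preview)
Your proof is correct and follows essentially the same approach as the paper. The paper establishes $U = 0$ by citing \eqref{eq:VrXr2VrAst} directly (noting that $r \equiv p \equiv 1 \bmod p-1$ lands in the $a \in \{1,2\}$ case there), while you reprove this case by an explicit dimension comparison via \cref{prop:Xrm012ModAst}(iii); for the analysis of $W$, both you and the paper invoke \cref{lem:analog47}, \cref{prop:JHXrppxV2}, and the lemmas \cref{lem:Xr2r4p-1AstModAstAstEqualModP}, \cref{lem:Xr-2AstModXr-2AstAstUnequalModP}, \cref{lem:Xr-2AstAstModXr-2AstAstAstUnequalModP} in the same way to separate the three congruence cases.
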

\begin{proof}
  By \eqref{eq:VrXr2VrAst}, we have $U = 0$.
  \begin{enumerate}
    \item
      When $r \equiv p \mod p$, then by \cref{lem:Xr2r4p-1AstModAstAstEqualModP} we have $X_{r-2}^{*}/X_{r-2}^{***} = 0$, hence $W = V_r^* / V_r^{***}$.
    \item
      If $r \not \equiv p, p-1 \mod p$, then by \cref{lem:Xr2AstModXr2AstAstUnequalModP} and \cref{lem:Xr2AstAstModXr2AstAstAstUnequalModP} we have $X_{r-2}^*/X_{r-2}^{**} \ne 0$ and $X_{r-2}^{**}/X_{r-2}^{***} \ne 0 $.
      By \cref{lem:analog47}, we have $X_{r''}^* = X_{r''}^{***}$, thus $\phi (X_{r''}^* \otimes V_2 ) \subseteq X_{r-2}^{***}$.
      By comparing with (the non-singular part of) the right-hand side of the short exact sequence of \cref{prop:JHXrppxV2} and using \cref{lem:VrAstQuotients}, we find that $W$ contains one Jordan-Hölder factor of $V_r^{*} / V_r^{**}$ and one of $V_r^{**} / V_r^{***}$.
    \item
      If $r \equiv p-1 \mod p$, then by \cref{lem:Xr2AstModXr2AstAstUnequalModP} we have $0 \ne X_{r-2}^*/X_{r-2}^{**}$.
      By \cref{lem:analog47}, we have in particular $X_{r''}^* = X_{r''}^{**}$, thus $\phi (X_{r''}^* \otimes V_2 ) \subseteq X_{r-2}^{**}$.
      By comparing with (the non-singular part of) the right-hand side of the short exact sequence of \cref{prop:JHXrppxV2}, therefore $X_{r-2}^*/X_{r-2}^{**} = V_{p-2} \ox \rD$.
      Therefore $W$ contains only one Jordan-Hölder factor of $V_r^{*} / V_r^{**}$ and possibly both of $V_r^{**} / V_r^{***}$.
      \qedhere
  \end{enumerate}
\end{proof}

\subsection{$a=p+1$}

\begin{proposition}
  \label{prop:QrEqp+1}
  If $r \geq 3p + 2$ and $r  \equiv p+1 \mod (p-1)$ then the Jordan-Hölder series of $Q$ is:
  \[
    0 \to W \to Q \to U \to 0
  \]
  where $U = 0$ and:
  \begin{enumerate}
    \item  If $r \not \equiv 0, 1 \mod p$ then $W$ has only one Jordan-Hölder factor $V_{2}$.
  \end{enumerate}
\end{proposition}
\begin{proof}
  By \eqref{eq:VrXr2VrAst}, we have $U = 0$.
  \begin{enumerate}
    \item If $r \not \equiv 0,1 \mod p$, then by \cref{lem:Xr2AstModXr2AstAstUnequalModPaEqualP1}
      we have  $X_{r-2}^{*}/X_{r-2}^{**} = V_r^*/V_r^{**}$ while by \cref{lem:Xr2AstAstModXr2AstAstAstUnequalModPaEqualp1}, we know that $ X_{r-2}^{**}/X_{r-2}^{***} \ne 0$.
      As $ V_{p-3} \ox D^2 \subseteq X_{r-2}^{**}/X_{r-2}^{***}$ we see that $Q$ contains at most $V_2$ as a Jordan-Hölder factor.
      Section 4.2.1 of \cite{GhateVangala} shows that $X_{r-2}^{**}/X_{r-2}^{***} \ne V_r^{**}/V_r^{***}$.
      The Jordan-Hölder series of $Q$ follows.
  \end{enumerate}
\end{proof}

 \subsection{$r$ has the same representative mod $(p-1)$ and $p$}

  \begin{proposition}
    \label{prop:QrEqA}
    Let $a$ in $\left\{ 5, \ldots, p-1 \right\}$ such that $r  \equiv a \mod (p-1)$.
    If $r \equiv a \mod p$, then the Jordan-Hölder series of $Q$ is:
    \[
      0 \to W \to Q \to U \to 0
    \]
    where $W = V_r^*/V_r^{***}$ and $U = V_{p -  a - 1} \ox \rD^{a}$.
  \end{proposition}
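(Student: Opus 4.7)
The plan is to deduce the proposition directly from the commutative diagram \eqref{eq:CDQ} together with two results established earlier. First I would identify $U$: by \eqref{eq:VrXr2VrAst}, since $a$ in $\{5,\ldots,p-1\} \subset \{3,\ldots,p-1\}$, we immediately have $U = V_r/(X_{r-2} + V_r^*) = V_{p-a-1} \otimes \rD^a$, so the right-hand column of the diagram is settled.

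Next I would tackle $W = V_r^*/(X_{r-2}^* + V_r^{***})$. From the left column of \eqref{eq:CDQ}, there is a short exact sequence
\[
  0 \to X_{r-2}^*/X_{r-2}^{***} \to V_r^*/V_r^{***} \to W \to 0,
\]
so it suffices to show that the leftmost term vanishes. But this is precisely the content of \cref{lem:Xr2r4p-1AstModAstAstEqualModP}, applied in the regime $5 \leq a \leq p$: under the standing hypotheses $r > p$, $r \equiv a \mod p-1$ and $r \equiv a \mod p$, we have $X_{r-2}^*/X_{r-2}^{***} = 0$. Substituting this into the short exact sequence yields the isomorphism $W \cong V_r^*/V_r^{***}$.

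Combining both computations with the bottom row of \eqref{eq:CDQ} gives exactly
\[
  0 \to V_r^*/V_r^{***} \to Q \to V_{p-a-1}\otimes \rD^a \to 0,
\]
which is the claimed description of $Q$. There is essentially no obstacle here because all heavy lifting has already been done: the vanishing of $X_{r-2}^*/X_{r-2}^{***}$ in the congruent regime $r \equiv a \mod p$ was the whole point of \cref{lem:Xr2r4p-1AstModAstAstEqualModP}, whose proof in turn relied on explicit representatives in $X_{r-1}$ and the vanishing $X_r^* = X_r^{***}$ of \cref{lem:analog47} for $a \geq 3$. The only care needed is to check that $a$ lies in the intersection $\{5,\ldots,p-1\}$ of the ranges required by \eqref{eq:VrXr2VrAst} and \cref{lem:Xr2r4p-1AstModAstAstEqualModP}, which it does by hypothesis.
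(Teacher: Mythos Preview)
Your proof is correct and follows essentially the same approach as the paper: identify $U$ via \eqref{eq:VrXr2VrAst}, then use \cref{lem:Xr2r4p-1AstModAstAstEqualModP} to obtain $X_{r-2}^*/X_{r-2}^{***}=0$ and hence $W=V_r^*/V_r^{***}$. The paper's proof is simply a terser version of yours, relying implicitly on the diagram \eqref{eq:CDQ} and the discussion preceding it.
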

  \begin{proof}
    By \eqref{eq:VrXr2VrAst}, we have $U =V_{p - a - 1} \ox \rD^{ a}$.
    By \cref{lem:Xr2r4p-1AstModAstAstEqualModP}, we know $X_{r-2}^*/X_{r-2}^{***} = 0$.
    Hence, $W = V_r^*/V_r^{***}$.
  \end{proof}

  \subsection{$r$ does \emph{not} have the same representative mod $(p-1)$ and $p$}

  \begin{proposition}
    \label{prop:Qrnota}
    Let $a$ in $\left\{ 5, \ldots, p - 1 \right\}$ be such that $r  \equiv a \mod (p-1)$.
    If $r \geq 3p + 2$ and $r \not \equiv a, a-1 \mod p$, then the Jordan-Hölder series of $Q$ is:
    \[
      0 \to W \to Q \to U \to 0
    \]
    where $W$ has the two Jordan-Hölder factors $V_{p - a + 1} \ox \rD^{a - 1}$ and  $V_{p - a + 3} \ox \rD^{a - 2}$ and $U = V_{p - a  - 1} \ox \rD^{a}$.
  \end{proposition}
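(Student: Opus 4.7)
The plan is to exploit the commutative diagram \eqref{eq:CDQ} together with the short exact sequence \eqref{eq:XrppAstxV2Xr2xV2} coming from $\phi \colon X_{r''} \otimes V_2 \twoheadrightarrow X_{r-2}$, just as was done in the preceding propositions for $a=p$ and $a=p+1$. First, by \eqref{eq:VrXr2VrAst}, since $a \in \{5, \ldots, p-1\}$, the quotient $U = V_r/(X_{r-2} + V_r^*) = V_{p-a-1} \otimes \rD^a$, which gives the right-hand factor. Thus it suffices to determine $W = V_r^*/(X_{r-2}^* + V_r^{***})$, or equivalently, by the short exact sequence in the left column of \eqref{eq:CDQ}, to identify exactly which of the four Jordan-Hölder factors of $V_r^*/V_r^{***}$ listed in \cref{lem:VrAstQuotients} lie in $X_{r-2}^*/X_{r-2}^{***}$.

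Next, I would invoke \cref{lem:analog47}, which for $a \geq 5$ gives $X_{r''}^* = X_{r''}^{**} = X_{r''}^{***}$, and therefore $\phi(X_{r''}^* \otimes V_2) \subseteq X_{r-2}^{***}$. Combined with \cref{prop:JHXrppxV2}, this shows that the quotient $X_{r-2}^*/X_{r-2}^{***}$ is a subquotient of the right-hand side of the short exact sequence of \cref{prop:JHXrppxV2}, restricted to its singular part; by \cref{prop:Xrm012ModAst}.(iii), removing the non-singular factor $V_a$, there are at most two Jordan-Hölder factors of $X_{r-2}^*/X_{r-2}^{***}$, namely $V_{a-2} \otimes \rD$ (from $X_{r-2}^*/X_{r-2}^{**}$) and $V_{a-4} \otimes \rD^2$ (from $X_{r-2}^{**}/X_{r-2}^{***}$).

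It then remains to verify that both of these factors actually occur, which is where the hypothesis $r \not\equiv a, a-1 \mod p$ enters: \cref{lem:Xr-2AstModXr-2AstAstUnequalModP} (applicable since $r \not\equiv a \mod p$) yields $X_{r-2}^*/X_{r-2}^{**} \neq 0$, and \cref{lem:Xr-2AstAstModXr-2AstAstAstUnequalModP} (applicable since $r \not\equiv a, a-1 \mod p$) yields $X_{r-2}^{**}/X_{r-2}^{***} \neq 0$. Because each of these quotients admits at most one Jordan-Hölder factor by the preceding paragraph, non-vanishing forces equality, i.e.\ $X_{r-2}^*/X_{r-2}^{**} = V_{a-2} \otimes \rD$ and $X_{r-2}^{**}/X_{r-2}^{***} = V_{a-4} \otimes \rD^2$.

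Finally, subtracting these two factors from the four factors of $V_r^*/V_r^{***}$ provided by \cref{lem:VrAstQuotients}\ref{en:VrAstModVrAstAst} and \ref{en:VrAstAstModVrAstAstAst} leaves exactly the two Jordan-Hölder factors $V_{p-a+1} \otimes \rD^{a-1}$ and $V_{p-a+3} \otimes \rD^{a-2}$ in $W$, yielding the claimed structure. There is no real obstacle in this proof; the bookkeeping is essentially a direct application of the main technical lemmas, and the argument closely parallels those of Propositions \ref{prop:QrEqp} and \ref{prop:QrEqp+1}.
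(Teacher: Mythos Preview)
Your proposal is correct and follows essentially the same approach as the paper's proof: identify $U$ via \eqref{eq:VrXr2VrAst}, use \cref{lem:analog47} to get $\phi(X_{r''}^* \otimes V_2) \subseteq X_{r-2}^{***}$, bound the factors of $X_{r-2}^*/X_{r-2}^{***}$ via \cref{prop:JHXrppxV2}, and then invoke \cref{lem:Xr-2AstModXr-2AstAstUnequalModP} and \cref{lem:Xr-2AstAstModXr-2AstAstAstUnequalModP} to show both pieces are nonzero. Your write-up is in fact slightly more explicit than the paper's in pinning down which factor lands in which quotient, but the argument is the same.
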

  \begin{proof}
    By \eqref{eq:VrXr2VrAst}, we have $U = V_{p - a  - 1} \ox \rD^{a}$.

    To compute the left-hand side $W$, we compare $X_{r-2}^*/X_{r-2}^{**}$ and $X_{r-2}^{**}/X_{r-2}^{***}$ with the Jordan-Hölder series of $V_r^*/V_r^{**}$ and $V_r^{**}/V_r^{***}$ in \cref{lem:VrAstQuotients}:
    By \cref{lem:Xr2AstModXr2AstAstUnequalModP} and \cref{lem:Xr2AstAstModXr2AstAstAstUnequalModP} we have $X_{r-2}^*/X_{r-2}^{**} \ne 0$ and $X_{r-2}^{**}/X_{r-2}^{***} \ne 0 $.
    By \cref{lem:analog47}, we have $X_{r''}^* = X_{r''}^{***}$, thus $\phi (X_{r''}^* \otimes V_2 ) \subseteq X_{r-2}^{***} \subseteq X_{r-2}^{**}$.
    By comparing with (the non-singular part of) the right-hand side of \cref{prop:JHXrppxV2}, we find that $W$ contains exactly one Jordan-Hölder factor each of $V_r^{*} / V_r^{**}$ and of $V_r^{**} / V_r^{***}$.
  \end{proof}

  \begin{proposition}
    \label{prop:QrEqA-1}
    Let $a$ in $\left\{ 5, \ldots, p-1 \right\}$ such that $r  \equiv a \mod (p-1)$.
    If $r \geq 3p + 2$ and $r \equiv a-1 \mod p$, then the Jordan-Hölder series of $Q$ is given by:
    \[
      0 \rightarrow W
      \rightarrow Q
      \rightarrow U \rightarrow 0
    \]
    where the Jordan-Hölder factors of $W$ are $V_{p-a+1} \ox \rD^{a - 1}$ and possibly $V_{a-4} \ox \rD^2$ and $ V_{p - a + 3} \ox \rD^{a - 2}$, and $U = V_{p - a - 1} \ox \rD^{a}$.
  \end{proposition}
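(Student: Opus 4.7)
The plan is to parallel the proof of \cref{prop:Qrnota}, departing only at the single step where the hypothesis $r \equiv a-1 \mod p$ leaves us short of the input available there. From \eqref{eq:VrXr2VrAst} one reads $U = V_{p-a-1} \otimes \rD^a$ immediately, so the whole content is the Jordan--Hölder analysis of $W = V_r^*/(X_{r-2}^* + V_r^{***})$; by \cref{lem:VrAstQuotients} the four possible factors of $V_r^*/V_r^{***}$ are $V_{a-2} \otimes \rD$, $V_{p-a+1} \otimes \rD^{a-1}$, $V_{a-4} \otimes \rD^2$ and $V_{p-a+3} \otimes \rD^{a-2}$, and our task is to decide which ones survive.

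First, I would apply \cref{lem:analog47}.(iii) to $r'' \equiv a-2 \mod p-1$: since $a-2 \geq 3$, it yields $X_{r''}^* = X_{r''}^{**} = X_{r''}^{***}$, whence the surjection of \cref{lem:Xr2V2xXrpp} forces $\phi(X_{r''}^* \otimes V_2) \subseteq X_{r-2}^{***}$. The complementary piece $X_{r-2}/\phi(X_{r''}^* \otimes V_2)$ is a quotient of $X_{r''}/X_{r''}^* \otimes V_2 = V_{a-2} \otimes V_2$, which by \cref{lem:JHTensor}.\ref{en:JHTensor0p-1} (noting $a \leq p-1$) decomposes as $V_a \oplus (V_{a-2} \otimes \rD) \oplus (V_{a-4} \otimes \rD^2)$. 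Its singular part $(V_{a-2} \otimes \rD) \oplus (V_{a-4} \otimes \rD^2)$ therefore bounds the Jordan--Hölder content of $X_{r-2}^*/X_{r-2}^{***}$ from above: at most $V_{a-2} \otimes \rD$ in the layer $*/**$ and at most $V_{a-4} \otimes \rD^2$ in $**/***$.

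Next, since $r \equiv a-1 \not\equiv a \mod p$, \cref{lem:Xr-2AstModXr-2AstAstUnequalModP} gives $X_{r-2}^*/X_{r-2}^{**} \neq 0$, so the bound just obtained forces $X_{r-2}^*/X_{r-2}^{**} = V_{a-2} \otimes \rD$. This removes $V_{a-2} \otimes \rD$ from $W$ and leaves $V_{p-a+1} \otimes \rD^{a-1}$ present, accounting for the first factor asserted in the statement.

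The main obstacle is the remaining layer $X_{r-2}^{**}/X_{r-2}^{***}$: the hypothesis $r \equiv a-1 \mod p$ is precisely the congruence class excluded from \cref{lem:Xr-2AstAstModXr-2AstAstAstUnequalModP}, and, as flagged in \cref{sec:singular-quotients-x-r-2}, no argument in this paper distinguishes the two remaining possibilities $0$ and $V_{a-4} \otimes \rD^2$. Accordingly, $V_{a-4} \otimes \rD^2$ survives into $W$ iff $X_{r-2}^{**}/X_{r-2}^{***} = 0$, while $V_{p-a+3} \otimes \rD^{a-2}$ is governed by the non-split extension of \cref{lem:VrAstQuotients}.\ref{en:VrAstAstModVrAstAstAst}; these are the factors carried by the ``possibly'' in the statement. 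The stronger assertion that $X_{r-2}^{**}/X_{r-2}^{***}$ actually vanishes (and hence both factors persist) would follow from the subsequently established \cite[Lemma 4.15]{GhateRavitheja}, but is not needed to prove the stated inclusion of Jordan--Hölder content.
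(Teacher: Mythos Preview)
Your proof is correct and follows essentially the same approach as the paper: both identify $U$ via \eqref{eq:VrXr2VrAst}, use \cref{lem:analog47} to place $\phi(X_{r''}^*\otimes V_2)$ inside $X_{r-2}^{***}$, bound the remaining factors of $X_{r-2}^*/X_{r-2}^{***}$ via the right-hand side of \cref{prop:JHXrppxV2} (which you unwind directly through \cref{lem:JHTensor}), and then invoke \cref{lem:Xr-2AstModXr-2AstAstUnequalModP} to pin down $X_{r-2}^*/X_{r-2}^{**}=V_{a-2}\otimes\rD$ while leaving the $**/***$ layer undetermined. The only difference is presentational: you spell out the tensor decomposition and the reference to \cite{GhateRavitheja} explicitly, whereas the paper cites \cref{prop:JHXrppxV2} wholesale.
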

  \begin{proof}
    By \eqref{eq:VrXr2VrAst}, we have $U = V_{p - a - 1} \ox \rD^{a}$.

    By \cref{lem:Xr2AstModXr2AstAstUnequalModP} we have $X_{r-2}^*/X_{r-2}^{**} \ne 0$.
    By \cref{lem:analog47}, we have $X_{r''}^* = X_{r''}^{***}$, thus $\phi (X_{r''}^* \otimes V_2 ) \subseteq X_{r-2}^{***} \subseteq X_{r-2}^{**}$.
    By comparing with (the non-singular part of) the right-hand side of \cref{prop:JHXrppxV2}, we find that $X_{r-2}^*/X_{r-2}^{**} = V_{a-2} \otimes \rD$.
    Therefore $W$ contains only one Jordan-Hölder factor of $V_r^{*} / V_r^{**}$ and possibly both of $V_r^{**} / V_r^{***}$.
  \end{proof}


\section{Eliminating Jordan-Hölder factors}
\label{sec:ElimJH}

Throughout this section we assume that $p \geq 5$ and $r \geq 3p+2$ (so that the results of the preceding \cref{sec:Q} apply).
We refer the reader to \cite{BG} and \cite{B03} for details but summarize the formulae needed throughout this section.

For $m = 0$ we set $I_0 = \{ 0\}$ and for $m > 0$ we let $I_m = \{ [\lambda_0] + [\lambda_1] p + \cdots + [\lambda_{m-1}]p^{m-1} \from
\lambda_i \in \mathbb{F}_p \}$, where $[\cdot]$ denotes the Teichmüller representative.
For $m \geq 1$, there is a truncation map  $[ \cdot ]_{m-1} \from I_m \rightarrow I_{m-1}$ given by taking the first $m-1$ terms in the $p$-adic expansion above.
For $m = 1$, the truncation map is the $0$-map.
Let $\alpha = \begin{pmatrix}
  1 &0\\
  0&p
\end{pmatrix} $.
For $m \geq 0$ and $\lambda \in I_m$, let
\[
  g_{m,\lambda}^0 = \begin{pmatrix}
    p^m &\lambda\\
    0&1
  \end{pmatrix} \text{ and }
  g_{m,\lambda}^1 = \begin{pmatrix}
    1 & 0\\
    p \lambda &p^{m + 1}
  \end{pmatrix},
\]
where $g_{0,0}^0 = \id$ and $g_{0,0}^1 = \alpha$.
We have the decomposition $G =\amalg_{i = 0,1} KZ(g^i_{m,\lambda})^{-1} $.

An element in $\ind_{KZ}^G V$ is a finite sum of functions of the form $[g,v]$ where $g =g_{m,\lambda}^0$ or $g_{m,\lambda}^1$ for some $\lambda \in I_m$ and  $v = \sum_{i = 0}^r c_i  X^{r-i} Y^i \in V = \Sym^r R^2 \otimes \rD^s$.

The Hecke operator $T$ that acts on $\ind_{KZ}^G \Sym^r \overline{\mathbb{Q}}_p^2 $ can be written as $T = T^+ + T^-$, where:
\begin{align}
  T^ + ([g^0_{n,\mu}, v]) & = \sum_{\lambda \in I_1} \bigg[ g_{n + 1, \mu + p^n \lambda}^0, \sum_{j = 0}^r \bigg( p^j \sum_{i = j}^r c_i \binom{i}{j} (-\lambda)^{i-j}\bigg) X^{r-j} Y^j \bigg],\\
  T^-([g^0_{n,\mu}, v]) & = \bigg[g^0_{n-1, [\mu ]_{n-1} }, \sum_{j = 0}^r \bigg(\sum_{i = j}^r p^{r-i} c_i \binom{i}{j} \bigg(\frac{\mu - [\mu]_{n-1}}{p^{n-1}}\bigg)^{i-j}\bigg) X^{r-j} Y^j \bigg] \quad (n > 0),\\
  T^-([g^0_{n,\mu}, v]) & = [\alpha, \sum_{j = 0}^r p^{r-j} c_j X^{r-j} Y^j] \quad (n = 0).
\end{align}
We will use these explicit formulae for $T$ to eliminate all but one of the Jordan-Hölder factors from \cref{sec:Q} to be able to apply \cite[Proposition 3.3]{BG09}.

To explain the calculations using the $T^+$ and $T^-$ operators, we use the following heuristic:

\begin{itemize}
  \item For $T^+$, we note that the terms with $p^j$  appear depending on the valuation of $c_i$.
    For example if $c_i = \frac{1}{p a_p}$, then $v(c_i) > -4$, so we need to consider only the first $4$ values of $j$, while the terms for $j \geq 4$ vanish as $p^j$ kills $c_i$.
  \item For $T^-$ we typically consider the highest index $i$ for which $c_i \not \equiv 0$ as $p^{r-i}$ usually kills the other $c_i$ terms.
    For example, if $c_{r-1} \not \equiv 0$, then the terms in $T^-$, which we consider are $p c_{r-1} \binom{r-1}{j}(-\lambda)^{r-1-j}$.
\end{itemize}

\begin{lemma}
  \label{lem:Analog85}
  Let $5 \leq a \leq p + 3$.  We have the short exact sequence of $\Gamma$-modules:
  \[
    0 \rightarrow J_0:= V_{a-4} \ox \rD^2 \rightarrow V_r^{**}/V_r^{***} \rightarrow J_1 := V_{p-a + 3} \ox \rD^{a-2} \rightarrow 0,
  \]
  which splits for $a=p+3$ and
  \begin{enumerate}
    \item The monomials $X^{a-4}, Y^{a-4} \in J_0$ map to $\theta^2 X^{r-2p-2}$, $\theta^2 Y^{r-2p-2}$, respectively, in $V_r^{**}/V_r^{***}$.
    \item The polynomials  $\theta^2 X^{r-2p-2}$, $\theta^2 Y^{r-2p-2}$ map to $0 \in J_1$ and $\theta^2 X^{r-2p-a + 2}Y^{a-4}$, $\theta^2 X^{r-3p-1}Y^{p-1}$ map to $X^{p-a + 3}$, $Y^{p-a + 3}$, respectively in $J_1$.
  \end{enumerate}
\end{lemma}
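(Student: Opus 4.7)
The existence of the short exact sequence itself is supplied by \cref{lem:VrAstQuotients}.\ref{en:VrAstAstModVrAstAstAst}, so only the explicit identification of the two maps requires verification. My plan is therefore to exhibit the injection and the projection as concrete composites and then chase the distinguished basis vectors.

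For the injection, I would exhibit it as the composite
\[
  V_{a-4} \otimes \rD^2
  \iso
  (V_{r-2p-2}/V_{r-2p-2}^*) \otimes \rD^2
  \overset{\theta^2\cdot}{\longrightarrow}
  V_r^{**}/V_r^{***},
\]
where the first isomorphism is supplied by \cref{lem:VrAstQuotients}.\ref{en:VrModVrAst} applied to $r-2p-2 \equiv a-4 \mod p-1$ (noting that for $a \leq p-1$ the quotient $V_{a-4}/V_{a-4}^*=V_{a-4}$ is irreducible), while the second arrow is multiplication by $\theta^2$. The latter is $\Gamma$-equivariant up to the twist by $\rD^2$ because $g\cdot\theta = (\det g)\theta$, and it lands in $V_r^{**}$ since $\theta^2 \in V_r^{**}$. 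Under the first arrow, $X^{a-4}$ and $Y^{a-4}$ correspond to $X^{r-2p-2}$ and $Y^{r-2p-2}$ (these extreme monomials survive because they lie outside the singular submodule), so the composite sends them to $\theta^2 X^{r-2p-2}$ and $\theta^2 Y^{r-2p-2}$, as claimed.

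For the projection onto $J_1$, the first bullet immediately gives $\theta^2 X^{r-2p-2},\,\theta^2 Y^{r-2p-2} \mapsto 0$. To identify the images of $\theta^2 X^{r-2p-a+2}Y^{a-4}$ and $\theta^2 X^{r-3p-1}Y^{p-1}$, I would use the isomorphism
\[
  V_r^{**}/V_r^{***} \cong (V_{r-p-1}^*/V_{r-p-1}^{**})\otimes \rD
\]
employed in the proof of \cref{lem:VrAstQuotients}.\ref{en:VrAstAstModVrAstAstAst}, together with the analogue of the present lemma one level down, namely \cite[Lemma 8.5]{BG09} extended to $V_{r-p-1}^*/V_{r-p-1}^{**}$. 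Division by $\theta$ sends the two polynomials to extreme-monomial representatives at the two ends of the lower stratum, which after the $\rD$-twist project to $X^{p-a+3}$ and $Y^{p-a+3}$ respectively in $V_{p-a+3}\otimes \rD^{a-2}$. As a sanity check, the congruence classes match: $(r-2p-a+2) + (a-4) = r-2p-2 \equiv a-4 \mod p-1$, so both representatives satisfy the exponent condition required by \cref{lem:VrAstCriteria}(ii).

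The main obstacle will be justifying the \emph{asymmetric} representative $\theta^2 X^{r-3p-1}Y^{p-1}$ chosen for $Y^{p-a+3}$ — naively one would expect $\theta^2 Y^{r-2p-a+2}X^{a-4}$. This asymmetry is forced by the congruence $j \equiv a-2 \mod p-1$ controlling the nonzero coefficients in $V_r^{**}$ (cf.\ \cref{lem:VrAstCriteria}): the exponent $p-1$ on $Y$ is the padding needed so that after dividing out a single $\theta$ the residue lands in the "other end" of $V_{r-p-1}^*/V_{r-p-1}^{**}$. I would verify the equivalence of the two candidate representatives modulo $V_r^{***} + \theta^2 V_{r-2p-2}$ directly from \cref{lem:VrAstCriteria}(iii), using the vanishing of $\sum c_j$, $\sum j c_j$ and $\sum j(j-1)c_j$ on their difference, which reduces to a finite identity of binomial sums of the same flavour as those handled in \cref{lem:binomialsumaeq2}.
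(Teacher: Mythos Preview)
Your approach for the injection is essentially the paper's (multiplication by $\theta^2$ identifies $V_r^{**}/V_r^{***}$ with $(V_{r-2p-2}/V_{r-2p-2}^*)\otimes\rD^2$), though note that your ``first isomorphism'' is actually an inclusion: $V_{a-4}$ is only the socle of $V_{r-2p-2}/V_{r-2p-2}^*$, not the whole thing. The claim that $X^{a-4}\mapsto X^{r-2p-2}$ under this inclusion is correct but deserves a word of justification beyond ``extreme monomials survive''.

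For the projection your route diverges from the paper's. You propose to peel off one $\theta$ and descend to $V_{r-p-1}^*/V_{r-p-1}^{**}$, then invoke the analogue one level down (you cite \cite[Lemma~8.5]{BG09}, but you mean \cite[Lemma~8.5]{BG}). The paper instead realises the surjection as the composite
\[
  V_r^{**}/V_r^{***}\;\iso\;(V_{r-2p-2}/V_{r-2p-2}^*)\otimes\rD^2
  \;\xrightarrow{\psi^{-1}}\;V_{p+a-5}/V_{p+a-5}^*
  \;\xrightarrow{\beta}\;V_{p-a+3}\otimes\rD^{a-2},
\]
where $\psi$ is Glover's isomorphism \cite[(4.2)]{G} between $V_s/V_s^*$ and $V_{s'}/V_{s'}^*$ for $s\equiv s'\bmod p-1$, and $\beta$ is Breuil's explicit projection \cite[Lemma~5.3]{B03}. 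Both maps are given on monomials, so one simply reads off $\psi^{-1}\colon X^{r-2p-a+2}Y^{a-4}\mapsto X^{p-1}Y^{a-4}$, $\beta\colon X^{p-1}Y^{a-4}\mapsto X^{p-a+3}$, and likewise $\psi^{-1}\colon X^{r-3p-1}Y^{p-1}\mapsto X^{a-4}Y^{p-1}$, $\beta\colon X^{a-4}Y^{p-1}\mapsto Y^{p-a+3}$.

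This buys you exactly the point you flagged as your ``main obstacle'': the asymmetric representative $\theta^2 X^{r-3p-1}Y^{p-1}$ is not a problem at all once you use $\psi^{-1}$, since Glover's map sends $X^iY^j$ with $i,j>0$ to the monomial of the same $Y$-degree in $V_{p+a-5}$. Your proposed workaround --- showing the two candidate representatives agree modulo $V_r^{***}$ via \cref{lem:VrAstCriteria} --- would eventually work, but it is both longer and less transparent than tracking through $\psi^{-1}$ and $\beta$.
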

\begin{proof}
  Following \cite[Lemma 8.5]{BG}, we have the following sequence:
  \begin{align}
    0 \to & V_{a-4} \ox \rD^2 \rightarrow V_r^{**}/V_r^{***} \iso V_{r-2p-2}/V_{r-2p-2}^* \ox \rD^2\\
    \overset{\psi^{-1}}\rightarrow & V_{p + a-5}/V_{p + a-5}^* \otimes \rD^2 \overset{\beta} \rightarrow V_{p-a + 3} \ox \rD^{a-2} \rightarrow 0.
  \end{align}
  where the map $\psi^{-1}$ is from \cite[(4.2)]{G} and $\beta$ from \cite[Lemma 5.3]{B03}.
  Under these maps $\psi^{-1}:  X^{r-2p-a + 2}Y^{a-4}  \mapsto X^{p-1}Y^{a-4}$ and $\beta : X^{p-1}Y^{a-4} \mapsto X^{p-a + 3}$. Similarly $\psi^{-1}:  X^{r-3p-1}Y^{p-1}  \mapsto X^{a-4}Y^{p-1}$ and $\beta : X^{a-4}Y^{p-1} \mapsto Y^{p-a + 3}$. The sequence splits for $a=p+3$ as $V_{p-1}$ is an injective module.
\end{proof}

\subsection{$r$ has the same representative mod $(p-1)$ and $p$}

\begin{proposition}
  \label{prop:elimrEqA}
  Let $a = 6, \ldots,p-1$.
  If  $r \equiv a \mod (p-1)$ and $r \equiv a \mod p^2$, then there is a surjection
  \[
    \ind^G_{KZ} (V_{p-a-1} \ox \rD^a)  \twoheadrightarrow \overline{\Theta}_{k,a_p}.
  \]
\end{proposition}
\begin{proof}
  By \cref{prop:QrEqA}, we have the following Jordan-Hölder series of $Q$:
  \[
    0  \rightarrow V_r^*/V_r^{***} \rightarrow Q \rightarrow V_{p-a-1} \ox \rD^a \rightarrow 0.
  \]
  To eliminate the factors coming from $V_r^{**} /V_r^{***}$ we consider $f = f_0 + f_1  \in \ind_{KZ}^G \Sym^r \overline{\mathbb{Q}}_p^2$, given by:
  \begin{align}
    f_1 & = \sum_{\lambda \in \mathbb{F}^*_p} \bigg[ g^0_{1,[\lambda]}, \dfrac{p^2}{a_p} [\lambda]^{p-3}(Y^r - X^{r-a}Y^a)\bigg] + \bigg[g_{1,0}^0, \dfrac{\binom{r}{2}(1-p)}{a_p}(X^2Y^{r-2} - X^{r-a + 2}Y^{a-2})\bigg],\\
    f_0 & = \bigg[\id,  \frac{p^2 (p-1)}{a_p^2} \sum_{\substack{0 < j < r-2 \\ j \equiv a-2\mod (p-1)}} \gamma_j X^{r-j}Y^j \bigg],
  \end{align}
  where the $\gamma_j$ are integers as in \cref{lem:analog7cg}.

  In $f_1$, for the first part we observe that $v (p^2/a_p) > -1$, so we consider only the term with $j=0$ for the first part of $T^+ f_1$.
  For $j=0$, we observe $\binom{r}{0} - \binom{a}{0} = 0 $.
  Regarding the second part, we note that $v (1/a_p) > -3$, so we consider the terms with $j=0,1,2$ for the second part of $T^+ f_1$.
  For $j=0$, we see that $\binom{r-2}{0} - \binom{a-2}{0} = 0 $.
  For $j=1,2$ we obtain $\frac{p^j}{a_p} (\binom{r-2}{j} - \binom{a-2}{j}) \equiv 0 \mod p$ as $r \equiv a \mod p^2$.
  Thus $T^+ f_1 \equiv 0 \mod p$.

  In $f_0$ we see that $v(p^2/a_p^2) > -4$.
  Due to the properties of $\gamma_j$ from  \cref{lem:analog7cg}, we have $\sum \binom{j}{n} \gamma_j \equiv 0 \mod p^{4-n}$ and $j \equiv a -2 \geq 4$, so the terms in $T^+ f_0 $ vanish $\mod p$.
  In $f_0$ the highest index $i$ for which $c_i \not \equiv 0 \mod p$ is $i= r-p-1 $.
  So we have $p^{r-i} = p^{p+1}$, which kills $p^2/a_p^2$ as $p \geq 5$.
  Thus $T^- f_0 \equiv 0 \mod p$ .

  For $T^- f_1$, we note that the highest terms for which $c_i \not \equiv 0$ are $i = r $ and $i= r-2$.
  In the case $i=r-2$ we note that it forces $j=r-2$ (as $\lambda =0$), so the non-zero term is $\dfrac{p^2(1-p)}{a_p} \binom{r}{2} X^{2}Y^{r-2}$.
  If $i=r$, then
  \[
    T^-f_1 = \bigg[\id, \frac{(p-1)p^2}{a_p} \sum_{\substack{0 < j \leq r-2 \\ j \equiv a-2 \mod (p-1)}} \binom{r}{j}X^{r-j}Y^j\bigg]
  \]
  The last term in the above expansion (when $j= r-2$) is
  \[
    \dfrac{p^2 \binom{r}{2}(p-1)}{a_p}X^2Y^{r-2},
  \]
  which is cancelled out by the term for $i=r-2$.
  Thus:
  \[
    T^-f_1 - a_p f_0 = \bigg[\id,  \frac{(p-1)p^2}{a_p} \sum_{\substack{0 < j < r-2 \\ j \equiv a-2 \mod (p-1)}}\bigg(\binom{r}{j} - \gamma_j \bigg)X^{r-j}Y^j\bigg]
  \]
  where the $\gamma_j \equiv \binom{r}{j} \mod p$ due to \cref{lem:analog7cg}, so $T^-f_1 - a_p f_0 \equiv 0 \mod p$.

  So $(T-a_p)f = -a_p f_1 \mod p $ and as $r \equiv a \mod p$ we have
  \begin{align}
      & (T-a_p) f \\
    \equiv{} & -\bigg[g_{1,0}^0, \binom{r}{2}(1-p) (X^2Y^{r-2} - X^{r-a + 2}Y^{a-2})\bigg]\\
    \equiv{} & -\bigg[g_{1,0}^0, \binom{a}{2} \theta \bigg(\sum_{i = 0}^{\frac{r-a}{p-1}-1}   X^{1+ i(p-1)}Y^{r-i(p-1) - p -2} \bigg)\bigg] \\
    \equiv{} & -\bigg[g_{1,0}^0, \binom{a}{2} \theta^2 \bigg(\sum_{i = 0}^{\frac{r-a}{p-1}-2} (i+1)   X^{ i(p-1)}Y^{r-i(p-1) -2 p -2} \bigg)\bigg] \\
    \equiv{} & \bigg[g_{1,0}^0, \binom{a}{2}\theta^2\big(X^{r-2p-a + 2}Y^{a-4}-Y^{r-2p-2}\big)\bigg] \mod V^{***}_r.
  \end{align}

 We follow the argument of \cite[Lemma 8.6]{BG} now. Let $v$ be the image of $\binom{a}{2}\theta^2(X^{r-2p-a + 2}Y^{a-4}-Y^{r-2p-2})$ in $V_r^{**}/V_r^{***}$.
  By \cref{lem:Analog85} the reduction $(\overline{T-a_p})f $ maps to $[g_{1,0}^0, \binom{a}{2}X^{p-a + 3}] \neq 0$  in $J_1 = V_{p-a + 3} \ox \rD^{a-2}$ .
  Because the short exact sequence for the Jordan-Hölder series of $V_r^{**}/V_r^{***}$ is non-split, the element $[g_{1,0}^0,v]$ generates $\ind_{KZ}^G (V_r^{**}/V_r^{***} )$ over $G$.

  To eliminate the factors coming from $V_r^{*} /V_r^{**}$ we consider $f = f_0 + f_1  \in \ind_{KZ}^G \Sym^r \overline{\mathbb{Q}}_p^2$, given by:
  \begin{align}
    f_1 & = \sum_{\lambda \in \mathbb{F}^*_p} \bigg[ g^0_{1,[\lambda]}, \dfrac{p}{a_p} [\lambda]^{p-2}(Y^r - X^{r-a}Y^a)\bigg] + \bigg[g_{1,0}^0, \dfrac{r(1-p)}{a_p}(XY^{r-1} - X^{r-a + 1}Y^{a-1})\bigg],\\
    f_0 & =\bigg [\id, \frac{p (p-1)}{a_p^2} \sum_{\substack{0 < j < r-1 \\ j \equiv a-1\mod (p-1)}}  \beta_jX^{r-j}Y^j \bigg],
  \end{align}
  where the $\beta_j$ are the integers from \cref{lem:analog7bg}' where thanks to the condition $r \equiv a \mod p^2$ we have $\beta_j \equiv \binom{r}{j} \mod p^2$ and $\sum \binom{j}{n}\beta_j \equiv 0 \mod p^{5-n}$.

  In $f_1$ for the first part we have $v (p/a_p) > -2$, so we consider the terms with $j=0,1$ for the first part of $T^+ f_1$.
  For $j=0$, we see that $\binom{r}{0} - \binom{a}{0} = 0 $ while for $j=1$, we see that $\frac{p}{a_p}\big(\binom{r}{1} - \binom{a}{1} \big) \equiv 0 \mod p $ as $r \equiv a \mod p^2$.
  Regarding the second part, we note that $v(1/a_p) > -3$, so we consider the terms in $T^+ f_1$ for $j=0,1,2$.
  For $j=0$ we see that $\binom{r-1}{0} - \binom{a-1}{0} = 0 $ while for $j=1,2$, we see that $\frac{p^j}{a_p} \big(\binom{r-1}{j} - \binom{a-1}{j}\big) \equiv 0 \mod p^2 $ as $r \equiv a \mod p^2$ .
  Thus $T^+ f_1 \equiv 0 \mod p$.

  In $f_0$ we see that $v(p/a_p^2)> -5$.
  Due to the properties of $\beta_j$, we have $\sum \binom{j}{n}\beta_j \equiv 0 \mod p^{5-n}$ (as $r \equiv a \mod p^2$) and $j \equiv a-1 \geq 5$, so the terms in $T^+ f_0 $ vanish $\mod p$.
  In $f_0$ the highest index $i$ for which $c_i \not \equiv 0 \mod p$ is $i= r-p $.
  Thus, $p^{r-i} = p^{p}$ but $p \geq 5$, so $T^- f_0 \equiv 0 \mod p$ .

  For $T^- f_1$, we note that the highest terms for which $c_i \not \equiv 0$ are $i = r $ and $i= r-1$.
  In case $i=r-1$, we note that it forces $j=r-1$ (as $\lambda =0$), so the nonzero term is $\frac{pr(1-p)}{a_p}  XY^{r-1}$.
  If $i=r$, then
  \[T^-f_1 = \bigg[\id, \frac{(p-1)p}{a_p} \sum_{\substack{0 < j \leq r-1 \\ j \equiv a-1 \mod (p-1)}}\binom{r}{j}X^{r-j}Y^j\bigg].\]
  The last term in the above expansion ($j= r-1$) is $\dfrac{p\binom{r}{1}(p-1)}{a_p}XY^{r-1}$, which is cancelled out by the term for $i=r-1$.
  Thus:
  \[
    T^-f_1 - a_p f_0 = \bigg[\id, \frac{(p-1)p}{a_p} \sum_{\substack{0 < j < r-1 \\ j \equiv a-1 \mod (p-1)}} \bigg( \binom{r}{j} - \beta_j \bigg)X^{r-j}Y^j \bigg]
  \]
  where the $\beta_j \equiv \binom{r}{j} \mod p^2$, so $T^-f_1 - a_p f_0 \equiv 0 \mod p$.
  Thus $(T-a_p)f = -a_p f_1 \mod p $, and
  \[
    (T-a_p) f  \equiv -\bigg[g_{1,0}^0, r(1-p) (XY^{r-1} - X^{r-a + 1}Y^{a-1})\bigg].
  \]
  The rest follows as in the proof of \cite[Lemma 8.6]{BG}, so we can eliminate the factors from $V_r^{*} /V_r^{**}$.
  Thus, the only remaining factor is $V_{p-a-1} \ox \rD^a$.
\end{proof}

  \begin{proposition}
    \label{prop:elimrEq5p}
    If  $r \equiv 5 \mod (p-1)$ and $r \equiv 5 \mod p^2$, and,
    when $v(a_p) = \frac{5}{2}$, assume that $v(a_p^2 - p^5) = 5$.
    Then
    \[
      \ind^G_{KZ} (V_{p-6} \ox \rD^5)  \twoheadrightarrow \overline{\Theta}_{k,a_p}.
    \]
  \end{proposition}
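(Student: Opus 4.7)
The plan is to mimic exactly the structure of \cref{prop:elimrEqA}, but to treat the extra delicacy arising when $v(a_p) = 5/2$. Since $r \equiv 5 \bmod p^2$ in particular implies $r \equiv 5 \bmod p$, by \cref{prop:QrEqA} the module $Q$ is an extension
\[
  0 \to V_r^*/V_r^{***} \to Q \to V_{p-6} \otimes \rD^5 \to 0,
\]
so it suffices to construct, for each of the four Jordan-Hölder factors of $V_r^*/V_r^{***}$, a function $f \in \ind_{KZ}^G \Sym^r \overline{\mathbb{Q}}_p^2$ whose image $(T - a_p)f$ is integral, whose mod-$p$ reduction lies in $\ind_{KZ}^G$ applied to that factor, and generates it.

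First I would eliminate the two factors coming from $V_r^{**}/V_r^{***}$ by setting $f = f_0 + f_1$ with
\begin{align}
  f_1 &= \sum_{\lambda \in \F_p^*}\bigg[g_{1,[\lambda]}^0,\, \tfrac{p^2}{a_p}[\lambda]^{p-3}(Y^r - X^{r-5}Y^5)\bigg] + \bigg[g_{1,0}^0,\, \tfrac{\binom{r}{2}(1-p)}{a_p}(X^2Y^{r-2} - X^{r-3}Y^3)\bigg],\\
  f_0 &= \bigg[\id,\, \tfrac{p^2(p-1)}{a_p^2}\sum_{\substack{0 < j < r-2\\ j \equiv 3 \bmod p-1}}\gamma_j X^{r-j}Y^j\bigg],
\end{align}
where the $\gamma_j$ are supplied by \cref{lem:analog7cg} (with $a = 5$, so $\gamma_j \equiv \binom{r}{j} \bmod p$ and $\sum_{j \geq n}\binom{j}{n}\gamma_j \equiv 0 \bmod p^{4-n}$ for $n = 0,1,2,3$). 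The verifications $T^+ f \equiv 0$ and $T^- f - a_p f_0 \equiv 0 \bmod p$ go through verbatim as in \cref{prop:elimrEqA}, using $r \equiv 5 \bmod p^2$ in place of $r \equiv a \bmod p^2$. The surviving mod-$p$ reduction is then $[g_{1,0}^0,\,-2\binom{5}{2}\theta^2(X^{r-2p-3}Y - Y^{r-2p-2})]$, which by \cref{lem:Analog85} projects to a generator of $\ind_{KZ}^G(V_r^{**}/V_r^{***})$.

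Next I would eliminate the two Jordan-Hölder factors of $V_r^*/V_r^{**}$ by the analogous $f = f_0 + f_1$ using $\beta_j$ from \cref{lem:analog7bg}, where the hypothesis $r \equiv 5 \bmod p^2$ strengthens the congruence to $\beta_j \equiv \binom{r}{j} \bmod p^2$. The computation of $T^\pm$ is parallel to the second half of \cref{prop:elimrEqA}; the only places where valuation has to be checked carefully are the terms involving $p^j/a_p$ in $T^+ f_1$ (surviving only for $j = 0,1$ because $v(p/a_p) < -2$) and $p/a_p^2$ in $T^+ f_0$ (killed since $\sum\binom{j}{n}\beta_j \equiv 0 \bmod p^{5-n}$ and $j \equiv 4$ forces $n \leq 4$). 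The conclusion then reduces to \cite[Lemma 8.6]{BG} exactly as in \cref{prop:elimrEqA}.

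The main obstacle is the borderline slope $v(a_p) = 5/2$. Here $v(p^2/a_p^2) = -1$ rather than being strictly greater than $-2$, so the term $p^2/a_p^2$ appearing in $f_0$ for the elimination of $V_r^{**}/V_r^{***}$ is of valuation $-1$ and the naive estimates above break down by one. This forces me to retain the first-order correction $p^5/a_p^2 - 1 = (p^5 - a_p^2)/a_p^2$ in the $T^-$ computation; the hypothesis $v(a_p^2 - p^5) = 5$ guarantees this correction is integral (in fact, in $p\,\Zb_p$), so the troublesome contribution from $T^- f_1 - a_p f_0$ still vanishes mod $p$. All other valuation bookkeeping then proceeds as in the case $a \geq 6$, and combining both steps gives the required surjection $\ind_{KZ}^G(V_{p-6}\otimes \rD^5) \twoheadrightarrow \overline{\Theta}_{k,a_p}$.
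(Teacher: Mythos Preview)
Your claim that the computations ``go through verbatim as in \cref{prop:elimrEqA}'' is precisely where the argument breaks. In \cref{prop:elimrEqA} the key line ``$j \equiv a-2 \geq 4$'' (resp.\ ``$j \equiv a-1 \geq 5$'') is what makes $T^+f_0 \equiv 0 \bmod p$: it guarantees that the smallest index occurring in the $\gamma_j$ (resp.\ $\beta_j$) sum is large enough that \emph{every} term $p^{n+2}/a_p^2 \cdot \sum_j \binom{j}{n}\gamma_j$ has positive valuation. When $a=5$ the smallest index drops to $j=3$ (resp.\ $j=4$), and the term at $[g_{1,0}^0,\cdot]$ with $n=3$ (resp.\ $n=4$) is
\[
\frac{p^5(p-1)}{a_p^2}\,\gamma_3\,X^{r-3}Y^3 \qquad\bigl(\text{resp.\ } \tfrac{p^5(p-1)}{a_p^2}\,\beta_4\,X^{r-4}Y^4\bigr),
\]
with $\gamma_3 \equiv \binom{r}{3} \equiv 10$ and $\beta_4 \equiv \binom{r}{4} \equiv 5$, both units mod $p$. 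This contribution has valuation $5 - 2v(a_p)$, which is \emph{negative} once $v(a_p) > 5/2$; so $(T-a_p)f$ is not even integral there, and your appeal to \cref{lem:analog7bg,lem:analog7cg} (which only control $n \leq 3$) does not save you.

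The paper handles this by a genuine case split. For $v(a_p) \leq 5/2$ the extra $p^5/a_p^2$ term is integral, and one must carry it through: the reduction of $(T-a_p)f$ is then a \emph{combination} of this term and $-a_pf_1$, and one checks (following \cite[Theorem~8.7]{BG}) that the combination still generates the relevant factor. For $v(a_p) > 5/2$ one instead rescales by $a_p^2/p^5$, after which only the $T^+f_0$ contribution survives mod $p$. The hypothesis $v(a_p^2 - p^5) = 5$ enters exactly at the boundary $v(a_p)=5/2$: there both contributions are units, and their sum involves $\tfrac{p^5 - a_p^2}{a_p^2}$, which must be a unit for the image to be nonzero. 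Your final paragraph misplaces the difficulty (it lies in $T^+f_0$, not $T^-f_1 - a_pf_0$), miscomputes $v(p^2/a_p^2)$ (it is $-3$, not $-1$, at $v(a_p)=5/2$), and claims the correction lies in $p\Zb_p$ when in fact it is a unit.
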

  \begin{proof}
    The Jordan-Hölder series of $Q$ is the same as in  \cref{prop:elimrEqA}.
    We will eliminate the factors from  $V_r^{*}/V_r^{**}$ and  $V_r^{**}/V_r^{***}$ leaving us with $V_{p-a-1} \ox \rD^a$ as in \cref{prop:elimrEqA}.

    To eliminate the terms from $V_r^{*}/V_r^{**}$, we distinguish two cases:
    \begin{itemize}
      \item
        If $v(a_p) \leq 5/2$ we use the functions from \cref{prop:elimrEqA}.

        For $\lambda \ne 0$ and $j \equiv 4 \mod (p-1)$, in $T^+ f_0$ we get the terms
        \[
          \frac{p^5(p-1)}{a_p^2} \sum_{\lambda} [g_{1, [\lambda]}, \sum_j \beta_j \binom{j}{4}X^{r-j}Y^j]
        \]
        which vanish, because $\sum_j \beta_j \binom{j}{4} \equiv \binom{r}{4} \sum \binom{r-4}{j-4} \equiv  0 \mod p$ by noting that $r-4 \equiv p \mod (p-1)$ while applying \cref{lem:analog7bg}'.

        Then $T^+ f_0$ also has the term $\frac{p^5(p-1)}{a_p^2} \beta_4 X^{r-4}Y^4$, which is integral as $v(a_p) \leq 5/2$.
        Noting that $\beta_4 \equiv 5 \mod p$, we can write
        $(T-a_p) f = T^+ f_0 - a_p f_1 $
        \[\equiv \bigg[g_{1,0}^0, \frac{5p^5(p-1)}{a_p^2}X^{r-4} Y^4 - 5(1-p) (XY^{r-1} - X^{r-4}Y^{4}) \bigg] \] and then follow the argument of \cite[Theorem 8.7]{BG}.

      \item
        If $v(a_p) > 5/2$, then consider $f' = \frac{a_p^2}{p^5}f$.
        All terms are zero except $T^+ f_0 = \bigg[g_{1,0}^0, \beta_4 X^{r-4} Y^4 \bigg]$ where $\beta_4 \equiv 5 \mod p$.
        By adding an appropriate term of $XY^{r-1}$, we can follow the argument as in \cref{prop:elimrEqA} to eliminate the factors from  $V_r^{*}/V_r^{**}$.

    \end{itemize}

    To eliminate the terms from $V_r^{**}/V_r^{***}$ we distinguish two cases:
    \begin{itemize}

      \item

        If $v(a_p) \leq 5/2$ we use the functions from \cref{prop:elimrEqA} but note that $T^+ f_0$ has the term $ \frac{p^5(p-1)}{a_p^2}\gamma_3 X^{r-3}Y^3$, which is integral as $v(a_p) \leq 5/2$.
        As $\gamma_3 \equiv 10 \mod p$, so we can write
        $(T-a_p) f = T^+ f_0 - a_p f_1$
        \[\equiv \bigg[g_{1,0}^0, \frac{10p^5(p-1)}{a_p^2} X^{r-3} Y^3 -\binom{5}{2}(1-p) (X^2Y^{r-2} - X^{r-3}Y^{3})  \bigg] \] and follow the argument as in the previous case.

      \item
        If $v(a_p) > 5/2$, then consider $f' = \frac{a_p^2}{p^5}f$.
        All terms are zero except $T^+ f_0 =\bigg[g_{1,0}^0, \gamma_3 X^{r-3} Y^3 \bigg]$.
        By adding an appropriate term of $X^2Y^{r-2}$, we can follow the argument as in the previous case to eliminate the factors from  $V_r^{**}/V_r^{***}$.
        \qedhere
    \end{itemize}
  \end{proof}

  \begin{proposition}
    \label{prop:elimrEqAp}
    Let $a = 5, \ldots,p-1$.
    If  $r \equiv a \mod p(p-1)$ but $r \not \equiv a \mod p^2$, (where in the case $a=5$ and $v(a_p ) = 5/2$ we assume $v(a_p^2 - p^5) =5$), then there is a surjection
    \[
      \ind^G_{KZ} (V_{p-a+1} \ox \rD^{a-1})  \twoheadrightarrow \overline{\Theta}_{k,a_p}.
    \]
  \end{proposition}
  \begin{proof}
    By \cref{prop:QrEqA}, we have the following Jordan-Hölder series of $Q$:
    \[
      0  \rightarrow V_r^*/V_r^{***} \rightarrow Q \rightarrow V_{p-a-1} \ox \rD^a \rightarrow 0.
    \]
    To eliminate the factors coming from $V_r^{**} /V_r^{***}$ we consider $f = f_0 + f_1  \in \ind_{KZ}^G \Sym^r \overline{\mathbb{Q}}_p^2$, given by:
    \begin{align}
      f_1 = & \sum_{\lambda \in \mathbb{F}^*_p}  \bigg[ g^0_{1, [\lambda]},  \dfrac{p}{a_p}[\lambda]^{p-2}(XY^{r-1}- X^{r-a+1}Y^{a-1}) \bigg]\\
            & + \bigg[ g^0_{1,0}, \dfrac{(1-r)}{a_p}\theta^2 (Y^{r-2p-2}- X^{p-1}Y^{r-3p-1}) \bigg], \quad \text{ and }\\
      f_0 = &  \bigg[\id, \frac{p^2(p-1)}{a_p^2}  \sum_{\substack{0 < j < r-2 \\ j \equiv a-2\mod (p-1)}} \beta_j  X^{r-j}Y^j \bigg],
    \end{align}
    where $\beta_j \equiv \binom{r-1}{j} \mod p$ and $\sum \binom{j}{n} \beta_j \equiv 0 \mod p^{4-n}$.
    We note that the existence of $\beta_j$ follows from \cref{lem:analog7bg} applied with $r-1$ instead of $r$.

    As $r \equiv a \mod p$, we see that $T^+ f_1 \equiv 0 \mod p$. For $f_0$ we see that the highest index $i = r-p-1 $ so $T^- f_0 \equiv 0 \mod p$. Also, as $v(p^2/a_p^2) < -4$ then by the properties of $\beta_j$ we have that $T^+ f_0 \equiv 0 \mod p$.

    For $T^- f_1$ we consider $i=r-1$ and $i = r-2$ to see that
    \[
      T^-f_1 - a_p f_0 = \bigg[\id,  \dfrac{(p-1)p^2}{a_p} \sum_{\substack{0 < j < r-2 \\ j \equiv a-2 \mod (p-1)}}\bigg( \binom{r-1}{j} - \beta_j \bigg)X^{r-j}Y^j \bigg]
    \]
    which vanishes modulo $p$ as $\beta_j \equiv \binom{r-1}{j} \mod p$.
    This means that
    \[
      (T-a_p) (f_1 + f_0) = -a_p f_1 = -(1-r)\bigg[ g^0_{1,0}, \theta^2 (Y^{r-2p-2}- X^{p-1}Y^{r-3p-1}) \bigg] .
    \]
    Thus, we can eliminate the factors from $V_r^{**}/V_r^{***}$.

    For $a=5$, we obtain $T^+ f_0 = [g^0_{2,0}, \frac{p^5}{a_p^2}\binom{r-1}{3}X^{r-3}Y^3]] \mod p$. If $v(a_p^2) < 5$ this term vanishes.

    If $v(a_p^2) > 5$, then consider $f' = \frac{a_p^2}{p^5}f$.
    All terms are zero except $T^+ f_0 =\bigg[g_{1,0}^0, \beta_3 X^{r-3} Y^3 \bigg]$.
    By adding an appropriate term of $X^2Y^{r-2}$, we can follow the argument as in \cref{prop:elimrEqA} to eliminate the factors from  $V_r^{**}/V_r^{***}$.

    In the case $v(a_p^2) = 5$, we assume that $v(a_p^2 - p^5) = 5$ as we get the extra non-zero term $a_p f'_1$. We then follow the same argument as \cref{prop:elimrEq5p}.

    To eliminate the factor $V_{p-a-1} \ox \rD^a$ we consider $f = f_0 + f_1  \in \ind_{KZ}^G \Sym^r \overline{\mathbb{Q}}_p^2$, given by:
    \begin{align}
      f_1 & = \sum_{\lambda \in \mathbb{F}^*_p} \bigg[ g^0_{1,[\lambda]}, \dfrac{1}{p^2}\big(Y^r - X^{r-a}Y^a\big) \bigg] + \bigg[g_{1,0}^0, \dfrac{(p-1)}{p}\big(Y^{r} - X^{r-a}Y^{a}\big)\bigg] \quad \text{ and }\\
      f_0 & = \bigg[\id, \frac{(p-1)}{p^2 a_p} \sum_{\substack{0 < j < r \\ j \equiv a\mod (p-1)}}  \alpha_jX^{r-j}Y^j \bigg],
    \end{align}
    where the $\alpha_j$ are the integers from  \cref{lem:analog7ag}' with the added conditions that $\alpha_j \equiv \binom{r}{j} \mod p^2$ and  $\sum \binom{j}{n} \alpha_j \equiv 0 \mod p^{5-n}$ as $r \equiv a \mod p$.

    For $T^+f_1$  in the first part of $f_1$ we note that $v(1/p^2) = -2$, so we need to consider $j=0,1,2$.
    For $j=0$, we see that $\binom{r}{0} - \binom{a}{0} = 0 $ while for $j=1$, we see that $\frac{p}{p^2}(\binom{r}{1} - \binom{a}{1}) = \frac{r - a}{p}$, which is integral as $p \mid r-a$, so the term involving $X^{r-1}Y$ maps to zero in $Q$.
    The term for $j=2$ is zero $\mod p$ as $r \equiv a \mod p$.
    For the second part, we note that $v(1/p) = -1$.
    The term with $j=0$ is identically zero while the coefficient of $X^{r-1}Y$  with $j=1$ is integral, which vanishes in $Q$.
    Thus $T^+ f_1 \equiv 0 \mod p$.

    In $f_0$ we see that $v(1/p^2 a_p)> - 5$.
    Due to the properties of $\alpha_j$, we have $\sum \binom{j}{n}\alpha_j \equiv 0 \mod p^{5-n}$ and $j \equiv a \geq 5$, so the terms in $T^+ f_0 $ vanish $\mod p$.
    Because the highest index $i$ for which $c_i \not \equiv 0 \mod p$ is $i= r-p+1$, we have $p^{r-i} = p^{p-1}$.
    Thus $T^- f_0 \equiv 0 \mod p$ for $p >5$.
    Note that $5 \leq a \leq p-1$ means that $p \geq 7$, so we do not need to worry about the case $p=5$.

    For $T^-f_1$ we note that the highest index of a nonzero coefficient is $i=r$, hence
    \[
      T^-f_1 = \bigg[\id,  \frac{(p-1)}{p^2} \sum_{\substack{0 < j \leq r \\ j \equiv a \mod (p-1)}}\binom{r}{j}X^{r-j}Y^j\bigg]
    \]
    The last term in the above expansion (when $j= r$) is $\dfrac{p(p-1)\binom{r}{r}}{p^2}Y^{r}$, which is cancelled out by the term for $i=r$ from the second part (where $\lambda=0$) which is $\dfrac{(1-p)}{p}Y^{r}$.
    We compute
    \[
      T^-f_1 - a_p f_0 = \bigg[\id,  \dfrac{(p-1)}{p^2} \sum_{\substack{0 < j < r \\ j \equiv a \mod (p-1)}}\bigg( \binom{r}{j} - \alpha_j \bigg)X^{r-j}Y^j \bigg]
    \]
    where the $\alpha_j \equiv \binom{r}{j} \mod p^2$, so $T^-f_1 - a_p f_0$ is integral.
    Now we follow the argument as in the proof of \cite[Theorem 8.3 ]{BG}.
    Applying Lemma 5(2) of \cite{A20} to our setting and using $r \equiv a \mod p$ yields
    \[
      \sum_{\substack{0 < j < r \\ j \equiv a \mod (p-1)}}\binom{r}{j}  \equiv p \frac{a-r}{a} \mod p^3.
    \]
    Thus, the expression maps to $\dfrac{a-r}{p a } X^{p-a-1}$, which is nonzero as $p^2 \not \mid a-r$.

    To eliminate the factor $V_{a-2} \ox \rD$, we consider $f = f_0 + f_1  \in \ind_{KZ}^G \Sym^r \overline{\mathbb{Q}}_p^2$, given by:
    \begin{align}
      f_1 & = \sum_{\lambda \in \mathbb{F}^*_p} \bigg[ g^0_{1,[\lambda]}, \dfrac{ [\lambda]^{p-2}}{p}\big(Y^r - X^{r-a}Y^a\big)\bigg]  + \bigg[g_{1,0}^0, \dfrac{-a}{p^2}\big(XY^{r-1} - X^{r-a+1}Y^{a-1}\big)\bigg] \quad \text{ and }\\
      f_0 & = \bigg[\id, \dfrac{(p-1)}{p a_p} \sum_{\substack{0 < j < r-1 \\ j \equiv a-1\mod (p-1)}}  \beta_jX^{r-j}Y^j \bigg],
    \end{align}
    where the $\beta_j$ are the integers from \cref{lem:analog7bg}.

    In $f_1$ we see that $v(1/p) = -1$, so we consider $j=0,1$.
    For $j=0$ we see $\binom{r}{0} -\binom{a}{0} = 0 $ while for $j=1$ we obtain $\frac{p}{p}(\binom{r}{1} -\binom{a}{1}) \equiv 0 \mod p$ as $ r-a \equiv 0 \mod p$.
    As $r-a \equiv 0 \mod p$ we see that $T^+ f_1 \equiv - \frac{ap(r-a)}{p^2} [g^0_{2,p[\lambda]}, X^{r-1}Y] \mod p$. Thus this term vanishes in $Q$.
    As $v(a_p) > 2$, we see that $a_p f_1 \equiv 0 \mod p $.

    For $f_0$, we note that $v(1/p a_p) > -4$ while $\sum \binom{j}{n}\beta_j \equiv 0 \mod p^{4-n}$ and $j \equiv a-1 \geq 4$, hence $T^+ f_0 \equiv 0 \mod p$.
    For $T^- f_0$ we note that the highest index is $i = r-p$, hence $p^{r-i} = p^{p}$, which kills $1/p a_p$ for $p \geq 5$.

    For $T^- f_1$, in the first part the highest index of a non-zero coefficient is $i=r$ while in the second part it is $i=r-1$, hence
    \[
      T^- f_1 \equiv{} \bigg[\id, \frac{(p-1)}{p} \bigg( \sum_{\substack{0 < j < r -1 \\ j \equiv a -1 \mod (p-1)}}  \binom{r}{j} X^{r-j}Y^j  + (r-a) XY^{r-1} \bigg)\bigg]
    \]
    We compute that
    \[
      T^-f_1 - a_p f_0 =\bigg[\id,  \frac{(p-1)}{p} \bigg(\sum_{\substack{0 < j < r -1 \\ j \equiv a -1 \mod (p-1)}}\bigg(\binom{r}{j} - \beta_j \bigg)X^{r-j}Y^j  + (r-a) XY^{r-1}\bigg)\bigg]
    \]

    As $p \mid \mid r-a$ and $\binom{r}{j} \equiv \beta_j \mod p $ we see that the above function is integral.
    As in \cite[Theorem 8.9(i)]{BG}, we change the above polynomial by a suitable $XY^{r-1}$ term so that it has the same image in $Q$ as
    \[\bigg[\id,(p-1) \big(F(X,Y)  + \frac{(a-r)}{p} \theta Y^{r-p-1} \big) \bigg],\]
    where:
  \[F(X,Y) =  \bigg[\id, \sum_{\substack{0 < j < r-1 \\ j \equiv a-1 \mod (p-1)}}\frac{1}{p} \bigg(\binom{r}{j} - \beta_j \bigg)X^{r-j}Y^j - \frac{(a-r)}{p} X^p Y^{r-p} \bigg)\bigg].\]

  We see that $F(X,Y)$ is integral as $\beta_j \equiv \binom{r}{j} \mod p$ and $r \equiv a \mod p$.
  By the conditions in \cref{lem:VrAstCriteria} and recalling that $\sum_j \beta_j \equiv 0 \mod p^4$, $\sum j \beta_j \equiv 0 \mod p^{3}$ and $r \equiv a \mod p$ we see that $F(X,Y) \in V_r^{**}$.
  Thus, $(T-a_p)f$ is equivalent to $ \dfrac{a-r}{p} \theta Y^{r-p-1}$, which, by \cite[Lemma 8.5]{BG}, maps to  $\dfrac{a-r}{p} Y^{a-2}$.
  This term is not zero as $r \not \equiv a \mod p^2$.
  Hence, the only surviving factor is $V_{p-a+1} \ox \rD^{a-1}$.
\end{proof}

\begin{proposition}
  \label{prop:elimaeqp}
  If $r \equiv p \mod (p-1)$ and $r  \equiv p \mod p$ (where in the case $p=5$ and $v(a_p ) = 5/2$ we assume $v(a_p^2 - p^5) =5$), then:
  \begin{enumerate}
    \item If $p^2 \nmid p-r$, then there is a surjection $ \ind^G_{KZ} (V_1)  \rightarrow \overline{\Theta}_{k,a_p}$.
    \item If $p^2 \mid p-r$, then there is a surjection $ \ind^G_{KZ} (V_{p-2} \otimes \rD)  \rightarrow \overline{\Theta}_{k,a_p}$.
  \end{enumerate}
\end{proposition}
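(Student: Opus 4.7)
The plan is to adapt the template of \cref{prop:elimrEqA} and \cref{prop:elimrEqAp} to the case $a = p$, where by \cref{prop:QrEqp} the module $Q$ coincides with $V_r^*/V_r^{***}$ and so has four Jordan-Hölder factors: $V_{p-2}\otimes\rD$ and $V_1$ from $V_r^*/V_r^{**}$, and $V_{p-4}\otimes\rD^2$ and $V_3\otimes\rD^{p-2}$ from $V_r^{**}/V_r^{***}$. For each factor that must be killed, I will construct a test function $f = f_0+f_1 \in \ind_{KZ}^G \Sym^r\Qb_p^2$ so that $(T-a_p)f$ is integral and its mod $p$ reduction lies in, and generates, the sub-induction attached to the remaining factor, at which point \cite[Proposition 3.3]{BG09} applies.

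The combinatorial input is provided by \cref{lem:analog7dg}. Its part (i) gives integers $\gamma_j$ congruent to $\binom{r}{j}\mod p^2$ with $\sum\binom{j}{n}\gamma_j\equiv 0\mod p^{5-n}$ (indexing $j\equiv 0\mod p-1$), part (ii) does the same for $j\equiv 1\mod p-1$, and under the stronger hypothesis $p^3\mid p-r$ part (iii) strengthens the congruences by one more power of $p$. I will use (ii) to eliminate the $V_r^{**}/V_r^{***}$-factors and the corresponding variant of (i) (together with the analogue of $\beta_j$ from \cref{lem:analog7bg}, adapted to $a=p$ via \cite[Lemma 2.7]{BG}) to eliminate the unwanted factor from $V_r^*/V_r^{**}$. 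The shape of each test function will mirror those in \cref{prop:elimrEqA}: the part $f_1$ is a sum $\sum_{\lambda\in\F_p^*}[g^0_{1,[\lambda]},\;p^c[\lambda]^?(Y^r-X^{r-a}Y^a)] + [g^0_{1,0},\;\star]$ whose combinatorial identities $\binom{r}{j}-\binom{a}{j}\equiv 0\mod p^?$ (using $r\equiv p=a\mod p$, and $\mod p^2$ in case (ii)) make $T^+f_1$ vanish, and $f_0 = [\id,\;p^c a_p^{-1}\sum\gamma_j X^{r-j}Y^j]$ is designed so that $T^+f_0$ vanishes by the lemmas above while $T^-f_1-a_p f_0$ collapses to a computable $\theta^2$-multiple for the $V_r^{**}/V_r^{***}$-factors, and to a computable $\theta$-multiple for the $V_r^*/V_r^{**}$-factors.

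To split the two cases, the distinction is exactly whether the residue of $(p-r)/p$ modulo $p$ is nonzero: this residue is the coefficient that survives after collapse in the $V_1$-direction (via $\binom{r}{1}-\binom{a}{1}=r-p$ divided by $p$) and dually in the $V_{p-2}\otimes\rD$-direction. Thus under $p\parallel p-r$ the elimination of $V_{p-2}\otimes\rD$ succeeds (with surviving image a nonzero multiple of $XY^{r-1}$ mapping into $V_1$), whereas under $p^2\mid p-r$ this residue vanishes and one must instead eliminate $V_1$; the surviving image then lies in $V_{p-2}\otimes\rD$, and the stronger divisibility is precisely what makes the corresponding $T^-f_1-a_pf_0$ integral. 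The $V_{p-4}\otimes\rD^2$ and $V_3\otimes\rD^{p-2}$ factors are eliminated in both cases by the same pair of test functions, using that $\sum j D_j$-type coefficients are nonzero modulo $p$ under $r\equiv p\mod p$ exactly as in \cref{prop:elimrEqAp}, together with \cref{lem:Analog85} to track the image in $V_{p-2}\otimes\rD^2$ and $V_3\otimes\rD^{p-2}$.

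The main obstacle, and where the case split $p^2\mid p-r$ vs $p^2\nmid p-r$ actually bites, is the elimination step for the $V_r^*/V_r^{**}$-factors: the coefficient of the surviving monomial $XY^{r-1}$ in $(T-a_p)f$ has the form $(r-p)/p^c$ times a unit, and choosing the right $c$ (namely $c=1$ in case (i) and $c=2$ in case (ii), using part (i) vs (ii) of \cref{lem:analog7dg}) is what makes the expression both integral and nonzero modulo $p$. Verifying that $T^+f_0$ vanishes modulo $p$ in case (ii) with this extra factor of $p$ absorbed will be the most delicate check, and will rely on the congruences $\sum\binom{j}{n}\gamma_j\equiv 0\mod p^{5-n}$ of \cref{lem:analog7dg}(ii) together with $p\geq 5$ to dominate $v(p^c/a_p)$ by the divisibilities $\gamma_j$ supplies, exactly parallel to the argument in \cref{prop:elimrEqAp}.
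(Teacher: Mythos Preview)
Your high-level plan is right---$Q = V_r^*/V_r^{***}$ by \cref{prop:QrEqp}, and one must kill three of its four factors---but two of your proposed eliminations would not go through as stated.

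First, your claim that the $V_r^{**}/V_r^{***}$-factors can be killed ``in both cases by the same pair of test functions'' using the $\sum jD_j$-type coefficient from \cref{prop:elimrEqAp} fails in case~(ii): that coefficient is $C_1/p \equiv (r-a)(r-a+1)/[p(a+r-1)]$, which vanishes modulo $p$ precisely when $p^2 \mid r-a$. So for $p^2 \mid p-r$ a genuinely different construction is needed. The paper instead uses a strengthening of \cref{lem:analog7cg} (available because $p^2\mid p-r$) to get $\gamma_j \equiv \binom{r}{j} \bmod p^2$ with $j\equiv p-2\bmod p-1$; then $T^-f_1 - a_pf_0$ already vanishes modulo $p$, and the surviving term is $-a_pf_1$ itself, which reduces to a $\theta^2$-expression mapping to $X^3$ via \cref{lem:Analog85}. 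Note also that \cref{lem:analog7dg}(ii), which you invoke for $V_r^{**}/V_r^{***}$, is indexed by $j\equiv 1\bmod p-1$; for $a=p$ that is the class $j\equiv a$, not $j\equiv a-2$, so it is the wrong congruence class for this step (and in any event \cref{lem:analog7dg} requires $p^2\mid p-r$, so it gives nothing in case~(i)).

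Second, the two-piece template $f=f_0+f_1$ supported at radius $\leq 1$ is not enough to eliminate the unwanted $V_r^*/V_r^{**}$-factor when $a=p$. Since $a-1\equiv 0\bmod p-1$, the sum $\sum_{j\equiv 0}\binom{r}{j}X^{r-j}Y^j$ picks up the boundary term $j=0$, producing an unwanted $X^r$. The paper (following \cite[Theorem 8.9]{BG}) uses a three-piece function $f_0+f_1+f_2$, where $f_2$ is supported at radius~$2$ on cosets $g^0_{2,p[\lambda]}$ and an auxiliary $f_0=[\id,\,c(X^r-X^pY^{r-p})]$ is inserted so that $T^+f_0$ cancels that $X^r$. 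In case~(i) the coefficients are the $\beta_j$ of \cref{lem:analog7bg} (which needs only $r\equiv a\bmod p$, hence applies even when $p^2\nmid p-r$, unlike \cref{lem:analog7dg}); in case~(ii) they are the $\gamma_j$ of \cref{lem:analog7dg}(i). Your intuition that the split is governed by the residue $(r-p)/p$ is correct, but the mechanism to reach it is structurally different from the two-piece functions of \cref{prop:elimrEqA}.
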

\begin{proof}
  We follow the proof of \cite[Theorem 8.9]{BG}.
  By \cref{prop:QrEqp},
  \[
    0 \rightarrow V_r^*/V_r^{***} \rightarrow Q \rightarrow 0,
  \]
  that is, $Q \simeq V_r^* / V_r^{***}$.
  \begin{enumerate}
    \item

      To eliminate the factors from $ V_r^{**}/V_r^{***} $ we choose the functions as in  \cref{prop:elimrEqAp} putting $a=p$ and seeing that $p^2 \not \mid r-p$.

      To eliminate the factor $V_{p-2} \ox \rD$ we choose the functions $f = f_0 + f_1 +f_2 \in \ind_{KZ}^G \Sym^r \overline{\mathbb{Q}}_p^2$, given by:
      \begin{align}
        f_2 & = \sum_{\lambda \in \mathbb{F}_p} \bigg[ g_{2,p[\lambda]}^0, \dfrac{[\lambda]^{p-2}}{p} (Y^r - X^{r-p}Y^p)\bigg],\\
        f_1 & = \bigg[g_{1,0}^0, \dfrac{ (p-1)}{p a_p} \sum_{\substack{0 < j < r-1 \\ j \equiv 0 \mod (p-1)}}  \beta_jX^{r-j}Y^j \bigg], \quad \text{ and }\\
        f_0 & = \bigg[\id, \dfrac{(1-p)}{p}(X^r - X^p Y^{r-p})\bigg]
      \end{align}
      where the integers $\beta_j$ are those given in \cref{lem:analog7bg}.

      In $f_2$ we see that $v(1/p) = -1$, so we only consider $j=0,1$.
      For $j=0$ we see that $\binom{r}{0}-\binom{p}{0} = 0$ while for $j=1$ we obtain $\frac{p}{p}\big( \binom{r}{1} - \binom{p}{1} \big) \equiv 0 \mod p$ as $r \equiv p \mod p$.
      Thus $T^+ f_2 \equiv 0 \mod p$.
      Since $v(a_p) > 2$, we see that $a_p f_2 \equiv 0 \mod p$.

      In $f_1$ we see $v(1/pa_p) > -4$.
      Because $\sum \binom{j}{n} \beta_j \equiv 0 \mod p^{4-n}$, we have $T^+ f_1 \equiv 0 \mod p$.
      Since the highest index is $i = r-p$, we see that $p^{r-i} = p^{p}$ kills $1 /p a_p$ for $p \geq 5$, which means $T^- f_1 \equiv 0 \mod p$.

      In $f_0$ we have $v(1/p) = -1$, so we only consider $j=0,1$.
      For $\lambda \ne 0$ and $j=0$ we see that $\frac{(1-p)}{p}(\binom{0}{0}-\binom{r-p}{0})X^r =0 $ while for $j=1$, $\frac{p(1-p)}{p}\big( \binom{0}{1} - \binom{r-p}{1} \big) \equiv 0 \mod p$ as $r \equiv p \mod p$.
      However, if $\lambda =0$ we have that $\lambda^0=1$, so we consider $i=j=0$ and see that $T^+ f_0 \equiv  \bigg[g_{1,0}^0, \dfrac{(1-p)}{p}X^r \bigg]$.
      Since $v(a_p) > 2$, we see that $a_p f_0 \equiv 0 \mod p$.

      For $T^- f_2$, for $i=r$ we see that:
      \[T^-f_2  =  \bigg[g_{1,0}^0, \frac{(p-1)}{p}\sum_{\substack{0 \leq j \leq r-1 \\ j \equiv 0 \mod (p-1)}} \binom{r}{j}X^{r-j}Y^j \bigg].\]
      The last term above (when $j=r-1$) is $\dfrac{(p-1)r}{p}XY^{r-1} $ while the first term (when $j=0$) is cancelled out by $T^+ f_0 =  \bigg[g_{1,0}^0, \dfrac{(1-p)}{p}X^r \bigg]$.

      This yields
      \[ T^-f_2 - a_p f_1 +T^+ f_0 =  \bigg[g_{1,0}^0, \frac{(p-1)}{p}\bigg( \sum_{\substack{0 < j < r-1 \\ j \equiv 0 \mod (p-1)}} \bigg(\binom{r}{j} - \beta_j \bigg)X^{r-j}Y^j + r XY^{r-1}\bigg)\bigg]\]
      which is integral as $\beta_j \equiv \binom{r}{j} \mod p$ and $p \mid r$.

      Now, we follow the same argument as in the proof of \cite[Theorem 8.9(i)]{BG} to eliminate the factor $V_{p-2} \ox \rD$.
      Thus, we are left with the factor $V_1$.
    \item

      We first assume that $v(a_p^2) < 5$ if $p=5$. To eliminate the factors from $V_r^{**}/V_r^{***}$ we consider $f = f_0 + f_1 \in \ind_{KZ}^G \Sym^r \overline{\mathbb{Q}}_p^2$, given by:
      \begin{align}
        f_1 = & \sum_{\lambda \in \mathbb{F}_p^*} \bigg[ g_{1,[\lambda]}^0, \dfrac{p}{a_p} [\lambda]^{p-3}(Y^r - X^{r-p}Y^p)\bigg]\\
              & + \bigg[g_{1,0}^0,\dfrac{\binom{r}{2}(1-p)}{pa_p}(X^2Y^{r-2} - X^{r-p + 2}Y^{p-2})\bigg], \quad \text{ and }\\
        f_0 = & \bigg[\id, \frac{p (p-1)}{a_p^2} \sum_{\substack{0 < j < r-2 \\ j \equiv p-2 \mod (p-1)}}  \gamma_jX^{r-j}Y^j \bigg],
      \end{align}
      where the integers $\gamma_j$ are those given in \cref{lem:analog7cg} that satisfy $\gamma_j \equiv \binom{r}{j} \mod p^2$ due to the condition that $p^2 \mid p-r$.

      In $f_1$, we note that in the first part $v(p/a_p) > -2$, so for $T^+ f_1$ we consider $j=0,1$.
      For $j=0$ we see that $\binom{r}{0} - \binom{p}{0} = 0$ while for $j=1$ we see $\frac{p}{a_p} \big(\binom{r}{1} - \binom{p}{1} \big) \equiv 0 \mod p$ as $p^2 \mid r-p$.
      In the second part of $f_1$ we have $v(\binom{r}{2}/p a_p) > -3$, so we consider $j=0,1,2$.
      For $j=0$ we see that $\binom{r-2}{0} - \binom{p-2}{0} = 0$ while for $j=1,2$ we see $\frac{p^j \binom{r}{2}(1-p)}{pa_p}\big(\binom{r-2}{j} - \binom{p-2}{j} \big) \equiv 0 \mod p$ as $p^2 \mid r-p$.
      Thus, $T^+ f_1 \equiv 0 \mod p$.

      In $f_0$ we have $v(p/a_p^2) > - 5$.
      Because $\sum \binom{j}{n} \gamma_j \equiv 0 \mod p^{5-n}$, we have $T^+ f_0 \equiv 0 \mod p$.
      Note that for $p=5$, $T^+ f_0 = [g_{1,0}, \frac{p^4(p-1)}{a_p^2} \gamma_{3} \binom{3}{3}X^{r-3}Y^3 ]$.
      Because $v(a_p^2) < 5$ and $\gamma_3 \equiv \binom{r}{3} \equiv 0 \mod p$, we obtain $T^+ f_0 \equiv 0 \mod p$.
      Because the highest index is $i = r-p-1$, we see that $p^{r-i} = p^{p+1}$ kills $p / a_p^2$.
      Hence $T^- f_0 \equiv 0 \mod p$.

      For $T^- f_1$, for the first part ($i=r$) we see  that:
      \[T^-f_1  = \bigg[\id, \frac{(p-1)p}{a_p}\sum_{\substack{0 < j \leq r-2 \\ j \equiv p-2 \mod (p-1)}} \binom{r}{j}X^{r-j}Y^j \bigg].\]
      The last term when $j=r-2$ is $\dfrac{\binom{r}{2}p}{a_p}X^2Y^{r-2} $, which is cancelled out by the second part of $T^- f_1$ ($i=r-2$).
      This yields
      \[ T^-f_1 - a_p f_0 = \bigg[\id, \frac{(p-1)p}{a_p} \sum_{\substack{0 < j < r-2 \\ j \equiv p-2 \mod (p-1)}} \bigg(\binom{r}{j} - \gamma_j \bigg)X^{r-j}Y^j \bigg]\]
      which is zero $\mod p$ as $\gamma_j \equiv \binom{r}{j} \mod p^2$ while $v(p/a_p) > -2$.
      Hence
      \[
        (T-a_p) f
        \equiv - a_p f_1
        \equiv -\bigg[g_{1,0}^0, \dfrac{\binom{r}{2}(1-p)}{p}(X^2Y^{r-2} - X^{r-p + 2}Y^{p-2}) \bigg]
      \]
      By the hypothesis $\frac{r}{p} \equiv 1 \mod p$ and $r-1 \equiv p-1 \mod p$, so
      \[
        (T-a_p) f
        \equiv -\bigg[g_{1,0}^0, \dfrac{(p-1)(1-p)}{2}(X^2Y^{r-2} - X^{r-p + 2}Y^{p-2})\bigg].
      \]
      Therefore, as in \cref{prop:elimrEqA}
      \begin{align}
        & X^2Y^{r-2} - X^{r-p + 2}Y^{p-2} \\
        \equiv & - \theta^2(X^{r-3p + 2}Y^{p-4}-Y^{r-2p-2}) \mod V_r^{***}.
      \end{align}
      Thus, $\overline{(T-a_p) f}$ maps to $[g_{1,0}^0, X^3]$ by \cref{lem:Analog85}. Following previous arguments, this shows that we can eliminate the factors from $V_r^{**}/V_r^{***}$.

      In the case $p=5$ and $v(a_p^2 ) \geq 5$ we assume $v(a_p^2 - p^5) = 5$ if $v(a_p) = 5/2$ and follow the argument in the case $p=3$ in \cite [Theorem 8.9.(ii)]{BG}.

      We consider the function $f' = \frac{a_p^2}{p^5} f$ where $f$ is the function above, obtaining:
      \begin{align}
        f'_1 = & \sum_{\lambda \in \mathbb{F}_p^*} \bigg[ g_{1,[\lambda]}^0, \dfrac{a_p}{p^4} [\lambda]^{p-3}(Y^r - X^{r-p}Y^p) \bigg]\\
               & + \bigg[g_{1,0}^0, \dfrac{\binom{r}{2}(1-p)a_p}{p^6}(X^2Y^{r-2} - X^{r-p + 2}Y^{p-2}) \bigg], \quad \text{ and }\\
        f'_0  = & \bigg[\id, \sum_{\substack{0 < j < r-2 \\ j \equiv p-2 \mod (p-1)}} \frac{(p-1)}{p^4} \gamma_jX^{r-j}Y^j \bigg],
      \end{align}
      where the integers $\gamma_j$ are those given in \cref{lem:analog7cg} that satisfy $\gamma_j \equiv \binom{r}{j} \mod p^2$ due to the condition that $p^2 \mid p-r$.

      In $f'_1$  we have $v(a_p/p^4) > -2$ in the first part of $f'_1$, so we consider $j=0,1$.
      For $j=0$ we see that $\binom{r}{0} - \binom{p}{0} = 0$ while for $j=1$ we see $\frac{pa_p}{p^4} \big(\binom{r}{1} - \binom{p}{1} \big) \equiv 0 \mod p$ as $p^2 \mid r-p$.
      In the second part of $f'_1$ we see that $v(\binom{r}{2} a_p/p^6)> -3$, so we consider $j=0,1,2$.
      For $j=0$ we see that $\binom{r-2}{0} - \binom{p-2}{0} = 0$ while for $j=1,2$ we see $\frac{p^j \binom{r}{2}a_p}{p^6} \big(\binom{r-2}{j} - \binom{p-2}{j} \big) \equiv 0 \mod p$ as $p^2 \mid r-p$.
      Thus, the second part of $T^+ f'_1 \equiv 0 \mod p$ as well.

      In $f'_0$ we have $v(1/p^4) =-4$.
      The highest index in $f'_0$ is $i= r-p-1$, so $p^{r-i} = p^{p+1}$, which kills $1/p^4$.
      Hence, $T^- f'_0 \equiv 0 \mod p$.
      We obtain $T^+ f'_0 =[\id, \dfrac{\gamma_3 }{p} X^{r-3}Y^3]$ (observing that $p-2 =3$), which is integral as $\gamma_3 \equiv \binom{r}{3} \equiv 0 \mod p$.

      For $T^- f'_1$, for the first part (when $i=r$) we that:
      \[T^-f'_1  = \bigg[\id, \frac{(p-1)a_p}{p^4}\sum_{\substack{0 < j \leq r-2 \\ j \equiv p-2 \mod (p-1)}} \binom{r}{j}X^{r-j}Y^j \bigg].\]
      The last term above (when $j=r-2$) is $\dfrac{\binom{r}{2}a_p}{p^4}X^2Y^{r-2} $, which is cancelled out by the second part of $T^- f'_1$ (when $i=r-2$).
      This yields
      \[ T^-f'_1 - a_p f'_0 = \bigg[\id, \frac{(p-1)a_p}{p^4} \sum_{\substack{0 < j < r-2 \\ j \equiv p-2 \mod (p-1)}} \bigg(\binom{r}{j} - \gamma_j \bigg)X^{r-j}Y^j \bigg],\]
      which is zero $\mod p$ as $\gamma_j \equiv \binom{r}{j} \mod p^2$ while $v(a_p/p^4) > -2$.

      Hence $(T-a_p) f' \equiv - a_p f'_1 + T^+ f'_0$, which is equivalent to:
      \[
        -\bigg[g_{1,0}^0, \dfrac{\binom{r}{2}(1-p)(a_p^2)}{p^6}(X^2Y^{r-2} - X^{r-p + 2}Y^{p-2}) + \dfrac{\gamma_3 }{p} X^{r-3}Y^3\bigg].
      \]
      We note that as $r \equiv p \mod p^2$ and $p=5$, we have  $\gamma_3/p \equiv \binom{r}{3}/p \equiv 2 \mod p$ and $\binom{r}{2}/p \equiv 2 \mod p $.
      By adding a suitable term of $X^2Y^{r-2}$, we obtain
      \[
        (T-a_p) f'
        \equiv - a_p f'_1 + T^+ f'_0
        \equiv -\bigg[g_{1,0}^0, 2\bigg(1-\dfrac{a_p^2}{p^5}\bigg)(X^2Y^{r-2} - Y^{r-3}Y^{3})\bigg].
      \]
      We see that this is in $V_r^{**}/V_r^{***}$ as $r-5 \equiv 0 \mod p$ ($p=5$) and that its image under the projection $V_r^{**}/V_r^{***} \twoheadrightarrow V_{3} \otimes D^{p-3} $ is $2\bigg(1-\dfrac{a_p^2}{p^5}\bigg)X^3$. By the hypothesis we know that $1-\frac{a_p^2}{p^5} \ne 0$ so we can eliminate the factors from $V_r^{**}/V_r^{***}$.

      To eliminate the factor $V_{1}$ we choose the functions $f = f_0 + f_1 +f_2$ in $\ind_{KZ}^G \Sym^r \overline{\mathbb{Q}}_p^2$, given by:
      \begin{align}
        f_2 = & \sum_{\lambda \in \mathbb{F}_p^*} \bigg[ g_{2,p[\lambda]}^0, \dfrac{[\lambda]^{p-2}p}{a_p} (Y^r - X^{r-p}Y^p)\bigg]\\
              & + \bigg[g_{2,0}^0, \dfrac{r(1-p)}{pa_p}(XY^{r-1} - X^{r-p + 1}Y^{p-1})\bigg],\\
        f_1 = & \bigg[g_{1,0}^0, \dfrac{ (p-1)p}{a_p^2} \sum_{\substack{0 < j < r-1 \\ j \equiv 0 \mod (p-1)}}  \gamma_jX^{r-j}Y^j \bigg],
      \end{align}
      and
      \[
        f_0 = \bigg[\id, \dfrac{(1-p)p}{a_p}(X^r - X^p Y^{r-p})\bigg]
      \]
      where the integers $\gamma_j$ are those given in \cref{lem:analog7dg}.(i) that satisfy $\gamma_j \equiv \binom{r}{j} \mod p^2$ due to the condition that $p^2 \mid p-r$.

      In $f_2$  we see that $v(p/a_p) > -2$ in the first part of $f_2$, so we consider $j=0,1$.
      For $j=0$ we see that $\binom{r}{0} - \binom{p}{0} = 0$ while for $j=1$ we see $\frac{p^2}{a_p} \big(\binom{r}{1} - \binom{p}{1} \big) \equiv 0 \mod p$ as $p^2 \mid r-p$.
      In the second part of $f_2$ we see that $v(r/ p a_p)> -3$, so we consider $j=0,1,2$.
      For $j=0$ we see that $\binom{r-1}{0} - \binom{p-1}{0} = 0$ while for $j=1,2$ we see $\frac{p^j r}{pa_p} \big(\binom{r-1}{j} - \binom{p-1}{j} \big) \equiv 0 \mod p$ as $p^2 \mid r-p$.
      Thus $T^+ f_2 \equiv 0 \mod p$.

      In $f_1$ we have $v(p/a_p^2) > -5$.
      Since $\sum \binom{j}{n} \gamma_j \equiv 0 \mod p^{5-n}$, we see that $T^+ f_1 \equiv 0 \mod p$.
      Note that for $p=5$, $T^+ f_1 = [g_{1,0}, \frac{p^5}{a_p^2} \gamma_{4} \binom{3}{3}X^{r-4}Y^4] \mod p$ but $\gamma_4 \equiv \binom{r}{4} \equiv 0$, so $T^+ f_1 \equiv 0 \mod p$.
      Since the highest index is $i = r-p-1$, we see that $p^{r-i} = p^{p+1}$ kills $p / a_p^2$ hence $T^- f_0 \equiv 0 \mod p$.

      In $f_0$ we have $v(p/a_p) > -2$, so we only consider $j=0,1$.
      For $j=0$ we see that $\frac{(1-p)p}{a_p}(\binom{0}{0}-\binom{r-p}{0})X^r =\frac{(1-p)p}{a_p}X^r $ while for $j=1$ we obtain $\frac{p^2(1-p)}{a_p}\big( \binom{0}{1} - \binom{r-p}{1} \big) \equiv 0 \mod p$ as $r \equiv p \mod p$.
      Thus,
      \[
        T^+ f_0 = [g_{1,0}^0, \frac{(1-p)p}{a_p}X^r].
      \]
      For $T^- f_2$, for the first part ($i=r$) we that:
      \[
        T^-f_2  = \bigg[\id, \frac{(p-1)p}{a_p}\sum_{\substack{0 \leq j \leq r-1 \\ j \equiv 0 \mod (p-1)}} \binom{r}{j}X^{r-j}Y^j \bigg].
      \]
      The last term (when $j=r-1$) is $\dfrac{(p-1)r}{a_p}XY^{r-1} $, which is cancelled out by the second part of $T^- f_2$ (when $i=r-1$).
      The first term (when $j=0$) is cancelled out by $T^+ f_0$.
      This yields
      \[
        T^-f_2 - a_p f_1 +T^+ f_0 = \bigg[\id, \frac{(p-1)p}{a_p} \sum_{\substack{0 < j < r-1 \\ j \equiv 0 \mod (p-1)}} \bigg(\binom{r}{j} - \gamma_j \bigg)X^{r-j}Y^j \bigg]
      \]
      which is zero $\mod p$ as $\gamma_j \equiv \binom{r}{j} \mod p^2$ while $v(p/a_p) > -2$.
      Hence
      \[
        (T-a_p) f
        \equiv - a_p f_2
        \equiv - \bigg[g_{2,0}^0, \dfrac{r(1-p)}{p}(XY^{r-1} - X^{r-p + 1}Y^{p-1})\bigg].
      \]
      By assumption, $\frac{r}{p} \equiv 1 \mod p$.
      We then follow the same argument as in the proof of \cite[Thm 8.9(ii)]{BG} to eliminate the factor $V_1$.
      Thus, the only factor left is $V_{p-2} \ox \rD$.
      \qedhere
  \end{enumerate}
\end{proof}

\subsection{$r$ does \emph{not} have the same representative mod $(p-1)$ and $p$}

\begin{proposition}
  \label{prop:elimrnEqAa-1}
  If  $r \equiv a \mod (p-1)$ and $r \not \equiv a, a-1 \mod p$ for $5 \leq a \leq p$, then there is a surjection
  \[
    \ind^G_{KZ} (V_{p-a + 3} \ox \rD^{a-2})  \twoheadrightarrow \overline{\Theta}_{k,a_p}.
  \]
\end{proposition}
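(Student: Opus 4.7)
By \cref{prop:Qrnota}, the module $Q$ has three Jordan--Hölder factors: the top factor $U = V_{p-a-1} \otimes \rD^a$ coming from $V_r/V_r^*$, and two factors $V_{p-a+1} \otimes \rD^{a-1}$ and $V_{p-a+3} \otimes \rD^{a-2}$ inside $W \subset V_r^*/V_r^{***}$. Since we already have the surjection $\ind^G_{KZ} Q \twoheadrightarrow \overline{\Theta}_{k,a_p}$, it suffices to produce two functions $f^{(1)}, f^{(2)} \in \ind_{KZ}^G \Sym^r \overline{\mathbb{Q}}_p^2$ satisfying the hypotheses outlined in the introduction: $(T-a_p)(f^{(i)}) \in \ind_{KZ}^G \Sym^r \overline{\mathbb{Z}}_p^2$, and the mod $p$ reductions $\overline{(T-a_p)(f^{(1)})}$, $\overline{(T-a_p)(f^{(2)})}$ generate, respectively, the $G$-submodules $\ind_{KZ}^G (V_{p-a-1} \otimes \rD^a)$ and $\ind_{KZ}^G (V_{p-a+1} \otimes \rD^{a-1})$ of $\ind^G_{KZ} Q$.

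To eliminate the top factor $V_{p-a-1} \otimes \rD^a$, I will follow the template of \cref{prop:elimrEqAp} (elimination of the same factor there) and of \cite[Theorem 8.3]{BG}. Set
\[
  f^{(1)}_1 = \sum_{\lambda \in \mathbb{F}_p^*}\!\bigg[g^0_{1,[\lambda]},\, \tfrac{1}{p^2}(Y^r - X^{r-a}Y^a)\bigg] + \bigg[g_{1,0}^0,\, \tfrac{p-1}{p}(Y^r - X^{r-a}Y^a)\bigg],
\]
\[
  f^{(1)}_0 = \bigg[\id,\, \tfrac{p-1}{p^2 a_p}\sum_{\substack{0<j<r\\ j\equiv a\!\!\mod p-1}}\!\!\alpha_j X^{r-j}Y^j\bigg],
\]
where the $\alpha_j$ are the integers of \cref{lem:analog7ag}. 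The valuation bound $v(a_p) > 2$ and the defining congruences $\sum \binom{j}{n}\alpha_j \equiv 0 \mod p^{4-n}$ will force $T^+ f^{(1)}_1, T^+ f^{(1)}_0, T^- f^{(1)}_0, a_p f^{(1)}_1 \equiv 0 \mod p$, by a case analysis identical to the one in the second half of \cref{prop:elimrEqAp}. The cancellation between $T^- f^{(1)}_1$ and $a_p f^{(1)}_0$ is governed by $\alpha_j \equiv \binom{r}{j} \mod p$, and the surviving term at the identity coset projects to a nonzero multiple of $X^{p-a-1}$ in $V_{p-a-1}\otimes \rD^a$ precisely because $r \not\equiv a \mod p$ (the value $\frac{a-r}{pa}$ appearing in the computation is nonzero modulo $p$).

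To eliminate the remaining middle factor $V_{p-a+1}\otimes \rD^{a-1}$ I will adapt the construction used in \cref{prop:elimrEqA} (elimination of $V_r^*/V_r^{**}$), but without invoking \cref{lem:analog7bg}, which requires $r\equiv a \mod p$ and therefore is unavailable here. In its place I will use the explicit element $F(X,Y) = (a-2)X^{r-1}Y + \sum_{k}k^{p+2-a}(kX+Y)^{r-2}X^2 \in X_{r-2}$ of \cref{lem:Xr-2AstModXr-2AstAstUnequalModP}, whose coefficient $c_1 = a-r \not\equiv 0 \mod p$ identifies its image in $V_r^*/V_r^{**}$ with the cosocle $V_{p-a+1}\otimes \rD^{a-1}$. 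Concretely I take
\[
  f^{(2)}_1 = \sum_{\lambda \in \mathbb{F}_p^*}\!\bigg[g^0_{1,[\lambda]},\, \tfrac{[\lambda]^{p-2}}{p a_p}(Y^r - X^{r-a}Y^a)\bigg] + \bigg[g_{1,0}^0,\, \tfrac{1}{p a_p}F(X,Y)\bigg],
\]
together with a correcting term $f^{(2)}_0 = [\id, \tfrac{1}{p a_p^2}\sum \alpha_j X^{r-j}Y^j]$ (with $\alpha_j$ as above) chosen so that $T^- f^{(2)}_1 - a_p f^{(2)}_0$ becomes integral via the congruence $\alpha_j \equiv \binom{r}{j} \mod p$. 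The valuation budget $v(p/a_p) < -2$ and $v(p/a_p^2) < -5$ will again annihilate $T^+$ and the tail contributions of $T^-$ modulo $p$; the surviving reduction is supported on a single coset and, via the isomorphism of \cref{lem:Analog85}, hits a nonzero multiple of a generator of $V_{p-a+1} \otimes \rD^{a-1}$ because of the factor $a-r \not\equiv 0 \mod p$.

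The main obstacle in carrying this plan out is the second step: the absence of a direct analogue of \cref{lem:analog7bg}/\cref{lem:analog7cg} under the hypothesis $r\not\equiv a,a-1 \mod p$ means the cancellation of $T^+ f^{(2)}_0$ and of the unwanted $V_r^{**}/V_r^{***}$-contribution from $T^- f^{(2)}_1$ must be engineered by hand using only the weaker identities of \cref{lem:binomialsumaeq2} together with a careful choice of boundary term involving $F(X,Y)$. The condition $r \not\equiv a-1 \mod p$ enters here to ensure that the determinant analogous to the one appearing in the proof of \cref{lem:Xr-2AstAstModXr-2AstAstAstUnequalModP} does not degenerate, so that the correction term can indeed be chosen to kill the $V_r^{**}/V_r^{***}$-piece while preserving the nonzero image in $V_{p-a+1}\otimes \rD^{a-1}$. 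Once both $f^{(1)}$ and $f^{(2)}$ are in place, the conclusion follows by applying \cite[Proposition 3.3]{BG09} to the resulting surjection $\ind_{KZ}^G (V_{p-a+3}\otimes \rD^{a-2}) \twoheadrightarrow \overline{\Theta}_{k,a_p}$.
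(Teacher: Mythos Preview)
Your overall architecture is correct, but both of your explicit test functions have genuine problems.

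\textbf{First step (eliminating $V_{p-a-1}\otimes\rD^a$).} You take the scaling from \cref{prop:elimrEqAp}, with denominator $p^2$ in $f^{(1)}_1$ and $p^2 a_p$ in $f^{(1)}_0$, and assert that the case analysis is ``identical''. It is not: that analysis uses $r\equiv a\bmod p$ in two places. In $T^+f^{(1)}_1$, the $j=1$ term is $\tfrac{p}{p^2}(r-a)X^{r-1}Y=\tfrac{r-a}{p}X^{r-1}Y$, which is non-integral here since $p\nmid r-a$. In $T^+f^{(1)}_0$, the coefficient has valuation $<-5$, so you would need $\sum\binom{j}{n}\alpha_j\equiv 0\bmod p^{5-n}$; \cref{lem:analog7ag} only gives $p^{4-n}$, and the strengthened version quoted in \cref{prop:elimrEqAp} again requires $r\equiv a\bmod p$. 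The paper fixes this by using the milder scaling $\tfrac{1}{p}$ in $f_1$ and $\tfrac{p-1}{p\,a_p}$ in $f_0$: then only $j=0,1$ matter for $T^+f_1$, the $j=1$ contribution $(r-a)X^{r-1}Y$ is integral and dies in $Q$, and the basic congruences of \cref{lem:analog7ag} suffice for $T^+f_0$. The surviving term then maps to $\tfrac{r-a}{a}X^{p-a-1}\neq 0$ exactly as in \cite[Theorem~8.3]{BG}.

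\textbf{Second step (eliminating $V_{p-a+1}\otimes\rD^{a-1}$).} Your idea of inserting the polynomial $F(X,Y)$ from \cref{lem:Xr-2AstModXr-2AstAstUnequalModP} at the $g_{1,0}^0$ coset, together with an $\alpha_j$-based corrector, does not assemble into a working Hecke computation: $F$ lives in $X_{r-2}$, so $[g_{1,0}^0,F]$ already maps to $0$ in $\ind_{KZ}^G Q$, and the $\alpha_j$ are tuned to the residue class $j\equiv a$, not $a-1$, so they cannot absorb the $T^-$ output of the $[\lambda]^{p-2}$-twisted sum. The paper takes a different route: it uses the second-difference element $XY^{r-1}-2X^pY^{r-p}+X^{2p-1}Y^{r-2p+1}$ (essentially $\theta^2$ times a monomial) at $g_{1,0}^0$ with denominator $p^2$, paired with a corrector built from $D_j=\binom{r-1}{j}-\bigl(\tfrac{p}{a-1}+O(p^2)\bigr)\binom{r}{j}$ and an explicit counterterm $C_1=-\sum jD_j$. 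The point is that $C_1/p\equiv\frac{(r-a)(r-a+1)}{(a-1)(a-2)}\bmod p$, which is nonzero precisely under your hypothesis $r\not\equiv a,a-1\bmod p$; this is where both congruence conditions are actually consumed.
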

\begin{proof}
  By \cref{prop:Qrnota}, we have the following Jordan-Hölder series of $Q$:
  \[ 0 \rightarrow W \rightarrow Q \rightarrow V_{p-a-1} \ox \rD^a \rightarrow 0 \]
  where $W$ has $V_{p-a + 1} \ox \rD^{a-1}$ and $ V_{p-a + 3} \ox \rD^{a-2}$ as factors.

  To eliminate the factor $V_{p-a-1} \ox \rD^a$, we consider $f = f_0 + f_1 \in \ind_{KZ}^G \Sym^r \overline{\mathbb{Q}}_p^2 $, where:
  \begin{align}
    f_1 & = \sum_{\lambda \in \mathbb{F}_p}\bigg[g^0_{1, [\lambda]}, \frac{1}{p} (Y^r-X^{r-a}Y^{a})\bigg], \quad \text{ and }\\
    f_0 & = \bigg[\id, \frac{(p-1)}{p a_p} \sum_{\substack{0 < j < r \\ j \equiv a\mod (p-1)}} \alpha_j X^{r-j}Y^j \bigg],
  \end{align}
  where the $\alpha_j$ are chosen as in \cref{lem:analog7ag}.

  In $f_1$ we have $v(1/p)=-1$, so we consider only $j=0,1$ in $T^+ f_1$.
  For $j=0$, we obtain $\frac{1}{p}(\binom{r}{0}-\binom{a}{0}) = 0$ while for $j=1$ we obtain $\frac{p}{p}(\binom{r}{1}-\binom{a}{1})X^{r-1}Y$, which is integral and goes to zero in $Q$.
  Because $v(a_p) > 2$, we have $a_p f_1 \equiv 0 \mod p$.

  In $ f_0$ we note that $v(1/p a_p) > -4$.
  As $\sum \binom{j}{n} \alpha_j \equiv 0 \mod p^{4-n}$ and $j \equiv a \geq 4$, hence $T^+ f_0 \equiv 0 \mod p$.
  For $T^- f_0$ the highest index $i= r-(p-1)$ and $p^{r-i} = p^{p-1}$, which kills $1/pa_p$ for $p \geq 5$.
  For $T^- f_1$ we consider $i=r$, obtaining:
  \[
    T^-f_1= \bigg[\id, \frac{(p-1)}{p} \sum_{\substack{0 < j < r \\ j \equiv a \mod (p-1)}} \binom{r}{j}X^{r-j}Y^j  +p Y^r\bigg].
  \]
  Because $Y^r$ is sent to zero in $Q$:
  \[
    T^-f_1 - a_p f_0 = \bigg[\id, \frac{(p-1)}{p} \sum_{\substack{0 < j < r \\ j \equiv a \mod (p-1)}} \bigg(\binom{r}{j} - \alpha_j\bigg)X^{r-j}Y^j \bigg]
  \]
  which is integral as $\binom{r}{j} \equiv \alpha_j \mod p$.
  Following the same argument as in the proof of \cite[Theorem 8.3]{BG}, we see that $(T-a_p) f$ maps to $[\id, \frac{r-a}{a} X^{p-a-1}]$, which is nonzero as $r \not \equiv a \mod p$.
To eliminate the factor $V_{p-a+1} \ox \rD^{a-1}$, we consider $f = f_1 + f_0$, where
  \begin{align}
    f_1 = & \sum_{\lambda \in \mathbb{F}^*_p}\bigg[g^0_{1, [\lambda]},\dfrac{-1}{a-1}[\lambda]^{p-2}(Y^{r}-2X^{p-1}Y^{r-p+1} + X^{2p-2}Y^{r-2p+2})\bigg] \\
          & + \sum_{\lambda \in \mathbb{F}^*_p}\bigg[g^0_{1, [\lambda]} \frac{1}{p^2} (XY^{r-1}-2X^{p}Y^{r-p} + X^{2p-1}Y^{r-2p+1}) \bigg]\\
          & + \bigg[g_{1,0}^0, \dfrac{r}{p(a-1)}(XY^{r-1}- X^{r-a+1}Y^{a-1}) \bigg]
  \end{align}
  and
  \begin{align}
    f_0 = & \bigg[\id, \frac{(p-1)}{pa_p} \bigg( \dfrac{C_1}{p-1}(X^pY^{r-p} - X^{2p-1}Y^{r-2p+1})+ \sum_{\substack{0 < j < r-1 \\ j \equiv a-1\mod (p-1)}} D_j  X^{r-j}Y^j \bigg) \bigg]\\
    + & \bigg[\id, \bigg( \dfrac{C_1(2r-3p) - \sum\binom{j}{2} D_j }{pa_p}\bigg)(X^{r-a+1}Y^{a-1}-2 X^{r-a-p+2}Y^{p+a-2} + X^{r-a-2p+3}Y^{2p+a-3}) \bigg],
  \end{align}
  where
  \[
    D_j =\binom{r-1}{j} -  \bigg( \dfrac{p}{a-1} + O(p^2) \bigg)  \binom{r}{j}
  \]
  and $O(p^2)$ is chosen so that
  \[
    \sum_{\substack{0 < j < r-1 \\ j \equiv a-1\mod (p-1)}} D_j =0.
  \]
  We let $C_1 = -\sum j D_j$.
  By \cref{lem:binomialsumaeq2}:
  \begin{align}
    \sum_{\substack{0 < j < r-1 \\ j \equiv a-1\mod (p-1)}}j  D_j & \equiv \dfrac{p(r-a)(r-a+1)}{(a-1)(a-2)} \mod p^2,\\
    \sum_{\substack{0 < j < r-1 \\ j \equiv a-1\mod (p-1)}}\binom{j}{2}  D_j & \equiv \dfrac{p(r-1)(r-a)(r-a+1)}{(a-1)(a-3)} \mod p^2, \quad \text{ and }\\
    \sum_{\substack{0 < j < r-1 \\ j \equiv a-1\mod (p-1)}}\binom{j}{3}  D_j & \equiv \dfrac{p\binom{r-1}{2}(r-a)(r-a+1)}{(a-1)(a-4)} \mod p^2.
  \end{align}
  In the second part of $f_1$ we have $v(1/p^2) = -2$, so we consider $j=0,1,2$ for $T^+ f_1$.
  For $j=0$ we obtain $\binom{r-1}{0} - 2 \binom{r-p}{0} + \binom{r-2p+1}{0} = 0 $ while for $j=1$, we see that $\binom{r-1}{1} - 2 \binom{r-p}{1} +  \binom{r-2p+1}{1} = 0 $, too.
  For $j=2$ the term $X^{r-2}Y^2$ has integral coefficients, so it maps to zero in $Q$.
  In the first part, we only consider $j=0$ and see that $\binom{r}{0} - 2 \binom{r-p+1}{0} + \binom{r-2p+2}{0} = 0 $. In the third part of $f_1$ we see that $v(r/p(a-1) = -1)$ and it is clear that for $j=0$ we obtain $T^+ f_1 = 0$ while for $j =1$ the term is integral. Hence, $T^+ f_1 \equiv 0 \mod p$.
  As $v(a_p) > 2$, we see that $a_p f_1 \equiv 0 \mod p$.

  In $f_0$ we see that $v (1/p a_p) > -4$, so we need to consider $j=0,1,2,3$ for $T^+ f_0$.
    In the first part, we see that for $j=0,1 $ the terms with $X^r$ and $X^{r-1}Y$ vanish modulo $p$.
  For $j=2$, the term with $X^{r-2}Y^2$ has the coefficient $\frac{p(p-1)}{a_p}(C_1 (2r-3p)- \sum \binom{j}{2}D_j)$.

  In the second part, we see that for $j=0,1$ the terms vanish. For $j=2$, the term is $p^2/pa_p ( C_1(2r-3p) - \sum\binom{j}{2} D_j )(p^2-2p+1) \mod p$ which cancels the $j=2$ term from the first part.
  Finally, for $j=3$, in both terms we have that $v(C_1) \geq 1$ and $v(\sum \binom{j}{3} D_j) \geq 1$, so $T^+ f_0 \equiv 0 \mod p$ so the term vanishes. Hence $T^+ f_0 \equiv 0 \mod p$.
  As the highest $i= r-p$, we see that $T^- f_0 \equiv 0 \mod p$.

  For $T^- f_1$ we consider $i=r $ and $i=r-1$ from the first two parts and $i=r-1$ from the third part.
  For $i=r-1$ from the first part we obtain:
  \[
    \bigg[\id, \dfrac{(p-1)}{p} \sum_{\substack{0 < j \leq r-1 \\ j \equiv a-1\mod (p-1)}} \binom{r-1}{j} X^{r-j}Y^j \bigg],
  \]
  and for $i=r$:
  \[
    \bigg[\id, \dfrac{-(p-1)}{a-1} \sum_{\substack{0 < j < r-1 \\ j \equiv a-1\mod (p-1)}}   \binom{r}{j}  X^{r-j}Y^j \bigg].
  \]

We note that the term with $j=r-1$ above cancels with the term for $i=r-1$ from the third part.
  Thus:
  \[
    T^- f_1  \equiv  \sum_{\substack{0 < j < r-1 \\ j \equiv a-1\mod (p-1)}} \bigg[\id, \dfrac{(p-1)}{p} D_j X^{r-j}Y^j \bigg].
  \]
  We hence compute that $(T-a_p) f = T^- f_1 - a_p f_0 \equiv [\id, \dfrac{C_1}{p}\theta X^{p-1}Y^{r-2p}] $ (we note that $a_p f_0$ of the second part of $f_0$ vanishes $\mod V_r^{**}$). Thus $(T-a_p) f$ maps to $[\id, \dfrac{C_1}{p} X^{p-a+1}]$.
  Because $r \not \equiv a, a-1 \mod p$,
  \[
    \dfrac{C_1}{p} \equiv \dfrac{(r-a)(r-a+1)}{(a-1)(a-2)}  \not\equiv 0 \mod p.
  \]
  Hence, the only remaining factor is $V_{p-a + 3} \ox \rD^{a-2}$.
\end{proof}

\begin{proposition}
  \label{prop:elimrEqA-1}
  If  $r \equiv a \mod (p-1)$ and $p \mid r - a + 1 $ for $5 \leq a \leq p$, then there is a surjection
  \[
    \ind^G_{KZ} (V_{p-a + 1} \ox \rD^{a-1})  \twoheadrightarrow \overline{\Theta}_{k,a_p}.
  \]
\end{proposition}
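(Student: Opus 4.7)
The plan is to apply \cite[Proposition 3.3]{BG09} after reducing $\ind_{KZ}^G Q \twoheadrightarrow \overline{\Theta}_{k,a_p}$ to the single Jordan-Hölder factor $V_{p-a+1}\otimes \rD^{a-1}$. By \cref{prop:QrEqA-1},
\[
0 \to W \to Q \to V_{p-a-1}\otimes \rD^a \to 0,
\]
with $W$ containing $V_{p-a+1}\otimes \rD^{a-1}$ and possibly $V_{a-4}\otimes \rD^2$ and $V_{p-a+3}\otimes \rD^{a-2}$. So I need to eliminate the top quotient $V_{p-a-1}\otimes \rD^a$ and the (potentially present) factors of $V_r^{**}/V_r^{***}$.

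I would begin by reusing verbatim the function $f=f_0+f_1$ of \cref{prop:elimrnEqAa-1} for the top quotient, namely $f_1=\sum_{\lambda}[g^0_{1,[\lambda]},\tfrac{1}{p}(Y^r-X^{r-a}Y^a)]$ and $f_0=[\id,\tfrac{p-1}{pa_p}\sum_{j\equiv a}\alpha_j X^{r-j}Y^j]$ with $\alpha_j$ as in \cref{lem:analog7ag}. The same $T^{\pm}$-computation yields $(\overline{T-a_p})f \equiv [\id,\tfrac{r-a}{a}X^{p-a-1}]$, and this is nonzero because $r-a\equiv-1\mod p$ under our hypothesis $p\mid r-a+1$.

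The heart of the argument is the second step, eliminating the factor $V_{p-a+3}\otimes \rD^{a-2}$. The naive analog of \cref{prop:elimrnEqAa-1} produces image coefficient $C_1/p\equiv -(r-a)(r-a+1)/((a-1)(a-2))\equiv 0\mod p$ in our setting. The remedy is to absorb one extra factor of $p$ into the denominators of $f_0$ and $f_1$, replacing $1/p$ by $1/p^2$; the integrality of the rescaled $f_0$ requires lifts $D_j\equiv \binom{r-1}{j}\mod p^2$ satisfying $\sum D_j\equiv 0\mod p^2$ and $\sum jD_j\equiv 0\mod p$, which is obtained from \cref{lem:binomialsumaeq2} (cases $i=1,2$) combined with a Vandermonde-type correction parallel to \cref{lem:analog7ag,lem:analog7bg,lem:analog7cg}. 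After the routine $T^{\pm}$-bookkeeping --- in which all contributions other than the target are killed by the relations $v(a_p)>2$ and the prescribed $p$-divisibility of the sums --- the residual coefficient is proportional to $(r-a+1)/p$, a $p$-unit thanks to $p^2\nmid r-a+1$. By \cref{lem:Analog85} this hits $X^{p-a+3}$ in $V_{p-a+3}\otimes \rD^{a-2}$, and by non-splitness of the sequence of \cref{lem:VrAstQuotients}.\ref{en:VrAstAstModVrAstAstAst} for $a\leq p$, this single element generates $\ind_{KZ}^G(V_r^{**}/V_r^{***})$ over $G$; in particular, the socle factor $V_{a-4}\otimes \rD^2$ is eliminated simultaneously whenever it appears.

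The main obstacle is verifying that the modified $D_j$'s yield binomial sums vanishing one $p$-adic order higher than in \cref{prop:elimrnEqAa-1}, so that the rescaled $f_0$ remains integral and the Hecke-computation cancellations persist modulo $p$. This extra precision is gained precisely from $p\mid r-a+1$, which upgrades the identity $\sum jD_j = p(r-a)(r-a+1)/((a-1)(a-2))$ from $\mod p$ to $\mod p^2$. Once this is verified, \cite[Proposition 3.3]{BG09} applied to the surviving surjection $\ind_{KZ}^G(V_{p-a+1}\otimes \rD^{a-1})\twoheadrightarrow \overline{\Theta}_{k,a_p}$ concludes the argument.
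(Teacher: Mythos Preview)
Your first step (eliminating $V_{p-a-1}\otimes\rD^a$ via the function of \cref{prop:elimrnEqAa-1}) matches the paper exactly. For the second step, however, you take an unnecessarily circuitous route. The paper's proof is a one-line reference: it simply reuses the function from \cref{prop:elimrEqAp} that eliminates the factors of $V_r^{**}/V_r^{***}$. That function yields the coefficient $C_1/p$ with $C_1 = (r-a)(r-a+1)/(a+r-1)$, and the \emph{only} place the hypothesis $p\parallel r-a$ of \cref{prop:elimrEqAp} enters is the verification that $v(C_1)=1$. But $v(C_1)=1$ holds equally well under the present hypothesis $p\parallel r-a+1$: here $r-a\equiv -1$ and $a+r-1\equiv 2a-2\not\equiv 0\pmod p$ for $5\le a\le p$, so $v(C_1)=v(r-a+1)=1$. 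The intermediate integrality checks in \cref{prop:elimrEqAp} only require $v(C_1)\ge 1$, which also holds. No rescaling or new lifts are needed.

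Your proposed rescaling could perhaps be pushed through, but there are two inaccuracies. First, the coefficient $(r-a)(r-a+1)/((a-1)(a-2))$ you quote is the one arising in \cref{prop:elimrnEqAa-1} for the $V_r^{*}/V_r^{**}$ level (the elimination of $V_{p-a+1}\otimes\rD^{a-1}$), not the $V_r^{**}/V_r^{***}$ level you are targeting; the correct coefficient for the latter, from \cref{prop:elimrEqAp}, has denominator $a+r-1$. Second, after dividing through by an extra $p$, the denominator of $f_0$ becomes $1/(p^2a_p)$ with valuation strictly less than $-5$, so the $T^+f_0$ computation requires controlling $\sum\binom{j}{n}D_j$ modulo $p^{5-n}$ for $n=0,\dots,4$, not merely $\sum D_j\bmod p^2$ and $\sum jD_j\bmod p$ as you state. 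The paper's direct reuse of \cref{prop:elimrEqAp} sidesteps all of this.
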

\begin{proof}
  By \cref{prop:QrEqA-1}, we have the following Jordan-Hölder series of $Q$:
  \[ 0 \rightarrow W \rightarrow Q \rightarrow V_{p-a-1} \ox \rD^a \rightarrow 0 \]
  where $W$ has $V_{p-a + 1} \ox \rD^{a-1}, V_{a-4} \ox \rD^2$ and $ V_{p-a + 3} $ as factors.

  We can eliminate the factor $V_{p-a-1} \ox \rD^a$ by the functions in the proof of \cref{prop:elimrnEqAa-1} as $r \not \equiv a \mod p$.

  To eliminate the factors from $V_r^{**}/V_r^{***}$ we consider $f = f_1 + f_0$ where

  \begin{align}
    f_1 = & \sum_{\lambda \in \mathbb{F}^*_p}\bigg[g^0_{2, p[\lambda]},\dfrac{p^2}{a_p}[\lambda]^{p-3}(Y^{r}- X^{p-1}Y^{r-p+1})\bigg]\\
    - & \sum_{\lambda \in \mathbb{F}^*_p}\bigg[g^0_{2, p[\lambda]} \frac{rp}{a_p}[\lambda]^{p-2} (XY^{r-1}-2X^{p}Y^{r-p} + X^{2p-1}Y^{r-2p+1}) \bigg]\\
    + & \bigg[g_{2,0}^0, \dfrac{\binom{r}{2}(p-1)}{a_p}\theta^2(Y^{r-2p-2}- X^{p-1}Y^{r-3p-1}) \bigg], \quad \text{ and }\\
    f_0 = & \bigg[g_{1,0}^0,  \sum_{\substack{0 < j < r-2 \\ j \equiv a-2\mod (p-1)}}\dfrac{(p-1)p^2}{a_p}\alpha'_j  X^{r-j}Y^j \bigg],
  \end{align}
  where $\alpha'_j$ are chosen as in \cref{lem:analog7eg}.

  We see that $T^+ f_1 \equiv 0 \mod p$. For $f_0$, by the properties of the $\alpha'_j$, we obtain $T^+ f_0 \equiv 0 \mod p$ and as the highest index is $i = r-2-(p-1)$ we also have that $T^- f_0 \equiv 0 \mod p$.

  For $T^- f_1$ we consider $i = r, r-1, r-2$ to see that
  $$T^-  f_1 \equiv \bigg[g_{1,0}^0,  \sum_{\substack{0 < j < r-2 \\ j \equiv a-2\mod (p-1)}}\dfrac{(p-1)p^2}{a_p^2} \bigg( \binom{r}{j} - r \binom{r-1}{j}\bigg)  X^{r-j}Y^j \bigg) \bigg]$$.

  Since $\alpha'_j \equiv \binom{r}{j} - r \binom{r-1}{j} \mod p$ we that $T^- f_1 - a_p f_0 \equiv 0 \mod p$.

  Hence, $(T-a_p)(f_1 + f_0) = -a_p f_1 \equiv [g_{2,0}^0, \binom{r}{2}\theta^2(Y^{r-2p-2}- X^{p-1}Y^{r-3p-1}) ] $ which generates $V_r^{**}/V_r^{***}$ and the proposition follows.

  Hence, the only remaining factor is $V_{p-a + 1} \ox \rD^{a-1}$.
\end{proof}

\subsection{$r \equiv 3 \mod (p-1)$ }

In the following proposition we eliminate all but one Jordan-Hölder factor.
We note that while eliminating the factors from $V_r^{*}/V_r^{**}$ we  consider $a=3$ but while eliminating the factors from $V_r^{**}/V_r^{***}$, we consider $a=p+2$, following the convention set in the beginning of the paper in \cref{lem:VrAstCriteria}.

\begin{proposition}
  \label{prop:elimaeq3}
  If  $r \equiv 3 \mod (p-1)$, and:
  \begin{enumerate}
    \item If $r \not \equiv 0, 1, 2 \mod p$, then there is a surjection $\ind^G_{KZ} (V_{p-4} \ox \rD^3)  \twoheadrightarrow \overline{\Theta}_{k,a_p}$.
    \item If $r \equiv 0 \mod p$ then there is a surjection $\ind^G_{KZ} (V_{1} \ox \rD)  \twoheadrightarrow \overline{\Theta}_{k,a_p}$.
  \end{enumerate}
\end{proposition}
\begin{proof}
  \hfill
  \begin{enumerate}
    \item If $r \not \equiv 0,1, 2 \mod p$, then by \cref{prop:QrEq3} we already have the result.
    \item If $r \equiv 0 \mod p$, then to eliminate the factor $V_{p-4} \ox \rD^3$ we use the functions as in \cref{prop:elimrnEqAa-1} used to eliminate $V_{p-a-1} \ox \rD^a$ (for $a = 3$ ) but note that $T^+ f_0$ has the term $ \frac{p-1}{pa_p}p^3 \alpha_3 X^{r-3}Y^3= \frac{p-1}{pa_p}p^3 \binom{r}{3}X^{r-3}Y^3$ by \cref{lem:analog7ag}.
      As $p \mid r$ we see that $\binom{r}{3} =0$, so $T^+ f_1 =0$. The rest follows as in \cref{prop:elimrnEqAa-1}. Hence, the only remaining factor is $V_1 \ox D$.
      \qedhere
  \end{enumerate}
\end{proof}

\subsection{$r \equiv 4 \mod (p-1)$ }

In the following proposition we eliminate all but one Jordan-Hölder factor. We note that while eliminating the factors from $V_r^{*}/V_r^{**}$ we consider $a=4$ but while eliminating the factors from $V_r^{**}/V_r^{***}$, we consider $a=p+3$, following the convention set in the beginning of the paper in \cref{lem:VrAstCriteria}.

\begin{proposition}
  \label{prop:elimrEq4}
   Let $r \geq 3p+2$. If  $r \equiv 4 \mod (p-1)$ and:
   \begin{enumerate}
     \item If $r \equiv 4 \mod p$ (and $r \geq 5p$ for $p=5$) then there is a surjection $\ind^G_{KZ} (V_{p-5} \ox \rD^4)  \twoheadrightarrow \overline{\Theta}_{k,a_p}$.
     \item If $r \equiv 1 \mod p$ then there is a surjection $\ind^G_{KZ} (V_{0} \ox \rD^2)  \twoheadrightarrow \overline{\Theta}_{k,a_p}$.
     \item If $r \not\equiv 1,2,3,4 \mod p$ then there is a surjection  $\ind^G_{KZ} (V_{p-3} \ox \rD^3)  \twoheadrightarrow \overline{\Theta}_{k,a_p}$.
   \end{enumerate}
\end{proposition}
\begin{proof}

 \begin{enumerate}
    \item Let $r \equiv 4 \mod p$. To eliminate the factors from $V_r^{*}/V_r^{**}$ we consider $f = f_0 + f_1 \in \ind_{KZ}^G \Sym^r \overline{\mathbb{Q}}_p^2$, given by:
      \begin{align}
        f_1 & = \sum_{\lambda \in \mathbb{F}_p^*} \bigg[ g_{1,[\lambda]}^0, \dfrac{a_p[\lambda]^{p-2}}{p^3} (Y^r - X^{r-4}Y^4) \bigg] + \bigg[g_{1,0}^0, \dfrac{r a_p(1-p)}{p^4}(XY^{r-1} - X^{r-3}Y^{3})\bigg],\\
        f_0 & = \bigg[\id, \dfrac{1}{p^3}  \sum_{\substack{0 < j < r-1 \\ j \equiv 3 \mod (p-1)}} \beta_j X^{r-j}Y^j \bigg],
      \end{align}
      where the $\beta_j$ are chosen as in \cref{lem:analog7bg}.

      Using the properties of $\beta_j$ and $ r \equiv 4 \mod p$ we see that $T^+ f_1 \equiv 0 \mod p$. As $v(a_p) > 2$, we obtain $a_p f_1 \equiv 0 \mod p$.
      We compute that:

      \[ T^-f_1 - a_p f_0 = \bigg[\id, \frac{a_p(p-1)}{p^3} \sum_{\substack{0 < j < r-1 \\ j \equiv 3 \mod (p-1)}} \bigg(\binom{r}{j} - \beta_j \bigg)X^{r-j}Y^j \bigg]\]
      which is zero $\mod p$ as $\beta_j \equiv \binom{r}{j} \mod p$ and $v(a_p/p^3) > -1$.

      In $f_0$ as the highest $i = r-p$, we see that $T^- f_0 \equiv 0 \mod p$. However, for $j=3$, we obtain $T^+ f_0 =  [g_{1,0}^0, \beta_3 X^{r-3}Y^3] \equiv  [g_{1,0}^0, 4 X^{r-3}Y^3] \mod p$.
      Hence, $(T-a_p)f = T^+ f_0 = [g_{1,0}^0, 4 X^{r-3}Y^3]$.
      Since $XY^{r-1}$ maps to zero in $Q$, we see that $(T-a_p)f = T^+ f_0 \equiv [g_{1,0}^0,4 (X^{r-3}Y^3 - XY^{r-1})]$.
      Now, we follow the argument as in \cite[Theorem 8.6]{BG} (for $a=4$) and see that we can eliminate the factors from $V_r^{*}/V_r^{**}$.

      Thus, $ V_{p-5} \otimes \rD^4$ is the only remaining factor by \cref{prop:QrEq4}.

\item If $r \equiv 1 \mod p$, to eliminate the factors from $V_r^{*}/V_r^{**}$ we consider $f = f_2 + f_1 \in \ind_{KZ}^G \Sym^r \overline{\mathbb{Q}}_p^2$, given by:
      \begin{align}
        f_2 & = \sum_{\lambda \in \mathbb{F}_p} \bigg[ g_{2,[\lambda]}^0, \dfrac{1}{p^2} ( XY^{r-1} - 2X^{p}Y^{r-p} + X^{2p-1}Y^{r-2p+1}) \bigg] ,\\
        f_1 & = \bigg[g^0_{1,0}, \dfrac{1}{p a_p}  \sum_{\substack{0 < j < r-1 \\ j \equiv 3 \mod (p-1)}} \beta_j X^{r-j}Y^j \bigg],
      \end{align}
      where the $\beta_j \equiv \binom{r-1}{j}$ are chosen as in \cref{lem:analog7ag}. The existence of $\beta_j$ follows with $r = r-1$.

      Using the properties of $\beta_j$ and $ r \equiv 1 \mod p$ we obtain
      \[
        T^+ f_1 \equiv
        \sum_{k \in \mathbb{F}_p} [g^0_{2,p[\lambda]}, \frac{p^3}{p a_p} \beta_3 X^{r-3}Y^3] \equiv
        \sum_{k \in \mathbb{F}_p} [g^0_{2,p[\lambda]}, \frac{p^3}{p a_p} \binom{r-1}{3} X^{r-3}Y^3] \mod p.
      \]
      As $v(a_p) > 2$, we obtain $a_p f_2 \equiv 0 \mod p$. Finally, for $T^+ f_2$ the terms for $j=0,1$ vanish while for $j=2$ the term is integral, hence vanishes in $Q$.
      We modify $T^-f_2 - a_p f_1$ by a suitable $XY^{r-1}$ term to obtain:

           \[ (T - a_p)f = \bigg[\id, \frac{(p-1)}{p} \sum_{\substack{0 < j < r-1 \\ j \equiv 3 \mod (p-1)}} \bigg(\binom{r-1}{j} - \beta_j \bigg)X^{r-j}Y^j - (p-1) XY^{r-1} \bigg] \mod <p,X^{r-2}Y^2> \]

By \cref{lem:analog25bg}, we see that

      \[  \frac{1}{p} \sum_{\substack{0 < j < r-1 \\ j \equiv 3 \mod (p-1)}} \binom{r-1}{j} \equiv 1 \mod p\]

Hence, by \cref{lem:VrAstCriteria} we obtain $(T - a_p)f \in V_r^*$. As the coefficient of $c_{r-1} \not \equiv 0 \mod p$, we see it is not in $V_r^{**}$. Then, we apply \cite[Lemma 2.12]{GhateVangala} to find that $(T-a_p)f$ maps to a non-zero element in $V_{p-3} \ox \rD^2$. Thus, we are left with $V_{0} \ox \rD^2$.

    \item If $r \not \equiv 1,2,3,4 \mod p$, we can use the functions from \cref{prop:elimrnEqAa-1} to eliminate the factor $V_{p-5} \ox \rD^4$. Hence, by \cref{prop:QrEq4}, we are left with $V_{p-3} \ox \rD^2$.
      \qedhere
  \end{enumerate}
\end{proof}

\subsection{$r \equiv p+1 \mod (p-1)$ }

In the following proposition we eliminate all but one Jordan-Hölder factor.
We note that while eliminating the factors from $V_r^{*}/V_r^{**}$ and $V_r^{**}/V_r^{***}$, we consider $a=p+1$, following the convention set in the beginning of the paper in \cref{lem:VrAstCriteria}.

\begin{proposition}
  \label{prop:elimaeqp+1}
  If  $r \equiv p +1 \mod (p-1)$ and
      if $r \not \equiv 0, 1 \mod p$, then there is a surjection $ \ind^G_{KZ} (V_{2} )  \rightarrow \overline{\Theta}_{k,a_p}$.
\end{proposition}
\begin{proof}
      If $r \not \equiv 0,1 \mod p$, then by \cref{prop:QrEqp+1} we know that $V_2$ is the only factor.
\end{proof}


\section{Separating Reducible and Irreducible cases}
\label{sec:sepRedIrred}

We follow the methods of \cite[Section 9]{BG} to separate the reducible and irreducible cases when $\overline{\Theta}_{k,a_p}$ is a quotient of $\ind(V_{p-2} \otimes \rD^n)$.
This happens in \cref{prop:elimrnEqAa-1} (for $a = 5$) and \cref{prop:elimaeqp} (for $a = p$ and $p^2 \mid p-r$).
By \cite[Lemma 3.2]{BG13}, we need to check whether the map $\ind_{KZ}^G V_{p-2}) \rightarrow \overline{\Theta_{k, a_p}}$ factors through the cokernel of $T$ (in which case $\overline{V}_{k,a_p}$ is irreducible) or the cokernel of $T^2 - cT +1$ for some $c \in \overline{F}_p$ (in which case $\overline{V}_{k,a_p}$ is reducible).

The following theorem is based on \cite[Theorem 9.1]{BG}:

\begin{theorem}
  Let $r \equiv 5 \mod (p-1)$ and $r \not \equiv 4,5 \mod p$.
  If $r \not\equiv 2, 3 \mod p$, then we further assume that $v(a_p^2) \neq 5$.
  Then $\overline{V}_{k, a_p}$ is irreducible.
\end{theorem}
\begin{proof}
  We consider $f = f_1 + f_0 \in \ind_{KZ}^G \Sym^r \overline{\mathbb{Q}}_p^2 $, where
  \[
    f_1 = \sum_{\lambda \in \mathbb{F}_p}\bigg[g^0_{1,[\lambda]}, \frac{\theta^2}{a_p} (X^{r-2p-3}Y-Y^{r-2p-2})\bigg],
  \]
  and
  \[
    f_0 = \bigg[\id, \frac{p^2(p-1)}{a_p^2} \sum_{\substack{0 < j < r-2 \\ j \equiv 3 \mod (p-1)}} \alpha_j X^{r-j}Y^j \bigg],
  \]
  where the $\alpha_j$ are chosen similar to \cref{lem:analog7ag} with the condition that $\alpha_j \equiv \binom{r-2}{j} \mod p$.

  In the first part of $f_1$ as $v(1/a_p) > -3$, we consider $j=0,1,2$ for $T^+ f_1$.
  We see that $\theta^2 (X^{r-2p-3}Y-Y^{r-2p-2})= X^{r-2p-1}Y^{2p+1}-2 X^{r-p-2}Y^{p+2} + X^{r-3}Y^{3} + X^{2}Y^{r-2}-2 X^{p+1}Y^{r-p-1}+ X^{2p}Y^{r-2p}$.
  For $j=0,1$ we obtain that $T^+ f_1$ is identically zero.
  For $j=2$ we see that $\sum a_i \binom{i}{2} \equiv 0 \mod p$ where $a_i$ is the coefficient of $X^{r-i}Y^i$ in  $\theta^2 (X^{r-2p-3}Y-Y^{r-2p-2})$, so $T^+ f_1 \equiv 0 \mod p$.

  In $f_0$ we have $v(p^2/a_p^2) > -4$.
  As $\sum_j \binom{j}{n} \alpha_j \equiv 0 \mod p^{4-n}$ we obtain $T^+ f_0 \equiv \frac{p^5(p-1)}{a_p^2}\binom{r-2}{3}X^{r-3}Y^3 \equiv 0 \mod p$ since  $r \equiv 2,3 \mod p$ or $v(a_p^2) < 5$.
  Finally, in $f_0$ the highest $i= r-p-1$, so $p^{r-i} = p^{p+1}$ kills $p^2/a_p^2$ for $p \geq 5$.
  Thus, $T^- f_0 \equiv 0 \mod p$.

  For $T^- f_1$ we consider $i = r-2$ and obtain:
  \[ T^-f_1= \bigg[\id, \sum_{\substack{0 < j < r-2\\ j \equiv 3 \mod (p-1)}} \frac{(p-1)p^2}{a_p}\binom{r-2}{j}X^{r-j}Y^j + \frac{p^3}{a_p}X^2Y^{r-2}\bigg]. \]
  As $v(a_p) < 3$ we obtain:
  \[ T^-f_1 - a_p f_0 = \bigg[\id, \dfrac{(p-1)p^2}{a_p} \sum_{\substack{0 < j < r-2\\ j \equiv 3 \mod (p-1)}} \bigg(\binom{r-2}{j} - \alpha_j \bigg)X^{r-j}Y^j\bigg], \]
  which dies mod $p$ as $\alpha_j \equiv \binom{r-2}{j} \mod p$.

  Hence, $(T-a_p) f = -a_p f_1 =  \sum_{\lambda \in \mathbb{F}_p}[g^0_{1,[\lambda] }, \theta^2 (X^{r-2p-3}Y-Y^{r-2p-2})]$.

  By  \cref{lem:Analog85} this maps to $  \sum_{\lambda \in \mathbb{F}_p}[g^0_{1,[\lambda] }, - X^{p-2}]$, which equals $- T[\id, X^{p-2}]$.
  Thus, the reducible case cannot occur.

  If $r \not \equiv 2,3,4,5 \mod p$ and $v(a_p^2) > 5$, we consider the function $f'= \frac{a_p^2}{p^5}f$. We see that $a_p f'_1, T^+ f'_1, T^- f'_0 \equiv 0 \mod p$, while we see that:

  \[ T^-f'_1 - a_p f'_0 = \bigg[\id, \dfrac{(p-1)a_p}{p^3} \sum_{\substack{0 < j < r-2\\ j \equiv 3 \mod (p-1)}} \bigg(\binom{r-2}{j} - \alpha_j \bigg)X^{r-j}Y^j\bigg], \]

  Since $\binom{r-2}{j} \equiv \alpha_j \mod p$, the above function vanishes.

  Hence, we are left with

  $$T^+ f'_0 = (p-1) \sum_{\lambda \in \mathbb{F}_p}[g^0_{1,[\lambda] }, \binom{r-2}{3} X^{r-3}Y^3]  $$.

  Let $G(X,Y) = -\frac{1}{(p-1)}\sum_k k^{p-4}(\frac{1}{2}(X+kY)^r - X(X+kY)^{r-1}) \in X_{r-2}$.
  Working modulo $p$ we see that:

  \[G(X,Y) = \dfrac{-1}{2} \sum_{\substack{0 < j \leq r-2\\ j \equiv 3 \mod (p-1)}} \bigg(\binom{r}{j} - 2\binom{r-1}{j}\bigg)X^{r-j}Y^j.\]

  Let $ F(X,Y) = X^{r-3}Y^3 + G(X,Y) = \sum c_j X^{r-j} Y^j$. By \cref{lem:binomialsumaeq2} with $i=2$ we see that for $0 < j < r-2$, we have $\sum \binom{r}{j} \equiv \frac{(5-r)(4+r)}{2}$ while $\sum \binom{r-1}{j} \equiv 5-r $.
  The term for $j=r-2$ gives us $\binom{r}{r-2} -2 (r-1)$.
  Thus, for $0 < j < r-2$, we obtain $\sum c_j \equiv 0 \mod p$.
  By \cref{lem:binomialsumaeq2} for $0 < j < r-2$, we obtain $\sum j \binom{r}{j} \equiv \frac{r(5-r)(2+r)}{2}$ and $\sum j \binom{r-1}{j} \equiv (r-1)(5-r)$.
  For the term $j=r-2$ we have $(r-2)(\binom{r}{r-2} -2 (r-1))$.
  Thus, we see that $\sum j c_j \equiv 0 \mod p$. Hence, $X^{r-3}Y^3 + G(X,Y) \in V_r^{**}$.
  Using \cite[Lemma 2.12]{GhateVangala} we find that $F(X,Y)$ maps to a  non-zero element under the projection $V_r^{**}/V_r^{***}  \twoheadrightarrow V_{p-2} \ox \rD^3$. Hence, we see that $V_{p-2} \ox \rD^3$ contributes irreducibly.
\end{proof}

\begin{theorem}[Extension of {\cite[Theorem 9.2]{BG}}]
  Let $r \equiv p \mod (p-1)$ and $p^2 \mid p-r$. If $p = 5$ and $v(a_p^2) = 5$ then assume that $v(a_p^2 - p^5 )= 5$.
  Then:
  \begin{enumerate}
    \item If $p^3 \nmid p-r$, then $\overline{V}_{k, a_p}$ is irreducible.
    \item If $p^3 \mid p-r$, then $\overline{V}_{k, a_p} \cong \unr(\sqrt{-1})\omega \oplus \unr(-\sqrt{-1})\omega$ is reducible.
  \end{enumerate}
\end{theorem}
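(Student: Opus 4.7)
The plan is to invoke \cite[Lemma 3.2]{BG13} after noting that \cref{prop:elimaeqp} already established a surjection $\ind_{KZ}^G (V_{p-2} \otimes \rD) \twoheadrightarrow \bar\Theta_{k,a_p}$. According to that lemma, $\bar{V}_{k,a_p}$ is irreducible iff this surjection factors through $\ind_{KZ}^G V_{p-2} \otimes \rD / T$, and it is reducible of the stated shape iff it factors through $\ind_{KZ}^G V_{p-2} \otimes \rD / (T^2 - cT + 1)$ with $c$ identifying the trace of Frobenius acting on the unramified part; the claimed splitting $\unr(\sqrt{-1})\omega \oplus \unr(-\sqrt{-1})\omega$ corresponds to $c = 0$, since the two eigenvalues sum to $0$ and multiply to $1$, which is compatible with the determinant being $\omega$ after the $\rD$-twist. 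So it suffices to exhibit, in each case, a function $f \in \ind_{KZ}^G \Sym^r \Qb_p^2$ whose Hecke image $(T - a_p) f$ reduces modulo $p$ to the appropriate operator applied to $[\id, X^{p-2}]$.

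For case (i), where $p^2 \parallel p - r$, I would adapt the construction used to eliminate the factor $V_{p-2} \otimes \rD$ in \cref{prop:elimaeqp}(ii), namely a sum $f = f_0 + f_1 + f_2$ with $f_2$ supported at level $2$ and using the integers $\gamma_j$ from \cref{lem:analog7dg}(ii), which satisfy $\gamma_j \equiv \binom{r}{j} \mod p^2$ and $\sum \binom{j}{n} \gamma_j \equiv 0 \mod p^{5-n}$. By the same valuation bookkeeping as in \cref{prop:elimaeqp}, all the ``bulk'' terms of $T^+ f_i$ and the cross terms of $T^- f_{i+1} - a_p f_i$ vanish modulo $p$, and only a boundary contribution involving $\theta$ at level $1$ survives. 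After applying \cite[Lemma 8.5]{BG} to pass to $V_{p-2} \otimes \rD$, this surviving contribution takes the form $\sum_{\lambda \in \F_p} [g^0_{1,[\lambda]}, c X^{p-2}] = c \cdot T([\id, X^{p-2}])$ with $c \in \F_p^*$ forced to be nonzero precisely because $p^3 \nmid p - r$ (so that $(p-r)/p^2 \not\equiv 0 \mod p$). This rules out the reducible case.

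For case (ii), where $p^3 \mid p - r$, the idea is to go one level deeper on the tree: build a function $f = f_0 + f_1 + f_2 + f_3$ with $f_3$ supported on matrices of the form $g^0_{3, \mu}$, replacing the $\gamma_j$ by those of \cref{lem:analog7dg}(iii), which have the sharper precision $\gamma_j \equiv \binom{r}{j} \mod p^3$ and $\sum \binom{j}{n} \gamma_j \equiv 0 \mod p^{6-n}$. The extra $p$-divisibility afforded by $p^3 \mid p - r$ now lets all the terms of $T^\pm$ cancel except for two symmetrically placed contributions, one at level $1$ and one at level $3$, which assemble into $T^2([\id, X^{p-2}]) + [\id, X^{p-2}] = (T^2 + 1)[\id, X^{p-2}]$ after reduction modulo $p$ via \cite[Lemma 8.5]{BG} and the appropriate identification from \cref{lem:Analog85}. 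The traceless shape $c = 0$ is forced structurally by the symmetry of the two surviving boundary terms.

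The main obstacle will be the bookkeeping in case (ii): one must exhibit a cancellation that is genuinely \emph{quadratic} in $T$, which requires coordinating three levels of the tree at once and exploiting all three precision conditions in \cref{lem:analog7dg}(iii) simultaneously. In particular, the leading terms of $T^-$ at level $3$, the middle cross terms $T^- f_{i+1} - a_p f_i$ for $i = 1, 2$, and the ``boundary'' contribution of $T^+ f_0$ all have to match in the right configuration to produce a genuine $T^2 + 1$. The hypothesis $v(a_p^2 - p^5) = 5$ when $p = 5$ and $v(a_p^2) = 5$ is needed so that the denominators $p^k/a_p^2$ appearing in $f_0$ do not exceed the $p^3$-precision provided by the $\gamma_j$, and this exceptional condition at $p=5$ is the technical price for staying within the slope range $(2,3)$.
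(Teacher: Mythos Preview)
Your outline for case (i) is essentially the paper's argument: a three-level function $f_0 + f_1 + f_2$ with the $\gamma_j$ of \cref{lem:analog7dg}(ii), yielding $(T-a_p)f \equiv c\,T([\id, X^{p-2}])$ with $c = \frac{(p-1)(p-r)}{p^2}$ nonzero precisely when $p^3 \nmid p-r$.

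For case (ii), however, the paper does \emph{not} go to level $3$. It keeps the same three-level shape $f = f_0 + f_1 + f_2$ but rescales: the coefficient in $f_2$ is $1/a_p$ rather than $1/p^2$, in $f_1$ it is $(p-1)/a_p^2$, and in $f_0$ it is $r/(pa_p)$, using the $\gamma_j$ of \cref{lem:analog7dg}(iii). With this scaling, the cross terms $T^- f_2 - a_p f_1 + T^+ f_0$ vanish modulo $p$ (now using $\gamma_j \equiv \binom{r}{j} \bmod p^3$), while $-a_p f_2$ and $-a_p f_0$ become integral and survive. Since $-a_p f_2$ is supported at level $2$ and $-a_p f_0$ at level $0$, their sum is identified (following \cite[Theorem 9.2]{BG}) with $(T^2+1)[\id, -X^{p-2}]$. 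Your sketch has the surviving terms at levels $1$ and $3$, but $(T^2+1)[\id, X^{p-2}]$ lives at levels $0$ and $2$, so your level-counting does not match the target; a four-term function at level $3$ is neither needed nor obviously adapted to producing a quadratic relation.

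Your explanation of the $p=5$ hypothesis is also off. It is not about controlling denominators relative to $p^3$-precision. When $p=5$ and $v(a_p) \geq 5/2$, the paper rescales $f' = (a_p^2/p^5) f$; then $(T-a_p)f'$ reduces to $c(T^2+1)[\id, X]$ with $c = \overline{1 - a_p^2/p^5}$, and the hypothesis $v(a_p^2 - p^5) = 5$ is exactly what guarantees $c \neq 0$.
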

\begin{proof}
  \hfill
  \begin{enumerate}
    \item  
      Consider the function $f = f_0 + f_1 +f_2 \in \ind_{KZ}^G \Sym^r \overline{\mathbb{Q}}_p^2$, given by:
      \begin{align}
        f_2 & = \sum_{\lambda \in \mathbb{F}_p, \mu \in \mathbb{F}_p^*}  \bigg[ g_{2,p[\mu] + [\lambda]}^0, \dfrac{1}{p^2}(Y^r - X^{r-p}Y^p)\bigg]\\
            & + \sum_{\lambda \in \mathbb{F}_p}\bigg[g_{2,[\lambda]}^0, \dfrac{(1-p)}{p}(Y^r-X^{r-p}Y^p)\bigg],\\
        f_1 & =\sum_{\lambda \in \mathbb{F}_p} \bigg[g_{1,[\lambda]}^0, \sum_{\substack{1 < j < r \\ j \equiv 1 \mod (p-1)}} \frac{(p- 1)}{p^2 a_p} \gamma_jX^{r-j}Y^j \bigg], \quad \text{ and }\\
        f_0 & = \bigg[\id, \frac{r}{p^3}(X^{r-1}Y-X^{r-p}Y^p) \bigg],
      \end{align}
      where the integers $\gamma_j$ are given in \cref{lem:analog7dg}.

      In the first part of $f_2$ we have $v(1/p^2) =-2$, so we consider $j=0,1,2$ for $T^+ f_2$.
      For $j=0$ we have $\binom{r}{0} - \binom{p}{0} = 0$ while for $j=1,2$ we see that $\frac{p^j}{p^2}(\binom{r}{j}-\binom{p}{j}) \equiv 0 \mod p^2$ as $p^2 \mid r-p$.
      In the second part of $f_2$ we have $v(1/p) =-1$, so we consider $j=0,1$ for $T^+ f_2$.
      For $j=0$ we have $\binom{r}{0} - \binom{p}{0} = 0$ while for $j=1$ we see that $\frac{p}{p}(\binom{r}{1}-\binom{p}{1})\equiv 0 \mod p^2$ as $p^2 \mid r-p$.
      Thus $T^+ f_2 \equiv 0 \mod p$.

      In $f_1$ we have $v(1/p^2 a_p) > -5$.  By the properties of the $\gamma_j$ we have $\sum \binom{j}{n}\gamma_j \equiv 0 \mod p^{5-n}$, so $T^+ f_1 \equiv 0 \mod p$.
      We see that $a_p f_2$ and $a_p f_0$ die mod $p$ as $v(a_p) > 2$.

      In $f_0$, we have $v(r/p^3) = -2$.
      For $T^+ f_0$ we consider $j = 0,1,2$.
      For $j=0$ we obtain $\frac{r}{p^3} (\binom{1}{0} - \binom{p}{0}) =0$.
      For $j=1$, we obtain 
      \begin{align}
        & \sum_{\lambda \in  \mathbb{F}_p^*}\bigg[g^0_{1,[\lambda] },\frac{pr}{p^3} \bigg(\binom{1}{1} - \binom{p}{1}\bigg)X^{r-1}Y\bigg]+\bigg[g^0_{1,0}, \frac{r}{p^2}X^{r-1}Y\bigg]\\
        = & \sum_{\lambda \in  \mathbb{F}_p^*}\bigg[g^0_{1,[\lambda] },\frac{r(1-p)}{p^2} X^{r-1}Y\bigg]+\bigg[g^0_{1,0}, \frac{r}{p^2}X^{r-1}Y\bigg]\\
        = & \sum_{\lambda \in  \mathbb{F}_p}\bigg[g^0_{1,[\lambda] }, \frac{r(1-p)}{p^2} X^{r-1}Y]+\bigg[g^0_{1,0}, \frac{rp}{p^2}X^{r-1}Y\bigg]
        \end{align}
        The last term is integral so it vanishes in $Q$.
        For $j=2$, we obtain
        \[
          \frac{p^2r(1-p)}{p^3} (\binom{1}{2} - \binom{p}{2})X^{r-2}Y^2,
        \]
        which is integral, hence vanishes in $Q$.
        Hence
        \[
          T^+ f_0 = \sum_{\lambda \in  \mathbb{F}_p}\bigg[g^0_{1,[\lambda] },\frac{r(1-p)}{p^2} (1-p)X^{r-1}Y\bigg].
        \]
        In $T^- f_1$, the highest index of a nonzero coefficient is $i= r-p+1$.
        Therefore $p^{r-i} = p^{p-1}$ kills $1/p^2 a_p$ for $p \geq 7$.
        If $p=5$, we note that $T^- f_1 $ has the term $\frac{(p-1)p^4}{p^2a_p} \gamma_{4}$.
        As $r \equiv p \mod p^2$, we see that $\gamma_4 \equiv  \binom{r}{4} \equiv 0 \mod p$ and hence  $T^- f_1 \equiv 0$.

        For $T^- f_2$ we consider $i=r$ in the first part of $f_2$, obtaining:
        \[
          \sum_{\lambda \in \mathbb{F}_p} \bigg[g_{1,\lambda}^0, \dfrac{(p- 1)}{p^2}  \sum_{\substack{1 < j < r \\ j \equiv 1 \mod (p-1)}}  \binom{r}{j} X^{r-j}Y^j  + \dfrac{1}{p}Y^r + \dfrac{r(p-1)}{p^2}X^{r-1}Y \bigg].
        \]
        The term $\dfrac{1}{p}Y^r$ is cancelled out by the second part of $T^- f_2$, while the term $ \dfrac{r(p-1)}{p^2}X^{r-1}Y $ is cancelled out by $T^+ f_0$.
        Thus $(T-a_p) f \equiv T^- f_2 -a_p f_1 + T^+ f_0$ is equivalent to:
        \[
          \sum_{\lambda \in \mathbb{F}_p} \bigg[g_{1,[\lambda]}^0, \frac{(p- 1)}{p^2}  \sum_{\substack{1 < j < r \\ j \equiv 1 \mod (p-1)}}  \bigg( \binom{r}{j}-\gamma_j \bigg) X^{r-j}Y^j \bigg].
        \]
        As $\binom{r}{j} \equiv \gamma_j \mod p^2$ the above function is integral.
        Because each of the monomials $X^{r-j}Y^j$ maps to $X^{p-2}$ under the projection $V_r / V_r^* \twoheadrightarrow V_{p-2} \otimes \rD^3$ if $j \equiv 1 \mod (p-1)$, by the properties of $\sum_j \gamma_j$ the expression above maps to $cX^{p-2}$, where $c = \frac{(p-1)(p-r)}{p^2}$ due to \cref{lem:analog25bg}.

        As $p^2 \mid p-r$ this sum is integral, but is nonzero as $p^3 \nmid p-r$.
        Thus $(T-a_p)f = \sum_{\lambda \in \mathbb{F}_p} [g_{1,\lambda}^0, cX^{p-2}] = cT[\id, X^{p-2}]$, which means that  $\overline{V}_{k, a_p}$ is irreducible.

      \item 

       Assume $v(a_p) < 5/2  $ if $p=5$. We consider the function $f = f_0 + f_1 + f_2$, where:
        \begin{align}
          f_2 & = \sum_{\lambda \in \mathbb{F}_p,\mu \in \mathbb{F}_p^*} \bigg[ g_{2,p[\mu] + [\lambda]}^0, \dfrac{1}{a_p}(Y^r - X^{r-p}Y^p) \bigg] + \sum_{\lambda \in \mathbb{F}_p}\bigg[g_{2,[\lambda]}^0, \dfrac{(1-p)}{a_p}(Y^r-X^{r-p}Y^p)\bigg],\\
          f_1 & = \sum_{\lambda \in \mathbb{F}_p} \bigg[g_{1,[\lambda]}^0,  \dfrac{(p-1)}{a_p^2} \sum_{\substack{1 < j < r \\ j \equiv 1 \mod (p-1)}} \gamma_jX^{r-j}Y^j \bigg],
        \end{align}
        where the $\gamma_j \equiv \binom{r}{j} \mod p^3$ are chosen as in \cref{lem:analog7dg} and
        \[
          f_0 = \bigg[\id, \dfrac{r}{p a_p}(X^{r-1}Y-X^{r-p}Y^p) \bigg] + \bigg[\id, \dfrac{-r}{a_p}(X^{r-1}Y-2X^{r-p}Y^p + X^{r-2p+1}Y^{2p-1}) \bigg].
        \]
        In the first and second part of $f_2$ we have $v(1/a_p)> -3$, so we consider $j=0,1,2$ for $T^+ f_2$.
        For $j=0$ we have $\binom{r}{0} - \binom{p}{0} = 0$ while for $j=1,2$ we see that $\frac{p^j}{a_p}(\binom{r}{j}-\binom{p}{j}) \equiv 0 \mod p$ as $p^3 \mid r-p$. Thus, we  see that $T^+ f_2 \equiv 0 \mod p$.

        In $f_1$ we see that $v(1/a_p^2) > -6$.
        By the properties of the $\gamma_j$ we have $\sum \binom{j}{n} \gamma_j \equiv 0 \mod p^{6-n}$, so $T^+ f_1 \equiv 0 \mod p$.
        In $T^- f_1$ the highest index of a non-zero coefficient is $i= r-p+1$, and $p^{r-i} = p^{p-1}$ kills $1/a_p^2$ for $p \geq 7$.
        For $p=5$ we see that $T^- f_1 $ has the terms $ \frac{p^4}{a_p^2}  \binom{r}{r-4}  \equiv 0 \mod $ as $p^3 \mid r-p$ and $v(a_p^2) < 5$, so $T^- f_1 \equiv 0 \mod p$.

        In $f_0$, in the first part we have $v(r/p a_p) > -3$ so we consider $j = 0,1,2$.
        For $j=0$, we obtain $\frac{r}{p a_p} (\binom{1}{0} - \binom{p}{0}) =0$ and for $j=2$, we obtain  $\frac{p^2r}{pa_p} (\binom{1}{2} - \binom{p}{2}) \equiv 0 \mod p$.
        For $j=1$, we obtain
        \begin{align}
         & \sum_{\lambda \in \mathbb{F}_p^*} \bigg[g_{1,\lambda}^0, \frac{pr}{pa_p} \bigg(\binom{1}{1} - \binom{p}{1}\bigg)X^{r-1}Y \bigg] + \bigg[g^0_{1,0}, \frac{r}{a_p} X^{r-1}Y \bigg]\\
          = & \sum_{\lambda \in \mathbb{F}_p^*} \bigg[g_{1,\lambda}^0, \frac{r(1-p)}{pa_p} X^{r-1}Y \bigg] + \bigg[g^0_{1,0}, \frac{r}{a_p} X^{r-1}Y \bigg]\\
          = & \sum_{\lambda \in \mathbb{F}_p} \bigg[g_{1,\lambda}^0,\frac{r(1-p)}{a_p}X^{r-1}Y\bigg]  + \bigg[g^0_{1,0}, \frac{rp}{a_p} X^{r-1}Y \bigg]
        \end{align}
        In the second part, we see that for $j=0,2$, the term $T^+ f_0$ vanishes but for $j=1$ the term $[g^0_{1,0}, \frac{-rp}{a_p} X^{r-1}Y]$ prevails.
        Hence, $T^+ f_0 = \sum_{\lambda \in \mathbb{F}_p} [g_{1,\lambda}^0, \frac{r(1-p)}{a_p}X^{r-1}Y]$.

        For $T^- f_2$ we consider $i=r$ in the first part and obtain:
        \[
          \sum_{\lambda \in \mathbb{F}_p} \bigg[g_{1,\lambda}^0, \dfrac{(p- 1)}{a_p}  \sum_{\substack{1 < j < r \\ j \equiv 1 \mod (p-1)}}   \binom{r}{j} X^{r-j}Y^j + \dfrac{(p-1)}{a_p}Y^r + \dfrac{r(p-1)}{a_p}X^{r-1}Y \bigg].
        \]
        The term $\dfrac{(p-1)}{a_p}Y^r$ is cancelled out by the second part of $T^- f_2$, while the term $ \dfrac{r(p-1)}{a_p}X^{r-1}Y $ is cancelled out by $T^+ f_0$.

        Then $T^- f_2 -a_p f_1 + T^+ f_0$ is equivalent to:
        \[
          \sum_{\lambda \in \mathbb{F}_p} \bigg[g_{1,\lambda}^0, \dfrac{(p- 1)}{a_p}  \sum_{\substack{1 < j < r \\ j \equiv 1 \mod (p-1)}}  \bigg( \binom{r}{j}-\gamma_j \bigg) X^{r-j}Y^j \bigg],
        \]
        which is zero as $\gamma_j \equiv \binom{r}{j} \mod p^3$.

        Thus, $(T-a_p) f \equiv -a_p f_2 - a_p f_0 \mod p$.
        Following the argument given in the proof of \cite[Theorem 9.2]{BG}, this turns out to be the same as $(T^2+ 1)[\id, -X^{p-2}]$.
        Therefore the representation is reducible.

        If $p=5$ and $v(a_p) \geq 5/2$, then we are in a situation similar to \cite[Theorem 9.2]{BG} for $p=3$ and $v(a_p) \geq 3/2$.
        We consider the function $f' = \dfrac{a_p^2}{p^5}f$.
        Then $(T-a_p) f'$ is integral and has reduction equal to the image of $c(T^2 + 1) [\id, X^{p-2}]$ where $c = \overline{1-a_p^2/p^5}$, which by the extra hypothesis is not zero.
        Thus, the representation is reducible.
        \qedhere
  \end{enumerate}
\end{proof}

  \bibliographystyle{amsalpha}
  \providecommand{\bysame}{\leavevmode\hbox to3em{\hrulefill}\thinspace}
\providecommand{\mr}[1]{MR~\href{http://www.ams.org/mathscinet-getitem?mr=#1}{#1}}
\providecommand{\zbl}[1]{zbMATH~\href{http://www.zentralblatt-math.org/zmath/en/search/?q=an:#1}{#1}}
\providecommand{\jfm}[1]{JFM~\href{http://www.emis.de/cgi-bin/JFM-item?#1}{#1}}
\providecommand{\arxiv}[1]{arXiv~\href{http://www.arxiv.org/abs/#1}{#1}}
\providecommand{\doi}[1]{DOI~\href{http://dx.doi.org/#1}{#1}}
\providecommand{\MR}{\relax\ifhmode\unskip\space\fi MR }
\providecommand{\MRhref}[2]{%
  \href{http://www.ams.org/mathscinet-getitem?mr=#1}{#2}
}
\providecommand{\href}[2]{#2}

\end{document}